\numberwithin{equation}{section}
\DeclareMathOperator{\Idd}{I}
\DeclareMathOperator{\supp}{Supp}
\DeclareMathOperator{\dist}{dist}
\let\oldmarginpar\marginpar
\renewcommand\marginpar[1]{\oldmarginpar{\color{red}\raggedleft\tiny #1}}
\newcommand{\intbar}{{- \hspace{- 1.05 em}} \int}
\begin{document}


\newtheorem{theo}{Theorem}
\newtheorem{prop}[theo]{Proposition}
\newtheorem{lem}[theo]{Lemma}
\newtheorem{cor}[theo]{Corollary}
\newtheorem*{theo*}{Theorem}
\newtheorem{rst}{Result}
\renewcommand*{\therst}{\Alph{rst}}

\theoremstyle{definition}
\newtheorem{defi}[theo]{Definition}

\theoremstyle{remark}
\newtheorem{rem}{Remark}
\newtheorem*{rem*}{Remark}
\newtheorem*{rems*}{Remarks}

\pagestyle{plain}

\title{Uniform Lipschitz Estimates in Bumpy Half-Spaces}

\author{Carlos Kenig\thanks{The University of Chicago, $5734$ S. University Avenue, Chicago, IL $60637$, USA. \emph{E-mail address:} \texttt{cek@math.uchicago.edu}} \and Christophe Prange\thanks{The University of Chicago, $5734$ S. University Avenue, Chicago, IL $60637$, USA. \emph{E-mail address:} \texttt{cp@math.uchicago.edu}}}

\maketitle

\begin{abstract}
This paper is devoted to the proof of uniform H\"older and Lipschitz estimates close to oscillating boundaries, for divergence form elliptic systems with periodically oscillating coefficients. Our main point is that no structure is assumed on the oscillations of the boundary. In particular, those are neither periodic, nor quasiperiodic, nor stationary ergodic. We investigate the consequences of our estimates on the large scales of Green and Poisson kernels. Our work opens the door to the use of potential theoretic methods in problems concerned with oscillating boundaries, which is an area of active research.
\end{abstract}

\section{Introduction}

This paper is concerned with H\"older and Lipschitz estimates for elliptic systems and their consequences in potential theory. Our divergence form elliptic system reads
\begin{equation}\label{sysoscueps}
\left\{
\begin{array}{rll}
-\nabla\cdot A(x/\varepsilon)\nabla u^\varepsilon&=f+\nabla\cdot F,&x\in D^\varepsilon(0,1),\\
u^\varepsilon&=0,&x\in \Delta^\varepsilon(0,1),
\end{array}
\right.
\end{equation}
and is posed in the domain with oscillating boundary 
\begin{equation*}
D^\varepsilon(0,1):=\left\{(x',x_d),\ |x'|<1,\ \varepsilon\psi(x'/\varepsilon)<x_d<\varepsilon\psi(x'/\varepsilon)+1\right\}.
\end{equation*}
The main focus of the paper is on uniformity in $\varepsilon$. The coefficients are assumed to be periodically oscillating. However, no structure assumption is made on the boundary. In particular, no periodicity, quasiperiodicity, nor stationary ergodicity is assumed on the oscillations of $\psi$.

\begin{center}
\includegraphics[width=9.78cm,height=5cm]{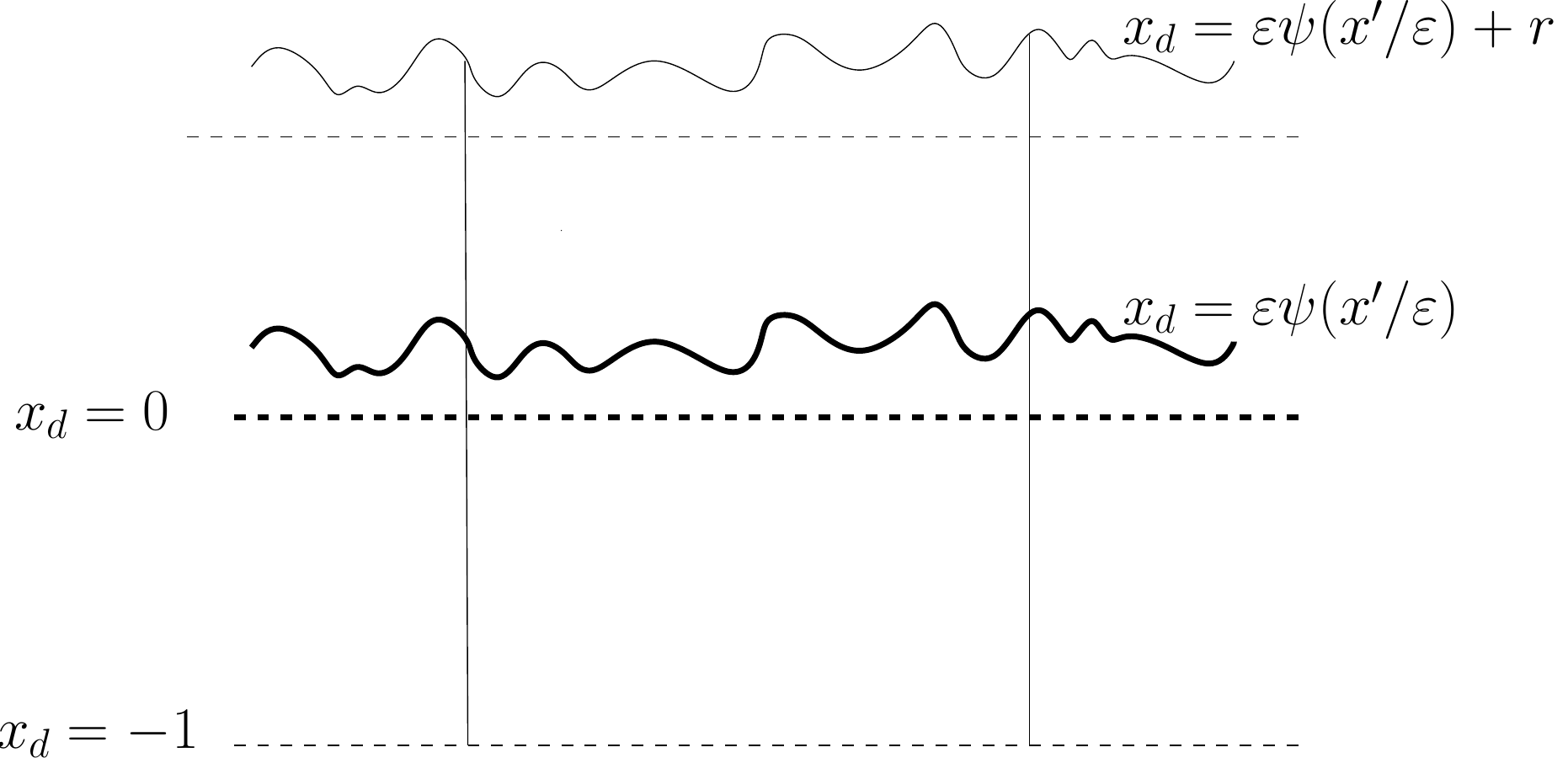}
\end{center}

The study of oscillating boundaries and roughness induced effects is an area of active applied and theoretical research. The applications involve a lot of different scales and range from geophysics \cite{DGVDormy06,CLS12,Lemouel_CBM06} to microfluidics \cite{ChuMa12}. From a mathematical point of view, the general goal is to describe the (averaged) effect of the oscillations of the boundary on the behavior of the solution to the partial differential equation. Two questions are of particular importance: well-posedness and asymptotic behavior far from the oscillating boundary. Well-posedness can be usually proved in very general settings. Indeed, some of the latest works \cite{DGVNMnoslip,DP_SC} have been focused on getting rid of any structure assumptions on the boundary. The analysis proves to be intricate, because a lot of the usual tools (Poincar\'e inequalities, Fourier analysis\dots) cannot be used. As far as the second question is concerned, some averaging properties of the oscillations have always been assumed in the existing litterature: the papers \cite{BassonDavid,GVALD_slip} are just two examples. Our hope is that the estimates in this paper will make it possible to resort to potential theoretical methods in order to investigate such questions.

Uniform Schauder estimates for elliptic systems with periodically oscillating coefficients have been pioneered by Avellaneda and Lin in a series of seminal papers \cite{alinscal,alin,alin2,Alin90P,alinLp}. Adapting a compactness method originating from the study of the regularity problem in the calculus of variations \cite{Almgren68,degiorgi61} (see also \cite{giaquintaBook}), they prove for instance that weak solutions $u^\varepsilon=u^\varepsilon(x)$ of 
\begin{equation*}
-\nabla\cdot A(x/\varepsilon)\nabla u^\varepsilon=0,\qquad x\in B(0,1),
\end{equation*}
satisfy the estimate 
\begin{equation}\label{lip1int}
\|\nabla u^\varepsilon\|_{L^\infty(B(0,1/2))}\leq C\|u^\varepsilon\|_{L^\infty(B(0,1))},
\end{equation}
with a constant $C>0$ uniform in $\varepsilon$. This interior estimate comes along with other interior and boundary H\"older and Lipschitz estimates for divergence form elliptic systems, as well as non-divergence form elliptic equations. A detailed review of interior estimates is done in section \ref{subsecintestalin} below. These works strongly rely on the periodicity assumption and are restricted to flat boundaries.

Another important contribution of the work of Avellaneda and Lin is the investigation of the asymptotics of Green $G^\varepsilon=G^\varepsilon(x,\tilde{x})$ and Poisson $P^\varepsilon=P^\varepsilon(x,\tilde{x})$ kernels associated to the operator with oscillating coefficients $-\nabla\cdot A(x/\varepsilon)\nabla$ and to the domain $\Omega\subset\mathbb R^d$. The key observation is that the analysis of the large scales $r=|x-\tilde{x}|\gg 1$ of $G^\varepsilon(x,\tilde{x})$ or $P^\varepsilon(x,\tilde{x})$ boils down, after proper rescaling, to the study of the local properties of $G^{\varepsilon/r}$ or $P^{\varepsilon/r}$. This can be done thanks to the local Schauder estimates uniform in $\varepsilon$. Such a line of ideas has been successfully implemented to expand the fundamental solution of $-\nabla\cdot A(x/\varepsilon)\nabla$ in \cite{alinLp}. It underlies the recent work of Kenig, Lin and Shen \cite{kls12}, where optimal expansions for Green and Neumann functions, along with their derivatives are derived. 

A boundary version of the Lipschitz estimate \eqref{lip1int} has also been used in the analysis of boundary layer correctors in homogenization. In the article \cite{BLtail} the investigation of the asymptotics far from the boundary of boundary layer correctors $v_{bl}=v_{bl}(y)$,
\begin{equation*}
\left\{
\begin{array}{rll}
-\nabla\cdot A(y)\nabla v_{bl}&=0,&y\cdot n>0,\\
v_{bl}&=v_0,&y\cdot n=0,
\end{array}
\right.
\end{equation*}
with $n\in\mathbb S^{d-1}$, is carried out using the representation of $v_{bl}$ via Poisson's kernel $P=P(y,\tilde{y})$ 
\begin{equation*}
v_{bl}(y)=\int_{\tilde{y}\cdot n=0}P(y,\tilde{y})v_0(\tilde{y})d\tilde{y}.
\end{equation*}
An expansion of $P(y,\tilde{y})$ for $|y-\tilde{y}|\gg 1$ is established: there exists an explicit kernel $P^{exp}=P^{exp}(y,\tilde{y})$ such that 
\begin{equation*}
|P(y,\tilde{y})-P^{exp}(y,\tilde{y})|\leq \frac{C}{|y-\tilde{y}|^{d-1+\kappa}},\qquad \kappa>0.
\end{equation*}
The latter makes it possible, using the ergodicity on the boundary, to show that
\begin{equation*}
v_{bl}(y)\stackrel{y\cdot n\rightarrow\infty}{\longrightarrow} V^\infty.
\end{equation*}
One of our motivations here is to generalize this analysis to systems in oscillating half-spaces
\begin{equation*}
\left\{
\begin{array}{rll}
-\nabla\cdot A(y)\nabla v_{bl}&=0,&y_d>\psi(y'),\\
v_{bl}&=v_0,&y_d=\psi(y').
\end{array}
\right.
\end{equation*}
Our paper is a first step in this direction. Going a step further, would require to assume some structure of $\psi$ ensuring averaging properties far from the boundary. Moreover, it would ask for a good understanding of the interplay between the oscillations of $\psi$, $A$ and $v_0$. We refer to \cite[sections 2 and 7]{BLtail} and to \cite[section 3]{DGVNMnoslip} to get an insight into these questions.

The work of Avellaneda and Lin on uniform estimates has far-reaching consequences in homogenization and potential theory: besides the references already cited let us mention \cite{dgvnm2,KLS12L2,KLSeig}. Generalizing it has been the purpose of intense research over the past few years. Without attempting to be exhaustive, we refer to the work of Kenig, Lin and Shen \cite{KLSNeumann} for the Neumann problem, of Geng and Shen \cite{Shen14parabolic} on parabolic systems, of Shen \cite{ShenAP14} on divergence form elliptic systems in an almost periodic setting and to the references cited in these papers. Moreover we are aware of one extension concerned with oscillating boundaries. In \cite{David09} G\'erard-Varet proves a uniform H\"older estimate close to an oscillating boundary for the Stokes system in fluid mechanics. The proof of a Lipschitz estimate for the system \eqref{sysoscueps}, which we address in this paper, is however much more involved and relies on new ideas.

Before going into the details of our results, let us state our setting.

\subsection{Framework}

Let $\lambda>0$, $0<\nu_0<1$ and $M_0>0$ be fixed in what follows. Let
\begin{equation*}
\omega:[0,\infty)\rightarrow[0,\infty),\ \mbox{such that}\ \omega(0)=0\ \mbox{and}\ \omega(t)\stackrel{t\rightarrow 0}{\longrightarrow}0,
\end{equation*}
be a fixed modulus of continuity. The boundary is a graph given by $\psi$ in the class $\mathcal C_{M_0}^{1,\omega}$ or $\mathcal C_{M_0}^{1,\nu_0}$ defined by
\begin{equation*}
\begin{aligned}
\mathcal C_{M_0}^{1,\omega}&:=\{\psi\in C^1(\mathbb R^{d-1}):\ 0\leq\psi\leq M_0,\ \|\nabla\psi\|_{L^\infty(\mathbb R^{d-1})}\leq M_0,\\
&\qquad\qquad |\nabla\psi(x')-\nabla\psi(\hat{x}')|\leq \omega(|x'-\hat{x}'|),\ \forall\ x',\ \hat{x}'\in\mathbb R^{d-1}\},\\
\mathcal C_{M_0}^{1,\nu_0}&:=\{\psi\in C^{1,\nu_0}(\mathbb R^{d-1}):\ 0\leq\psi\leq M_0,\ \|\nabla\psi\|_{L^\infty(\mathbb R^{d-1})}+[\nabla\psi]_{C^{0,\nu_0}(\mathbb R^{d-1})}\leq M_0\},
\end{aligned}
\end{equation*}
Keep in mind that $D^\varepsilon(0,r)$ and thus $u^\varepsilon$ depends on $\psi$, although we usually do not write explicitly the dependence in $\psi$. The reason for this is that all our results hold uniformly for $\psi$ in the above class $\mathcal C_{M_0}^{1,\omega}$ or $\mathcal C_{M_0}^{1,\nu_0}$.

We assume that the coefficients matrix $A=A(y)=(A^{\alpha\beta}_{ij}(y))$, with $1\leq\alpha,\ \beta\leq d$ and $1\leq i,\ j\leq N$ is real, that
\begin{equation}\label{smoothA2}
A\ \mbox{belongs to the class}\ C^{0,\nu_0}\ \mbox{and}\ \|A\|_{L^\infty(\mathbb R^{d})}+[A]_{C^{0,\nu_0}(\mathbb R^{d})}\leq M_0,
\end{equation}
that $A$ is uniformly elliptic i.e.
\begin{equation}\label{elliptA}
\lambda |\xi|^2\leq A^{\alpha\beta}_{ij}(y)\xi^\alpha_i\xi^\beta_j\leq \frac{1}{\lambda}|\xi|^2,\quad\mbox{for all}\ \xi=(\xi^\alpha_i)\in\mathbb R^{dN},\ y\in\mathbb R^d
\end{equation}
and periodic i.e. 
\begin{equation}\label{perA}
A(y+z)=A(y),\quad\mbox{for all}\ y\in\mathbb R^d,\ z\in\mathbb Z^d.
\end{equation}
We say that $A$ belongs to the class $\mathcal A^{0,\nu_0}$ if $A$ satisfies \eqref{smoothA2}, \eqref{elliptA} and \eqref{perA}. Starred quantities always refer to the transposed operator $-\nabla\cdot A^*(x/\varepsilon)\nabla$, where for all $\alpha,\ \beta\in\{1,\ldots\ d\}$ and $i,\ j\in\{1,\ldots\ N\}$, $(A^*)^{\alpha\beta}_{ij}:=A^{\beta\alpha}_{ji}$.

\subsection{Outline of our main results}

Our ultimate goal is to prove a Lipschitz estimate close to the oscillating boundary for $u^\varepsilon$ weak solution of \eqref{sysoscueps}. Our main focus is on getting uniformity in $\varepsilon$, $\psi\in \mathcal C_{M_0}^{1,\nu_0}$ and $A\in\mathcal A^{0,\nu_0}$. Our main results can be stated as follows.

\begin{rst}[H\"older, Proposition \ref{propboundaryholder}]\label{rst1}
Let $\kappa,\ \kappa'>0$. There exists $C>0$, such that for all 
\begin{equation*}
0<\mu<\min\left(1-d/(d+\kappa),2-d/(d/2+\kappa')\right), 
\end{equation*}
for all $\psi\in\mathcal C_{M_0}^{1,\omega}$, for all $A\in\mathcal A^{0,\nu_0}$, for all $\varepsilon>0$, for all $f\in L^{d/2+\kappa'}(D^\varepsilon(0,1))$, for all $F\in L^{d+\kappa}(D^\varepsilon(0,1))$, for all $u^\varepsilon$ weak solution to \eqref{sysoscueps}
\begin{equation*}
[u^\varepsilon]_{C^{0,\mu}(\overline{D^\varepsilon(0,1/2)})}\leq C\left\{\|u^\varepsilon\|_{L^2(D^\varepsilon(0,1))}+\|f\|_{L^{d/2+\kappa'}(D^\varepsilon(0,1))}+\|F\|_{L^{d+\kappa}(D^\varepsilon(0,1))}\right\}.
\end{equation*}
Notice that $C$ depends on $d$, $N$, $M_0$, on the modulus of continuity $\omega$ of $\nabla\psi$, $\lambda$, $\kappa$ and $\kappa'$.
\end{rst}

\begin{rst}[Lipschitz, Theorem \ref{theoboundarylip}]\label{rst2}
Let $0<\mu<1$ and $\kappa>0$. There exists $C>0$, such that for all $\psi\in\mathcal C_{M_0}^{1,\nu_0}$, for all $A\in\mathcal A^{0,\nu_0}$, for all $\varepsilon>0$, for all $f\in L^{d+\kappa}(D^\varepsilon(0,1))$, for all $F\in C^{0,\mu}(D^\varepsilon(0,1))$, for all $u^\varepsilon$ weak solution to \eqref{sysoscueps} 
\begin{equation*}
\|\nabla u^\varepsilon\|_{L^\infty(D^\varepsilon(0,1/2))}\leq C\left\{\|u^\varepsilon\|_{L^\infty(D^\varepsilon(0,1))}+\|f\|_{L^{d+\kappa}(D^\varepsilon(0,1))}+\|F\|_{C^{0,\mu}(D^\varepsilon(0,1))}\right\}.
\end{equation*}
Notice that $C$ depends on $d$, $N$, $M_0$, $\lambda$, $\nu_0$, $\kappa$ and $\mu$.
\end{rst}

In section \ref{sec2scales}, we address a generalization of these estimates to the case when the coefficients are oscillating at a scale $\alpha$ and the boundary is oscillating at another scale $\beta$. There is no connection between $\alpha$ and $\beta$. A boundary H\"older estimate uniform in $\alpha$ and $\beta$ is stated in Proposition \ref{propboundaryholderab}. A boundary Lipschitz estimate uniform in $\alpha$ and $\beta$ is stated in Theorem \ref{theoboundarylipalphabeta}.

The next estimate compares the Green function $G^\varepsilon=G^\varepsilon(x,\tilde{x})$ associated to the operator with oscillating coefficients $-\nabla\cdot A(x/\varepsilon)\nabla$ and the oscillating domain $x_d>\varepsilon\psi(x'/\varepsilon)$, to the Green function $G^0=G^0(x,\tilde{x})$ associated to the homogenized operator with constant coefficients $-\nabla\cdot \overline{A}\nabla$ and the flat domain $x_d>0$.

\begin{rst}[Expansion, Theorem \ref{theogeps-g0}]\label{rst3}
There exists $C>0$, such that for all $\psi\in C^{1,\nu_0}_{M_0}$, for all $A\in \mathcal A^{0,\nu_0}$, for all $\varepsilon>0$, for all $x,\ \tilde{x}\in D^\varepsilon_+$
\begin{equation*}
|G^\varepsilon(x,\tilde{x})-G^0(x,\tilde{x})|\leq \frac{C\varepsilon}{|x-\tilde{x}|^{d-1}}.
\end{equation*}
Notice that $C$ depends on $d$, $N$, $M_0$, $\lambda$ and $\nu_0$.
\end{rst}

Along with these theorems, we prove among other things: estimates on Green and Poisson kernels (see Lemmas \ref{lemestgreend=3}, \ref{lemestgreend=2} and Propositions \ref{propestgeps} and \ref{propestpeps}), and a maximum principle for systems in a domain with oscillating boundary (Lemma \ref{lemLinftygLinfty}).

\subsection{Comments and strategy of proof}

We focus here on the Lipschitz estimate of Result \ref{rst2}. Let us make some comments:
\begin{description}
\item[boundary smoothness] Taking $\psi$ only Lipschitz would not be enough to get that $u^\varepsilon$ is Lipschitz close to the boundary. We need $C^{1,\nu_0}$ regularity on $\psi$.
\item[blow-up] For $\psi\in\mathcal C_{M_0}^{1,\nu_0}$, the norm
\begin{equation}\label{blowuppsi}
[\nabla(\varepsilon\psi(x'/\varepsilon))]_{C^{0,\nu_0}}=[\nabla\psi(x'/\varepsilon)]_{C^{0,\nu_0}}=O(\varepsilon^{-\nu_0})
\end{equation}
blows up in the limit $\varepsilon\rightarrow 0$. 
\item[lack of structure] Appart from taking $\psi$ in the class $\mathcal C_{M_0}^{1,\nu_0}$, the boundary has no structure. In particular, it is neither periodic, nor quasiperiodic.
\item[non periodic homogenization] Take $N=1$, $A=\Idd_d$ and $f=F=0$. In this case, $u^\varepsilon$ is a weak solution to
\begin{equation*}
\left\{
\begin{array}{rll}
-\Delta u^\varepsilon&=0,&x\in D^\varepsilon(0,1),\\
u^\varepsilon&=0,&x\in \Delta^\varepsilon(0,1).
\end{array}
\right.
\end{equation*}
If one maps the oscillating domain into the flat domain, using the (non unique) mapping
\begin{equation}\label{flatteningmap}
(x',x_d)\in\mathbb R^d_+\stackrel{\Psi^\varepsilon}{\longmapsto}(x',x_d+\varepsilon\psi(x'/\varepsilon)\vartheta(x_d/\varepsilon))\in D^\varepsilon_+,
\end{equation}
($\vartheta$ is a cut-off function in the vertical direction) we get that for all $(x',x_d)\in\mathbb R^d_+$
\begin{equation*}
\tilde{u}^\varepsilon(x',x_d):=u^\varepsilon(x',x_d+\varepsilon\psi(x'/\varepsilon)\vartheta(x_d/\varepsilon))
\end{equation*}
is a weak solution to
\begin{equation}\label{sysosctildeueps}
\left\{
\begin{array}{rll}
-\nabla\cdot \tilde{A}(x/\varepsilon)\nabla \tilde{u}^\varepsilon&=0,&x\in\tilde{D}^\varepsilon(0,1),\\
\tilde{u}^\varepsilon&=0,&x\in \tilde{\Delta}^\varepsilon(0,1),
\end{array}
\right.
\end{equation}
where $(x',x_d)\in\tilde{D}^\varepsilon(0,1)$ if $|x'|<1$ and $0<x_d<1+\varepsilon\psi(x'/\varepsilon)$, and for all $x\in\mathbb R^d$
\begin{equation*}
\tilde{A}(x/\varepsilon)=
\left(\begin{array}{c|c}
\Idd_{d-1}&-\vartheta(x_d/\varepsilon)\nabla_{x'}\psi(x'/\varepsilon)\\
\hline
0& 1-\vartheta'(x_d/\varepsilon)\psi(x'/\varepsilon)
\end{array}\right)^T
\left(\begin{array}{c|c}
\Idd_{d-1}&-\vartheta(x_d/\varepsilon)\nabla_{x'}\psi(x'/\varepsilon)\\
\hline
0 & 1-\vartheta'(x_d/\varepsilon)\psi(x'/\varepsilon)
\end{array}\right).
\end{equation*}
Notice that the oscillations are localized near the boundary. Yet the system \eqref{sysosctildeueps} has no structure and the homogenization may even not be possible.
\end{description}
To conclude, there is no way around dealing with the strong oscillations of the boundary.

\paragraph{Further remarks}

\begin{rem}
The H-convergence theory tells us that a subsequence of $\tilde{u}^\varepsilon$ solving \eqref{sysosctildeueps} converges weakly to a solution $\tilde{u}^0$ of an elliptic system with non oscillating coefficients $A^0=A^0(x)$. Notice that $A^0$ may depend on the subsequence, and is not unique. However, appart from $A^0\in L^\infty$, we do not have any information on the regularity of $A^0$.
\end{rem}

\begin{rem}[large scales]
In all what follows, the main issue when proving estimates uniform in $\varepsilon$ comes from the large scales $O(1)$ when $\varepsilon\rightarrow 0$. For the small scales $O(\varepsilon)$ we can always rely on classical estimates.
\end{rem}

\begin{rem}[boundedness of the boundary]
For our proof of the Lipschitz estimate, it is crucial that $\psi$ is bounded in $L^\infty(\mathbb R^{d-1})$. This ensures the convergence of the oscillating half-space to a flat one. If instead, we have a graph given by $\psi\in C^{1,\nu_0}(\mathbb R^{d-1})$, such that for instance
\begin{equation*}
\psi(0)=0\ \mbox{and}\ \|\nabla\psi\|_{L^\infty(\mathbb R^{d-1})}\leq M_0,
\end{equation*}
then for all $x'\in\mathbb R^{d-1}$
\begin{equation*}
|\varepsilon\psi(x'/\varepsilon)|=|\varepsilon\psi(x'/\varepsilon)-\varepsilon\psi(0)|\leq M_0|x'|.
\end{equation*}
This implies that the graph of $\psi^\varepsilon=\varepsilon\psi(\cdot/\varepsilon)$ is squeezed is the complement of a cone of $\mathbb R^d$. Furthermore, since
\begin{equation*}
\|\nabla\psi^\varepsilon\|_{L^\infty(\mathbb R^{d-1})}=O(1)\ \mbox{and}\ \psi^\varepsilon(0)=0,
\end{equation*}
for all $0<\mu<1$ by Ascoli and Arzela's theorem, we may extract a subsequence converging strongly in $C^{0,\mu}(\mathbb R^{d-1})$ to $\psi^0$. Of course, the limit $\psi^0$ of the subsequence is far from being unique. This smoothness is enough to get H\"older regularity, but is too weak to get Lipschitz regularity. 
\end{rem}

\begin{rem}[position of the boundary]
So as to avoid pointless technicalities, we work in the whole paper with a boundary lying above $x_d=0$, i.e. $0\leq \psi$. This condition can be always achieved by translating in the vertical direction. Doing so may change the coefficients of the operator, but the resulting coefficients $\tilde{A}$ still belong to $\mathcal A^{0,\nu_0}$ and the estimates of the paper apply.
\end{rem}

\begin{rem}[boundedness of the boundary and flattening]
Notice also that using the mapping \eqref{flatteningmap} in order to flatten the boundary leads to a loss of the crucial information $\psi\in L^{\infty}(\mathbb R^{d-1})$ in the system \eqref{sysosctildeueps}. 
\end{rem}

\paragraph*{Strategy of proof}

At first, the blow-up \eqref{blowuppsi} seems to be a huge obstruction to uniform Lipschitz estimates. Indeed, we have to rely on Schauder estimates. Applying these estimates directly leads to bounds depending on the $C^{1,\nu_0}$ semi-norm of $\varepsilon\psi(\cdot/\varepsilon)$, which blows-up. The key is that we only need the $C^{1,\mu}$ regularity estimate at small scale $O(\varepsilon)$. At large scale, we only see the Lipschitz regularity of $\psi$ and
\begin{equation*}
\|\nabla(\varepsilon\psi(x'/\varepsilon))\|_{L^\infty}=O(1).
\end{equation*}
Moreover, what exempts us from having to deal with the homogenization of \eqref{sysosctildeueps}, is that the amplitude of the boundary is of the order of $\varepsilon$, so that when $\varepsilon\rightarrow 0$, the boundary tends to a flat one. This argument is the key to the so-called ``improvement lemmas'', in which the compactness analysis is carried out.

\subsection{Overview of the paper}

Section \ref{secprelim} is concerned with a review of classical Schauder estimates, as well as a complete description of the interior estimates to be found in \cite{alin}. In section \ref{secbdrayholder}, we give a proof of Result \ref{rst1}, the H\"older estimate uniform in $\varepsilon$ in the oscillating domain. In section \ref{secbdarycor}, we carry out the analysis of a boundary corrector, which is crucial in order to get the boundary Lipschitz estimate. The boundary Lipschitz estimate of Result \ref{rst2} is proved first for equations with constant coefficients in section \ref{secliplap}, then for general elliptic systems with periodically oscillating coefficients in section \ref{secbdrliposc}. Pointwise estimates on Green and Poisson kernels, as well as the first-order expansion for Green's kernel in the oscillating domain (Result \ref{rst3}) are established in section \ref{secasygreen}. In section \ref{sec2scales}, we tackle the generalization of the uniform boundary estimates to systems where the coefficients and the boundary oscillate at two different scales. The last part \ref{secgen} is devoted to generalizations of our estimates to boundaries with a macroscopic behavior and to inclined half-spaces.

\subsection{Further notations}

For $\varepsilon>0$, $r>0$, for $x_0'\in\mathbb R^{d-1}$, let
\begin{equation*}
\begin{aligned}
D^\varepsilon_{\psi}(0,r)=D^\varepsilon(0,r)&:=\left\{(x',x_d),\ |x'|<r,\ \varepsilon\psi(x'/\varepsilon)<x_d<\varepsilon\psi(x'/\varepsilon)+r\right\},\\
\Delta^\varepsilon_{\psi}(0,r)=\Delta^\varepsilon(0,r)&:=\left\{(x',x_d),\ |x'|<r,\ x_d=\varepsilon\psi(x'/\varepsilon)\right\},\\
D^\varepsilon_{\psi,+}=D^\varepsilon_+&:=\left\{(x',x_d),\ \varepsilon\psi(x'/\varepsilon)<x_d\right\},\\
D^\varepsilon_{\psi,-}=D^\varepsilon_-&:=\left\{(x',x_d),\ \varepsilon\psi(x'/\varepsilon)>x_d\right\},\\
\Delta^\varepsilon_{\psi}=\Delta^\varepsilon&:=\left\{(x',x_d),\ x_d=\varepsilon\psi(x'/\varepsilon)\right\},\\
D^\varepsilon_{\psi}(x_0',r)=D^\varepsilon(x_0',r)&:=\left\{(x',x_d),\ |x'-x_0'|<r,\ \varepsilon\psi(x'/\varepsilon)<x_d<\varepsilon\psi(x'/\varepsilon)+r\right\},\\
\Delta^\varepsilon_{\psi}(x_0',r)=\Delta^\varepsilon(x_0',r)&:=\left\{(x',x_d),\ |x'-x_0'|<r,\ x_d=\varepsilon\psi(x'/\varepsilon)\right\},\\
D^0(0,r)&:=\left\{(x',x_d),\ |x'|<r,\ 0<x_d<r\right\},\\
\Delta^0(0,r)&:=\left\{(x',0),\ |x'|<r\right\},\\
D^{-1}(0,r)&:=\left\{(x',x_d),\ |x'|<r,\ -1<x_d<r\right\},
\end{aligned}
\end{equation*}
where $|x'|=\max_{i=1,\ldots\ d}|x_i|$. Notice that $x_0:=(x_0',\varepsilon\psi(x_0'/\varepsilon))\in \Delta(x_0',r)$. For an arbitrary point $x_0\in \mathbb R^d$, $B(x_0,r)$ is simply the ball of center $x_0$ and radius $r$. We usually drop the subscripts $\psi$, except at very few places. For $\varepsilon>0$, $x\in D^\varepsilon_+$, 
\begin{equation*}
\delta^\varepsilon_{\psi}(x)=\delta(x):=x_d-\varepsilon\psi(x'/\varepsilon).
\end{equation*}
We write $\delta(x)$ when no confusion is possible, even if this number depends on $\varepsilon$ and $\psi$.

Let also
\begin{equation*}
(\overline{u})_{D^\varepsilon(0,r)}:=\intbar_{D^\varepsilon(0,r)}u=\frac{1}{|D^\varepsilon(0,r)|}\int_{D^\varepsilon(0,r)}u.
\end{equation*}
We will write $(\overline{u})_{0,r}$ in short when it does not lead to any confusion. In general, for a point $x_0=(x_0',\varepsilon\psi(x_0'/\varepsilon))\in \Delta^\varepsilon_+$
\begin{equation*}
(\overline{u})_{D^\varepsilon(x_0',r)}=(\overline{u})_{x_0',r}:=\intbar_{D^\varepsilon(x_0',r)}u=\frac{1}{|D^\varepsilon(x_0',r)|}\int_{D^\varepsilon(x_0',r)}u.
\end{equation*}
The Lebesgue measure of a set is denoted by $|\cdot|$. In the sequel, $C>0$ is always a constant uniform in $\varepsilon$ which may change from line to line. For a positive integer $m$, let also $\Idd_m$ denote the identity matrix $M_m(\mathbb R)$. 

\section{Preliminaries}
\label{secprelim}

\subsection{Classical Schauder regularity}

Let $u=u(y)$ be a weak solution to
\begin{equation*}
\left\{
\begin{array}{rll}
-\nabla\cdot A(y)\nabla u&=f+\nabla\cdot F,&y\in D^1_\psi(0,1),\\
u&=0,&y\in \Delta^1_\psi(0,1).
\end{array}
\right.
\end{equation*}
For classical Schauder estimates, we refer to \cite[Chapter III]{giaquintaBook}, \cite[Chapter 5]{giaquinta12} and in a slightly different context (hydrodynamics) to \cite{GiaMod82}.

Following the method of Campanato, the Schauder estimates can be obtained from purely energetical considerations (Cacciopoli and Poincar\'e inequalities), without relying on potential theory. The central result in this theory is the characterization of H\"older continuity in terms of Campanato spaces \cite[Theorem 1.2]{giaquintaBook} and \cite[Theorem 5.5]{giaquinta12}: when $\Omega\subset\mathbb R^d$ is a Lipschitz domain and $u\in C^{0,\mu}(\overline{\Omega})$ for $0<\mu<1$, then 
\begin{equation*}
[u]_{C^{0,\mu}(\overline{\Omega})}^2\sim\sup_{x_0\in\overline{\Omega},\ \rho>0}\rho^{-2\mu}\intbar_{B(x_0,\rho)\cap\Omega}|u-(\overline{u})_{B(x_0,\rho)\cap\Omega}|^2dx,
\end{equation*}
where $\sim$ means that the semi-norms on the left and right hand sides are equivalent.

\begin{theo}[classical H\"older regularity]\label{theoclassholder}
Let $\kappa,\ \kappa'>0$. Assume that $\psi\in C^1(\mathbb R^{d-1})$,
\begin{equation*}
\|\nabla\psi\|_{L^\infty(\mathbb R^{d-1})}\leq M_0,\ |\nabla\psi(y')-\nabla\psi(\hat{y}')|\leq \omega(|y'-\hat{y}'|),\ \forall\ y',\ \hat{y}'\in\mathbb R^{d-1}
\end{equation*}
and $A\in\mathcal A^{0,\nu_0}$. Assume furthermore that $f\in L^{d/2+\kappa'}(D^1_\psi(0,1))$, $F\in L^{d+\kappa}(D^1_\psi(0,1))$ and $u\in L^2(D^1_\psi(0,1))$. Then $u\in C^{0,\sigma}(D^1_\psi(0,1/2))$ and
\begin{equation}\label{holderclassestu}
[u]_{C^{0,\sigma}(D^1_\psi(0,1/2))}\leq C\left\{\|u\|_{L^2(D^1_\psi(0,1))}+\|f\|_{L^{d/2+\kappa'}(D^1_\psi(0,1))}+\|F\|_{L^{d+\kappa}(D^1_\psi(0,1))}\right\},
\end{equation}
for $\sigma:=\min\left(1-d/(d+\kappa),2-d/(d/2+\kappa')\right)$.
\end{theo}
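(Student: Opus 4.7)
The plan is to apply Campanato's method, using the characterization of H\"older continuity in terms of $L^2$-averages recalled just above the statement. It is enough to show that for every $x_0\in\overline{D^1_\psi(0,1/2)}$ and every $\rho\in(0,1/4]$,
\begin{equation*}
\intbar_{B(x_0,\rho)\cap D^1_\psi(0,1)} |u-U_{x_0,\rho}|^2\,dx \leq C\,\rho^{2\sigma}\,\mathcal{N}^2,
\end{equation*}
where $\mathcal{N}$ stands for the right-hand side of \eqref{holderclassestu}, $U_{x_0,\rho}=0$ for balls touching $\Delta^1_\psi(0,1)$ (to exploit the Dirichlet condition), and $U_{x_0,\rho}=(\overline{u})_{B(x_0,\rho)\cap D^1_\psi(0,1)}$ for purely interior balls.

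First I would flatten the boundary locally by a $C^{1,\omega}$ diffeomorphism $\Phi$ that maps $\Delta^1_\psi(y_0',r)$ into $\{x_d=0\}$ near each boundary point $y_0'$. Pulling the system back, $\tilde{u}:=u\circ\Phi^{-1}$ satisfies a divergence form elliptic system on a flat half-ball, with transformed coefficients $\tilde{A}$ that remain uniformly elliptic and inherit only the modulus of continuity $\omega$ of $\nabla\psi$ (so in particular are merely \emph{continuous}, not H\"older), and with source terms of the same integrability as $(f,F)$. This reduces the problem to a flat-boundary setting in which all estimates below are of a purely $L^2$ nature.

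The core is the classical freeze-the-coefficients decomposition. For a reference point $y_0$ on the flat boundary and a radius $R$, write $\tilde{u}=v+w$ on $B(y_0,R)\cap\{x_d>0\}$, where $v$ solves the homogeneous constant-coefficient system $-\nabla\cdot \tilde{A}(y_0)\nabla v=0$ with $v=\tilde{u}$ on $\partial B(y_0,R)\cap\{x_d>0\}$ and $v=0$ on $B(y_0,R)\cap\{x_d=0\}$, and $w=\tilde{u}-v$ carries the source and the perturbation of the coefficients. Regularity up to the flat boundary for constant-coefficient systems yields the fundamental decay
\begin{equation*}
\int_{B(y_0,\rho)\cap\{x_d>0\}}|v|^2 \leq C\Bigl(\frac{\rho}{R}\Bigr)^{d+2}\int_{B(y_0,R)\cap\{x_d>0\}}|v|^2,\qquad 0<\rho\leq R,
\end{equation*}
(with averages subtracted for interior balls). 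For $w$, testing the equation against $w$ itself, using Poincar\'e's inequality (legitimate because $w=0$ on the flat face), the continuity of $\tilde{A}$, and H\"older's inequality with the sharp scaling exponents, leads to
\begin{equation*}
\int_{B(y_0,R)\cap\{x_d>0\}}|\nabla w|^2 \leq C\,\omega(R)^2\int_{B(y_0,R)\cap\{x_d>0\}}|\nabla \tilde{u}|^2 + C\,R^{d-2+2\sigma}\mathcal{N}^2,
\end{equation*}
with $\sigma=\min(\sigma_1,\sigma_2)$, $\sigma_1=2-d/(d/2+\kappa')$ attached to $f$ and $\sigma_2=1-d/(d+\kappa)$ attached to $F$.

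Combining the two bounds with a Caccioppoli estimate on $v$ to dominate $\int|\nabla v|^2$ by $R^{-2}\int|\tilde{u}|^2$, one obtains the standard two-scale inequality
\begin{equation*}
\phi(\rho)\leq C_1\Bigl[\Bigl(\frac{\rho}{R}\Bigr)^{d+2}+\omega(R)^2\Bigr]\phi(R)+C_2\,R^{d+2\sigma}\mathcal{N}^2,\qquad 0<\rho\leq R\leq R_0,
\end{equation*}
for $\phi(\rho):=\int_{B(y_0,\rho)\cap\{x_d>0\}}|\tilde{u}|^2$ (and its averaged analogue at interior points). The iteration lemma of Campanato--Giaquinta (see \cite[Chapter III]{giaquintaBook}) then yields $\phi(\rho)\leq C\rho^{d+2\sigma}\mathcal{N}^2$ for all $0<\rho\leq R_0$, provided $R_0$ is chosen so small that $\omega(R_0)$ is absorbed into a geometric factor. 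The main technical point is precisely this absorption, together with keeping it uniform in $\psi\in\mathcal{C}^{1,\omega}_{M_0}$; but since $\omega$ is a \emph{fixed} modulus of continuity, such an $R_0$ can be selected independently of $\psi$ in the class, and the constant in \eqref{holderclassestu} inherits this uniformity. The Campanato characterization then converts the decay of $\phi$ into the desired H\"older semi-norm bound, after covering $\overline{D^1_\psi(0,1/2)}$ by interior and boundary balls in the usual way.
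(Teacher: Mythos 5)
Your proposal is the standard Campanato freezing argument, and that is precisely what the paper leans on: the paper gives no proof here, but instead cites Theorems 5.17, 5.19--5.21 and Corollary 5.18 of Giaquinta--Martinazzi and Theorem 2.8 of Giaquinta--Modica, which prove exactly this statement by the route you describe (flatten the boundary, freeze coefficients, decay of the frozen-coefficient solution, energy estimate for the remainder, iteration, Campanato characterization). The exponents you extract, $\sigma_1=2-d/(d/2+\kappa')$ for $f$ and $\sigma_2=1-d/(d+\kappa)$ for $F$, agree with the theorem, and the observation that the constant depends on $\psi$ only through $M_0$ and $\omega$ (not on $\|\psi\|_{L^\infty}$) is correct and is the point the paper emphasizes in its remark following the statement.

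One small but real wrinkle in the combination step: you write ``a Caccioppoli estimate on $v$ to dominate $\int|\nabla v|^2$'', but the term that needs controlling is $\int_{B_R^+}|\nabla\tilde u|^2$ coming from the $w$-estimate, so Caccioppoli must be applied to $\tilde u$, not to $v$; this converts $\int_{B_R^+}|\nabla\tilde u|^2$ into $R^{-2}\int_{B_{2R}^+}|\tilde u|^2$ plus source contributions, so the two-scale inequality you obtain actually has $\phi(2R)$, not $\phi(R)$, on the right. This is harmless -- a rescaling of $R$ patches it -- but the cleaner route (and the one the cited references take) is to run the iteration at the level of $\Phi(\rho)=\int_{B_\rho^+}|\nabla\tilde u|^2$ with the decay exponent $(\rho/R)^d$ for $\nabla v$, obtain the Morrey bound $\Phi(\rho)\lesssim\rho^{d-2+2\sigma}\mathcal N^2$, and only then pass to the Campanato bound on $u$ by Poincar\'e. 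This avoids the ball-enlargement altogether. Apart from that technical slip, your sketch is faithful to the proof the paper refers to.
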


For elements of proof, we refer to Theorems 5.17, 5.21 and Corollary 5.18 in \cite{giaquinta12}, as well as Theorem 2.8 in \cite{GiaMod82}.

\begin{rem}[regularity on the coefficients]
Notice that this estimate is true for 
\begin{equation}\label{smoothA1}
\begin{aligned}
&A\ \mbox{in the class}\ C^0\ \mbox{such that}\ \|A\|_{L^\infty(\mathbb R^{d})}\leq M_0\\
&\mbox{and}\ |A(y)-A(\hat{y})|\leq \omega(|y-\hat{y}|),\ \forall\ y,\ \hat{y}\in\mathbb R^{d}.
\end{aligned}
\end{equation}
However, we do not use this optimal regularity in our work, since our focus is on Lipschitz estimates, for which $A$ has to be $C^{0,\nu_0}$.
\end{rem}

\begin{rem}[regularity on the source term]
Notice that $F\in L^{d+\kappa}(B(0,1))\subset L^{2,d-2+2\kappa/(d+\kappa)}$, which is the Morrey space (see \cite[Definition 5.1]{giaquinta12}). 
\end{rem}

\begin{rem}
The constant $C$ in \eqref{holderclassestu} depends on the $C^{0,\nu_0}$ norm of $A$, and on the $L^\infty$ norm and on the modulus of continuity of $\nabla\psi$ since the theorem is proved by flattening the boundary. In particular, $C$ does not depend on $\|\psi\|_{L^\infty}$. 
\end{rem}

\begin{theo}[classical Lipschitz and $C^{1,\sigma}$ regularity]\label{theoclasslip}
Let $\kappa>0$ and $0<\mu<1$. Assume that $\psi\in C^{1,\nu_0}(\mathbb R^{d-1})$,
\begin{equation*}
\|\nabla\psi\|_{L^\infty(\mathbb R^{d-1})}+[\nabla\psi]_{C^{0,\nu_0}(\mathbb R^{d-1})}\leq M_0,
\end{equation*}
and $A\in\mathcal A^{0,\nu_0}$. Assume furthermore that $f\in L^{d+\kappa}(D^1_\psi(0,1))$, $F\in C^{0,\mu}(D^1_\psi(0,1))$ and $u\in L^2(D^1_\psi(0,1))$. Then $u\in W^{1,\infty}(D^1_\psi(0,1/2))$ and
\begin{equation}\label{lipclassestu}
\|\nabla u\|_{L^\infty(D^1_\psi(0,1/2))}+[\nabla u]_{C^{1,\sigma}(D^1_\psi(0,1/2))}\leq C\{\|u\|_{L^2(D^1_\psi(0,1))}+\|f\|_{L^{d+\kappa}(D^1_\psi(0,1))}+\|F\|_{C^{0,\mu}(D^1_\psi(0,1))}\},
\end{equation}
where $\sigma:=\min\left(1-d/(d+\kappa),\mu,\nu_0\right)$.
\end{theo}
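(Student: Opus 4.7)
The plan is to reduce the problem to a constant-coefficient problem on a flat half-ball by a change of variables, and then run the classical Campanato iteration tracking the three regularity exponents that compete in $\sigma$.

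First, I would flatten the boundary via a $C^{1,\nu_0}$ diffeomorphism $\Phi$ of the form $\Phi(y',y_d) = (y', y_d - \psi(y'))$ (or a localized variant using a vertical cutoff), mapping $D^1_\psi(0,1)$ to a flat half-ball $B^+(0,1)$. Setting $\tilde u = u \circ \Phi^{-1}$ produces a system $-\nabla\cdot \tilde A(y)\nabla \tilde u = \tilde f + \nabla\cdot \tilde F$ on $B^+(0,1)$ with $\tilde u = 0$ on the flat part, where the transformed coefficients $\tilde A$ satisfy the same uniform ellipticity as $A$ (with a slightly smaller constant, depending on $M_0$) and remain in $C^{0,\nu_0}$ because the composition of $C^{0,\nu_0}$ functions with a $C^{1,\nu_0}$ diffeomorphism stays in $C^{0,\nu_0}$. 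Similarly $\tilde F \in C^{0,\mu\wedge\nu_0}$ and $\tilde f \in L^{d+\kappa}$ with norms controlled by the original ones. Once the boundary is flat and the zero Dirichlet condition is preserved, it suffices to prove the $C^{1,\sigma}$ estimate for $\tilde u$.

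Second, I would argue by the Campanato characterization of H\"older continuity: it is enough to show that for every $y_0 \in \overline{B^+(0,1/2)}$ and every small $\rho$,
\begin{equation*}
\inf_{P \text{ affine},\ P|_{\{y_d=0\}}=0}\ \rho^{-d}\int_{B^+(y_0,\rho)} |\tilde u - P|^2\,dy \le C \rho^{2(1+\sigma)},
\end{equation*}
with $C$ controlled by the right-hand side of \eqref{lipclassestu}. To obtain this, I freeze the coefficients at $y_0$, i.e.\ solve the homogeneous constant-coefficient problem $-\nabla\cdot \tilde A(y_0)\nabla v = 0$ in $B^+(y_0,\rho)$ with $v = \tilde u$ on $\partial B^+(y_0,\rho)$ and $v=0$ on the flat boundary. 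By reflection and standard regularity for constant-coefficient systems, $v$ and $\nabla v$ are smooth and one has the decay
\begin{equation*}
\int_{B^+(y_0,r)}|\nabla v - (\overline{\nabla v})_r|^2 \le C\Bigl(\frac{r}{\rho}\Bigr)^{d+2} \int_{B^+(y_0,\rho)}|\nabla v|^2,\qquad 0<r<\rho.
\end{equation*}
The remainder $w = \tilde u - v$ solves a problem whose source terms are $(\tilde A(y)-\tilde A(y_0))\nabla \tilde u$, $\tilde F$, and $\tilde f$, so Caccioppoli together with the $C^{0,\nu_0}$ continuity of $\tilde A$ and Morrey embedding for $L^{d+\kappa}$ and $C^{0,\mu}$ data yields an error term of order $\rho^{2(1+\sigma)}$ with $\sigma = \min(1-d/(d+\kappa),\mu,\nu_0)$. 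Combining the two bounds and iterating on dyadic scales (a geometric series argument \`a la Campanato) gives the desired Morrey estimate on $\nabla \tilde u$.

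Third, transporting the $C^{1,\sigma}$ bound for $\tilde u$ back through $\Phi$ yields the stated estimate for $u$, since $\Phi^{-1} \in C^{1,\nu_0}$ with $\nu_0 \ge \sigma$. The main obstacle in this scheme is bookkeeping: one must verify at each step of the iteration that the three contributions — the $C^{0,\nu_0}$ oscillation of $\tilde A$, the H\"older norm of $\tilde F$, and the $L^{d+\kappa}$ norm of $\tilde f$ — each produce an admissible power of $\rho$, and that the worst of these three determines $\sigma$. Aside from this, every step (flattening, freezing, Caccioppoli, Campanato iteration) is standard, which is why the authors content themselves with references to \cite{giaquinta12,GiaMod82} rather than giving a detailed proof.
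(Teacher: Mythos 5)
Your proposal is correct and follows precisely the standard Campanato scheme (flatten the boundary via $\Phi(y',y_d)=(y',y_d-\psi(y'))$, freeze coefficients, compare to the constant-coefficient boundary problem, iterate on dyadic scales) that underlies Theorems 5.19--5.21 of \cite{giaquinta12} and Theorem 2.8 of \cite{GiaMod82}, which is exactly what the paper invokes — the paper gives no independent proof of this classical result, only these references and a remark confirming that the proof proceeds by flattening. One small caveat: the decay estimate for $\nabla v$ at boundary points is not obtained by naive even/odd reflection for a general constant-coefficient \emph{system}; it follows instead from boundary regularity for constant-coefficient systems with zero Dirichlet data (ADN-type estimates), but the conclusion you use is correct, so this does not affect the argument.
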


For elements of proof, we refer to Theorems 5.19, 5.20 and 5.21 in \cite{giaquinta12}, as well as Theorem 2.8 in \cite{GiaMod82}.

\begin{rem}
The constant $C$ in \eqref{lipclassestu} depends on $\|A\|_{C^{0,\nu_0}}$ and on $\|\nabla\psi\|_{C^{0,\nu_0}}$ since the theorem is proved by flattening the boundary. Again, the constant does not depend on the $L^\infty$ norm of $\psi$.
\end{rem}

\subsection{Homogenization and weak convergence}

We recall the standard weak convergence result in periodic homogenization for a fixed domain $\Omega$. As usual, the constant homogenized matrix $\overline{A}=\overline{A}^{\alpha\beta}\in M_N(\mathbb R)$ is given by
\begin{equation}\label{defA0}
\overline{A}^{\alpha\beta}:=\int_{\mathbb T^d}A^{\alpha\beta}(y)dy+\int_{\mathbb T^d}A^{\alpha\gamma}(y)\partial_{y_\gamma}\chi^\beta(y)dy,
\end{equation}
where the family $\chi=\chi^\gamma(y)\in M_N(\mathbb R)$, $y\in\mathbb T^d$, solves the cell problems
\begin{equation}\label{eqdefchi}
-\nabla_y \cdot A(y)\nabla_y \chi^\gamma=\partial_{y_\alpha}A^{\alpha\gamma},\ y\in \mathbb T^d\qquad \mbox{and}\qquad \int_{\mathbb T^d}\chi^\gamma(y)dy=0.
\end{equation}

\begin{theo}[weak convergence]\label{theoweakcvhomo}
Let $\Omega$ be a bounded Lipschitz domain in $\mathbb R^d$ and let $u_k\in W^{1,2}(\Omega)$ be a sequence of weak solutions to
\begin{equation*}
-\nabla\cdot A_k(x/\varepsilon_k)\nabla u_k=f_k\in (W^{1,2}(\Omega))',
\end{equation*}
where $\varepsilon_k\rightarrow 0$ and the matrices $A_k=A_k(y)\in L^\infty$ satisfy \eqref{elliptA} and \eqref{perA}. Assume that there exist $f\in (W^{1,2}(\Omega))'$ and $u_k\in W^{1,2}(\Omega)$, such that $f_k\longrightarrow f$ strongly in $(W^{1,2}(\Omega))'$, $u_k\rightarrow u_0$ strongly in $L^2(\Omega)$ and $\nabla u_k\rightharpoonup\nabla u^0$ weakly in $L^2(\Omega)$. Also assume that the constant matrix $\overline{A_k}$ defined by \eqref{defA0} with $A$ replaced by $A_k$ converges to a constant matrix $A^0$. Then
\begin{equation*}
A_k(x/\varepsilon_k)\nabla u_k\rightharpoonup A^0\nabla u^0\quad\mbox{weakly in}\ L^2(\Omega) 
\end{equation*}
and 
\begin{equation*}
\nabla\cdot A^0\nabla u^0=f\in (W^{1,2}(\Omega))'.
\end{equation*}
\end{theo}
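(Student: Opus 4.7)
The strategy is Tartar's method of oscillating test functions (the ``energy method''), adapted to the case of a varying sequence $A_k$. First I extract a subsequence (not relabeled) such that $A_k(x/\varepsilon_k)\nabla u_k \rightharpoonup \sigma$ weakly in $L^2(\Omega)$; this is possible because $\|A_k\|_{L^\infty}\leq \lambda^{-1}$ and $\nabla u_k$ is bounded in $L^2$. Passing to the limit in the weak formulation $\int_\Omega A_k(x/\varepsilon_k)\nabla u_k\cdot \nabla\varphi\,dx=\langle f_k,\varphi\rangle$ against any $\varphi\in C_c^\infty(\Omega)$ immediately yields $-\nabla\cdot\sigma = f$ in $\Omega$. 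The whole problem is thus to identify $\sigma = A^0\nabla u^0$, after which uniqueness forces the full sequence to converge.

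Next I introduce the adjoint correctors: for each $\beta$, let $\chi_k^{*,\beta}$ solve the cell problem \eqref{eqdefchi} for $A_k^*$ on $\mathbb T^d$ with zero mean, and set
\begin{equation*}
w_k^\beta(x):=x_\beta-\varepsilon_k\,\chi_k^{*,\beta}(x/\varepsilon_k).
\end{equation*}
The standard Lax--Milgram / energy estimate for the cell problem, together with the uniform bound on $A_k$, gives $\|\chi_k^{*,\beta}\|_{W^{1,2}(\mathbb T^d)}\leq C$ uniformly in $k$. Consequently $w_k^\beta\to x_\beta$ strongly in $L^2_{\mathrm{loc}}(\mathbb R^d)$, and $\nabla w_k^\beta$ is bounded in $L^2_{\mathrm{loc}}$. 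By construction $A_k^*(y)\nabla_y w_k^\beta(y)$ has zero divergence in $y$, hence $A_k^*(x/\varepsilon_k)\nabla w_k^\beta$ has zero divergence in $x$. Moreover the periodic mean of $A_k^*(y)(e_\beta-\nabla_y\chi_k^{*,\beta}(y))$ is exactly $\overline{A_k^*}e_\beta=(\overline{A_k})^T e_\beta$, which by assumption converges to $(A^0)^T e_\beta$. Up to extracting a further subsequence, one obtains the weak convergence
\begin{equation*}
A_k^*(x/\varepsilon_k)\nabla w_k^\beta\rightharpoonup (A^0)^T e_\beta\quad\text{weakly in }L^2_{\mathrm{loc}}(\mathbb R^d).
\end{equation*}

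Now fix $\varphi\in C_c^\infty(\Omega)$ and compute
$I_k:=\int_\Omega A_k(x/\varepsilon_k)\nabla u_k\cdot\nabla w_k^\beta\;\varphi\,dx$
in two different ways. Writing $\nabla w_k^\beta\,\varphi=\nabla(w_k^\beta\varphi)-w_k^\beta\nabla\varphi$ and using the equation for $u_k$ with the admissible test function $w_k^\beta\varphi\in W^{1,2}_0(\Omega)$, we get
\begin{equation*}
I_k=\langle f_k,w_k^\beta\varphi\rangle-\int_\Omega A_k(x/\varepsilon_k)\nabla u_k\cdot\nabla\varphi\;w_k^\beta\,dx.
\end{equation*}
Strong convergence $w_k^\beta\varphi\to x_\beta\varphi$ in $W^{1,2}_0(\Omega)$ (combining the strong $L^2$ and weak $L^2$ convergence of $\nabla w_k^\beta$ against the bounded factor $\varphi$) and $f_k\to f$ strongly in $(W^{1,2}_0)'$ handle the first term; the second is a weak$\times$strong $L^2$ pairing. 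Dually, using that $A_k^*(x/\varepsilon_k)\nabla w_k^\beta$ is divergence-free on $\mathbb R^d$ and that $u_k\varphi$ has compact support,
\begin{equation*}
I_k=\int_\Omega \nabla(u_k\varphi)\cdot A_k^*(x/\varepsilon_k)\nabla w_k^\beta\,dx-\int_\Omega u_k\,\nabla\varphi\cdot A_k^*(x/\varepsilon_k)\nabla w_k^\beta\,dx,
\end{equation*}
where the first integral vanishes. Since $u_k\to u^0$ strongly in $L^2$, the remaining term converges to $-\int_\Omega u^0\,\nabla\varphi\cdot (A^0)^T e_\beta\,dx$. Equating the two limits, using $-\nabla\cdot\sigma=f$ to rewrite $\langle f,x_\beta\varphi\rangle=\int\sigma^\beta\varphi+\int\sigma\cdot\nabla\varphi\,x_\beta$, and integrating by parts on the right-hand side yields $\int\sigma^\beta\varphi=\int (A^0)^{\beta\alpha}\partial_\alpha u^0\,\varphi$ for every $\varphi$, i.e.\ $\sigma=A^0\nabla u^0$.

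The main obstacle is the weak-$L^2$ convergence of $A_k^*(x/\varepsilon_k)\nabla w_k^\beta$ to the constant $(A^0)^T e_\beta$: unlike the textbook case where $A_k$ is a fixed periodic matrix, here both the cell-problem data and their solutions vary with $k$. The uniform $W^{1,2}(\mathbb T^d)$ bound on the correctors allows one to pass to a weakly convergent subsequence in $L^2(\mathbb T^d)$, and the hypothesis $\overline{A_k}\to A^0$ pins down the mean of the limit, so that a standard ``weak convergence of oscillating sequences'' argument (or equivalently two-scale convergence) identifies the $\varepsilon_k$-scaled weak limit as the announced constant. The rest is bookkeeping, and the final uniqueness statement removes the need to pass to subsequences.
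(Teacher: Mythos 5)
Your proof uses exactly the route the paper intends: the classical Tartar oscillating--test--function (div--curl) argument, which the paper does not write out but cites as ``the classical oscillating test function argument'' with a pointer to \cite[Lemma 2.1]{KLSNeumann}. The two nontrivial points that distinguish this statement from the textbook case --- that $A_k$, hence both the correctors and their fluxes, vary with $k$, and that the hypothesis $\overline{A_k}\to A^0$ is what pins down the effective matrix --- are correctly identified and handled via the uniform $W^{1,2}(\mathbb T^d)$ bound on $\chi_k^{*,\beta}$.

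One slip to correct: with the paper's conventions \eqref{defA0}--\eqref{eqdefchi}, the oscillating test function must be
\begin{equation*}
w_k^\beta(x)=x_\beta+\varepsilon_k\,\chi_k^{*,\beta}(x/\varepsilon_k),
\end{equation*}
i.e.\ with a \emph{plus} sign, so that $\nabla_y w_k^\beta=e_\beta+\nabla_y\chi_k^{*,\beta}$. Indeed, since $-\nabla_y\cdot A_k^*\nabla_y\chi_k^{*,\beta}=\partial_{y_\alpha}(A_k^*)^{\alpha\beta}$, one has $\nabla_y\cdot\bigl(A_k^*(e_\beta+\nabla_y\chi_k^{*,\beta})\bigr)=0$ and $\int_{\mathbb T^d}A_k^*(e_\beta+\nabla_y\chi_k^{*,\beta})=\overline{A_k^*}\,e_\beta=(\overline{A_k})^T e_\beta$, which are the two facts your computation of $I_k$ uses. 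With the minus sign you wrote, the flux $A_k^*(x/\varepsilon_k)\nabla w_k^\beta$ is neither divergence-free nor does its mean equal $(\overline{A_k})^T e_\beta$, so the dual computation of $I_k$ breaks down. Once the sign is flipped the rest of the argument (pairing weak and strong $L^2$ convergences, integrating by parts with $\varphi\in C_c^\infty(\Omega)$, using $-\nabla\cdot\sigma=f$, and uniqueness of the limit to dispense with subsequences) is correct, and works verbatim for $N\geq 1$ with matrix-valued correctors. One minor wording issue: $w_k^\beta\varphi\to x_\beta\varphi$ in $W^{1,2}_0(\Omega)$ is only a \emph{weak} convergence (the gradients do not converge strongly), but as your parenthetical remark indicates, this is all that is needed to pass to the limit in $\langle f_k,w_k^\beta\varphi\rangle$ since $f_k\to f$ strongly in the dual.
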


For a proof, which relies on the classical oscillating test function argument, we refer for instance to \cite[Lemma 2.1]{KLSNeumann}. This is an interior convergence result, since no boundary condition is prescribed on $u_k$.

\subsection{Interior estimates in homogenization}
\label{subsecintestalin}

We recall here two interior estimates proved by Avellaneda and Lin in \cite{alin}.

\begin{theo}[H\"older estimate, {\cite[Lemma 9]{alin}}]
For all $\kappa>0$, there exists $C>0$ such that for all $A\in\mathcal A^{0,\nu_0}$, for all $\varepsilon>0$, for all $F\in L^{d+\kappa}(B(0,1))$, for all $u^\varepsilon\in L^2(B(0,1))$ weak solution to
\begin{equation*}
-\nabla\cdot A(x/\varepsilon)\nabla u^\varepsilon=\nabla\cdot F\quad\mbox{in}\ B(0,1),
\end{equation*}
the following estimate holds
\begin{equation*}
[u^\varepsilon]_{C^{0,\mu}(B(0,1/2))}\leq C\left\{\|u^\varepsilon\|_{L^2(B(0,1))}+\|F\|_{L^{d+\kappa}(B(0,1))}\right\},
\end{equation*}
where $\mu:=1-d/(d+\kappa)$. Notice that $C$ depends on $d$, $N$, on $\|A\|_{C^{0,\nu_0}}$ i.e. on $M_0$, on $\lambda$ and $\kappa$.
\end{theo}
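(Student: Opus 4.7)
This is the classical Avellaneda--Lin interior H\"older estimate, whose natural proof proceeds by the compactness (``improvement'') method combined with Campanato's characterization of H\"older spaces. The plan is to establish, for some $\theta\in(0,1/4)$ and some $C>0$, the Morrey/Campanato decay
\begin{equation*}
\inf_{c\in\mathbb R^N}\intbar_{B(x_0,r)}|u^\varepsilon-c|^2\,dx\leq C\,r^{2\mu}\bigl(\|u^\varepsilon\|_{L^2(B(0,1))}^2+\|F\|_{L^{d+\kappa}(B(0,1))}^2\bigr)
\end{equation*}
for every $B(x_0,r)\subset B(0,1/2)$, and then conclude by the Campanato equivalence recalled just before Theorem \ref{theoclassholder}.

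\textbf{Step 1: One-step improvement via compactness.} I would prove that there exist $\theta\in(0,1/4)$ and $\varepsilon_0,\delta_0>0$ such that if $\varepsilon\leq\varepsilon_0$ and $u^\varepsilon$ is a weak solution to the equation on $B(0,1)$ with $\|u^\varepsilon\|_{L^2(B(0,1))}\leq 1$ and $\|F\|_{L^{d+\kappa}(B(0,1))}\leq \delta_0$, then
\begin{equation*}
\inf_{c\in\mathbb R^N}\intbar_{B(0,\theta)}|u^\varepsilon-c|^2\,dx\leq \theta^{2\mu}.
\end{equation*}
The argument is by contradiction. Assume a sequence $\varepsilon_k\to 0$ and data $u^{\varepsilon_k}$, $F_k$ violating the conclusion. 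By the Caccioppoli inequality, $\nabla u^{\varepsilon_k}$ is bounded in $L^2(B(0,3/4))$, so up to extraction $u^{\varepsilon_k}\to u^0$ strongly in $L^2$ and weakly in $H^1$. Since $F_k\to 0$ in $L^{d+\kappa}\hookrightarrow L^2$, the right-hand side $\nabla\cdot F_k$ tends to $0$ strongly in $W^{-1,2}$. Theorem \ref{theoweakcvhomo} then yields $-\nabla\cdot\overline A\nabla u^0=0$ on a neighborhood of $0$, where $\overline A$ is the constant homogenized matrix given by \eqref{defA0}. Classical interior $C^{1,\alpha}$ regularity for constant-coefficient elliptic systems bounds $\|\nabla u^0\|_{L^\infty(B(0,1/2))}$ by $\|u^0\|_{L^2(B(0,3/4))}\leq 1$, whence
\begin{equation*}
\inf_{c}\intbar_{B(0,\theta)}|u^0-c|^2\,dx\leq C\theta^2\|\nabla u^0\|_{L^\infty}^2\leq \tfrac12\theta^{2\mu}
\end{equation*}
as soon as $\theta$ is chosen small, using $\mu<1$. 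Passing to the limit in the $L^2$ norm on $B(0,\theta)$ contradicts the assumption for $k$ large. Fixing such $\theta$, a second application absorbing the size of $F$ gives the linear estimate
\begin{equation*}
\inf_{c}\intbar_{B(0,\theta)}|u^\varepsilon-c|^2\leq \theta^{2\mu}\bigl(\|u^\varepsilon\|_{L^2(B(0,1))}^2+C_0\|F\|_{L^{d+\kappa}(B(0,1))}^2\bigr).
\end{equation*}

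\textbf{Step 2: Iteration by rescaling.} Setting $v(x):=u^\varepsilon(\theta x)$, one checks that $v$ solves the same type of equation with oscillation parameter $\varepsilon/\theta$ and with $F$ replaced by $\theta\,F(\theta\cdot)$, whose $L^{d+\kappa}$ norm rescales by $\theta^{1-d/(d+\kappa)}=\theta^{\mu}$. Provided $\varepsilon/\theta^k\leq\varepsilon_0$, the improvement lemma can be iterated $k$ times, yielding after a standard geometric summation
\begin{equation*}
\inf_{c}\intbar_{B(0,\theta^k)}|u^\varepsilon-c|^2\leq C\theta^{2k\mu}\bigl(\|u^\varepsilon\|_{L^2(B(0,1))}^2+\|F\|_{L^{d+\kappa}(B(0,1))}^2\bigr).
\end{equation*}
Centering the argument at any $x_0\in B(0,1/2)$ gives the analogous bound for all balls of radius $\theta^k$ with $\theta^k\geq\varepsilon/\varepsilon_0$.

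\textbf{Step 3: Small scales by classical Schauder.} For scales $r\leq\varepsilon/\varepsilon_0$, I would rescale $w(y):=u^\varepsilon(x_0+\varepsilon y)$, which solves $-\nabla\cdot A(y)\nabla w=\nabla\cdot(\varepsilon F(x_0+\varepsilon\cdot))$ with $A\in C^{0,\nu_0}$ independent of $\varepsilon$. Classical H\"older regularity (the interior version of Theorem \ref{theoclassholder}) applied to $w$, together with the scaling of the $L^{d+\kappa}$-norm of $F$, transfers to $u^\varepsilon$ the same Morrey decay on all scales $r\leq\varepsilon/\varepsilon_0$. Combining with Step 2, the Morrey decay holds on every scale $r\leq 1/2$, and Campanato's theorem concludes the proof.

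\textbf{Main obstacle.} The delicate step is Step 1, more precisely the passage to the limit via Theorem \ref{theoweakcvhomo}: one must ensure that the (sub)sequence of homogenized matrices $\overline{A}$ does not degenerate (here this is immediate since $A\in\mathcal A^{0,\nu_0}$ is fixed once the normalization is fixed) and that the contribution of $F$ is genuinely absorbed. The second, more technical issue is the clean tracking of constants through Step 2 so that the iteration closes linearly in both $\|u^\varepsilon\|_{L^2}$ and $\|F\|_{L^{d+\kappa}}$, which is precisely why the exponent $\mu=1-d/(d+\kappa)$ matches the scaling of $F$.
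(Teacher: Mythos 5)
Your proposal reproduces the standard three-step compactness argument of Avellaneda and Lin, which is exactly the proof behind the cited \cite[Lemma 9]{alin}; the paper itself does not reprove this interior estimate but does carry out the same scheme (improvement lemma by contradiction and homogenization via Theorem \ref{theoweakcvhomo}, iteration by rescaling, blow-up to classical Schauder at the $O(\varepsilon)$ scale, Campanato equivalence) for the boundary analogue in Proposition \ref{propboundaryholder} and Lemmas \ref{lem1holder}--\ref{lem2holder}. Your outline is correct; the one point to be explicit about in Step~1 is that the contradiction hypothesis must also drive $\|F_k\|_{L^{d+\kappa}}\to 0$ (take the threshold along the sequence, as the paper does with $\|F^{\varepsilon_k}\|\leq\varepsilon_k$), so that $\nabla\cdot F_k\to 0$ and the limit equation is genuinely homogeneous.
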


\begin{rem}[rescaled estimate]
Assume 
\begin{equation*}
-\nabla\cdot A(x/\varepsilon)\nabla u^\varepsilon=\nabla\cdot F\quad\mbox{in}\ B(0,r),
\end{equation*}
for $r>0$. Then,
\begin{equation}\label{intestrescaled}
[u^\varepsilon]_{C^{0,\mu}(B(0,r/2))}\leq C\bigl\{r^{-d/2-\mu}\|u^\varepsilon\|_{L^2(B(0,r))}+\underbrace{r^{1-\mu-d/(d+\kappa)}}_{=1}\|F\|_{L^{d+\kappa}(B(0,r))}\bigr\}.
\end{equation}
\end{rem}

\begin{rem}[more general source terms]
A slight generalization of this H\"older estimate is the following: for $\kappa,\ \kappa'>0$, for $u^\varepsilon$ weak solution to
\begin{equation*}
-\nabla\cdot A(x/\varepsilon)\nabla u^\varepsilon=f+\nabla\cdot F\quad\mbox{in}\ B(0,1),
\end{equation*}
we have
\begin{equation*}
[u^\varepsilon]_{C^{0,\mu}(B(0,1/2))}\leq C\left\{\|u^\varepsilon\|_{L^2(B(0,1))}+\|f\|_{L^{d/2+\kappa'}(B(0,1))}+\|F\|_{L^{d+\kappa}(B(0,1))}\right\},
\end{equation*}
with $\mu:=\min\left(1-d/(d+\kappa),2-d/(d/2+\kappa')\right)$. The rescaled estimate reads
\begin{multline}\label{intestrescaledbis}
[u^\varepsilon]_{C^{0,\mu}(B(0,r/2))}\leq C\bigl\{r^{-d/2-\mu}\|u^\varepsilon\|_{L^2(B(0,r))}+r^{2-\mu-2d/(d+2\kappa')}\|f\|_{L^{d/2+\kappa'}(B(0,r))}\bigr.\\
\bigl.
+r^{1-\mu-d/(d+\kappa)}\|F\|_{L^{d+\kappa}(B(0,r))}\bigr\}.
\end{multline}
\end{rem}

\begin{rem}[regularity on the coefficients]
Notice again that this estimate is true for $A\in C^0$ satisfying \eqref{smoothA1}. However, we do not use this fact here, since our focus is on Lipschitz estimates, for which $A$ has to be $C^{0,\nu_0}$.
\end{rem}

\begin{rem}
In view of Theorem 5.17 and Corollary 5.18 in \cite{giaquinta12}, the classical Schauder regularity applies and gives $u^\varepsilon\in C^{0,\mu}(B(0,1/2))$, with $\mu:=1-d/(d+\kappa)=\kappa/(d+\kappa)$. Of course, this classical estimate is not uniform in $\varepsilon$, and the contribution of \cite{alin} is to show that there is a way, via homogenization, to get uniform H\"older estimates in $\varepsilon$ when the coefficients are periodically oscillating.
\end{rem}

\begin{theo}[Lipschitz estimate, {\cite[Lemma 16]{alin}}]
For all $\kappa>0$, there exists $C>0$ such that for all $A\in\mathcal A^{0,\nu_0}$, for all $\varepsilon>0$, for all $f\in L^{d+\kappa}(B(0,1))$, for all $u^\varepsilon\in L^\infty(B(0,1))$ weak solution to
\begin{equation*}
-\nabla\cdot A(x/\varepsilon)\nabla u^\varepsilon=f\quad\mbox{in}\ B(0,1),
\end{equation*}
the following estimate holds
\begin{equation*}
\|\nabla u^\varepsilon\|_{L^\infty(B(0,1/2))}\leq C\left\{\|u^\varepsilon\|_{L^\infty(B(0,1))}+\|f\|_{L^{d+\kappa}(B(0,1))}\right\}.
\end{equation*}
Notice that $C$ depends on $d$, $N$, $M_0$, $\lambda$ and $\kappa$.
\end{theo}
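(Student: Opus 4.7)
The strategy is the compactness method of Avellaneda--Lin, splitting scales at $r \sim \varepsilon$. For $r < \varepsilon$, the rescaling $v(y) = u^\varepsilon(\varepsilon y)$ converts the equation into $-\nabla \cdot A(y) \nabla v = \varepsilon^2 f(\varepsilon y)$, whose coefficients are fixed and of class $C^{0,\nu_0}$, so the classical Lipschitz estimate (Theorem \ref{theoclasslip}) applies directly. For $r \geq \varepsilon$, uniform estimates come from an iterated improvement lemma obtained by contrast with the homogenized limit.

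The heart of the argument is the following \emph{one-step improvement}: fix any $\mu$ with $0 < \mu < \kappa/(d+\kappa)$; there exist $\theta \in (0, 1/4)$ and $\varepsilon_0 > 0$ (depending on $d$, $N$, $M_0$, $\lambda$, $\kappa$, $\mu$) such that whenever $\varepsilon < \varepsilon_0$, $A \in \mathcal A^{0,\nu_0}$, and $u^\varepsilon$ solves the equation in $B(0,1)$ with $\|u^\varepsilon\|_{L^\infty(B(0,1))} \leq 1$ and $\|f\|_{L^{d+\kappa}(B(0,1))} \leq \varepsilon_0$, there is a vector $p \in \mathbb R^{dN}$ with $|p| \leq C_0$ for which the corrected affine approximant $P(x) := u^\varepsilon(0) + p \cdot x + \varepsilon \chi^\gamma(x/\varepsilon) p_\gamma$ (an exact solution of the homogeneous system, built from the correctors \eqref{eqdefchi}) satisfies
\begin{equation*}
\|u^\varepsilon - P\|_{L^\infty(B(0,\theta))} \leq \theta^{1+\mu}.
\end{equation*}
I would prove this by contradiction and compactness. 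Failure yields sequences $\varepsilon_k \to 0$, $A_k \in \mathcal A^{0,\nu_0}$, $f_k$ with $\|f_k\|_{L^{d+\kappa}} \to 0$, and $u^{\varepsilon_k}$ bounded by $1$, for which no such $P$ exists. Up to extraction the homogenized matrices converge, $\overline{A_k} \to A^0$ (the set of uniformly elliptic $L^\infty$-bounded constant matrices is compact); by the Avellaneda--Lin interior H\"older estimate, $(u^{\varepsilon_k})$ is pre-compact in $C^0_{loc}(B(0,1))$; and by Theorem \ref{theoweakcvhomo} the limit $u^0$ solves $-\nabla \cdot A^0 \nabla u^0 = 0$ in $B(0, 1/2)$ (the source vanishes in $W^{-1,2}$ by H\"older's inequality). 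Classical Schauder for constant-coefficient systems yields $\|u^0 - u^0(0) - \nabla u^0(0)\cdot x\|_{L^\infty(B(0,\theta))} \leq C \theta^2$ with $|\nabla u^0(0)| \leq C$; since $\varepsilon_k \chi(\cdot/\varepsilon_k) \to 0$ uniformly (correctors are bounded by ellipticity), the approximant $P_k$ constructed with $p = \nabla u^0(0)$ beats $\theta^{1+\mu}$ for $k$ large once $\theta$ is chosen with $C\theta^2 \leq \tfrac12 \theta^{1+\mu}$, a contradiction.

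Next I iterate. Because $P_k$ is an exact solution of the homogeneous oscillating system, the residual $w := u^\varepsilon - P_k$ still satisfies $-\nabla \cdot A(\cdot/\varepsilon) \nabla w = f$. Rescaling $v(y) := \rho^{-1} \theta^{-k(1+\mu)} w(\theta^k y)$, with $\rho := \|u^\varepsilon\|_{L^\infty(B(0,1))} + \varepsilon_0^{-1}\|f\|_{L^{d+\kappa}(B(0,1))}$, brings the problem to the same form with $\varepsilon$ replaced by $\varepsilon/\theta^k$ and a source of $L^{d+\kappa}$-norm at most $\varepsilon_0$ (this uses the scale-invariant exponent $1 - \mu - d/(d+\kappa) \geq 0$ for the chosen $\mu$). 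Applying the improvement lemma as long as $\varepsilon/\theta^k < \varepsilon_0$ produces a sequence of slopes with $|p_{k+1} - p_k| \leq C \rho \theta^{k\mu}$, hence a Cauchy sequence with $|p_\infty| \leq C\rho$ by geometric summation, and the bound $\|u^\varepsilon - P_k\|_{L^\infty(B(0, \theta^k))} \leq C\rho \theta^{k(1+\mu)}$ down to scale $\theta^{k_\ast} \sim \varepsilon$. Combining this with the classical Schauder bound at the sub-microscopic scales $r < \varepsilon$ (applied after the rescaling above) gives $|\nabla u^\varepsilon(x_0)| \leq C\rho$ for any $x_0 \in B(0, 1/2)$, which is the claim.

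The hard part will be the one-step improvement. The compactness argument must ensure the limit elliptic system has \emph{constant} coefficients with quantitative ellipticity $\lambda$, which requires compactness of $\overline{A_k}$ within uniformly elliptic matrices together with uniformity of the Avellaneda--Lin H\"older bound over $\mathcal A^{0,\nu_0}$. The treatment of the source is also delicate, since $L^{d+\kappa}$ is not compactly embedded in $L^2$: we are forced to absorb $f$ through the smallness normalization $\|f\|_{L^{d+\kappa}} \leq \varepsilon_0$, under which $f_k$ does converge strongly to zero and so drops out of the limit equation.
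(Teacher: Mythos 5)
Your proposal is correct and reproduces the standard Avellaneda--Lin three-step compactness argument (improvement lemma with a corrected affine approximant, geometric iteration down to scale $\varepsilon$, then blow-up and classical Schauder below scale $\varepsilon$). This is the method of the cited source [alin, Lemma 16] and is the same scheme this paper adapts in Sections \ref{secliplap}--\ref{secbdrliposc} for the boundary version; the paper itself quotes the interior result without proof.
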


\begin{rem}[$L^\infty$ control of $u^\varepsilon$]
The control of $\|u^\varepsilon\|_{L^\infty(B(0,1))}$ is used in a compactness argument of Ascoli-Arzela type.
\end{rem}

\begin{rem}[rescaled estimate]
Assume 
\begin{equation*}
-\nabla\cdot A(x/\varepsilon)\nabla u^\varepsilon=f\quad\mbox{in}\ B(0,r),
\end{equation*}
for $r>0$. Then,
\begin{equation}\label{intgradestrescaled}
\|\nabla u^\varepsilon\|_{L^\infty(B(0,r/2))}\leq C\left\{r^{-1}\|u^\varepsilon\|_{L^\infty(B(0,r))}+r^{1-d/(d+\kappa)}\|f\|_{L^{d+\kappa}(B(0,r))}\right\}.
\end{equation}
\end{rem}

\begin{rem}[more general source terms]
A slight generalization of this Lipschitz estimate is the following: for $\kappa>0$, for $0<\mu<1$, for $u^\varepsilon$ weak solution to
\begin{equation*}
-\nabla\cdot A(x/\varepsilon)\nabla u^\varepsilon=f+\nabla\cdot F\quad\mbox{in}\ B(0,1),
\end{equation*}
we have
\begin{equation*}
\|\nabla u^\varepsilon\|_{L^\infty(B(0,1/2))}\leq C\left\{\|u^\varepsilon\|_{L^\infty(B(0,1))}+\|f\|_{L^{d+\kappa}(B(0,1))}+\|F\|_{C^{0,\mu}(B(0,1))}\right\}.
\end{equation*}
The rescaled estimate then reads
\begin{equation}\label{intgradestrescaledbis}
\|\nabla u^\varepsilon\|_{L^\infty(B(0,r/2))}\leq C\left\{r^{-1}\|u^\varepsilon\|_{L^\infty(B(0,r))}+r^{1-d/(d+\kappa)}\|f\|_{L^{d+\kappa}(B(0,r))}+r^{\mu}\|F\|_{C^{0,\mu}(B(0,r))}\right\}.
\end{equation}
\end{rem}

\section{Boundary H\"older estimate}
\label{secbdrayholder}

The following proposition is a generalization to oscillating boun\-daries of Lemma 12 in \cite{alin}. A similar estimate, in the case of the Stokes system with oscillating boundary, is to be found in \cite[estimate (5.4)]{David09}.

\begin{prop}\label{propboundaryholder}
Let $\kappa,\ \kappa'>0$. There exist $C>0$, $\varepsilon_0>0$ such that for all 
\begin{equation*}
0<\mu<\min\left(1-d/(d+\kappa),2-d/(d/2+\kappa')\right), 
\end{equation*}
for all $\psi\in\mathcal C_{M_0}^{1,\omega}$, for all $A\in\mathcal A^{0,\nu_0}$, for all $\varepsilon>0$, for all $f\in L^{d/2+\kappa'}(D^\varepsilon(0,1))$, for all $F\in L^{d+\kappa}(D^\varepsilon(0,1))$, for all $u^\varepsilon$ weak solution to \eqref{sysoscueps} the bounds
\begin{equation*}
\|u^\varepsilon\|_{L^2(D^\varepsilon(0,1))}\leq 1,\quad\|f\|_{L^{d/2+\kappa'}(D^\varepsilon(0,1))}\leq \varepsilon_0,\quad\|F\|_{L^{d+\kappa}(D^\varepsilon(0,1))}\leq \varepsilon_0
\end{equation*}
imply
\begin{equation*}
[u^\varepsilon]_{C^{0,\mu}(\overline{D^\varepsilon(0,1/2)})}\leq C.
\end{equation*}
Notice that $C$ and $\varepsilon_0$ depend on $d$, $N$, $M_0$, on the modulus of continuity $\omega$ of $\nabla\psi$, $\lambda$, $\kappa$ and $\kappa'$.
\end{prop}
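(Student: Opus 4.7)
The plan is to follow the Avellaneda--Lin compactness scheme, adapted to cope with the oscillating boundary. The central ingredient is a one-step \emph{improvement lemma}: there exist $\theta\in(0,1)$ and $\varepsilon_1>0$, depending only on $d$, $N$, $M_0$, $\omega$, $\lambda$, $\kappa$, $\kappa'$ and $\mu$, such that under the smallness hypotheses of the proposition and provided $\varepsilon\le \varepsilon_1$, one has
\begin{equation*}
\|u^\varepsilon\|_{L^2(D^\varepsilon(0,\theta))}\le \theta^{d/2+\mu}.
\end{equation*}

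I would establish this lemma by contradiction. Assuming failure produces sequences $\varepsilon_k\to 0$, $\psi_k\in\mathcal C_{M_0}^{1,\omega}$, $A_k\in\mathcal A^{0,\nu_0}$, and solutions $u^{\varepsilon_k}$ of \eqref{sysoscueps} with vanishing source norms but $\|u^{\varepsilon_k}\|_{L^2(D^{\varepsilon_k}(0,\theta))}>\theta^{d/2+\mu}$. Arzel\`a--Ascoli applied to $(\nabla\psi_k)$, which is equi-bounded and equi-continuous thanks to the modulus $\omega$, yields $\psi_k\to\psi_0$ in $C^1_{\mathrm{loc}}(\mathbb R^{d-1})$; crucially, the $L^\infty$ bound $0\le\psi_k\le M_0$ forces $|\varepsilon_k\psi_k(x'/\varepsilon_k)|\le M_0\varepsilon_k\to 0$ uniformly in $x'$, so the oscillating boundary collapses to the flat hyperplane $\{x_d=0\}$. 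A Caccioppoli inequality (with cut-offs vanishing on the top and lateral faces of $D^{\varepsilon_k}(0,1)$ but not on $\Delta^{\varepsilon_k}$) gives a uniform $W^{1,2}$ bound and, up to subsequence, $u^{\varepsilon_k}\to u^0$ strongly in $L^2_{\mathrm{loc}}$ and weakly in $W^{1,2}_{\mathrm{loc}}$ on any open set compactly contained in $D^0(0,1)$. Since $\overline{A_k}$ is bounded we may assume $\overline{A_k}\to A^0$, and Theorem~\ref{theoweakcvhomo} identifies $u^0$ as a solution of $-\nabla\cdot A^0\nabla u^0=0$ in $D^0(0,1)$; uniform trace control at the collapsing boundary transfers $u^{\varepsilon_k}=0$ on $\Delta^{\varepsilon_k}$ into $u^0=0$ on $\Delta^0(0,1)$. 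Constant-coefficient boundary Schauder regularity for the flat half-space yields $u^0\in C^{0,1}(\overline{D^0(0,1/2)})$, and since $u^0$ vanishes on $\Delta^0$,
\begin{equation*}
\|u^0\|_{L^2(D^0(0,\theta))}\le C_\star\,\theta^{d/2+1}.
\end{equation*}
Choosing $\theta$ so small that $C_\star\theta^{d/2+1}\le \tfrac12\theta^{d/2+\mu}$, which is legal because $\mu<1$, contradicts the standing assumption after passing to the limit.

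With the improvement lemma in hand, I would iterate on the rescaled functions $v(z):=\theta^{-\mu}u^\varepsilon(\theta z)$. These solve the same type of system with coefficient scale $\varepsilon'=\varepsilon/\theta$, the same boundary graph $\psi$ (the rescaling converts $y_d=\varepsilon\psi(y'/\varepsilon)$ into $z_d=\varepsilon'\psi(z'/\varepsilon')$) and rescaled sources whose norms shrink by factors $\theta^{2-d/(d/2+\kappa')-\mu}$ and $\theta^{1-d/(d+\kappa)-\mu}$. The condition $\mu<\min(1-d/(d+\kappa),2-d/(d/2+\kappa'))$ imposed in the statement is exactly what makes both exponents positive, so the smallness of the data is preserved along the iteration and one obtains
\begin{equation*}
\|u^\varepsilon\|_{L^2(D^\varepsilon(0,\theta^k))}\le C\,\theta^{k(d/2+\mu)}
\end{equation*}
for every $k$ with $\theta^k\ge\varepsilon/\varepsilon_1$. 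For smaller scales, a final rescaling $y\mapsto u^\varepsilon(\varepsilon y)$ produces a solution in a unit-scale domain with boundary graph $\psi\in\mathcal C_{M_0}^{1,\omega}$ of \emph{fixed} modulus $\omega$, so the classical Schauder estimate of Theorem~\ref{theoclassholder} applies and closes the small-scale gap. Translating the origin along $\Delta^\varepsilon(0,1/2)$ gives the same $L^2$-decay at every boundary point, and the Campanato-type characterization of H\"older continuity recalled in section~\ref{secprelim}, combined with the interior estimate of \cite[Lemma~9]{alin} for points away from $\Delta^\varepsilon$, yields the $C^{0,\mu}$ bound on $\overline{D^\varepsilon(0,1/2)}$.

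The main obstacle is the boundary compactness step. The domain $D^{\varepsilon_k}(0,1)$ itself changes with $k$, its oscillations occur at the same scale as those of the coefficients, and all uniform energy and trace bounds must be proved on these moving domains with constants independent of $k$. The reason the strategy works is precisely the a priori $L^\infty$ bound on $\psi_k$: the boundary amplitude $\varepsilon_k M_0$ vanishes at the same speed as the coefficient oscillation, so the limit problem lives on a flat half-space with a constant-coefficient operator where classical boundary regularity furnishes the geometric decay needed to beat the exponent $\mu<1$. Identifying the correct functional framework for applying Theorem~\ref{theoweakcvhomo} on the sequence of moving subdomains is the main technical point to nail down.
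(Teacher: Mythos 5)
Your proposal follows the paper's three-step compactness scheme essentially verbatim: the improvement lemma with the decay estimate $\|u^\varepsilon\|_{L^2(D^\varepsilon(0,\theta))}\le\theta^{d/2+\mu}$ is equivalent to the paper's mean-value version $\intbar_{D^\varepsilon(0,\theta)}|u^\varepsilon|^2\le\theta^{2\mu}$, the contradiction argument (Caccioppoli, extension by zero below the collapsing boundary, Theorem~\ref{theoweakcvhomo}, trace identification), the iteration with the sign conditions on the exponents $1-\mu-d/(d+\kappa)$ and $2-\mu-d/(d/2+\kappa')$, and the two-regime conclusion (rescaled interior estimate far from $\Delta^\varepsilon$, microscale blow-up plus classical Schauder near $\Delta^\varepsilon$) all match. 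The only stylistic deviations are that you use Lipschitz regularity of $u^0$ where the paper uses $C^{0,\mu'}$ for some $\mu<\mu'<1$ (both work, and both are available for the constant-coefficient limit), and that you invoke Arzel\`a--Ascoli on $\nabla\psi_k$ which is unnecessary here since only the $L^\infty$ bound $0\le\psi_k\le M_0$ is used to collapse the boundary to $\{x_d=0\}$.
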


\begin{rem}
Of course, for all $f\in L^{d/2+\kappa'}(D^\varepsilon(0,1))$, for all $F\in L^{d+\kappa}(D^\varepsilon(0,1))$, for all $u^\varepsilon$ weak solution to \eqref{sysoscueps} such that
\begin{equation*}
\|u^\varepsilon\|_{L^2(D^\varepsilon(0,1))}<\infty,\quad\|f\|_{L^{d/2+\kappa'}(D^\varepsilon(0,1))}<\infty,\quad\|F\|_{L^{d+\kappa}(D^\varepsilon(0,1))}<\infty,
\end{equation*}
we have the estimate
\begin{equation}\label{bdaryholdest}
[u^\varepsilon]_{C^{0,\mu}(\overline{D^\varepsilon(0,1/2)})}\leq C\left\{\|u^\varepsilon\|_{L^2(D^\varepsilon(0,1))}+\|f\|_{L^{d/2+\kappa'}(D^\varepsilon(0,1))}+\|F\|_{L^{d+\kappa}(D^\varepsilon(0,1))}\right\},
\end{equation} 
with $C>0$ uniform in $\varepsilon$. In order to see that \eqref{bdaryholdest} boils down to Proposition \ref{propboundaryholder} we divide $u^\varepsilon$, $f$ and $F$ by the quantity 
\begin{equation*}
J:=\intbar_{D^\varepsilon(0,1)}|u^\varepsilon|^2+\frac{1}{\varepsilon_0}\|f\|_{L^{d/2+\kappa'}(D^\varepsilon(0,1))}+\frac{1}{\varepsilon_0}\|F\|_{L^{d+\kappa}(D^\varepsilon(0,1))}.
\end{equation*}
\end{rem}

\begin{rem}[without source terms]
If $f=F=0$, then for all $0<\mu<1$, 
\begin{equation*}
\|u^\varepsilon\|_{L^2(D^\varepsilon(0,1))}\leq 1\ \mbox{implies}\ [u^\varepsilon]_{C^{0,\mu}(\overline{D^\varepsilon(0,1/2)})}\leq C.
\end{equation*}
\end{rem}

\begin{rem}[control of the H\"older norm]
Let us emphasize three arguments showing that the estimate on the semi-norm of $u^\varepsilon$ is enough to control the H\"older norm of $u^\varepsilon$: namely, if
\begin{equation*}
\|u^\varepsilon\|_{L^2(D^\varepsilon(0,1))}\leq 1,
\end{equation*}
then
\begin{equation*}
\|u^\varepsilon\|_{C^{0,\mu}(\overline{D^\varepsilon(0,1/2)})}\leq C.
\end{equation*} 
Assume for simplicity that $f=F=0$. Fix $\varepsilon>0$. 

The first argument reads as follows. It is enough to show that we can control $|u^\varepsilon(x_\varepsilon)|$ at one point $x_\varepsilon\in D^\varepsilon(0,1/2)$:
\begin{equation}\label{findxepstche}
|u^\varepsilon(x_\varepsilon)|\leq C\left(\int_{D^\varepsilon(0,1)}|u^\varepsilon(x)|^2dx\right)^{1/2}.
\end{equation}
Indeed, for every $x\in D^\varepsilon(0,1/2)$, there is an $\tilde{x}\in D^{\varepsilon}(0,1/2)$ such that the segments $[x,\tilde{x}]$ and $[\tilde{x},x_\varepsilon]$ are contained in $D^\varepsilon(0,1/2)$ (of course, $\tilde{x}$ is introduced to compensate for the lack of convexity of $D^\varepsilon(0,1/2)$). Then
\begin{equation*}
\begin{aligned}
|u^\varepsilon(x)|&\leq |u^\varepsilon(x)-u^\varepsilon(\tilde{x})|+|u^\varepsilon(\tilde{x})-u^\varepsilon(x_\varepsilon)|+|u^\varepsilon(x_\varepsilon)|\\
&\leq C\left\{[u^\varepsilon]_{C^{0,\mu}(\overline{D^\varepsilon(0,1/2)})}+\left(\int_{D^\varepsilon(0,1)}|u^\varepsilon(x)|^2dx\right)^{1/2}\right\},
\end{aligned}
\end{equation*}
with $C>0$ uniform in $\varepsilon$. Note that \eqref{findxepstche} is a consequence of Tchebychev's inequality, since
\begin{equation*}
|\{u^\varepsilon>t\}|\leq \|u^\varepsilon\|_{L^2(D^\varepsilon(0,1))}/t^2\stackrel{t\rightarrow 0}{\longrightarrow} 0.
\end{equation*}

The second argument starts from the inequality
\begin{equation*}
|u(x)|\leq |u(y)|+|u(x)-u(y)|\leq |u(y)|+[u]_{C^{0,\mu}(D^\varepsilon(0,1/2))}|x-y|^\mu
\end{equation*}
for all $x,\ y\in D^\varepsilon(0,1/2)$. Integrating the latter with respect to $y$ yields
\begin{equation*}
\|u\|_{L^\infty(D^\varepsilon(0,1/2))}\leq C\left(\|u\|_{L^2(D^\varepsilon(0,1))}+[u]_{C^{0,\mu}(D^\varepsilon(0,1/2))}\right)\leq C\|u\|_{L^2(D^\varepsilon(0,1))}.
\end{equation*}

Finally, an alternative (simpler) argument uses directly the fact that $u^\varepsilon(x)=0$ for $x\in\Delta^\varepsilon(0,1/2)$: for $\bar{x}:=(x',\varepsilon\psi(x'/\varepsilon))\in\Delta^\varepsilon(0,1/2)$
\begin{equation*}
|u^\varepsilon(x)|=|u^\varepsilon(x)-u^\varepsilon(\bar{x})|\leq [u^\varepsilon]_{C^{0,\mu}(\overline{D^\varepsilon(0,1/2)})}\delta(x)^\mu\leq [u^\varepsilon]_{C^{0,\mu}(\overline{D^\varepsilon(0,1/2)})}\leq C\|u^\varepsilon\|_{L^2(D^\varepsilon(0,1))}.
\end{equation*}
\end{rem}

The following corollary is a generalization of Theorem 13 in \cite{alin} to oscillating boundaries. Similar estimates for the Green function associated to the Stokes operator in an oscillating domain have been showed in \cite[section 5.2]{David09}.

Let $\widetilde{G}^\varepsilon=\widetilde{G}^\varepsilon(x,\tilde{x})$ be the Green kernel associated to the operator $-\nabla\cdot A(x/\varepsilon)\nabla$ and the oscillating domain $D^\varepsilon(0,2)$. We recall that for all $\tilde{x}\in D^\varepsilon(0,2)$, $\widetilde{G}^\varepsilon(\cdot,\tilde{x})$ is a weak solution of
\begin{equation*}
\left\{
\begin{array}{rll}
-\nabla\cdot A(x/\varepsilon)\nabla \widetilde{G}^\varepsilon(x,\tilde{x})&=\delta(x-\tilde{x})\Idd_N,&x\in D^\varepsilon(0,2),\\
\widetilde{G}^\varepsilon(x,\tilde{x})&=0,&x\in \partial D^\varepsilon(0,2),
\end{array}
\right.
\end{equation*}
where here $\delta(\cdot)$ stands for the Dirac measure supported at the point $0$.

\begin{cor}\label{coralingreen}
There exists $C>0$, for all $\psi\in\mathcal C_{M_0}^{1,\omega}$, for all $A\in\mathcal A^{0,\nu_0}$, for all $\varepsilon>0$, for all $x,\ \tilde{x}\in D^\varepsilon(0,15/8)$,
\begin{align}
|\widetilde{G}^\varepsilon(x,\tilde{x})|\leq \frac{C}{|x-\tilde{x}|^{d-2}},\qquad \mbox{if}\quad d\geq 3\label{estgreend-23},\\
|\widetilde{G}^\varepsilon(x,\tilde{x})|\leq C(|\log|x-\tilde{x}||+1),\qquad\mbox{if}\quad d=2\label{estgreend-22}.
\end{align}
\end{cor}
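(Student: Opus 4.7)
The plan is to follow the classical Grüter--Widman / Avellaneda--Lin strategy for pointwise Green function estimates: bound $\widetilde{G}^\varepsilon(x, \tilde{x})$ by the $L^2$ norm of $\widetilde{G}^\varepsilon(\cdot, \tilde{x})$ on a region of size $\sim r := |x - \tilde{x}|$ sitting at distance $\sim r$ from $\tilde{x}$, then control that $L^2$ norm by energy estimates. The uniformity in $\varepsilon$ and $\psi$ comes from combining the just-proven boundary Hölder estimate of Proposition \ref{propboundaryholder} with the interior estimate of section \ref{subsecintestalin}. Fix $x, \tilde{x}$; by the symmetry $\widetilde{G}^\varepsilon(x, \tilde{x}) = \widetilde{G}^{\varepsilon,*}(\tilde{x}, x)$, with the transpose operator satisfying the same hypotheses, I may assume $\delta(x) \leq \delta(\tilde{x})$.

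Step 1 (reduction to an $L^2$ bound). Pick a small constant $c_0 = c_0(M_0) > 0$. If $\delta(x) \geq c_0 r$, the ball $B(x, c_0 r/4)$ lies in $D^\varepsilon_+$ at distance $\geq c_0 r/2$ from $\tilde{x}$, so the rescaled interior Hölder estimate \eqref{intestrescaled} applied to $\widetilde{G}^\varepsilon(\cdot, \tilde{x})$ yields $|\widetilde{G}^\varepsilon(x, \tilde{x})| \leq C r^{-d/2} \|\widetilde{G}^\varepsilon(\cdot, \tilde{x})\|_{L^2(B(x, c_0 r/4))}$. Otherwise $\delta(x) < c_0 r$; set $x_0 := (x', \varepsilon\psi(x'/\varepsilon)) \in \Delta^\varepsilon$. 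A short calculation using $\|\nabla\psi\|_\infty \leq M_0$ shows that, for $c_0$ and an auxiliary radius $\rho_0 = \rho_0(M_0) > 0$ chosen small enough, $|\tilde{x} - y| \geq c(M_0) r$ for every $y \in D^\varepsilon(x_0', \rho_0 r)$, so that $\widetilde{G}^\varepsilon(\cdot, \tilde{x})$ satisfies the homogeneous system on $D^\varepsilon(x_0', \rho_0 r)$ with zero Dirichlet data on $\Delta^\varepsilon(x_0', \rho_0 r)$. Rescaling by $y \mapsto (y - (x_0', 0))/(\rho_0 r)$ only replaces $\varepsilon$ by $\varepsilon/(\rho_0 r)$ and $\psi$ by a translate still in $\mathcal C_{M_0}^{1,\omega}$. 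Applying Proposition \ref{propboundaryholder}, and using $\widetilde{G}^\varepsilon(x_0, \tilde{x}) = 0$ together with $|x - x_0| = \delta(x) < c_0 r$, yields
\begin{equation*}
|\widetilde{G}^\varepsilon(x, \tilde{x})| \leq C (\delta(x)/r)^\mu r^{-d/2} \|\widetilde{G}^\varepsilon(\cdot, \tilde{x})\|_{L^2(D^\varepsilon(x_0', \rho_0 r))} \leq C r^{-d/2} \|\widetilde{G}^\varepsilon(\cdot, \tilde{x})\|_{L^2(D^\varepsilon(x_0', \rho_0 r))}.
\end{equation*}

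Step 2 ($L^2$ bound via Caccioppoli and Sobolev). For $\rho > 0$ I pick a cutoff $\eta$ supported on $\{\rho/2 < |y - \tilde{x}| < 4\rho\} \cap D^\varepsilon(0,2)$ with $\eta \equiv 1$ on $\{\rho \leq |y - \tilde{x}| \leq 2\rho\}$ and $|\nabla \eta| \lesssim \rho^{-1}$, test the equation for $\widetilde{G}^\varepsilon(\cdot, \tilde{x})$ against $\eta^2 \widetilde{G}^\varepsilon$ (legitimate since $\eta$ vanishes near the pole), and combine ellipticity with Sobolev's embedding, iterating dyadically. This classical computation delivers, uniformly in $\varepsilon$ and $\psi \in \mathcal C_{M_0}^{1,\omega}$,
\begin{equation*}
\|\widetilde{G}^\varepsilon(\cdot, \tilde{x})\|_{L^2(\{\rho < |y - \tilde{x}| < 2\rho\} \cap D^\varepsilon(0,2))} \leq \begin{cases} C \rho^{2 - d/2}, & d \geq 3, \\ C \rho (|\log \rho| + 1), & d = 2, \end{cases}
\end{equation*}
for $0 < \rho \leq 1$. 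Inserting $\rho \sim r$ into Step 1 produces exactly \eqref{estgreend-23} and \eqref{estgreend-22}.

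The main obstacle I anticipate is ensuring that all constants in Step 2 are genuinely uniform in the oscillations of $\psi$. The Caccioppoli inequality itself is purely local and uses only ellipticity of $A$, hence is manifestly uniform. The input needed to prime the dyadic iteration, namely a global $L^q$ bound on $\widetilde{G}^\varepsilon(\cdot, \tilde{x})$ for some $q < d/(d-2)$ obtained by Stampacchia-type duality, relies on a Sobolev inequality on $D^\varepsilon(0,2)$; this is standard because $D^\varepsilon(0,2)$ is uniformly Lipschitz with constant depending only on $M_0$. A secondary technical point is the elementary geometric lemma in Step 1 guaranteeing $|\tilde{x} - y| \geq c(M_0) r$, which reduces to the observation that if both $\delta(x)$ and $\delta(\tilde{x})$ are much smaller than $r$ then the tangential separation $|x' - \tilde{x}'|$ must itself be $\gtrsim r$, a direct consequence of $\|\nabla \psi\|_\infty \leq M_0$.
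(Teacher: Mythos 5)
Your proof follows essentially the same route as the paper, which simply refers the reader to Avellaneda--Lin \cite[Theorem 13]{alin} and notes that the key ingredient is the uniform boundary H\"older estimate of Proposition \ref{propboundaryholder}; your two-step scheme (reduce to a local $L^2$ bound at scale $r=|x-\tilde{x}|$ via interior/boundary H\"older, then control that $L^2$ mass on dyadic annuli by Caccioppoli and Sobolev, primed by a Stampacchia-type weak-$L^{d/(d-2)}$ bound) is precisely the Gr\"uter--Widman / Avellaneda--Lin argument the authors are pointing to, with the oscillating-boundary uniformity supplied exactly where they say it must be. The argument is sound.
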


\begin{proof}[Proof of Corollary \ref{coralingreen}]
The proof of this corollary follows the lines of \cite[Theorem 13]{alin}. The key is the boundary H\"older estimate of Proposition \ref{propboundaryholder} uniform in $\varepsilon$ for $f=F=0$.
\end{proof}

\subsection*{Proof of Proposition \ref{propboundaryholder}}

Let $\kappa,\ \kappa'>0$ be fixed for the whole proof. The proof follows the scheme of the three-step compactness method introduced by Avellaneda and Lin \cite{alin} in the context of homogenization:
\begin{enumerate}
\item improvement lemma,
\item iteration lemma,
\item proof of Proposition \ref{propboundaryholder}.
\end{enumerate}
We will redo the same type of three steps argument in the proof of the boundary Lipschitz estimates. For the latter, however, the proof is much more involved, as it requires the introduction of Dirichlet and boundary correctors, meant to correct the oscillations of the coefficients and of the boundary.

The estimate of the $C^{0,\mu}$ semi-norm of $u^\varepsilon$ is based on the characterization of H\"older spaces by Campanato (see \cite[Theorem 1.2]{giaquintaBook} or \cite[Theorem 5.5]{giaquinta12}):
\begin{equation*}
[u^\varepsilon]_{C^{0,\mu}(\overline{D^\varepsilon(0,1/2)})}^2\sim\sup_{x_0\in \overline{D^\varepsilon(0,1/2)},\ \rho>0}\rho^{-2\mu}\intbar_{D^\varepsilon(x_0,\rho)}|u^\varepsilon-(\overline{u^\varepsilon})_{x_0,\rho}|^2dx,
\end{equation*}
where $\sim$ means that the semi-norms on the left and right hand sides are equivalent.

\subsubsection*{First step}

\begin{lem}[improvement lemma]\label{lem1holder}
For all $0<\mu<\min\left(1-d/(d+\kappa),2-d/(d/2+\kappa')\right)$, there exist $\varepsilon_0>0,\ 0<\theta<1/8$, such that for all $\psi\in\mathcal C_{M_0}^{1,\omega}$, for all $0<\varepsilon<\varepsilon_0$, for all $A\in\mathcal A^{0,\nu_0}$, for all $f\in L^{d/2+\kappa'}(D^\varepsilon(0,1))$, for all $F\in L^{d+\kappa}(D^\varepsilon(0,1))$, for all $u^\varepsilon$ weak solution to 
\begin{equation}\label{elliptosc1/2}
\left\{\begin{array}{rll}
-\nabla\cdot A(x/\varepsilon)\nabla u^\varepsilon&=f+\nabla\cdot F
,&x\in D^\varepsilon(0,1),\\
u^\varepsilon&=0,&x\in\Delta^\varepsilon(0,1),
\end{array}
\right. 
\end{equation}
if
\begin{equation*}
\intbar_{D^\varepsilon(0,1)}|u^\varepsilon|^2\leq 1,\ \left\|f\right\|_{L^{d/2+\kappa'}(D^\varepsilon(0,1))}\leq \varepsilon_0,\ \left\|F\right\|_{L^{d+\kappa}(D^\varepsilon(0,1))}\leq \varepsilon_0
\end{equation*}
then 
\begin{equation}\label{estlem1holder}
\intbar_{D^\varepsilon(0,\theta)}|u^\varepsilon|^2\leq\theta^{2\mu}.
\end{equation}
\end{lem}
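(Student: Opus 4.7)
I follow the three-step compactness scheme of Avellaneda--Lin and argue by contradiction. Fix $\mu$ in the prescribed range. The key is to select $\theta\in(0,1/8)$ first, depending only on $d$, $N$, $\lambda$, $\mu$, in such a way that any weak solution $u^0$ of a constant coefficient homogeneous system
\[
-\nabla\cdot A^0\nabla u^0=0\ \mbox{in}\ D^0(0,1),\qquad u^0=0\ \mbox{on}\ \Delta^0(0,1),
\]
with $A^0$ a constant uniformly elliptic matrix and $\|u^0\|_{L^2(D^0(0,1))}\leq 1$, satisfies $\intbar_{D^0(0,\theta)}|u^0|^2\leq \tfrac12\theta^{2\mu}$. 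Such a $\theta$ exists because classical Schauder theory for flat boundaries gives $u^0\in C^{1,\alpha}(\overline{D^0(0,1/2)})$ with a norm controlled by a constant $C_0=C_0(d,N,\lambda)$; combined with $u^0(x',0)=0$ this forces $|u^0(x)|\leq C_0 x_d$ on $D^0(0,\theta)$, hence $\intbar_{D^0(0,\theta)}|u^0|^2\leq C_0^2\theta^2$, which is $\leq \tfrac12\theta^{2\mu}$ as soon as $C_0^2\theta^{2(1-\mu)}\leq 1/2$.

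\textbf{Extraction of a limit.} Suppose the lemma fails for this $\theta$. Then there exist sequences $\varepsilon_k\downarrow 0$, $\psi_k\in\mathcal C^{1,\omega}_{M_0}$, $A_k\in\mathcal A^{0,\nu_0}$, $f_k$, $F_k$ with $\|f_k\|_{L^{d/2+\kappa'}}+\|F_k\|_{L^{d+\kappa}}\to 0$, and weak solutions $u^{\varepsilon_k}$ of \eqref{elliptosc1/2} satisfying $\|u^{\varepsilon_k}\|_{L^2(D^{\varepsilon_k}(0,1))}\leq 1$ but $\intbar_{D^{\varepsilon_k}(0,\theta)}|u^{\varepsilon_k}|^2>\theta^{2\mu}$. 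Since $\|\varepsilon_k\psi_k(\cdot/\varepsilon_k)\|_{L^\infty}\leq M_0\varepsilon_k\to 0$, the graphs $\Delta^{\varepsilon_k}$ converge uniformly to $\Delta^0$ and $|D^{\varepsilon_k}(0,r)\triangle D^0(0,r)|\to 0$ for every $r>0$. Extend each $u^{\varepsilon_k}$ by zero across $\Delta^{\varepsilon_k}$; since the trace vanishes there, the extension $\tilde u^{\varepsilon_k}$ lies in $H^1(D^{-1}(0,1))$, is supported in $\{x_d\geq 0\}$, and is uniformly bounded in $L^2$. A standard Cacciopoli argument (which is valid up to the oscillating boundary precisely because $u^{\varepsilon_k}$ vanishes there) bounds $\tilde u^{\varepsilon_k}$ uniformly in $H^1_{\mathrm{loc}}(D^{-1}(0,1))$, so after passing to a subsequence I obtain $\tilde u^{\varepsilon_k}\to u^0$ strongly in $L^2(D^{-1}(0,1-\eta))$ and weakly in $H^1_{\mathrm{loc}}$, for every $\eta>0$. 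The uniform $L^\infty$ and ellipticity bounds on $A_k$ control the cell solutions $\chi_k$ in $H^1$, hence $\overline{A_k}$ is bounded; extract further a subsequence with $\overline{A_k}\to A^0$, a constant uniformly elliptic matrix.

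\textbf{Identification and contradiction.} On any fixed interior subdomain $\Omega\Subset D^0(0,1)$, one has $\Omega\subset D^{\varepsilon_k}(0,1)$ for all $k$ large, so $u^{\varepsilon_k}$ solves the full system on $\Omega$; applying Theorem \ref{theoweakcvhomo} (note that $f_k+\nabla\cdot F_k\to 0$ in $H^{-1}(\Omega)$ thanks to the Sobolev embeddings $L^{d/2+\kappa'}\hookrightarrow W^{-1,2}$ and $L^{d+\kappa}\hookrightarrow L^2$) identifies $u^0$ as a weak solution of $-\nabla\cdot A^0\nabla u^0=0$ in all of $D^0(0,1)$. The vanishing of the zero extension below $\{x_d=0\}$ forces the trace of $u^0$ to vanish on $\Delta^0(0,1)$, and weak lower semicontinuity gives $\|u^0\|_{L^2(D^0(0,1))}\leq 1$. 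The choice of $\theta$ then provides $\intbar_{D^0(0,\theta)}|u^0|^2\leq \tfrac12\theta^{2\mu}$. Combining the strong $L^2$ convergence of $\tilde u^{\varepsilon_k}$ on $D^{-1}(0,\theta)$ with $|D^{\varepsilon_k}(0,\theta)\triangle D^0(0,\theta)|\to 0$ yields $\intbar_{D^{\varepsilon_k}(0,\theta)}|u^{\varepsilon_k}|^2\to \intbar_{D^0(0,\theta)}|u^0|^2\leq \tfrac12\theta^{2\mu}$, contradicting the failure hypothesis. The main technical obstacle is precisely handling the varying domains $D^{\varepsilon_k}(0,1)$: the passage to the limit in the equation requires exhausting $D^0(0,1)$ from inside by subdomains contained in every $D^{\varepsilon_k}$ for large $k$, while the boundary condition for $u^0$ is recovered through the $H^1$-extension by zero beneath $\Delta^{\varepsilon_k}$, which crucially uses that $\varepsilon_k\|\psi_k\|_{L^\infty}\to 0$.
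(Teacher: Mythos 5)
Your proposal is correct and follows essentially the same three-step compactness scheme as the paper (choose $\theta$ from the flat constant-coefficient estimate, argue by contradiction, extend by zero to a fixed domain, extract weak $H^1$/strong $L^2$ limits, identify the limit via the homogenization convergence theorem, then pass to the limit in the contradictory inequality). The only differences are cosmetic: you invoke $C^{1,\alpha}$ Schauder regularity to get $|u^0(x)|\leq C_0 x_d$ and thus a $\theta^2$ bound, whereas the paper uses $C^{0,\mu'}$ with $\mu<\mu'<1$ to get a $\theta^{2\mu'}$ bound, both of which suffice since $\mu<1$. One small bookkeeping slip: from $\intbar_{D^{\varepsilon_k}(0,1)}|u^{\varepsilon_k}|^2\leq 1$ weak lower semicontinuity yields $\|u^0\|_{L^2(D^0(0,1))}^2\leq|D^0(0,1)|=2^{d-1}$, not $\leq 1$ in the standard $L^2$ norm (the paper's $4^{2d+1}$ factor absorbs exactly this); the fix is to select $\theta$ against the correct constant, which your argument allows, so the conclusion is unaffected.
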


\begin{rem}
It is a classical fact (see for example \cite[Exercise 5.10]{giaquinta12}) that
\begin{equation*}
\int_{D^\varepsilon(0,\theta)}|u^\varepsilon-(\overline{u^\varepsilon})_{0,\theta}|^2=\inf_{\bar{u}\in\mathbb R^N}\int_{D^\varepsilon(0,\theta)}|u^\varepsilon-\bar{u}|^2.
\end{equation*}
This implies that
\begin{equation*}
\intbar_{D^\varepsilon(0,\theta)}|u^\varepsilon-(\overline{u^\varepsilon})_{0,\theta}|^2\leq\intbar_{D^\varepsilon(0,\theta)}|u^\varepsilon|^2\leq\theta^{2\mu}.
\end{equation*}
\end{rem}

\begin{rem}
The reason why we can prove the improved bound \eqref{estlem1holder} lies in the fact that $u^{\varepsilon}$ vanishes on $\Delta^{\varepsilon}(0,1)$. The iteration argument of the second step of the proof is easier to carry out with this bound on $u^{\varepsilon}$, rather than the bound on $u^{\varepsilon}-(\overline{u^\varepsilon})_{0,\theta}$. Indeed, the former vanishes on $\Delta^{\varepsilon}(0,1)$, which is not true for the latter.
\end{rem}

\begin{proof}[Proof of Lemma \ref{lem1holder}]
Let $0<\theta<1/8$, $0<\mu'<\min\left(1-d/(d+\kappa),2-d/(d/2+\kappa')\right)$, $A^0$ be any constant coefficients matrix satisfying \eqref{elliptA} and $u^0\in W^{1,2}(D^0(0,1/4))$ be a weak solution to
\begin{equation}\label{eqlimu0}
\left\{
\begin{array}{rll}
-\nabla\cdot A^0\nabla u^0&=0,&x\in D^0(0,1/4),\\
u^0&=0,&x\in\Delta^0(0,1/4),
\end{array}
\right.
\end{equation}
such that 
\begin{equation*}
\intbar_{D^0(0,1/4)}|u^0|^2\leq 4^{2d+1}. 
\end{equation*}
By classical Schauder regularity (see Theorem \ref{theoclassholder}),
\begin{equation}\label{estu^0holderbis}
[u^0]_{C^{0,\mu'}(\overline{D^0(0,1/8)})}\leq C\left(\intbar_{D^0(0,1/4)}|u^0|^2\right)^{1/2}.
\end{equation}
We have for $\theta$ small, using $u^0(0)=0$,
\begin{align*}
\intbar_{D^0(0,\theta)}|u^0|^2\leq C\sup_{x\in D^0(0,\theta)}|u^0(x)|^2&\leq C\sup_{x\in D^0(0,\theta)}|u^0(x)-u^0(0)|^2\\
&\leq C\theta^{2\mu'}[u^0]_{C^{0,\mu'}(\overline{D^0(0,1/8)})}^2\leq C\theta^{2\mu'}\intbar_{D^0(0,1/4)}|u^0|^2,
\end{align*}
so that
\begin{equation}\label{estu^0holder}
\intbar_{D^0(0,\theta)}|u^0|^2dx\leq C4^{2d+1}\theta^{2\mu'}
\end{equation}
where $C$ is uniform in $\theta$. Fix $0<\mu<\mu'<1$, and choose $0<\theta<1/8$ such that $\theta^{2\mu}>C4^{2d+1}\theta^{2\mu'}$. 

The goal is now to show (uniformly in $\psi$ and $A$ in their respective classes) that for this $\theta$, there exists $\varepsilon_0$ such that for all $0<\varepsilon<\varepsilon_0$,
\begin{equation*}
\intbar_{D^\varepsilon(0,\theta)}|u^\varepsilon|^2\leq\theta^{2\mu}.
\end{equation*}
Let us show this by contradiction. Assume that there exist a sequence $\varepsilon_k\stackrel{k\rightarrow\infty}{\longrightarrow}0$, $\psi_k\in\mathcal C_{M_0}^{1,\omega}$, $A_k\in\mathcal A^{0,\nu_0}$, $f^{\varepsilon_k}\in L^{d/2+\kappa'}(D^{\varepsilon_k}(0,1/2))$, $F^{\varepsilon_k}\in L^{d+\kappa}(D^{\varepsilon_k}(0,1/2))$ and $u^{\varepsilon_k}$ weak solution to \eqref{elliptosc1/2} such that
\begin{equation*}
\intbar_{D^{\varepsilon_k}(0,1)}|u^{\varepsilon_k}|^2\leq 1,\ \|f^{\varepsilon_k}\|_{L^{d/2+\kappa'}(D^{\varepsilon_k}(0,1))}\leq \varepsilon_k,\ \|F^{\varepsilon_k}\|_{L^{d+\kappa}(D^{\varepsilon_k}(0,1))}\leq \varepsilon_k
\end{equation*}
and
\begin{equation}\label{contraholder}
\intbar_{D^{\varepsilon_k}(0,\theta)}|u^{\varepsilon_k}|^2>\theta^{2\mu}.
\end{equation}
Thanks to the uniform control of $u^{\varepsilon_k}$ in $L^2$, we can rely on weak compactness in $W^{1,2}$ using Cacciopoli's inequality, and on strong compactness in $L^2$ using Rellich's compact embedding. Doing so we have to be careful, since the $L^2$ bound on $u^\varepsilon$ is on the domain with oscillating boundary $D^{\varepsilon_k}(0,1)$. Since $u^{\varepsilon_k}=0$ on $\Delta^{\varepsilon_k}(0,1)$, we control by Cacciopoli's inequality
\begin{equation*}
\|\nabla u^{\varepsilon_k}\|_{L^2(D^{\varepsilon_k}(0,1/2))}\leq C,
\end{equation*}
with $C>0$ uniform in $\varepsilon_k$. In order to deal with the oscillating boundary, we extend $f^{\varepsilon_k}$, $F^{\varepsilon_k}$ and $u^{\varepsilon_k}$ by $0$ below the oscillating boundary $x_d=\varepsilon_k\psi_k(x'/\varepsilon_k)$. We then have for $k$ large
\begin{equation*}
\|u^{\varepsilon_k}\|_{L^2(D^{-1}(0,1/4))}\leq C\quad\mbox{and}\quad\|\nabla u^{\varepsilon_k}\|_{L^2(D^{-1}(0,1/4))}\leq C.
\end{equation*}
Therefore, up to extracting subsequences,
\begin{equation*}
\overline{A_k}\longrightarrow A^0,
\end{equation*}
with $\overline{A_k}$ defined by \eqref{defA0} with $A$ replaced by $A_k$, and $A^0$ is a constant coefficients matrix satisfying the ellipticity condition \eqref{elliptA}, and there exists $u^0\in H^1(D^{-1}(0,1/4))$,
\begin{equation}\label{weakCVholderest}
\begin{aligned}
u^{\varepsilon_k}\rightharpoonup u^0\quad\mbox{weakly in}\quad L^2(D^{-1}(0,1/4)),\\
\nabla u^{\varepsilon_k}\rightharpoonup \nabla u^0\quad\mbox{weakly in}\quad L^2(D^{-1}(0,1/4)),
\end{aligned}
\end{equation}
and by Rellich's compact embedding theorem,
\begin{equation}\label{strongCVholderest}
u^{\varepsilon_k}\longrightarrow u^0\quad\mbox{strongly in}\quad L^2(D^{-1}(0,1/4)).
\end{equation}
Furthermore,
\begin{equation}\label{weakCVsource}
\begin{aligned}
f^{\varepsilon_k}\longrightarrow 0\quad\mbox{strongly in}\quad L^{d/2+\kappa'}(D^{-1}(0,1/4)),\\
F^{\varepsilon_k}\longrightarrow 0\quad\mbox{strongly in}\quad L^{d+\kappa}(D^{-1}(0,1/4)).
\end{aligned}
\end{equation}
We have to show that $u^0$ solves \eqref{eqlimu0}. That $u^0$ is a weak solution to
\begin{equation}\label{u0limweaksol}
-\nabla\cdot A^0\nabla u^0=0\quad\mbox{in}\quad D^0(0,1/4)
\end{equation}
follows from Theorem \ref{theoweakcvhomo}: for $\iota>0$, taking $\Omega_\iota=(-1/4,1/4)^{d-1}\times(\iota,1/4)$ yields 
\begin{equation*}
-\nabla\cdot A^0\nabla u^0=0\quad\mbox{in}\quad \Omega_\iota,
\end{equation*}
thus \eqref{u0limweaksol} holds. It remains to see that $u^0=0$ on $\Delta^0(0,1/4)$. Take a test function $\varphi\in C^\infty_c((-1/4,1/4)^{d-1}\times(-1,0))$. For all $k$, $\supp(\varphi)\subset D^{\varepsilon_k}_-$ since $0\leq \psi_k$. Then, since $u^{\varepsilon_k}=0$ on $D^{\varepsilon_k}_-$ and by \eqref{weakCVholderest}, we get for $k$ sufficiently large, 
\begin{equation*}
0=\int_{D^{-1}(0,1/4)} u^{\varepsilon_k}\varphi\stackrel{k\rightarrow\infty}{\longrightarrow}\int_{D^{-1}(0,1/4)} u^0\varphi.
\end{equation*}
Consequently $u^0=0$ in $\mathcal D'((-1/4,1/4)^{d-1}\times(-1,0))$, thus $u^0|_{(-1/4,1/4)^{d-1}\times(-1,0)}=0$, and by the trace theorem $u^0=0\in W^{1/2,2}(\Delta^0(0,1/4))$. It follows that $u^0$ satisfies the estimate \eqref{estu^0holderbis}. Moreover,  
\begin{equation*}
\intbar_{D^0(0,1/4)}|u^0|^2\leq 4\intbar_{D^0(0,1/4)}|u^{\varepsilon_k}|^2\leq 4^{2d+1}\intbar_{D^0(0,1)}|u^{\varepsilon_k}|^2\leq 4^{2d+1}, 
\end{equation*}
so that \eqref{estu^0holder} holds.

The last step of the proof consists in passing to the limit in \eqref{contraholder} in order to get a contradiction. Since $|D^{\varepsilon_k}(0,\theta)|=|D^0(0,\theta)|$, we have
\begin{equation}\label{decompCVmeanueps^2}
\begin{aligned}
\intbar_{D^{\varepsilon_k}(0,\theta)}|u^{\varepsilon_k}|^2
=&\frac{1}{|D^0(0,\theta)|}\int_{D^{\varepsilon_k}(0,\theta)}|u^{\varepsilon_k}|^2\\
=&\frac{1}{|D^0(0,\theta)|}\left\{\int_{D^0(0,\theta)}|u^{\varepsilon_k}|^2+\int_{D^{\varepsilon_k}(0,\theta)\setminus D^0(0,\theta)}|u^{\varepsilon_k}|^2-\int_{D^0(0,\theta)\setminus D^{\varepsilon_k}(0,\theta)}|u^{\varepsilon_k}|^2\right\}.
\end{aligned}
\end{equation}
By the strong convergence \eqref{strongCVholderest} of $u^{\varepsilon_k}$
\begin{equation*}
\int_{D^0(0,\theta)}|u^{\varepsilon_k}|^2\stackrel{k\rightarrow\infty}{\longrightarrow}\int_{D^0(0,\theta)}|u^0|^2,
\end{equation*}
and for $k$ large
\begin{align*}
\int_{D^{\varepsilon_k}(0,\theta)\setminus D^0(0,\theta)}|u^{\varepsilon_k}|^2&=\int_{D^{\varepsilon_k}(0,\theta)\setminus D^0(0,\theta)}|u^0|^2+\int_{D^{\varepsilon_k}(0,\theta)\setminus D^0(0,\theta)}(|u^{\varepsilon_k}|^2-|u^0|^2)\\
&\leq 2\int_{D^{\varepsilon_k}(0,\theta)\setminus D^0(0,\theta)}|u^0|^2+2\int_{D^{\varepsilon_k}(0,\theta)\setminus D^0(0,\theta)}|u^{\varepsilon_k}-u^0|^2\\
&\leq 2\int_{D^{-1}(0,1/4)}|u^0|^21_{D^{\varepsilon_k}(0,\theta)\setminus D^0(0,\theta)}+2\int_{D^{-1}(0,1/4)}|u^{\varepsilon_k}-u^0|^2\stackrel{k\rightarrow\infty}{\longrightarrow}0,
\end{align*}
by dominated convergence on the one hand and the strong convergence \eqref{strongCVholderest} on the other hand. In a similar fashion, the third term in the right hand side of \eqref{decompCVmeanueps^2} is shown to tend to $0$ when $k\rightarrow\infty$. Finally,
\begin{equation*}
C4^{2d+1}\theta^{2\mu'}<\theta^{2\mu}\leq\intbar_{D^{\varepsilon_k}(0,\theta)}|u^{\varepsilon_k}|^2\stackrel{k\rightarrow\infty}{\longrightarrow}\intbar_{D^0(0,\theta)}|u^0|^2
\end{equation*}
which is in contradiction with \eqref{estu^0holder}.
\end{proof}

\subsubsection*{Second step}

\begin{lem}[iteration lemma]\label{lem2holder}
For $0<\mu<\min\left(1-d/(d+\kappa),2-d/(d/2+\kappa')\right)$ fixed, let $\varepsilon_0>0$ and $\theta>0$ as given by Lemma \ref{lem1holder}. For all $k\in\mathbb N$, $k\geq 1$, for all $\varepsilon<\theta^{k-1}\varepsilon_0$, for all $\psi\in\mathcal C_{M_0}^{1,\omega}$, for all $A\in\mathcal A^{0,\nu_0}$, for all $f\in L^{d/2+\kappa'}(D^\varepsilon(0,1))$, for all $F\in L^{d+\kappa}(D^\varepsilon(0,1))$, for all $u^\varepsilon$ weak solution to \eqref{elliptosc1/2}, if
\begin{equation*}
\intbar_{D^\varepsilon(0,1)}|u^\varepsilon|^2\leq 1,\ \left\|f\right\|_{L^{d/2+\kappa'}(D^\varepsilon(0,1))}\leq \varepsilon_0,\ \left\|F\right\|_{L^{d+\kappa}(D^\varepsilon(0,1))}\leq \varepsilon_0
\end{equation*}
then
\begin{equation*}
\intbar_{D^\varepsilon(0,\theta^k)}|u^\varepsilon|^2\leq\theta^{2k\mu}.
\end{equation*}
\end{lem}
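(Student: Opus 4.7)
The plan is to prove Lemma \ref{lem2holder} by induction on $k$, where the base case $k=1$ is just Lemma \ref{lem1holder} (since $\theta^0\varepsilon_0 = \varepsilon_0$), and the inductive step reduces the scale-$\theta^{k+1}$ estimate on $u^\varepsilon$ to a scale-$\theta$ estimate on a suitably rescaled function. The key observation is that the choice $\theta$ depends only on $\mu$, so the improvement lemma can be reapplied verbatim at each step provided the rescaled object satisfies its hypotheses.

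Concretely, assuming the estimate at step $k$, I would set $\varepsilon' := \varepsilon/\theta^k$ and define
\begin{equation*}
v^{\varepsilon'}(y) := \theta^{-k\mu}\, u^\varepsilon(\theta^k y).
\end{equation*}
A direct change of variables shows that $v^{\varepsilon'}$ is a weak solution of
\begin{equation*}
-\nabla_y\cdot A(y/\varepsilon')\nabla_y v^{\varepsilon'} = f^{\varepsilon'} + \nabla_y\cdot F^{\varepsilon'}\quad\text{in}\quad D^{\varepsilon'}_\psi(0,1),
\end{equation*}
with $v^{\varepsilon'} = 0$ on $\Delta^{\varepsilon'}_\psi(0,1)$ and source terms $f^{\varepsilon'}(y) = \theta^{k(2-\mu)} f(\theta^k y)$, $F^{\varepsilon'}(y) = \theta^{k(1-\mu)} F(\theta^k y)$. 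The crucial self-similarity used here is that the rescaling $x = \theta^k y$ maps $D^\varepsilon_\psi(0,\theta^k)$ onto $D^{\varepsilon'}_\psi(0,1)$ with the same graph $\psi$, because $\varepsilon'\psi(y'/\varepsilon') = (\varepsilon/\theta^k)\psi(\theta^k y'/\varepsilon)$ is precisely $\varepsilon\psi(x'/\varepsilon)$ divided by $\theta^k$. Likewise $A(y/\varepsilon') = A(x/\varepsilon)$, so $A\in\mathcal A^{0,\nu_0}$ and $\psi\in\mathcal C_{M_0}^{1,\omega}$ carry over without change.

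Next I would verify the three hypotheses of Lemma \ref{lem1holder} at scale $\varepsilon'$. The condition $\varepsilon' < \varepsilon_0$ is equivalent to $\varepsilon < \theta^k\varepsilon_0$, which is our assumption. A change of variables combined with the induction hypothesis gives
\begin{equation*}
\intbar_{D^{\varepsilon'}(0,1)}|v^{\varepsilon'}|^2 = \theta^{-2k\mu}\intbar_{D^\varepsilon(0,\theta^k)}|u^\varepsilon|^2 \leq 1.
\end{equation*}
For the source terms, the computation yields
\begin{equation*}
\|f^{\varepsilon'}\|_{L^{d/2+\kappa'}(D^{\varepsilon'}(0,1))} \leq \theta^{k[(2-\mu) - d/(d/2+\kappa')]}\,\|f\|_{L^{d/2+\kappa'}(D^\varepsilon(0,1))},
\end{equation*}
\begin{equation*}
\|F^{\varepsilon'}\|_{L^{d+\kappa}(D^{\varepsilon'}(0,1))} \leq \theta^{k[(1-\mu) - d/(d+\kappa)]}\,\|F\|_{L^{d+\kappa}(D^\varepsilon(0,1))}.
\end{equation*}
Both exponents are positive thanks to the assumption $\mu<\min(1-d/(d+\kappa),\,2-d/(d/2+\kappa'))$, so each of the rescaled norms is bounded by $\varepsilon_0$. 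Applying Lemma \ref{lem1holder} to $v^{\varepsilon'}$ produces $\intbar_{D^{\varepsilon'}(0,\theta)}|v^{\varepsilon'}|^2 \leq \theta^{2\mu}$, and undoing the rescaling gives
\begin{equation*}
\intbar_{D^\varepsilon(0,\theta^{k+1})}|u^\varepsilon|^2 = \theta^{2k\mu}\intbar_{D^{\varepsilon'}(0,\theta)}|v^{\varepsilon'}|^2 \leq \theta^{2(k+1)\mu},
\end{equation*}
closing the induction.

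There is essentially no hard step here; the lemma is a routine geometric iteration. The only point requiring care is the verification that the strict inequalities defining $\mu$ give \emph{positive} exponents in the source-term bounds—this is precisely why Proposition \ref{propboundaryholder} has a strict inequality rather than equality in the admissible range of $\mu$—and the verification that the self-similar scaling preserves the class of admissible boundaries $\psi$, which is automatic because $\psi$ itself is unchanged under $x\mapsto\theta^k y$ with the matched choice $\varepsilon'=\varepsilon/\theta^k$.
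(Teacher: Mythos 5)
Your proof is correct and follows essentially the same approach as the paper: induction on $k$, rescaling $u^\varepsilon$ by $x\mapsto\theta^k y$ and dividing by $\theta^{k\mu}$, verifying the hypotheses of the improvement lemma for the rescaled solution (which is what the strict inequality $\mu<\min(1-d/(d+\kappa),2-d/(d/2+\kappa'))$ guarantees for the source terms), and then applying Lemma~\ref{lem1holder}. The paper's $U^\varepsilon$ is exactly your $v^{\varepsilon'}$, and the exponent bookkeeping matches.
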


\begin{proof}[Proof of Lemma \ref{lem2holder}]
Let us do the proof by induction. The case $k=1$ is Lemma \ref{lem1holder}. Let $k\geq 1$ and assume that for all $\varepsilon<\theta^{k-1}\varepsilon_0$, for all $\psi\in\mathcal C_{M_0}^{1,\omega}$, for all $A\in\mathcal A^{0,\nu_0}$, for all $u^\varepsilon$ weak solution to \eqref{elliptosc1/2}, if
\begin{equation*}
\intbar_{D^\varepsilon(0,1)}|u^\varepsilon|^2\leq 1,\ \left\|f\right\|_{L^{d/2+\kappa'}(D^\varepsilon(0,1))}\leq \varepsilon_0,\ \left\|F\right\|_{L^{d+\kappa}(D^\varepsilon(0,1))}\leq \varepsilon_0
\end{equation*}
then
\begin{equation}\label{estlem2holderHR}
\intbar_{D^\varepsilon(0,\theta^k)}|u^\varepsilon|^2\leq\theta^{2k\mu}.
\end{equation}
For all $x\in D^{\varepsilon/\theta^k}(0,1)$, let $U^\varepsilon$ be defined by
\begin{equation*}
U^\varepsilon(x):=u^\varepsilon(\theta^kx)/\theta^{k\mu}.
\end{equation*}
Since
\begin{equation*}
\nabla\cdot\left(A(\theta^kx/\varepsilon)\nabla u^\varepsilon(\theta^kx)\right)=\theta^{2k}\left[\nabla\cdot A(\cdot/\varepsilon)\nabla u^\varepsilon\right](\theta^kx),
\end{equation*}
$U^\varepsilon$ solves
\begin{equation*}
\left\{\begin{array}{rll}
-\nabla\cdot A(\theta^k x/\varepsilon)\nabla U^\varepsilon&=\frac{1}{\theta^{k\mu}}\left[\theta^{2k}f(\theta^kx)+\theta^k\nabla\cdot(F(\theta^kx))\right],&x\in D^{\varepsilon/\theta^k}(0,1)\\
U^\varepsilon&=0,&x\in \Delta^{\varepsilon/\theta^k}(0,1)
\end{array}
\right.
\end{equation*}
and from \eqref{estlem2holderHR}
\begin{equation*}
\intbar_{D^{\varepsilon/\theta^k}(0,1)}|U^\varepsilon(x)|^2\leq 1.
\end{equation*}
Notice that since $\mu<\min\left(1-d/(d+\kappa),2-d/(d/2+\kappa')\right)$,
\begin{equation*}
\|\theta^{2k-k\mu}f(\theta^k x)\|_{L^{d/2+\kappa'}(D^{\varepsilon/\theta^k}(0,1))}=\theta^{k(2-d/(d/2+\kappa')-\mu)}\|f\|_{L^{d/2+\kappa'}(D^{\varepsilon}(0,1))}\leq \|f\|_{L^{d/2+\kappa'}(D^{\varepsilon}(0,1))}\leq \varepsilon_0
\end{equation*}
and
\begin{equation*}
\|\theta^{k-k\mu}F(\theta^k x)\|_{L^{d+\kappa}(D^{\varepsilon/\theta^k}(0,1))}=\theta^{k(1-d/(d+\kappa)-\mu)}\|F\|_{L^{d+\kappa}(D^{\varepsilon}(0,1))}\leq \|F\|_{L^{d+\kappa}(D^{\varepsilon}(0,1))}\leq \varepsilon_0.
\end{equation*}
Therefore, applying Lemma \ref{lem1holder}, for $\varepsilon/\theta^k<\varepsilon_0$,
\begin{equation*}
\intbar_{D^{\varepsilon/\theta^k}(0,\theta)}|U^\varepsilon(x)|^2\leq \theta^{2\mu},
\end{equation*}
which boils down to
\begin{equation*}
\intbar_{D^{\varepsilon}(0,\theta^{k+1})}|u^\varepsilon(x)|^2\leq \theta^{2(k+1)\mu}
\end{equation*}
and concludes the proof.
\end{proof}

\subsubsection*{Third step: proof of Proposition \ref{propboundaryholder}}

If $\varepsilon\geq\varepsilon_0$, then Proposition \ref{propboundaryholder} follows from the classical H\"older estimates. Let $\varepsilon<\varepsilon_0$, $A\in\mathcal A^{0,\nu_0}$ and $\psi\in\mathcal C_{M_0}^{1,\omega}$ be fixed for the rest of the proof. There exists a unique integer $k\geq 1$ such that $\theta^k\leq\varepsilon/\varepsilon_0<\theta^{k-1}$. Let $x_0:=(0,x_{0,d})\in D^{\varepsilon}(0,1/2)$. Recall that $\delta(x_0)=x_{0,d}-\varepsilon\psi(0)$. We distinguish between two cases: either $\delta(x_0)>\theta^{k+1}/2$ (case ``far'' from the boundary) or $\delta(x_0)\leq\theta^{k+1}/2$ (case ``close'' to the boundary).

Assume that $\delta(x_0)>\theta^{k+1}/2$. In that case, the idea is to rely on the rescaled interior H\"older estimate \eqref{intestrescaledbis}
\begin{multline*}
[u^\varepsilon]_{C^{0,\mu}(B(x_0,\delta(x_0)/4))}\leq C\bigl\{(\delta(x_0))^{-d/2-\mu}\|u^\varepsilon\|_{L^2(B(x_0,\delta(x_0)/2))}\bigr.\\
\bigl.+(\delta(x_0))^{2-\mu-d/(d/2+\kappa')}\|f\|_{L^{d/2+\kappa'}(B(x_0,\delta(x_0)/2))}+(\delta(x_0))^{1-\mu-d/(d+\kappa)}\|F\|_{L^{d+\kappa}(B(x_0,\delta(x_0)/2))}\bigr\}.
\end{multline*}
There exists $0\leq l\leq k$ such that $\theta^{l+1}/2<\delta(x_0)\leq\theta^l/2$. If $l=0$, then
\begin{equation*}
\|u^\varepsilon\|_{L^2(B(x_0,\delta(x_0)/2))}\leq \|u^\varepsilon\|_{L^2(D^\varepsilon(0,1))}\leq |D^\varepsilon(0,1)|^{1/2}<C(2\delta(x_0)/\theta)^{d/2+\mu},
\end{equation*}
where $C>0$ only depends on the dimension $d$. If $l\geq 1$, then by Lemma \ref{lem2holder}
\begin{equation*}
\|u^\varepsilon\|_{L^2(B(x_0,\delta(x_0)/2))}\leq \|u^\varepsilon\|_{L^2(D^\varepsilon(0,\theta^l))}\leq C\theta^{(d/2+\mu)l}<C(2\delta(x_0)/\theta)^{d/2+\mu},
\end{equation*}
where $C>0$ only depends on $d$ and in particular not on $\theta$. Moreover, for all $l\in\mathbb N$, since 
\begin{equation*}
1-\mu-d/(d+\kappa)>0,\quad 2-\mu-d/(d/2+\kappa')>0,
\end{equation*}
we get
\begin{equation*}
\begin{aligned}
&(\delta(x_0))^{2-\mu-2d/(d+2\kappa')}\|f\|_{L^{d/2+\kappa'}(B(x_0,\delta(x_0)/2))}\leq \|f\|_{L^{d/2+\kappa'}(D^\varepsilon(0,1))}\leq \varepsilon_0,\\
&(\delta(x_0))^{1-\mu-d/(d+\kappa)}\|F\|_{L^{d+\kappa}(B(x_0,\delta(x_0)/2))}\leq \|F\|_{L^{d+\kappa}(D^\varepsilon(0,1))}\leq \varepsilon_0.
\end{aligned}
\end{equation*}
This boils down to
\begin{equation*}
[u^\varepsilon]_{C^{0,\mu}(B(x_0,\delta(x_0)/4))}\leq C
\end{equation*}
with $C>0$ uniform in $\varepsilon$, $k$, but which may depend on $d$, $\mu$ and $\theta$.

Assume that $\delta(x_0)\leq\theta^{k+1}/2\leq \theta\varepsilon/(2\varepsilon_0)$. In this case we rely on classical estimates near the boundary, which is relevant since we are at the small scale $O(\varepsilon)$. We proceed thanks to a blow-up argument, i.e. we consider $v^\varepsilon$ defined for $y\in D^1(0,1/\varepsilon)$ by
\begin{equation*}
U^\varepsilon(y):=\frac{1}{\varepsilon}u^\varepsilon(\varepsilon y).
\end{equation*}
It solves
\begin{equation*}
\left\{\begin{array}{rll}
-\nabla\cdot A(y)\nabla U^\varepsilon&=\varepsilon f(\varepsilon y)+\nabla\cdot(F(\varepsilon y)),&y\in D^1(0,1/\varepsilon),\\
U^\varepsilon&=0,&y\in \Delta^1(0,1/\varepsilon).
\end{array}
\right.
\end{equation*}
Applying the classical estimate \eqref{holderclassestu} and rescaling, we get
\begin{align*}
&\varepsilon^{-1+\mu}[u^\varepsilon(x)]_{C^{0,\mu}(D^\varepsilon(0,\varepsilon/(2\varepsilon_0)))}=[U^\varepsilon(y)]_{C^{0,\mu}(D^1(0,1/(2\varepsilon_0)))}\\
&\leq C\left\{\|U^\varepsilon(y)\|_{L^2(D^1(0,1/\varepsilon_0))}+\|\varepsilon f(\varepsilon\cdot)\|_{L^{d/2+\kappa'}(D^1(0,1/\varepsilon_0))}+\|F(\varepsilon\cdot)\|_{L^{d+\kappa}(D^1(0,1/\varepsilon_0))}\right\}\\
&=C\left\{\varepsilon^{-1-d/2}\|u^\varepsilon(x)\|_{L^2(D^\varepsilon(0,\varepsilon/\varepsilon_0))}+\varepsilon^{1-d/(d/2+\kappa')}\|f\|_{L^{d/2+\kappa'}(D^\varepsilon(0,\varepsilon/\varepsilon_0))}+\varepsilon^{-d/(d+\kappa)}\|F\|_{L^{d+\kappa}(D^\varepsilon(0,\varepsilon/\varepsilon_0))}\right\}.
\end{align*}
Therefore,
\begin{align*}
[u^\varepsilon]_{C^{0,\mu}(D^\varepsilon(0,\varepsilon/(2\varepsilon_0)))}&\leq C\left\{\varepsilon^{-d/2-\mu}\|u^\varepsilon\|_{L^2(D^\varepsilon(0,\varepsilon/\varepsilon_0))}\right.\\
&\left.\qquad+\varepsilon^{2-\mu-d/(d/2+\kappa')}\|f\|_{L^{d/2+\kappa'}(D^\varepsilon(0,\varepsilon/\varepsilon_0))}+\varepsilon^{1-\mu-d/(d+\kappa)}\|F\|_{L^{d+\kappa}(D^\varepsilon(0,\varepsilon/\varepsilon_0))}\right\}\\
&\leq C\left\{1/(\varepsilon_0\theta^k)^{d/2+\mu}\|u^\varepsilon\|_{L^2(D^\varepsilon(0,\theta^{k-1}))}+\|f\|_{L^{d/2+\kappa'}(D^\varepsilon(0,1))}+\|F\|_{L^{d+\kappa}(D^\varepsilon(0,1))}\right\}\\
&\leq C\left\{1/(\varepsilon_0\theta^k)^{d/2+\mu}\theta^{(k-1)(d/2+\mu)}+2\varepsilon_0\right\}\leq C_{\varepsilon_0,\theta}.
\end{align*}

One case is left. We have to deal with arbitrary points $x_0\in D^\varepsilon(0,1/2)$. Remember that $\varepsilon$, $\psi$ and $A$ are fixed. We consider $\tilde{\psi}$ defined for $y'\in\mathbb R^{d-1}$ by
\begin{equation*}
\tilde{\psi}(y'):=\psi(y'/2+x_0'/\varepsilon),
\end{equation*}
and for $y\in\mathbb R^d$
\begin{equation*}
\tilde{A}(y',y_d):=A(y'/2+x_0'/\varepsilon,y_d).
\end{equation*}
Notice that $\tilde{\psi}\in \mathcal C_{M_0}^{1}$ and $\tilde{A}\in\mathcal A^{0,\nu_0}$. For all $|x'|<1$, $2\varepsilon\tilde{\psi}(x'/(2\varepsilon))<x_d<2\varepsilon\tilde{\psi}(x'/(2\varepsilon))+1$, let $\tilde{u}^\varepsilon$ be defined by
\begin{equation*}
\tilde{u}^\varepsilon(x',x_d):=u^\varepsilon(x'/2+x_0',x_d),
\end{equation*}
$\tilde{f}$ be defined by
\begin{equation*}
\tilde{f}(x',x_d):=f(x'/2+x_0',x_d),
\end{equation*}
and $\tilde{F}$ be defined by
\begin{equation*}
\tilde{F}(x',x_d):=F(x'/2+x_0',x_d).
\end{equation*}
Let 
\begin{equation*}
J:=\intbar_{D^{2\varepsilon}(0,1)}|\tilde{u}^\varepsilon|^2+\frac{1}{\varepsilon_0}\|\tilde{f}\|_{L^{d/2+\kappa'}(D^{2\varepsilon}_{\tilde{\psi}}(0,1))}+\frac{1}{\varepsilon_0}\|\tilde{F}\|_{L^{d+\kappa}(D^{2\varepsilon}_{\tilde{\psi}}(0,1))}\leq\varepsilon_0.
\end{equation*}
We have that 
\begin{equation*}
\|\tilde{u}^\varepsilon\|_{L^2(D^{2\varepsilon}_{\tilde{\psi}}(0,1))}\leq J,\ \|\tilde{f}\|_{L^{d/2+\kappa'}(D^{2\varepsilon}_{\tilde{\psi}}(0,1))}\leq \varepsilon_0J,\ \|\tilde{F}\|_{L^{d+\kappa}(D^{2\varepsilon}_{\tilde{\psi}}(0,1))}\leq \varepsilon_0J
\end{equation*}
and $\tilde{u}^\varepsilon$ solves
\begin{equation*}
\left\{\begin{array}{rll}
-\nabla\cdot \tilde{A}(x/(2\varepsilon))\nabla\tilde{u}^\varepsilon&=\tilde{f}+\nabla\cdot\tilde{F},&x\in D^{2\varepsilon}_{\tilde{\psi}}(0,1),\\
\tilde{u}^\varepsilon&=0,&x\in \Delta^{2\varepsilon}_{\tilde{\psi}}(0,1).
\end{array}
\right.
\end{equation*}
We can thus apply Lemma \ref{lem2holder} and argue exactly as above for $x_0=(0,x_{0,d})$. 

\section{Boundary corrector}
\label{secbdarycor}

In the following lemma we build a corrector term for the oscillating boundary. This term is crucial for the construction of an expansion for $u^\varepsilon$ in the proof of the uniform Lipschitz estimate.

For fixed $M_0>0$, let $\vartheta'\in C^\infty_c(\mathbb R^{d-1})$ (resp. $\vartheta_{d}\in C^\infty_c(\mathbb R)$) a cut-off function compactly supported in $(-3/2,3/2)^{d-1}$ (resp. in $(-3M_0/2,3M_0/2)$), identically equal to $1$ on $(-1,1)^{d-1}$ (resp. on $(-M_0,M_0)$). We define the cut-off function $\Theta\in C^\infty_c(\mathbb R^d)$ by for all $x'\in\mathbb R^{d-1}$, $y_d\in\mathbb R$,
\begin{equation*}
\Theta(x',y_d):=\vartheta'(x')\vartheta_d(y_d).
\end{equation*}
Notice that $\Theta$ is compactly supported in $(-3/2,3/2)^{d-1}\times(-3M_0/2,3M_0/2)$, identically equal to $1$ on $(-1,1)^{d-1}\times(-M_0,M_0)$ and that for all $x',\ \hat{x}'\in\mathbb R^{d-1}$, $y_d\in\mathbb R$,
\begin{equation}\label{condTheta}
|x'|,\ |\hat{x}'|\leq 1\qquad\mbox{implies}\qquad\Theta(x',y_d)=\Theta(\hat{x}',y_d).
\end{equation}
This property of $\Theta$ is just meant to give a nice form to the expansion of $u^\varepsilon$ (see Lemma \ref{lem2} below).

\begin{lem}[boundary corrector]\label{bdarycorlem}
For all $1/2<\tau<1$, there exists $C>0$ such that for all $\psi\in\mathcal C_{M_0}^{1,\omega}$, for all $A\in\mathcal A^{0,\nu_0}$, for all $0<\varepsilon<1$, the unique weak solution $v^\varepsilon\in W^{1,2}(D^\varepsilon(0,2))$ of
\begin{equation}\label{sysbdarycorr}
\left\{
\begin{array}{rll}
-\nabla\cdot A(x/\varepsilon)\nabla v^\varepsilon&=\nabla\cdot A(x/\varepsilon)\nabla(\psi(x'/\varepsilon)\Theta(x',x_d/\varepsilon)),&x\in D^\varepsilon(0,2),\\
v^\varepsilon&=0,&x\in\partial D^\varepsilon(0,2),
\end{array}
\right. 
\end{equation}
satisfies the following estimate: for all $x\in D^\varepsilon(0,3/2)$,
\begin{equation}\label{estbdarycorlem}
|v^\varepsilon(x)|\leq \frac{C_0\delta(x)^\tau}{\varepsilon^\tau},
\end{equation}
where $\delta(x):=x_d-\varepsilon\psi(x'/\varepsilon)$.
\end{lem}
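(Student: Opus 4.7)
The plan is to combine a uniform $L^\infty$ bound on $v^\varepsilon$ with a blow-up argument near the boundary. As a preparatory step, I would introduce the lift $w^\varepsilon := v^\varepsilon + g$ with $g(x) := \psi(x'/\varepsilon)\Theta(x',x_d/\varepsilon)$, which turns \eqref{sysbdarycorr} into the \emph{homogeneous} Dirichlet problem
\begin{equation*}
-\nabla\cdot A(x/\varepsilon)\nabla w^\varepsilon = 0 \text{ in } D^\varepsilon(0,2), \qquad w^\varepsilon|_{\partial D^\varepsilon(0,2)} = g,
\end{equation*}
whose Dirichlet data satisfies $\|g\|_{L^\infty(\partial D^\varepsilon(0,2))}\leq M_0$ (even though $|\nabla g|\sim\varepsilon^{-1}$ on its support). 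A maximum-principle / Poisson-kernel bound for divergence-form elliptic systems with H\"older continuous coefficients should give $\|w^\varepsilon\|_{L^\infty(D^\varepsilon(0,2))}\leq CM_0$, and hence $\|v^\varepsilon\|_{L^\infty(D^\varepsilon(0,2))}\leq C$ uniformly in $\varepsilon$, $\psi$ and $A$.

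Next, for $x_0 \in D^\varepsilon(0,3/2)$ I would distinguish two regimes. When $\delta(x_0) \geq \varepsilon/2$, the estimate \eqref{estbdarycorlem} follows directly from the $L^\infty$ bound because $(\delta(x_0)/\varepsilon)^\tau \geq 2^{-\tau}$. When $\delta(x_0) < \varepsilon/2$ (so in particular one may assume $\varepsilon<1/2$), I would blow up at the boundary point $\bar x_0 := (x_0', \varepsilon\psi(x_0'/\varepsilon)) \in \Delta^\varepsilon$ by setting $V(y) := v^\varepsilon(\bar x_0 + \varepsilon y)$. A direct change-of-variables computation then gives
\begin{equation*}
-\nabla_y\cdot A(\bar x_0/\varepsilon + y)\nabla_y V = \nabla_y\cdot\tilde F \quad\text{in } D^1_{\tilde\psi}(0,1), \qquad V|_{\Delta^1_{\tilde\psi}(0,1)} = 0,
\end{equation*}
where $\tilde\psi(y') := \psi(\bar x_0'/\varepsilon + y') - \psi(\bar x_0'/\varepsilon)$ still belongs to $\mathcal C_{M_0}^{1,\omega}$ with $\tilde\psi(0)=0$ and the same modulus of continuity $\omega$, the translated coefficients $A(\bar x_0/\varepsilon + \cdot)$ still belong to $\mathcal A^{0,\nu_0}$ by periodicity, and $\tilde F(y) := \varepsilon F(\bar x_0 + \varepsilon y)$ satisfies $\|\tilde F\|_{L^\infty} \leq C$ (the $\varepsilon^{-1}$ factor in $|\nabla g|$ being exactly absorbed by the $\varepsilon$ coming from the change of variables). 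From the first step one also has $\|V\|_{L^\infty(D^1_{\tilde\psi}(0,1))} \leq C$.

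The problem for $V$ is now posed at unit scale with a uniformly regular boundary and uniformly bounded data, so I would apply the classical boundary H\"older estimate of Theorem \ref{theoclassholder} to $V$ at the exponent $\tau \in (1/2,1)$, with $\kappa$ chosen so that $\tau < 1 - d/(d+\kappa)$, to obtain $[V]_{C^{0,\tau}(\overline{D^1_{\tilde\psi}(0,1/2)})} \leq C$ uniformly in $\varepsilon$, $\psi$, $A$. Since $V(0) = v^\varepsilon(\bar x_0) = 0$ and $y_0 := (x_0 - \bar x_0)/\varepsilon = (0, \delta(x_0)/\varepsilon)$ lies in $D^1_{\tilde\psi}(0, 1/2)$ under the assumption $\delta(x_0) < \varepsilon/2$, this would yield
\begin{equation*}
|v^\varepsilon(x_0)| = |V(y_0) - V(0)| \leq [V]_{C^{0,\tau}}\,|y_0|^\tau \leq C\bigl(\delta(x_0)/\varepsilon\bigr)^\tau,
\end{equation*}
which is \eqref{estbdarycorlem}. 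The hard part of the proof will be the uniform $L^\infty$ bound on $w^\varepsilon$ in the first step: a direct Schauder argument would produce constants depending on the $C^{1,\omega}$ norm of $\varepsilon\psi(\cdot/\varepsilon)$, which blows up as $\varepsilon\to 0$, so the required $L^\infty\to L^\infty$ estimate must depend only on $M_0$, $\lambda$, $\nu_0$ and $\omega$, and will crucially exploit the $L^\infty$-boundedness of the Dirichlet data $g$ together with the homogeneity of the equation satisfied by $w^\varepsilon$.
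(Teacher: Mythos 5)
Your proposal takes a genuinely different route from the paper, and it contains a real gap in its first step.

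The paper's proof of Lemma \ref{bdarycorlem} represents $v^\varepsilon$ via Green's kernel, integrates by parts, and bounds the resulting integral directly with the gradient estimates of Lemma \ref{lemestgreend=3}, which are derived solely from the uniform boundary H\"older estimate (Proposition \ref{propboundaryholder}) together with \emph{classical} Lipschitz bounds rescaled to scale $O(\varepsilon)$. No uniform-in-$\varepsilon$ Lipschitz or $L^\infty$-maximum-principle input is needed, so nothing is circular. The resulting bound $|v^\varepsilon(x)|\leq C\delta(x)^\tau/\varepsilon^\tau$ is permitted to grow like $\varepsilon^{-\tau}$ when $\delta(x)\sim 1$; the paper never asserts, nor needs, a uniform $L^\infty$ bound on $v^\varepsilon$ at this stage.

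Your argument, by contrast, rests entirely on the uniform $L^\infty$ bound $\|w^\varepsilon\|_{L^\infty(D^\varepsilon(0,2))}\leq C$, i.e.\ an Agmon--Miranda maximum principle for the operator $-\nabla\cdot A(x/\varepsilon)\nabla$ in the bumpy domain with constants independent of $\varepsilon$. For the scalar case $N=1$ this is the classical maximum principle and your argument then works (and in fact yields the stronger bound $|v^\varepsilon(x)|\leq C\min\{1,(\delta(x)/\varepsilon)^\tau\}$). For systems $N>1$, however, this uniform $L^\infty\to L^\infty$ bound is exactly the content of Lemma \ref{lemLinftygLinfty} of the paper, and its proof hinges on the uniform Poisson-kernel estimate $|\widetilde{P}^\varepsilon(x,\tilde x)|\leq C\delta(x)/|x-\tilde x|^d$, which in turn requires the uniform boundary Lipschitz estimate for Green's kernel --- precisely what the boundary corrector lemma is a tool to prove. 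At the stage where Lemma \ref{bdarycorlem} is invoked, the best available Green-gradient bound is \eqref{est3nablagepsb}, namely $|\nabla_2\widetilde{G}^\varepsilon(x,\tilde x)|\leq C\delta(x)^\tau\delta(\tilde x)^\tau/(\varepsilon|x-\tilde x|^{d-2+2\tau})$ for $|x-\tilde x|>\varepsilon$; evaluated at $\delta(\tilde x)\sim\varepsilon$ and $|x-\tilde x|\sim 1$ this gives $C\delta(x)^\tau\varepsilon^{\tau-1}$, which blows up and does \emph{not} control the harmonic measure of the bumpy domain uniformly. So your step one is logically circular in the system case. You do sense this in your final remark, but the ``crucial exploitation of the $L^\infty$-boundedness of $g$'' you invoke there is precisely what cannot be carried out at this point without the main theorem.

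Your second step --- blowing up at a boundary point $\bar x_0$ to unit scale, noting that the factor $\varepsilon^{-1}$ in $\nabla g$ is exactly cancelled by the factor $\varepsilon$ from the change of variables so that the rescaled flux $\tilde F$ is $O(1)$, then applying the classical boundary H\"older estimate of Theorem \ref{theoclassholder} and using $V(0)=0$ --- is correct, and it does cleanly produce the target weight $(\delta(x)/\varepsilon)^\tau$ in the boundary layer. The architecture is thus sound except for the absence of a non-circular substitute for the uniform $L^\infty$ bound on $w^\varepsilon$.
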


\begin{rem}
Notice that $v^\varepsilon$ depends on $\psi$ even if the dependence is not explicitly written. However, the constant $C_0>0$ in the above inequality is uniform for $\psi\in\mathcal C_{M_0}^{1,\omega}$.
\end{rem}

The proof of Lemma \ref{bdarycorlem} follows from the representation of $v^\varepsilon$ thanks to Green's kernel $\widetilde{G}^\varepsilon=\widetilde{G}^\varepsilon(x,\tilde{x})$ associated to the operator $-\nabla\cdot A(x/\varepsilon)\nabla$ and to the oscillating domain $D^\varepsilon(0,2)$. An estimate of $\nabla_2\widetilde{G}^\varepsilon$ is the key.

\begin{lem}[estimate of Green's kernel, $d\geq 3$]\label{lemestgreend=3}
For all $0<\tau<1$, there exists $C>0$ such that for all $\psi\in\mathcal C_{M_0}^{1,\omega}$, for all $A\in\mathcal A^{0,\nu_0}$, for all $0<\varepsilon<1$:
\begin{enumerate}[label=(\arabic*)]
\item for all $x,\ \tilde{x}\in D^\varepsilon(0,7/4)$,
\begin{align}
|\widetilde{G}^\varepsilon(x,\tilde{x})|\leq \frac{C\delta(x)^\tau}{|x-\tilde{x}|^{d-2+\tau}},\label{est3gepsa}\\
|\widetilde{G}^\varepsilon(x,\tilde{x})|\leq \frac{C\delta(x)^\tau\delta(\tilde{x})^\tau}{|x-\tilde{x}|^{d-2+2\tau}},\label{est3gepsb}
\end{align}
\item for all $x,\ \tilde{x}\in D^\varepsilon(0,3/2)$,
\begin{align}
|\nabla_2\widetilde{G}^\varepsilon(x,\tilde{x})|\leq \frac{C\delta(x)^\tau}{|x-\tilde{x}|^{d-1+\tau}},\quad\mbox{for}\quad |x-\tilde{x}|\leq\varepsilon,\label{est3nablagepsa}\\
|\nabla_2\widetilde{G}^\varepsilon(x,\tilde{x})|\leq \frac{C\delta(x)^\tau\delta(\tilde{x})^\tau}{\varepsilon|x-\tilde{x}|^{d-2+2\tau}},\quad\mbox{for}\quad |x-\tilde{x}|>\varepsilon.\label{est3nablagepsb}
\end{align}
\end{enumerate}
\end{lem}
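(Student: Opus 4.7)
The plan is a bootstrap, starting from the baseline sup-norm bound $|\widetilde G^\varepsilon(x,\tilde x)| \leq C/|x-\tilde x|^{d-2}$ provided by Corollary \ref{coralingreen}, and repeatedly applying the boundary Hölder estimate of Proposition \ref{propboundaryholder} (to the system or its adjoint, which is admissible since $A^\ast \in \mathcal{A}^{0,\nu_0}$) in order to gain the factors $\delta(x)^\tau$ and $\delta(\tilde x)^\tau$. The rescaled boundary Hölder estimate
\[ [u]_{C^{0,\tau}(\overline{D^\varepsilon(y', \rho/2)})} \leq C \rho^{-d/2-\tau}\|u\|_{L^2(D^\varepsilon(y',\rho))} \]
(valid because the class $\mathcal{C}^{1,\omega}_{M_0}$ and the periodicity assumption transform covariantly under the natural dilation $y \mapsto y'+\rho\xi$, with effective oscillation scale $\varepsilon/\rho$) together with the vanishing $u(\bar x) = 0$ at the foot $\bar x := (x',\varepsilon\psi(x'/\varepsilon))$ is the engine that produces each gain of a $\delta^\tau$ factor.

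To prove \eqref{est3gepsa}, I fix $\tilde x$ and set $u(x) := \widetilde G^\varepsilon(x,\tilde x)$, a weak solution of the homogeneous system on $D^\varepsilon(0,2)\setminus \{\tilde x\}$ vanishing on $\Delta^\varepsilon(0,2)$. Write $r := |x-\tilde x|$. If $\delta(x) \gtrsim r$ the target bound is equivalent to Corollary \ref{coralingreen}; otherwise I apply the rescaled boundary Hölder estimate on $D^\varepsilon(x',r/2)$, a region that stays at distance $\gtrsim r$ from $\tilde x$ (since $|\bar x - \tilde x| \geq 3r/4$). Plugging the baseline sup bound into the $L^2$ norm gives $\|u\|_{L^2} \leq C r^{2-d/2}$, hence $[u]_{C^{0,\tau}} \leq C r^{2-d-\tau}$, and $|u(x)| = |u(x)-u(\bar x)| \leq \delta(x)^\tau [u]_{C^{0,\tau}} \leq C\delta(x)^\tau/r^{d-2+\tau}$. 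For \eqref{est3gepsb} I repeat the same argument in the second variable: fix $x$ and view $v(\tilde y) := \widetilde G^\varepsilon(x,\tilde y)$ as a weak solution of the adjoint system away from $x$, vanishing on $\Delta^\varepsilon(0,2)$. Using \eqref{est3gepsa} as the \emph{improved} baseline gives $\sup_{D^\varepsilon(\tilde x',r/2)}|v| \leq C\delta(x)^\tau/r^{d-2+\tau}$, hence $\|v\|_{L^2} \leq C\delta(x)^\tau r^{2-d/2-\tau}$, and the same Hölder-then-vanishing trick produces the bound \eqref{est3gepsb}.

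The gradient estimates \eqref{est3nablagepsa} and \eqref{est3nablagepsb} then follow by applying, on a ball centered at $\tilde x$, an interior Lipschitz estimate to the adjoint equation. At small scale $|x-\tilde x| \leq \varepsilon$, one rescales to reduce the coefficients to $C^{0,\nu_0}$ functions at unit scale and invokes classical Schauder theory on a ball of radius $\sim r$, giving $|\nabla_2\widetilde G^\varepsilon(x,\tilde x)| \lesssim r^{-1}\sup_{B(\tilde x, r/4)}|v|$, and the sup is controlled by \eqref{est3gepsa}. At large scale $|x-\tilde x| > \varepsilon$, one applies the uniform Avellaneda–Lin interior Lipschitz estimate \cite[Lemma 16]{alin} on a ball of radius $\varepsilon/2$, yielding $|\nabla_2 \widetilde G^\varepsilon(x,\tilde x)| \lesssim \varepsilon^{-1}\sup_{B(\tilde x,\varepsilon/2)}|v|$, and the sup is controlled by \eqref{est3gepsb}. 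The main technical obstacle is the first bootstrap step: ensuring that the rescaled boundary Hölder estimate applies cleanly (which requires checking the $\mathcal{C}^{1,\omega}_{M_0}$ class is preserved under the relevant translations and dilations, and that the region chosen stays uniformly away from the singularity of the Green kernel). The gradient step also needs a careful geometric choice of the interior ball when $\tilde x$ lies close to the boundary, which is handled by the fact that the bound is trivial when $\delta(\tilde x) \lesssim r$ (since then one may enlarge $r$ by a harmless factor and still stay within the sup estimate).
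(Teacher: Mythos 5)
Your treatment of the two Green function estimates \eqref{est3gepsa} and \eqref{est3gepsb} matches the paper's argument: dichotomize on whether $\delta(x)$ (resp.\ $\delta(\tilde x)$) is comparable to $r=|x-\tilde x|$, and in the near-boundary case apply the rescaled uniform boundary H\"older estimate from Proposition~\ref{propboundaryholder} on a region of radius $\sim r$ that stays away from the singularity, exploiting the vanishing of the Green function at the foot on $\Delta^\varepsilon$. This part is correct and essentially the same as the paper's.

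The gradient step, however, has a genuine gap. You propose to bound $\nabla_2\widetilde G^\varepsilon(x,\tilde x)$ via an \emph{interior} Lipschitz estimate on a ball centred at $\tilde x$, and you dismiss the case where $\tilde x$ is close to the boundary by asserting that the bound is ``trivial when $\delta(\tilde x)\lesssim r$.'' This is not so. Look at \eqref{est3nablagepsa}: the right-hand side $C\delta(x)^\tau/|x-\tilde x|^{d-1+\tau}$ contains no factor of $\delta(\tilde x)$, so when $\tilde x$ lies on or arbitrarily close to $\Delta^\varepsilon$ (which is precisely where the cut-off $\Theta(\cdot,\cdot/\varepsilon)$ in Lemma~\ref{bdarycorlem} puts the integration variable) the estimate is a genuine boundary gradient bound --- it is morally a Poisson kernel estimate --- and nothing about it is trivial. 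Likewise for \eqref{est3nablagepsb}: the factor $\delta(\tilde x)^\tau$ on the right makes the claimed bound \emph{smaller} as $\tilde x$ approaches the boundary, so the near-boundary regime is the \emph{hard} case, not a free one. An interior estimate on $B(\tilde x,r/4)$ simply does not apply when $\delta(\tilde x)<r/4$, and ``enlarging $r$'' does not produce the gradient bound from a sup bound.

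What is actually needed here, and what the paper does, is a classical \emph{boundary} Lipschitz estimate (Theorem~\ref{theoclasslip}) applied to $\widetilde G^{*,\varepsilon}(\cdot,x)$ on the boundary cylinder $D^\varepsilon(\tilde x',r/2)$, after a rescaling that makes both the coefficients and the boundary graph slowly oscillating. When $r\le\varepsilon$ one zooms to the scale $r$, so the effective parameter is $\varepsilon'=2\varepsilon/r\ge 2$, and the constant in Theorem~\ref{theoclasslip} is controlled because it depends only on $\|A\|_{C^{0,\nu_0}}$ and $\|\nabla(\varepsilon'\psi'(\cdot/\varepsilon'))\|_{C^{0,\nu_0}}$, both $O(1)$ at that scale --- and crucially \emph{not} on $\|\psi\|_{L^\infty}$. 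When $r>\varepsilon$ one applies the same estimate at the fixed small scale $O(\varepsilon)$ and feeds in \eqref{est3gepsb}, which is where the extra factor $\varepsilon^{-1}$ in \eqref{est3nablagepsb} comes from. You should replace the interior-ball step with this boundary argument; the rest of your outline then goes through.
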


\begin{rem}
Although the gradient estimate holds for all $x,\ \tilde{x}\in D^\varepsilon(0,3/2)$, thanks to the cut-off $\Theta(x',x_d/\varepsilon)$ we essentially need it in a small layer of size $O(\varepsilon)$ located near the oscillating boundary.
\end{rem}

\begin{rem}
Notice that the decay at large scales of $\nabla_2\widetilde{G}^\varepsilon$ is not better than the decay of $\widetilde{G}^\varepsilon$. This comes from the fact that the bound \eqref{est3nablagepsb} is obtained from \eqref{est3gepsb} by applying Lipschitz estimates at small scale $O(\varepsilon)$.
\end{rem}

\begin{rem}
Notice that the estimates \eqref{est3gepsa} and \eqref{est3gepsb} only hold for all $x,\ \tilde{x}\in D^\varepsilon(0,7/4)$, not all $x,\ \tilde{x}\in D^\varepsilon(0,2)$. This comes from the fact that the boundary $\partial D^\varepsilon(0,2)$ is not smooth at the points lying on $\partial D^\varepsilon(0,2)\cap \Delta^\varepsilon\cap\{|x'|=2\}$.
\end{rem}

\begin{lem}[estimate of Green's kernel, $d=2$]\label{lemestgreend=2}
For all $0<\tau<1$, there exists $C>0$ such that for all $\psi\in\mathcal C_{M_0}^{1,\omega}$, for all $A\in\mathcal A^{0,\nu_0}$, for all $0<\varepsilon<1$:
\begin{enumerate}[label=(\arabic*)]
\item for all $x,\ \tilde{x}\in D^\varepsilon(0,7/4)$,
\begin{align}
|\widetilde{G}^\varepsilon(x,\tilde{x})|\leq C(|\log|x-\tilde{x}||+1)\frac{\delta(x)^\tau}{|x-\tilde{x}|^{\tau}},\label{est3gepsad=2}\\
|\widetilde{G}^\varepsilon(x,\tilde{x})|\leq C(|\log|x-\tilde{x}||+1)\frac{\delta(x)^\tau\delta(\tilde{x})^\tau}{|x-\tilde{x}|^{2\tau}},\label{est3gepsbd=2}
\end{align}
\item for all $x,\ \tilde{x}\in D^\varepsilon(0,3/2)$,
\begin{align}
|\nabla_2\widetilde{G}^\varepsilon(x,\tilde{x})|\leq C(|\log|x-\tilde{x}|+1)\frac{\delta(x)^\tau}{|x-\tilde{x}|^{1+\tau}},\quad\mbox{for}\quad |x-\tilde{x}|\leq\varepsilon,\label{est3nablagepsad=2}\\
|\nabla_2\widetilde{G}^\varepsilon(x,\tilde{x})|\leq C(|\log|x-\tilde{x}||+1)\frac{\delta(x)^\tau\delta(\tilde{x})^\tau}{\varepsilon|x-\tilde{x}|^{2\tau}},\quad\mbox{for}\quad |x-\tilde{x}|>\varepsilon.\label{est3nablagepsbd=2}
\end{align}
\end{enumerate}
\end{lem}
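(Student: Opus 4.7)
The plan is to mimic the strategy of the three-dimensional analogue Lemma~\ref{lemestgreend=3}, using Corollary~\ref{coralingreen} as the base estimate, which in dimension two is logarithmic: $|\widetilde{G}^\varepsilon(x,\tilde{x})|\leq C(|\log|x-\tilde{x}||+1)$. This logarithmic factor propagates linearly through each subsequent step. Fix throughout $0<\tau<1$, $\psi\in\mathcal{C}_{M_0}^{1,\omega}$, $A\in\mathcal{A}^{0,\nu_0}$ and $0<\varepsilon<1$, and write $\bar{x}:=(x',\varepsilon\psi(x'/\varepsilon))$ so that $|x-\bar{x}|=\delta(x)$ and $\widetilde{G}^\varepsilon(\bar{x},\tilde{x})=0$ whenever $|x'|<2$.

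For \eqref{est3gepsad=2}, let $x,\tilde{x}\in D^\varepsilon(0,7/4)$; one may assume $|x-\tilde{x}|$ is small enough that all subsequent inclusions remain within $D^\varepsilon(0,15/8)$, otherwise Corollary~\ref{coralingreen} already gives the claim with room to spare. If $\delta(x)\geq|x-\tilde{x}|/4$ the bound follows directly from Corollary~\ref{coralingreen}. Otherwise set $r:=|x-\tilde{x}|/4$: on the boundary cube $D^\varepsilon(x',r)$, the map $y\mapsto\widetilde{G}^\varepsilon(y,\tilde{x})$ satisfies the homogeneous system (the pole at $\tilde{x}$ lying at distance at least $3r$) with zero Dirichlet data on $\Delta^\varepsilon(x',r)$. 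The rescaled boundary H\"older estimate of Proposition~\ref{propboundaryholder}, combined with the pointwise bound of Corollary~\ref{coralingreen} and $|D^\varepsilon(x',r)|\sim r^d$, yields
\begin{equation*}
[\widetilde{G}^\varepsilon(\cdot,\tilde{x})]_{C^{0,\tau}(D^\varepsilon(x',r/2))}\leq Cr^{-\tau-d/2}\|\widetilde{G}^\varepsilon(\cdot,\tilde{x})\|_{L^2(D^\varepsilon(x',r))}\leq Cr^{-\tau}(|\log r|+1).
\end{equation*}
Since $\widetilde{G}^\varepsilon(\bar{x},\tilde{x})=0$ and $|x-\bar{x}|=\delta(x)$, this gives
\begin{equation*}
|\widetilde{G}^\varepsilon(x,\tilde{x})|\leq[\widetilde{G}^\varepsilon(\cdot,\tilde{x})]_{C^{0,\tau}}\delta(x)^\tau\leq C(|\log|x-\tilde{x}||+1)\frac{\delta(x)^\tau}{|x-\tilde{x}|^\tau},
\end{equation*}
which is \eqref{est3gepsad=2}. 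The symmetric bound \eqref{est3gepsbd=2} follows by iterating in the second variable: $\tilde{x}\mapsto\widetilde{G}^\varepsilon(x,\tilde{x})$ solves the transposed homogeneous system $-\nabla_{\tilde{x}}\cdot A^*(\tilde{x}/\varepsilon)\nabla_{\tilde{x}}\widetilde{G}^\varepsilon(x,\cdot)=0$ on $D^\varepsilon(\tilde{x}',r)$ and vanishes on $\Delta^\varepsilon(\tilde{x}',r)$, so Proposition~\ref{propboundaryholder} applies again, this time starting from the just-proven bound \eqref{est3gepsad=2} instead of Corollary~\ref{coralingreen}; the weight $\delta(x)^\tau$ is inert in this step and simply multiplies through.

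For the gradient estimates, the plan is to apply classical Schauder--Lipschitz estimates at the microscopic scale to the transposed Green function $\tilde{y}\mapsto\widetilde{G}^\varepsilon(x,\tilde{y})$. Set $r_0:=\min(|x-\tilde{x}|/4,\varepsilon)$ and consider the boundary (or interior, if $\delta(\tilde{x})$ is large enough) domain of radius $r_0$ around $\tilde{x}$; by construction the pole at $x$ is excluded. A rescaling by $r_0$ brings the problem to the unit scale, where the coefficients $A(\cdot)$ are of class $C^{0,\nu_0}$ on a fixed ball and the boundary profile $\psi$ retains its macroscopic regularity, so Theorem~\ref{theoclasslip} (in its interior or boundary form depending on the position of $\tilde{x}$) produces
\begin{equation*}
|\nabla_2\widetilde{G}^\varepsilon(x,\tilde{x})|\leq\frac{C}{r_0}\sup\bigl|\widetilde{G}^\varepsilon(x,\cdot)\bigr|
\end{equation*}
on the relevant neighborhood. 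Feeding in \eqref{est3gepsad=2} or \eqref{est3gepsbd=2} according to whether $r_0=|x-\tilde{x}|/4$ (i.e.\ $|x-\tilde{x}|\leq\varepsilon$) or $r_0=\varepsilon$ (i.e.\ $|x-\tilde{x}|>\varepsilon$) yields \eqref{est3nablagepsad=2} and \eqref{est3nablagepsbd=2} respectively.

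The main technical obstacle is the boundary gradient estimate when $\tilde{x}$ lies within distance $O(\varepsilon)$ of $\Delta^\varepsilon$: after the $\varepsilon$-blow-up one is reduced to a single fixed elliptic system with $C^{0,\nu_0}$ coefficients on a domain whose boundary inherits only the modulus of continuity $\omega$ on $\nabla\psi$, so the relevant Lipschitz bound is the classical (non-homogenized) one, not the uniform-in-$\varepsilon$ estimate of Result~\ref{rst2}. Beyond that, the only care required is bookkeeping: separating the regimes $|x-\tilde{x}|\leq\varepsilon$ and $|x-\tilde{x}|>\varepsilon$, managing the lateral faces of $\partial D^\varepsilon(0,2)$ (which is why the assertions hold on $D^\varepsilon(0,7/4)$ and $D^\varepsilon(0,3/2)$, not on the whole of $D^\varepsilon(0,2)$), and checking that the logarithmic factor $(|\log|x-\tilde{x}||+1)$ transfers through the H\"older and Lipschitz steps without degradation, exactly replacing the powers $|x-\tilde{x}|^{2-d}$ that appear in the three-dimensional proof.
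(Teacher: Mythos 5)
Your proposal is correct and follows exactly the route the paper intends: the paper's proof of Lemma \ref{lemestgreend=2} is stated to be "the same approach" as Lemma \ref{lemestgreend=3} with the logarithmic bound \eqref{estgreend-22} substituted for the power bound \eqref{estgreend-23}, which is precisely what you carry out -- case split on $\delta(x)$ versus $|x-\tilde x|$, rescaled boundary H\"older estimate from Proposition \ref{propboundaryholder} for \eqref{est3gepsad=2}, iteration in the second variable via $\widetilde G^{*,\varepsilon}$ for \eqref{est3gepsbd=2}, then classical Lipschitz estimates at scale $\min(|x-\tilde x|,\varepsilon)$ for the gradient bounds. (The only cosmetic caveat: the factor $4$ in $r:=|x-\tilde x|/4$ should be replaced by a constant depending on $M_0$, as the paper's $8\eta+1$ does, so that $D^\varepsilon(x',r)$ both excludes $\tilde x$ and stays inside $D^\varepsilon(0,15/8)$ for all admissible $\psi$.)
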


Below we only address the proof of Lemma \ref{lemestgreend=3}. The case $d=2$ is handled by the same approach, using the estimate \eqref{estgreend-22} on Green's kernel.

\subsection{Proof of Lemma \ref{bdarycorlem}}

Let us assume that $d\geq 3$. The case $d=2$ is handled similarly using the estimates \eqref{est3nablagepsad=2} and \eqref{est3nablagepsbd=2}. We can represent $v^\varepsilon$ thanks to Green's kernel: for all $x\in D^\varepsilon(0,3/2)$,
\begin{equation*}
v^\varepsilon(x)=\int_{D^\varepsilon(0,2)}\widetilde{G}^\varepsilon(x,\tilde{x})\nabla\cdot A(\tilde{x}/\varepsilon)\nabla(\psi(\tilde{x}'/\varepsilon)\Theta(\tilde{x}',\tilde{x}_d/\varepsilon))d\tilde{x}.
\end{equation*}
Integrating by parts, we get
\begin{equation*}
v^\varepsilon(x)=-\int_{D^\varepsilon(0,2)}\nabla_2 \widetilde{G}^\varepsilon(x,\tilde{x})A(\tilde{x}/\varepsilon)\nabla(\psi(\tilde{x}'/\varepsilon)\Theta(\tilde{x}',\tilde{x}_d/\varepsilon))d\tilde{x}.
\end{equation*}
We now use the fact that the cut-off $\Theta(\tilde{x}',\tilde{x}_d/\varepsilon)$ is supported in $[-3/2,3/2]^{d-1}\times[-3M_0\varepsilon/2,3M_0\varepsilon/2]$, and split the latter integral:
\begin{align*}
v^\varepsilon(x)&=-\int_{D^\varepsilon(0,2)\cap[-3/2,3/2]^{d-1}\times[-3M_0\varepsilon/2,3M_0\varepsilon/2]\cap\{|x-\tilde{x}|\leq\varepsilon\}}\nabla_2 \widetilde{G}^\varepsilon(x,\tilde{x})A(\tilde{x}/\varepsilon)\nabla(\psi(\tilde{x}'/\varepsilon)\Theta(\tilde{x}',\tilde{x}_d/\varepsilon))d\tilde{x}\\
&\qquad-\int_{D^\varepsilon(0,2)\cap[-3/2,3/2]^{d-1}\times[-3M_0\varepsilon/2,3M_0\varepsilon/2]\cap\{|x-\tilde{x}|>\varepsilon\}}\nabla_2 \widetilde{G}^\varepsilon(x,\tilde{x})A(\tilde{x}/\varepsilon)\nabla(\psi(\tilde{x}'/\varepsilon)\Theta(\tilde{x}',\tilde{x}_d/\varepsilon))d\tilde{x}\\
&=I_1+I_2.
\end{align*}
Using $\psi\in\mathcal C_{M_0}^{1,\omega}$ we get on the one hand by \eqref{est3nablagepsa}
\begin{align*}
|I_1|&\leq C/\varepsilon\int_{D^\varepsilon(0,2)\cap[-3/2,3/2]^{d-1}\times[-3M_0\varepsilon/2,3M_0\varepsilon/2]\cap\{|x-\tilde{x}|\leq\varepsilon\}}\frac{\delta(x)^\tau}{|x-\tilde{x}|^{d-1+\tau}}d\tilde{x}\\
&\leq C\delta(x)^\tau/\varepsilon\int_{|x-\tilde{x}|\leq\varepsilon}\frac{1}{|x-\tilde{x}|^{d-1+\tau}}d\tilde{x}\\
&\leq C\delta(x)^\tau/\varepsilon^\tau\int_{|\tilde{y}|\leq 1}\frac{1}{|\tilde{y}|^{d-1+\tau}}d\tilde{y}\leq C\delta(x)^\tau/\varepsilon^\tau,
\end{align*}
and on the other hand by \eqref{est3nablagepsb}
\begin{align*}
|I_2|&\leq C/\varepsilon^2\int_{D^\varepsilon(0,2)\cap[-3/2,3/2]^{d-1}\times[-3M_0\varepsilon/2,3M_0\varepsilon/2]\cap\{|x-\tilde{x}|>\varepsilon\}}\frac{\delta(x)^\tau\delta(\tilde{x})^\tau}{|x-\tilde{x}|^{d-2+2\tau}}d\tilde{x}\\
&\leq C\delta(x)^\tau/\varepsilon^2\int_{\mathbb R^{d-1}\times[-3M_0\varepsilon/2,3M_0\varepsilon/2]\cap\{|x-\tilde{x}|>\varepsilon\}}\frac{\delta(\tilde{x})^\tau}{|x-\tilde{x}|^{d-2+2\tau}}d\tilde{x}\\
&\leq C\delta(x)^\tau\varepsilon^{-2+\tau}\Biggl\{\int_{|x'-\tilde{x}'|>\varepsilon}\frac{1}{|x'-\tilde{x}'|^{d-2+2\tau}}\int_{-3M_0\varepsilon/2}^{3M_0\varepsilon/2}\frac{1}{\left\{1+\frac{(x_d-\tilde{x}_d)^2}{|x'-\tilde{x}'|^{2}}\right\}^{(d-2+2\tau)/2}}d\tilde{x}_dd\tilde{x}'\Biggr.\\
&\qquad\Biggl.+\int_{\{|x-\tilde{x}|>\varepsilon\}\cap\{|x'-\tilde{x}'|<\varepsilon\}\times[-3M_0\varepsilon/2,3M_0\varepsilon/2]}\frac{1}{|x-\tilde{x}|^{d-2+2\tau}}d\tilde{x}\Biggr\}\\
&\leq C\delta(x)^\tau\varepsilon^{-2+\tau}\Biggl\{\varepsilon\int_{|x'-\tilde{x}'|>\varepsilon}\frac{1}{|x'-\tilde{x}'|^{d-2+2\tau}}d\tilde{x}'+\int_{\{|x-\tilde{x}|>\varepsilon\}\cap\{|x'-\tilde{x}'|<\varepsilon\}\times[-3M_0\varepsilon/2,3M_0\varepsilon/2]}\frac{1}{\varepsilon^{d-2+2\tau}}d\tilde{x}\Biggr\}\\
&\leq C\delta(x)^\tau\varepsilon^{-2+\tau}\Biggl\{\varepsilon^{2-2\tau}\int_{|\tilde{y}'|>1}\frac{1}{|\tilde{y}'|^{d-2+2\tau}}d\tilde{y}'+\varepsilon^{2-2\tau}\Biggr\}\leq C\delta(x)^\tau/\varepsilon^{\tau},
\end{align*}
for $1/2<\tau<1$.

\subsection{Proof of Lemma \ref{lemestgreend=3}}

The first observation is that the estimates \eqref{est3gepsa} and \eqref{est3gepsb} follow directly from the boundary H\"older regularity uniform in $\varepsilon$ of Proposition \ref{propboundaryholder}. Let us sketch the proof.

Fix $0<\varepsilon<1$. Let 
\begin{equation*}
\eta:=7/4\left\{4+(1+4/7M_0)^2\right\}^{1/2};
\end{equation*}
it is an upper bound for the diameter of $D^\varepsilon(0,7/4)$ when $0<\varepsilon<1$. Let $0<\tau<1$, $x,\ \tilde{x}\in D^\varepsilon(0,7/4)$, $r:=|x-\tilde{x}|$ and $\bar{x}:=(x',\varepsilon\psi(x'/\varepsilon))\in\Delta^\varepsilon(0,7/4)$. Notice that $|x-\bar{x}|=x_d-\varepsilon\psi(x'/\varepsilon)=\delta(x)$. Either $\delta(x)=|x-\bar{x}|\geq r/(8\eta+1)$ or $\delta(x)=|x-\bar{x}|<r/(8\eta+1)$. In the former case, \eqref{est3gepsa} follows directly from \eqref{estgreend-23} since $(\delta(x)/r)^\tau\geq (1/(8\eta+1))^\tau$ so that
\begin{equation*}
|\widetilde{G}^\varepsilon(x,\tilde{x})|\leq \frac{C}{|x-\tilde{x}|^{d-2}}\leq \frac{C\delta(x)^\tau}{|x-\tilde{x}|^{d-2+\tau}}.
\end{equation*}
In the latter case $\delta(x)=|x-\bar{x}|<r/(8\eta+1)$, we use the fact that $\widetilde{G}^\varepsilon(\cdot,\tilde{x})$ is a weak solution of 
\begin{equation*}
\left\{
\begin{array}{rll}
-\nabla\cdot A(x/\varepsilon)\nabla \widetilde{G}^\varepsilon(x,\tilde{x})&=0,&x\in D^\varepsilon(\bar{x},r/(8\eta+1)),\\
\widetilde{G}^\varepsilon(x,\tilde{x})&=0,&x\in \Delta^\varepsilon(\bar{x},r/(8\eta+1)).
\end{array}
\right.
\end{equation*}
Notice that
\begin{equation*}
D^\varepsilon(\bar{x},r/(8\eta+1))\subset D^\varepsilon(0,15/8),
\end{equation*}
that $\tilde{x}\notin D^\varepsilon(\bar{x},r/(8\eta+1))$, and that $x\in D^\varepsilon(\bar{x},1/(8\eta+1))$. We want to apply the boundary H\"older estimate \eqref{bdaryholdest} properly rescaled. Consider for $z\in D^{\varepsilon'}_{\psi'}(0,1)$
\begin{equation*}
u^{\varepsilon'}(z):=\widetilde{G}^\varepsilon((r/(8\eta+1))z+\bar{x},\tilde{x}),\quad \varepsilon':=(8\eta+1)\varepsilon/r,\quad\psi':=\psi(\cdot+\bar{x}'/\varepsilon)
\end{equation*}
which solves
\begin{equation*}
\left\{
\begin{array}{rll}
-\nabla\cdot A(z/\varepsilon'+\bar{x}/\varepsilon)\nabla u^{\varepsilon'}&=0,&z\in D^{\varepsilon'}_{\psi'}(0,1),\\
u^{\varepsilon'}&=0,&z\in \Delta^{\varepsilon'}_{\psi'}(0,1).
\end{array}
\right.
\end{equation*}
Now, $\psi'\in\mathcal C_{M_0}^1$, $A(\cdot+\bar{x}/\varepsilon)\in\mathcal A^{0,\nu_0}$ and estimate \eqref{bdaryholdest} implies
\begin{multline*}
(r/(8\eta+1))^\tau[\widetilde{G}^\varepsilon(\cdot,\tilde{x})]_{C^{0,\tau}(\overline{D^\varepsilon(\bar{x},r/(16\eta+2))})}=[u^{\varepsilon'}]_{C^{0,\tau}(\overline{D^{\varepsilon'}_{\psi'}(0,1/2)})}\\
\leq C\|u^{\varepsilon'}\|_{L^2(D^{\varepsilon'}_{\psi'}(0,1))}=C\|\widetilde{G}^\varepsilon(\cdot,\tilde{x})\|_{L^2(D^\varepsilon(\bar{x},r/(8\eta+1)))}\left(r/(8\eta+1)\right)^{-d/2},
\end{multline*}
so that using \eqref{estgreend-23} 
\begin{equation*}
\begin{aligned}
|\widetilde{G}^\varepsilon(x,\tilde{x})|&=|\widetilde{G}^\varepsilon(x,\tilde{x})-\widetilde{G}^\varepsilon(\bar{x},\tilde{x})|\leq [\widetilde{G}^\varepsilon(\cdot,\tilde{x})]_{C^{0,\tau}(\overline{D^\varepsilon(\bar{x},r/(16\eta+2))})}\delta(x)^\tau\\
&\leq \frac{C\delta(x)^\tau}{r^{d/2+\tau}}\|\widetilde{G}^\varepsilon(\cdot,\tilde{x})\|_{L^2(D^\varepsilon(\bar{x},r/(8\eta+1)))}\leq \frac{C\delta(x)^\tau}{r^{d-2+\tau}}=\frac{C\delta(x)^\tau}{|x-\tilde{x}|^{d-2+\tau}},
\end{aligned}
\end{equation*}
which is \eqref{est3gepsa}. Estimate \eqref{est3gepsb} is obtained in a similar manner by relying on the bound \eqref{est3gepsa} instead of \eqref{estgreend-23} and by considering $\widetilde{G}^{*,\varepsilon}=\widetilde{G}^{*,\varepsilon}(\tilde{x},x)$ the Green kernel associated to the operator $-\nabla\cdot A^{*}(\tilde{x}/\varepsilon)\nabla$ and the domain $D^\varepsilon(0,2)$, where $(A^*)^{\alpha\beta}_{ij}:=A^{\beta\alpha}_{ji}$. Notice that $\widetilde{G}^{*,\varepsilon}(\tilde{x},x)=\widetilde{G}^{\varepsilon}(x,\tilde{x})^T$ for all $x,\ \tilde{x}\in D^{\varepsilon}(0,2)$. 

The gradient estimates \eqref{est3nablagepsa} and \eqref{est3nablagepsb} now follow from classical Lipschitz estimates applied at small scale. Let $x,\ \tilde{x}\in D^\varepsilon(0,3/2)$ and $r:=|x-\tilde{x}|$. Notice that 
\begin{equation*}
\widetilde{G}^\varepsilon(x,\cdot)^T=\widetilde{G}^{*,\varepsilon}(\cdot,x)\quad\mbox{so that}\quad\nabla_2\widetilde{G}^\varepsilon(x,\cdot)^T=\nabla_1 \widetilde{G}^{*,\varepsilon}(\cdot,x),
\end{equation*}
and
\begin{equation*}
\left\{
\begin{array}{rll}
-\nabla\cdot A^*(\hat{x}/\varepsilon)\nabla \widetilde{G}^{*,\varepsilon}(\hat{x},x)&=0,&\hat{x}\in D^\varepsilon(\tilde{x},r/2),\\
\widetilde{G}^{*,\varepsilon}(\hat{x},x)&=0,&\hat{x}\in \Delta^\varepsilon(\tilde{x},r/2).
\end{array}
\right.
\end{equation*}
If $r\leq\varepsilon$, consider for $z\in D^{\varepsilon'}_{\psi'}(\tilde{x},1)$
\begin{equation*}
u^{\varepsilon'}(z)=\widetilde{G}^{*,\varepsilon}(r/2(z-\tilde{x})+\tilde{x},x),\quad \varepsilon':=2\varepsilon/r,\quad \psi':=\psi(\cdot+\tilde{x}'(1/\varepsilon-1/\varepsilon'))
\end{equation*}
which solves
\begin{equation*}
\left\{
\begin{array}{rll}
-\nabla\cdot A^*(z/\varepsilon'+\tilde{x}(1/\varepsilon-1/\varepsilon'))\nabla u^{\varepsilon'}&=0,&z\in D^{\varepsilon'}_{\psi'}(\tilde{x}',1),\\
u^{\varepsilon'}&=0,&z\in \Delta^{\varepsilon'}_{\psi'}(\tilde{x}',1).
\end{array}
\right.
\end{equation*}
Now, $A^*(\cdot+\tilde{x}(1/\varepsilon-1/\varepsilon'))\in\mathcal A^{0,\nu_0}$, and the classical Lipschitz estimate yields
\begin{equation}\label{classlipestueps'}
\|\nabla u^{\varepsilon'}\|_{L^\infty(D^{\varepsilon'}_{\psi'}(\tilde{x}',1/2))}\leq C'\|u^{\varepsilon'}\|_{L^\infty(D^{\varepsilon'}_{\psi'}(\tilde{x}',1))},
\end{equation}
where $C'$ in the previous inequality depends a priori on $\varepsilon'$ but only through $\|A^*(\cdot+\tilde{x}(1/\varepsilon-1/\varepsilon'))\|_{C^{0,\nu_0}}$ and $\|\nabla(\varepsilon'\psi'(\cdot/\varepsilon'))\|_{C^{0,\nu_0}}$: since $\varepsilon'\geq 2$,
\begin{equation*}
\|A^*(\cdot/\varepsilon'+\tilde{x}(1/\varepsilon-1/\varepsilon'))\|_{C^{0,\nu_0}}=O(1)\ \mbox{and}\ \|\nabla(\varepsilon'\psi'(\cdot/\varepsilon'))\|_{C^{0,\mu}}=O(1).
\end{equation*}
Thus $C'$ can be taken uniform in $\varepsilon'$. Rescaling and applying \eqref{est3gepsa} finally gives
\begin{equation*}
\|\nabla_2\widetilde{G}^\varepsilon(x,\cdot)\|_{L^\infty(D^\varepsilon(\tilde{x},r/4))}\leq \frac{C}{r}\|\widetilde{G}^\varepsilon(x,\cdot)\|_{L^\infty(D^\varepsilon(\tilde{x},r/2))}\leq \frac{C}{r}\sup_{\hat{x}\in D^\varepsilon(\tilde{x},r/2)}\frac{\delta(x)^\tau}{|x-\hat{x}|^{d-2+\tau}}\leq \frac{C\delta(x)^\tau}{r^{d-1+\tau}},
\end{equation*}
since $|x-\hat{x}|\geq |x-\tilde{x}|-|\tilde{x}-\hat{x}|\geq r/2$ for all $\hat{x}\in D^\varepsilon(\tilde{x},r/2)$. Thus
\begin{equation*}
|\nabla_2\widetilde{G}^\varepsilon(x,\tilde{x})|\leq \frac{C\delta(x)^\tau}{|x-\tilde{x}|^{d-1+\tau}}.
\end{equation*}
If $r>\varepsilon$, $D^\varepsilon(\tilde{x},\varepsilon/2)\subset D^\varepsilon(\tilde{x},r/2)$, so that we may directly apply the classical estimate at small scale in combination with \eqref{est3gepsb}
\begin{multline*}
\|\nabla_2\widetilde{G}^\varepsilon(x,\cdot)\|_{L^\infty(D^\varepsilon(\tilde{x},\varepsilon/4))}\leq \frac{C}{\varepsilon}\|\widetilde{G}^\varepsilon(x,\cdot)\|_{L^\infty(D^\varepsilon(\tilde{x},\varepsilon/2))}\\
\leq \frac{C}{\varepsilon}\sup_{\hat{x}\in D^\varepsilon(\tilde{x},\varepsilon/2)}\frac{\delta(x)^\tau\delta(\hat{x})^\tau}{|x-\hat{x}|^{d-2+2\tau}}\leq \frac{C\delta(x)^\tau\delta(\tilde{x})^\tau}{\varepsilon|x-\tilde{x}|^{d-2+2\tau}},
\end{multline*}
which implies \eqref{est3nablagepsb}.

\section{Boundary Lipschitz estimate for Poisson's equation}
\label{secliplap}

We consider the problem
\begin{equation}\label{laposc}
\left\{\begin{array}{rll}
-\Delta u^\varepsilon&=0
,&x\in D^\varepsilon(0,1),\\
u^\varepsilon&=0,&x\in\Delta^\varepsilon(0,1).
\end{array}
\right. 
\end{equation}
Our goal is to show the following proposition:
\begin{prop}\label{proplap}
There exists a constant $C>0$ such that for all $\psi\in\mathcal C_{M_0}^{1,\nu_0}$, for all $\varepsilon>0$, for all $u^\varepsilon$ weak solution to \eqref{laposc},
\begin{equation*}
\left\|\nabla u^\varepsilon\right\|_{L^\infty(D^\varepsilon(0,1/2))}\leq C\left\|u^\varepsilon\right\|_{L^\infty(D^\varepsilon(0,1))}.
\end{equation*}
Notice that $C$ depends on $d$, $N$, $M_0$, $\lambda$ and $\nu_0$.
\end{prop}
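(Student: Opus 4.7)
The proof follows the three-step compactness scheme used in Proposition~\ref{propboundaryholder}, but with constant approximants replaced by affine-type approximants adapted to the oscillating boundary. The centrepiece is the harmonic function
\[
\phi^\varepsilon(x) := x_d - \varepsilon\psi(x'/\varepsilon)\Theta(x',x_d/\varepsilon) - \varepsilon v^\varepsilon(x),
\]
with $v^\varepsilon$ the boundary corrector of Lemma~\ref{bdarycorlem} applied with $A\equiv\mathrm{Id}$. Thanks to \eqref{condTheta}, $\phi^\varepsilon$ vanishes on $\Delta^\varepsilon(0,1)$, and \eqref{estbdarycorlem} together with $\|\psi\|_{L^\infty}\le M_0$ gives $|\phi^\varepsilon(x)-x_d|\le \varepsilon M_0 + C\varepsilon^{1-\tau}\delta(x)^\tau$ for any $\tau\in(1/2,1)$. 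The plan is to approximate $u^\varepsilon$ by multiples $c\,\phi^\varepsilon$ at every dyadic scale $\theta^k$ down to the microscopic scale $\varepsilon$, and then deduce the Lipschitz bound by combining the iterations with the classical boundary estimate of Theorem~\ref{theoclasslip} at scale $\varepsilon$.

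The first step is an \emph{improvement lemma}: fix $\mu\in(0,1)$ and pick $\mu<\mu'<1$; I claim there exist $\theta\in(0,1/8)$ and $\varepsilon_0>0$ such that, for $\varepsilon<\varepsilon_0$ and any $u^\varepsilon$ solving \eqref{laposc} with $\|u^\varepsilon\|_{L^\infty(D^\varepsilon(0,1))}\le 1$, some $c\in\mathbb R^N$, $|c|\le C_0$, satisfies $\|u^\varepsilon - c\,\phi^\varepsilon\|_{L^\infty(D^\varepsilon(0,\theta))}\le \theta^{1+\mu}$. Arguing by contradiction, extract $\varepsilon_k\to 0$, $\psi_k\in\mathcal C_{M_0}^{1,\nu_0}$ and solutions $u^{\varepsilon_k}$ violating the conclusion. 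The uniform H\"older bound of Proposition~\ref{propboundaryholder} combined with Ascoli--Arzel\`a (after extending by zero below $\Delta^{\varepsilon_k}$) produces a subsequential uniform limit $u^0$; since $\|\varepsilon_k\psi_k(\cdot/\varepsilon_k)\|_{L^\infty}\le \varepsilon_k M_0\to 0$, the boundaries $\Delta^{\varepsilon_k}$ converge uniformly to $\Delta^0$, whence $u^0$ is harmonic in $D^0(0,3/4)$ and vanishes on $\Delta^0(0,3/4)$. Classical $C^{1,\mu'}$ boundary regularity (Theorem~\ref{theoclasslip}) yields $u^0(x)=c\,x_d + O(|x|^{1+\mu'})$ with $|c|\le C$, so on $D^0(0,\theta)$ one has $\|u^0-c\,x_d\|_{L^\infty}\le C\theta^{1+\mu'}$. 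Combining
\[
\|u^{\varepsilon_k}-u^0\|_{L^\infty}\to 0,\qquad \|u^0-c\,x_d\|_{L^\infty(D^0(0,\theta))}\le C\theta^{1+\mu'},\qquad \|c\,x_d-c\,\phi^{\varepsilon_k}\|_{L^\infty}\le C\varepsilon_k^{1-\tau}\theta^\tau,
\]
and handling the sliver $D^{\varepsilon_k}(0,\theta)\triangle D^0(0,\theta)$ via the uniform H\"older bound (which forces both $u^{\varepsilon_k}$ and $c\,\phi^{\varepsilon_k}$ to be $O(\varepsilon_k^{\mu''})$ there), one gets $\|u^{\varepsilon_k}-c\,\phi^{\varepsilon_k}\|_{L^\infty(D^{\varepsilon_k}(0,\theta))}\le o_k(1)+C\theta^{1+\mu'}$. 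Choosing $\theta$ so small that $C\theta^{1+\mu'}<\tfrac12\theta^{1+\mu}$ contradicts the failure assumption for large $k$.

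The second step is iteration. Setting $\phi^{\varepsilon,j}(x):=\theta^j\,\phi^{\varepsilon/\theta^j}(x/\theta^j)$ (harmonic in $D^\varepsilon(0,2\theta^j)$, zero on $\Delta^\varepsilon\cap\{|x'|<\theta^j\}$, bounded by $C\theta^k$ on $D^\varepsilon(0,\theta^k)$ whenever $k\ge j$), a standard induction shows that, as long as $\varepsilon<\theta^{k-1}\varepsilon_0$, there exist scalars $a_0,\dots,a_{k-1}$ with $|a_j|\le C_0\theta^{j\mu}$ such that
\[
\Bigl\|u^\varepsilon - \sum_{j=0}^{k-1} a_j\,\phi^{\varepsilon,j}\Bigr\|_{L^\infty(D^\varepsilon(0,\theta^k))}\le \theta^{k(1+\mu)}.
\]
The inductive step rescales the residual $w_k$, which is harmonic and vanishes on the boundary, from scale $\theta^k$ to scale $1$, applies the improvement lemma on the rescaled problem in $D^{\varepsilon/\theta^k}_\psi(0,1)$ (admissible since $\varepsilon/\theta^k<\varepsilon_0$), and unscales. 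Summing the geometric series yields $\|u^\varepsilon\|_{L^\infty(D^\varepsilon(0,\theta^k))}\le C\theta^k$, and the whole scheme applies verbatim centred at any $(x_0',\varepsilon\psi(x_0'/\varepsilon))$ with $|x_0'|<1/2$. To close the argument, for $x_0\in D^\varepsilon(0,1/2)$ with $\delta(x_0)\ge\varepsilon$ one picks $k$ with $\theta^k\simeq\delta(x_0)$ and uses the classical interior Lipschitz estimate for $-\Delta$ on $B(x_0,\delta(x_0)/4)$ to get $|\nabla u^\varepsilon(x_0)|\le C\delta(x_0)^{-1}\cdot C\theta^k\le C$; for $\delta(x_0)<\varepsilon$, the blow-up $U^\varepsilon(y):=\varepsilon^{-1}u^\varepsilon(\varepsilon y)$ is harmonic in $D^1_\psi$, vanishes on $\Delta^1_\psi$, and the classical boundary Lipschitz estimate of Theorem~\ref{theoclasslip} (applicable since $\psi\in\mathcal C_{M_0}^{1,\nu_0}$) together with the iteration bound $\|u^\varepsilon\|_{L^\infty(D^\varepsilon(x_0',\varepsilon))}\le C\varepsilon$ gives $|\nabla u^\varepsilon(x_0)|\le C$. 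The case $\varepsilon\ge\varepsilon_0$ is absorbed into classical Schauder.

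The main obstacle is the improvement lemma, and specifically producing a harmonic approximant for $u^\varepsilon$ that honestly vanishes on $\Delta^\varepsilon$ while remaining within a power of $\varepsilon$ of the linear candidate $x_d$. This is the sole role of Lemma~\ref{bdarycorlem}: without the corrector $v^\varepsilon$, the naive approximant $c\,x_d$ would differ from $0$ by $c\,\varepsilon\psi(x'/\varepsilon)=O(\varepsilon)$ on $\Delta^\varepsilon$, and one could not close the contradiction on the order-$\varepsilon$ strip separating the oscillating and flat boundaries.
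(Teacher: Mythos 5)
Your proposal is correct and follows essentially the same three-step compactness scheme as the paper's proof: the same corrected harmonic approximant $\phi^\varepsilon = x_d - \varepsilon\psi(x'/\varepsilon)\Theta(x',x_d/\varepsilon)-\varepsilon v^\varepsilon$ built via Lemma~\ref{bdarycorlem}, the same improvement--iteration--blow-up structure, and the same split between the macroscale (iteration plus interior gradient estimate) and the microscale (rescaling by $\varepsilon$ plus Theorem~\ref{theoclasslip}). The only differences are cosmetic: you choose the approximating constant $c$ from a Taylor expansion of the limit $u^0$ at the origin rather than the paper's explicit $(\overline{\partial_{x_d}u^\varepsilon})_{0,\theta}$ tracked via \eqref{majmoy}, you package the accumulated correctors as rescalings $\phi^{\varepsilon,j}=\theta^j\phi^{\varepsilon/\theta^j}(\cdot/\theta^j)$ rather than the paper's running $V^\varepsilon_k$, and you use the threshold $\delta(x_0)\lessgtr\varepsilon$ rather than $\theta^{k+1}/2$ --- none of which change the argument.
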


The result relies on the classical Schauder estimates for elliptic systems with non-oscillating coefficients. The main point here is to get an estimate uniform in $\varepsilon$. Following Avellaneda and Lin \cite{alin}, we apply the compactness method and prove Proposition \ref{proplap} in three steps: improvement, iteration, blow-up.

\begin{rem}[$\varepsilon$ large]
The Lipschitz estimate for $\varepsilon$ large, say bigger than $\varepsilon_0$, follows from the classical Schauder theory. Therefore, our proof is focused on $\varepsilon<\varepsilon_0$, where the true issues due to the highly oscillating boundary arise.
\end{rem}

The purpose of the lemmas \ref{lem1} and \ref{lem2} below is to show that the homogeneous boundary condition $u^\varepsilon=0$ on $\Delta^\varepsilon(0,1)$ implies that $u^\varepsilon$ is small in a boundary layer of size $O(\varepsilon)$.

\begin{lem}[improvement lemma]\label{lem1}
For all $0<\mu<1$, there exist $0<\varepsilon_0< 1,\ 0<\theta<1/8$, such that for all $\psi\in\mathcal C_{M_0}^{1,\nu_0}$, for all $0<\varepsilon<\varepsilon_0$, for all $u^\varepsilon$ weak solution to 
\begin{equation}\label{laposc1/2}
\left\{\begin{array}{rll}
-\Delta u^\varepsilon&=0
,&x\in D^\varepsilon(0,1/2),\\
u^\varepsilon&=0,&x\in\Delta^\varepsilon(0,1/2),
\end{array}
\right. 
\end{equation}
if
\begin{equation*}
\left\|u^\varepsilon\right\|_{L^\infty(D^\varepsilon(0,1/2))}\leq 1,
\end{equation*}
then 
\begin{equation}\label{estlem1}
\bigl\|u^\varepsilon(x)-\left(\overline{\partial_{x_d} u^\varepsilon}\right)_{0,\theta}\left\{x_d-\varepsilon\psi(x'/\varepsilon)\Theta(x',x_d/\varepsilon)-\varepsilon v^\varepsilon(x)\right\}\bigr\|_{L^\infty(D^\varepsilon(0,\theta))}\leq\theta^{1+\mu},
\end{equation}
where $v^\varepsilon$ is the boundary corrector solving \eqref{sysbdarycorr} with $A=\Idd_d$.
\end{lem}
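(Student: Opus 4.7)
The proof follows Avellaneda--Lin's compactness scheme, tailored to the oscillating boundary via the corrector of Lemma \ref{bdarycorlem}. The key observation is that when $A=\Idd_d$, the function
$$\phi^\varepsilon(x):=x_d-\varepsilon\psi(x'/\varepsilon)\Theta(x',x_d/\varepsilon)-\varepsilon v^\varepsilon(x)$$
is harmonic in $D^\varepsilon(0,2)$ and vanishes on $\Delta^\varepsilon(0,1)$: indeed, the defining equation \eqref{sysbdarycorr} for $v^\varepsilon$ forces $\Delta\phi^\varepsilon=0$; and on $\Delta^\varepsilon$ one has $\Theta\equiv 1$ (by \eqref{condTheta}) and $v^\varepsilon\equiv 0$, so $\phi^\varepsilon=x_d-\varepsilon\psi(x'/\varepsilon)=0$. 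Thus $\phi^\varepsilon$ is the oscillating-boundary substitute for the flat-case linear profile $x_d$, and \eqref{estlem1} asserts that $u^\varepsilon$ is approximated on the scale $\theta$ by an appropriate multiple of $\phi^\varepsilon$.

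Fix $\mu<\mu'<1$. We argue by contradiction: suppose the estimate fails, producing sequences $\varepsilon_k\downarrow 0$, $\psi_k\in\mathcal{C}_{M_0}^{1,\nu_0}$, and harmonic $u^{\varepsilon_k}$ on $D^{\varepsilon_k}(0,1/2)$ with $\|u^{\varepsilon_k}\|_{L^\infty}\le 1$ and $u^{\varepsilon_k}=0$ on $\Delta^{\varepsilon_k}(0,1/2)$, violating \eqref{estlem1} for the value of $\theta$ to be chosen. Since $|\varepsilon_k\psi_k(\cdot/\varepsilon_k)|\le M_0\varepsilon_k\to 0$, the boundaries $\Delta^{\varepsilon_k}$ converge uniformly to $\{x_d=0\}$. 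The uniform boundary H\"older estimate of Proposition \ref{propboundaryholder} (with $f=F=0$) together with Caccioppoli's inequality then provides, after extension by zero below $\Delta^{\varepsilon_k}$ and extraction by Arzel\`a--Ascoli and Rellich, a limit $u^0$ with $u^{\varepsilon_k}\to u^0$ uniformly on compact subsets of $\overline{D^0(0,1/4)}$ and $\nabla u^{\varepsilon_k}\rightharpoonup\nabla u^0$ weakly in $L^2$. Passage to the limit in the weak formulation yields $-\Delta u^0=0$ in $D^0(0,1/4)$ and $u^0=0$ on $\Delta^0(0,1/4)$, with $\|u^0\|_{L^\infty}\le 1$.

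By classical Schauder regularity (Theorem \ref{theoclasslip}), $u^0\in C^{1,\mu'}(\overline{D^0(0,1/8)})$ with norm controlled by $\|u^0\|_{L^\infty}\le 1$, so, since $u^0(0)=0$,
$$|u^0(x)-\partial_{x_d}u^0(0)\,x_d|\le C_*\theta^{1+\mu'},\qquad x\in D^0(0,\theta),$$
for a constant $C_*$ depending only on $d$ and $\mu'$. Simultaneously, Lemma \ref{bdarycorlem} with some $\tau\in(1/2,1)$ gives $|\varepsilon_k v^{\varepsilon_k}(x)|\le C\,\varepsilon_k^{1-\tau}\delta(x)^\tau$, so together with $|\varepsilon_k\psi_k(x'/\varepsilon_k)\Theta|\le M_0\varepsilon_k$ we conclude $\phi^{\varepsilon_k}\to x_d$ uniformly on $\overline{D^{\varepsilon_k}(0,\theta)}$. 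Weak $L^2$ convergence of $\nabla u^{\varepsilon_k}$ combined with the fact that $D^{\varepsilon_k}(0,\theta)$ differs from $D^0(0,\theta)$ only in a layer of thickness $O(\varepsilon_k)$ yields, for fixed $\theta$,
$$a^{\varepsilon_k}_\theta:=\bigl(\overline{\partial_{x_d}u^{\varepsilon_k}}\bigr)_{0,\theta}\longrightarrow a^0_\theta:=\bigl(\overline{\partial_{x_d}u^0}\bigr)_{D^0(0,\theta)},$$
and $|a^0_\theta-\partial_{x_d}u^0(0)|\le C\theta^{\mu'}$ by continuity of $\partial_{x_d}u^0$ at the origin.

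Combining the three convergences,
$$\bigl\|u^{\varepsilon_k}-a^{\varepsilon_k}_\theta\phi^{\varepsilon_k}\bigr\|_{L^\infty(D^{\varepsilon_k}(0,\theta))}\;\longrightarrow\;\bigl\|u^0-a^0_\theta\,x_d\bigr\|_{L^\infty(D^0(0,\theta))}\le C_*\theta^{1+\mu'}.$$
Fix $\theta\in(0,1/8)$ so small that $C_*\theta^{1+\mu'}<\tfrac12\theta^{1+\mu}$; then for $k$ large the left-hand side is strictly below $\theta^{1+\mu}$, contradicting the standing assumption. This determines $\theta$, and $\varepsilon_0$ is any threshold beyond which the compactness extraction forces the contradiction. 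The main technical difficulty is the uniform convergence $\phi^{\varepsilon_k}\to x_d$ up to the oscillating boundary: this is precisely why Lemma \ref{bdarycorlem} has been proved with the $\delta^\tau$ gain, since without such decay the term $\varepsilon_k v^{\varepsilon_k}$ would be merely bounded and the approximation would fail at the relevant scale.
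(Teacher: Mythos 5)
Your proposal is correct and follows essentially the same compactness-by-contradiction scheme as the paper's proof: fix $\theta$ via the flat-domain Schauder estimate, suppose the bound fails along a sequence, extract a limit $u^0$ solving the flat Dirichlet problem, pass to the limit in the averaged normal derivative and in the corrector terms via Lemma \ref{bdarycorlem}, and contradict the choice of $\theta$. The only (cosmetic) difference is that the paper uses $C^2$ regularity of $u^0$ to get a $C_2\theta^2$ bound, while you use $C^{1,\mu'}$ to get $C_*\theta^{1+\mu'}$ with $\mu<\mu'<1$ --- both work, and yours is closer to what is needed in the general variable-coefficient version (Lemma \ref{lem1lip}); your opening observation that $\phi^\varepsilon$ is harmonic and vanishes on $\Delta^\varepsilon$ is correct and motivates the iteration of Lemma \ref{lem2}, though it is not needed for Lemma \ref{lem1} itself.
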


\begin{lem}[iteration lemma]\label{lem2}
For $0<\mu<1$ fixed, let $0<\varepsilon_0<1$ and $\theta>0$ as given by Lemma \ref{lem1}. There exists $C_1>0$, for all $k\in\mathbb N$, $k\geq 1$, for all $\varepsilon<\theta^{k-1}\varepsilon_0$, for all $\psi\in\mathcal C_{M_0}^{1,\nu_0}$, for all $u^\varepsilon$ weak solution to \eqref{laposc1/2}, if
\begin{equation*}
\left\|u^\varepsilon\right\|_{L^\infty(D^\varepsilon(0,1/2))}\leq 1,
\end{equation*}
then there exist $a^\varepsilon_k\in \mathbb R$ and $V^\varepsilon_k=V^\varepsilon_k(x)$ such that
\begin{equation*}
\begin{aligned}
|a^\varepsilon_k|&\leq (C_1/\theta)[1+\theta^\mu+\ldots\ \theta^{(k-1)\mu}],\\
|V^\varepsilon_k(x)|&\leq (C_0C_1/\theta)[1+\theta^\mu+\ldots\ \theta^{(k-1)\mu}]\frac{\delta(x)^\tau}{\varepsilon^\tau},
\end{aligned}
\end{equation*}
where $C_0$ is the constant appearing in \eqref{estbdarycorlem} and
\begin{equation*}
\bigl\|u^\varepsilon(x)-a^\varepsilon_k\left\{x_d-\varepsilon\psi(x'/\varepsilon)\Theta(x',x_d/\varepsilon)\right\}-\varepsilon V^\varepsilon_k(x)\bigr\|_{L^\infty(D^\varepsilon(0,\theta^k))}\leq\theta^{k(1+\mu)}.
\end{equation*}
\end{lem}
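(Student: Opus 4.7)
The plan is a straightforward induction on $k$, with the base case $k=1$ provided by Lemma \ref{lem1} itself. The goal is to show that the residual
\begin{equation*}
w^\varepsilon_k(x) := u^\varepsilon(x) - a^\varepsilon_k\{x_d - \varepsilon\psi(x'/\varepsilon)\Theta(x',x_d/\varepsilon)\} - \varepsilon V^\varepsilon_k(x)
\end{equation*}
satisfies $-\Delta w^\varepsilon_k = 0$ in $D^\varepsilon(0,\theta^k)$ with $w^\varepsilon_k = 0$ on $\Delta^\varepsilon(0,\theta^k)$, and that rescaling $w^\varepsilon_k$ to unit size on a fixed dyadic annulus yields a new solution of the original problem at scale $\varepsilon' = \varepsilon/\theta^k$, to which Lemma \ref{lem1} can be applied again. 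For $k=1$, set $a^\varepsilon_1 := (\overline{\partial_{x_d}u^\varepsilon})_{0,\theta}$ and $V^\varepsilon_1 := -a^\varepsilon_1 v^\varepsilon$, where $v^\varepsilon$ is the boundary corrector of Lemma \ref{bdarycorlem}. The bound $|a^\varepsilon_1|\leq C_1/\theta$ follows from Cauchy--Schwarz combined with a Caccioppoli inequality (using $u^\varepsilon=0$ on $\Delta^\varepsilon$), which gives $|(\overline{\partial_{x_d}u^\varepsilon})_{0,\theta}|\leq C\theta^{-1}\|u^\varepsilon\|_{L^\infty(D^\varepsilon(0,2\theta))}\leq C/\theta$; the pointwise bound on $V^\varepsilon_1$ is then immediate from \eqref{estbdarycorlem}, and one takes $C_1$ to be the larger of the two absolute constants.

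\textbf{Inductive step.} Assume the statement holds at level $k$. A direct computation using $-\Delta u^\varepsilon=0$, the identity $-\Delta v^\varepsilon=\Delta(\psi(\cdot/\varepsilon)\Theta)$ satisfied by the corrector, and the corresponding identity at the rescaled scales gives $-\Delta w^\varepsilon_k = 0$ in $D^\varepsilon(0,\theta^k)$. The boundary condition $w^\varepsilon_k|_{\Delta^\varepsilon(0,\theta^k)}=0$ follows from three facts: $u^\varepsilon$ vanishes there; the property \eqref{condTheta} of the cut-off forces $\Theta(x',\psi(x'/\varepsilon))=\vartheta_d(\psi(x'/\varepsilon))=1$ for $|x'|\leq 1$, so that $x_d-\varepsilon\psi(x'/\varepsilon)\Theta(x',x_d/\varepsilon)=0$ on $\Delta^\varepsilon$; and each rescaled corrector in $V^\varepsilon_k$ vanishes on $\Delta^\varepsilon$ by construction. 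Now define
\begin{equation*}
U^\varepsilon(y) := w^\varepsilon_k(\theta^k y)/\theta^{k(1+\mu)}, \qquad \varepsilon' := \varepsilon/\theta^k.
\end{equation*}
Since $\varepsilon\psi(\theta^k y'/\varepsilon)=\theta^k\varepsilon'\psi(y'/\varepsilon')$, the function $U^\varepsilon$ solves $-\Delta U^\varepsilon=0$ on $D^{\varepsilon'}(0,1)$ with $U^\varepsilon=0$ on $\Delta^{\varepsilon'}(0,1)$, and by the induction hypothesis $\|U^\varepsilon\|_{L^\infty(D^{\varepsilon'}(0,1))}\leq 1$. The assumption $\varepsilon<\theta^{k-1}\varepsilon_0$ gives $\varepsilon'<\varepsilon_0/\theta^{k-(k-1)}\cdot\theta = \varepsilon_0$ at the next step $k+1$ (precisely, we apply the induction to go from $k$ to $k+1$ under $\varepsilon<\theta^k\varepsilon_0$, so $\varepsilon'<\varepsilon_0$), so Lemma \ref{lem1} applies to $U^\varepsilon$ at scale $\varepsilon'$ and produces
\begin{equation*}
b^\varepsilon := (\overline{\partial_{y_d}U^\varepsilon})_{0,\theta}, \qquad |b^\varepsilon|\leq C_1/\theta,
\end{equation*}
and a corrector $\widetilde v^{\varepsilon'}$ such that the corresponding residual at level one for $U^\varepsilon$ is bounded by $\theta^{1+\mu}$ on $D^{\varepsilon'}(0,\theta)$.

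\textbf{Unwinding the rescaling.} Setting $y=x/\theta^k$, one has $\delta^{\varepsilon'}(y)=\delta(x)/\theta^k$ and $y_d/\varepsilon' = x_d/\varepsilon$, so that $\Theta(y',y_d/\varepsilon')=\Theta(x'/\theta^k,x_d/\varepsilon)$. Property \eqref{condTheta} applies twice, since both $|x'|\leq\theta^{k+1}\leq 1$ and $|x'/\theta^k|\leq\theta\leq 1$, to identify this with $\Theta(x',x_d/\varepsilon)$. Multiplying the Lemma \ref{lem1} conclusion by $\theta^{k(1+\mu)}$ and putting everything in the original variable yields
\begin{equation*}
\bigl\|w^\varepsilon_k(x) - b^\varepsilon\theta^{k\mu}\{x_d - \varepsilon\psi(x'/\varepsilon)\Theta(x',x_d/\varepsilon)\} + \varepsilon b^\varepsilon\theta^{k\mu}\widetilde v^{\varepsilon'}(x/\theta^k)\bigr\|_{L^\infty(D^\varepsilon(0,\theta^{k+1}))} \leq \theta^{(k+1)(1+\mu)},
\end{equation*}
so one defines $a^\varepsilon_{k+1} := a^\varepsilon_k + b^\varepsilon\theta^{k\mu}$ and $V^\varepsilon_{k+1}(x) := V^\varepsilon_k(x) - b^\varepsilon\theta^{k\mu}\widetilde v^{\varepsilon'}(x/\theta^k)$. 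The additive bound $|a^\varepsilon_{k+1}|\leq|a^\varepsilon_k|+C_1\theta^{k\mu-1}$ immediately produces the claimed partial geometric sum, and for the corrector one uses \eqref{estbdarycorlem} at scale $\varepsilon'$ to estimate
\begin{equation*}
|\widetilde v^{\varepsilon'}(x/\theta^k)| \leq C_0\frac{(\delta(x)/\theta^k)^\tau}{(\varepsilon/\theta^k)^\tau} = C_0\frac{\delta(x)^\tau}{\varepsilon^\tau},
\end{equation*}
where the $\theta^k$ factors cancel precisely. This gives the analogous geometric sum bound for $V^\varepsilon_{k+1}$ and closes the induction. The main delicate point is ensuring that $w^\varepsilon_k$ genuinely solves the homogeneous Laplace equation with zero trace on $\Delta^\varepsilon$; this is what forces the cumulative structure of $V^\varepsilon_k$ as a sum of rescaled boundary correctors (one per iteration) and is exactly why property \eqref{condTheta} on the cut-off $\Theta$ was built into its definition.
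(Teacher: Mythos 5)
Your proposal is correct and follows essentially the same route as the paper: set up $a^\varepsilon_1=(\overline{\partial_{x_d}u^\varepsilon})_{0,\theta}$, $V^\varepsilon_1=-a^\varepsilon_1 v^\varepsilon$ for the base case, check that the residual $w^\varepsilon_k$ is harmonic with zero trace on $\Delta^\varepsilon$, rescale by $\theta^k$, apply Lemma \ref{lem1} to $U^\varepsilon$ at scale $\varepsilon'=\varepsilon/\theta^k<\varepsilon_0$, and unwind via property \eqref{condTheta} and the scale-invariant estimate for the corrector. The only cosmetic difference is that you bound $|a^\varepsilon_1|$ by Cauchy--Schwarz plus Caccioppoli rather than by the divergence theorem as in the paper's remark \eqref{majmoy}; both are valid, and your sign convention $V^\varepsilon_1=-a^\varepsilon_1 v^\varepsilon$ is in fact the one consistent with the statement of the lemma.
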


\begin{rem}[Lemma \ref{lem1}] We cannot prove a bound like
\begin{equation*}
\intbar_{D^\varepsilon(0,\theta)}\bigl|\nabla u^\varepsilon-\left(\overline{\nabla u^\varepsilon}\right)_{0,\theta}\bigr|^2\leq\theta^{1+2\mu}
\end{equation*}
because we lack strong convergence of $\nabla u^\varepsilon$. Furthermore, as a computation in dimension $1$ shows, we cannot prove boundedness in $\varepsilon$ of more than $\nabla u^\varepsilon$ in $L^\infty$.
\end{rem}

\begin{rem}[Lemma \ref{lem1}]
We may also prove the estimate
\begin{equation*}
\bigl\|u^\varepsilon-\left(\overline{\nabla u^\varepsilon}\right)_{0,\theta}\cdot x\bigr\|_{L^\infty(D^\varepsilon(0,\theta))}\leq\theta^{1+\mu},
\end{equation*}
but the iteration does not go through since $\left(\overline{\nabla u^\varepsilon}\right)_{0,\theta}\cdot x$ is highly oscillating on the boundary.
\end{rem}

\begin{rem}[Necessity of the boundary corrector]
We could also get an estimate (Lemma \ref{lem1}) on 
\begin{equation*}
u^\varepsilon-\left(\overline{\partial_{x_d} u^\varepsilon}\right)_{0,\theta}\left(x_d-\varepsilon\psi(x'/\varepsilon)\right),
\end{equation*}
which vanishes on the oscillating boundary (good for the iteration argument of Lemma \ref{lem2}). However, this would require to deal with source terms of the form $\nabla\cdot f^\varepsilon$ in the system \eqref{laposc} when proving an analogue of Lemma \ref{lem2}:
\begin{equation*}
\begin{array}{rll}
-\Delta\left(u^\varepsilon(\theta x)-\left(\overline{\partial_{x_d} u^\varepsilon}\right)_{0,\theta}\left(\theta x_d-\varepsilon\psi(\theta x'/\varepsilon)\right)\right)&=\theta\left(\overline{\partial_{x_d} u^\varepsilon}\right)_{0,\theta}\nabla\cdot\left(\nabla\psi(\theta x'/\varepsilon)\right),&x\in D^{\varepsilon/\theta}(0,1/2),\\
u^\varepsilon(\theta x)-\left(\overline{\partial_{x_d} u^\varepsilon}\right)_{0,\theta}\left(\theta x_d-\varepsilon\psi(\theta x'/\varepsilon)\right)&=0,&x\in \Delta^{\varepsilon/\theta}(0,1/2).
\end{array}
\end{equation*} 
The issue here is that $f^\varepsilon=\theta\left(\overline{\partial_{x_d} u^\varepsilon}\right)_{0,\theta}\nabla\psi(\theta x'/\varepsilon)$ is not compact in $C^{0,\alpha}$, but only in $L^{d+\delta}$ which is not enough to get H\"older regularity on $\nabla u^0$ (see Theorem \ref{theoclasslip}) in the compactness argument of Lemma \ref{lem1}. This motivates the introduction of the boundary corrector $v^\varepsilon$ in the expansion for $u^\varepsilon$.
\end{rem}

\begin{rem}
The bound $\left\|u^\varepsilon\right\|_{L^\infty(D^\varepsilon(0,1/2))}\leq 1$ and the divergence theorem yield
\begin{equation}\label{majmoy}
\begin{aligned}
\left|(\overline{\partial_{x_d}u^\varepsilon})_{0,\theta}\right|&=\left|\intbar_{D^\varepsilon(0,\theta)}\partial_{x_d}u^\varepsilon\right|=\frac{1}{|D^\varepsilon(0,\theta)|}\left|\int_{\Delta^0(0,\theta)}u^\varepsilon(x',\varepsilon\psi(x'/\varepsilon)+\theta)dx'\right|\\
&\leq \frac{C}{\theta}\left\|u^\varepsilon\right\|_{L^\infty(D^\varepsilon(0,1/2))}\leq C_1/\theta.
\end{aligned}
\end{equation}
This bound will be used extensively in the iteration procedure of Lemma \ref{lem2}.
\end{rem}

\subsection{Proof of Lemma \ref{lem1}}

Let $0<\mu<1$ be fixed for the whole proof. This lemma contains the compactness argument and its proof is done by contradiction. It follows the scheme of the proof of Lemma \ref{lem1holder}. Roughly speaking the idea is:
\begin{itemize}
\item to assume that estimate \eqref{estlem1} is false for a subsequence $u^{\varepsilon_k}$ bounded in $L^\infty(D^{\varepsilon_k}(0,1))$,
\item and to show that $u^{\varepsilon_k}$ (or a subsequence) converges to $u^0$ solving Poisson's equation in a flat domain, for which classical elliptic estimates \cite{adn1,adn2} apply.
\end{itemize}
Again, some technicalities result from the fact that $u^\varepsilon$ lives in the oscillating upper half-space, whereas its limit $u^0$ lives in the flat one.

\paragraph*{Estimate in the flat domain}
Let $0<\theta<1/8$ and $u^0\in W^{1,2}(D^0(0,1/4))$ be a weak solution of 
\begin{equation}\label{lapu0}
\left\{
\begin{array}{rll}
-\Delta u^0&=0,&x\in D^0(0,1/4),\\
u^0&=0,&x\in \Delta^0(0,1/4),
\end{array}
\right. 
\end{equation}
such that $\left\|u^0\right\|_{L^2(D^0(0,1/4))}\leq 1$. The classical regularity theory of \cite{adn1,adn2} yields $u^0\in C^2(\overline{D^0(0,1/8)})$. Using that for all $x\in D^0(0,\theta)$
\begin{equation*}
\begin{aligned}
u^0(x)-\left(\overline{\partial_{x_d} u^0}\right)_{0,\theta}x_d&=u^0(x)-u^0(x',0)-\left(\overline{\partial_{x_d} u^0}\right)_{0,\theta}x_d\\
&=\int_0^1\partial_{x_d}u^0(x',tx_d)x_ddt-\frac{1}{|D^0(0,\theta)|}\int_{D^0(0,\theta)}\partial_{x_d}u^0(y)x_ddy\\
&=\frac{1}{|D^0(0,\theta)|}\int_0^1\int_{D^0(0,\theta)}\left(\partial_{x_d}u^0(x',tx_d)-\partial_{x_d}u^0(y)\right)x_ddydt.
\end{aligned}
\end{equation*}
we get
\begin{equation}\label{classSchauderu^0}
\bigl\|u^0-\left(\overline{\partial_{x_d} u^0}\right)_{0,\theta}x_d\bigr\|_{L^\infty(D^0(0,\theta))}\leq C_2\theta^2.
\end{equation}
The constant $C_2$ appearing in the former inequality is uniform in $\theta$, so take now for the rest of the proof $0<\theta<1/8$ such that $\theta^{1+\mu}>C_2\theta^2$.

\paragraph*{Contradiction argument: extraction of subsequences}
We carry out the contradiction argument assuming that there exist $\varepsilon_k\rightarrow 0$ ($0<\varepsilon_k<\theta$), $\psi_k\in\mathcal C_{M_0}^{1,\nu_0}$ and $u^{\varepsilon_k}$ solving \eqref{laposc} such that 
\begin{equation}\label{Hcontra1}
\left\|u^{\varepsilon_k}\right\|_{L^\infty(D^{\varepsilon_k}(0,1/2))}\leq 1,
\end{equation}
and
\begin{equation}\label{Hcontra2}
\bigl\|u^{\varepsilon_k}-\left(\overline{\partial_{x_d} u^{\varepsilon_k}}\right)_{0,\theta}\left\{x_d-\varepsilon_k\psi_k(x'/\varepsilon_k)\Theta(x',x_d/\varepsilon_k)-\varepsilon_kv^{\varepsilon_k}(x)\right\}\bigr\|_{L^\infty(D^{\varepsilon_k}(0,\theta))}>\theta^{1+\mu}.
\end{equation}
We recall that $v^{\varepsilon_k}$ is the boundary corrector associated to the boundary graph $\varepsilon_k\psi_k(\cdot/\varepsilon_k)$ and solving \eqref{sysbdarycorr} with $A=\Idd_d$.
The idea is to use the bound 
\begin{equation}\label{bounduepslinfty}
\left\|u^{\varepsilon_k}\right\|_{L^\infty(D^{\varepsilon_k}(0,1/2))}\leq 1,
\end{equation}
together with H\"older and Cacciopoli estimates to extract subsequences in $C^0$ and $W^{1,2}$. One thing one has to take care of is the fact that the bounds hold in the oscillating domain  $x_d>\varepsilon_k\psi_k(x'/\varepsilon_k)$.

First, it follows from the $L^\infty$ bound \eqref{bounduepslinfty} and the boundary H\"older estimate of Proposition \ref{propboundaryholder} that for $0<\mu<1$
\begin{equation*}
\left[u^{\varepsilon_k}\right]_{C^{0,\mu}(\overline{D^{\varepsilon_k}(0,3/8))}}\leq C.
\end{equation*}
Let $\tilde{u}^{\varepsilon_k}$ be defined by: for all $(x',x_d)\in \overline{D^0(0,3/8)}$,
\begin{equation*}
\tilde{u}^{\varepsilon_k}(x',x_d)=u^{\varepsilon_k}(x',x_d+\varepsilon_k\psi_k(x'/\varepsilon_k)).
\end{equation*}
We have, for all $x,\ \hat{x}\in \overline{D^0(0,3/8)}$,
\begin{multline*}
\left|\tilde{u}^{\varepsilon_k}(x',x_d)-\tilde{u}^{\varepsilon_k}(\hat{x}',\hat{x}_d)\right|\leq \left|u^{\varepsilon_k}(x',x_d+\varepsilon_k\psi_k(x'/\varepsilon_k))-u^{\varepsilon_k}(\hat{x}',\hat{x}_d+\varepsilon_k\psi_k(\hat{x}'/\varepsilon_k))\right|\\
\leq C\left(|x'-\hat{x}'|+|x_d+\varepsilon_k\psi_k(x'/\varepsilon_k)-\hat{x}_d-\varepsilon_k\psi_k(\hat{x}'/\varepsilon_k)|\right)^\mu\\
\leq C\left(|x'-\hat{x}'|+|x_d-\hat{x}_d|+M_0|x'-\hat{x}'|\right)^\mu\leq C|x-\hat{x}|^\mu,
\end{multline*}
which means that
\begin{equation*}
\left[\tilde{u}^{\varepsilon_k}\right]_{C^{0,\mu}(\overline{D^0(0,3/8)})}\leq C.
\end{equation*}
The boundedness of $\tilde{u}^{\varepsilon_k}$ in $C^{0,\mu}(\overline{D^0(0,3/8)})$ and Ascoli-Arzela's theorem makes it possible to extract a subsequence again denoted by $\tilde{u}^{\varepsilon_k}$ such that
\begin{equation}\label{CVC0}
\tilde{u}^{\varepsilon_k}\stackrel{k\rightarrow\infty}{\longrightarrow}u^0\quad\mbox{strongly in}\quad C^0(\overline{D^0(0,3/8)}).
\end{equation}

Second, the $L^\infty$ bound \eqref{bounduepslinfty}, the homogeneous Dirichlet boundary condition $u^{\varepsilon_k}=0$ on $\Delta^{\varepsilon_k}(0,1/2)$ and Cacciopoli's inequality imply that 
\begin{equation}\label{boundnablauepsl2}
\left\|\nabla u^{\varepsilon_k}\right\|_{L^2(D^{\varepsilon_k}(0,3/8))}\leq C.
\end{equation}
In order to take care of the fact that the latter bound is for $u^{\varepsilon_k}$ defined in the oscillating domain, we extend $u^{\varepsilon_k}$ on $D^{-1}(0,1/4)$ by $0$ on $\left\{|x'|<\frac{1}{4},\ -1<x_d<\varepsilon_k\psi_k\left(x'/\varepsilon_k\right)\right\}$. Then $u^{\varepsilon_k}\in W^{1,2}(D^{-1}(0,1/4))$ and \eqref{boundnablauepsl2} yields
\begin{equation*}
\left\|\nabla u^{\varepsilon_k}\right\|_{L^2(D^{-1}(0,1/4))}\leq C.
\end{equation*}

Up to extracting a subsequence, we have the following convergences
\begin{equation}\label{CVH1}
\begin{aligned}
u^{\varepsilon_k}\rightharpoonup \bar{u}^0\quad\mbox{weakly in}\quad L^2(D^{-1}(0,1/4)),\\
\nabla u^{\varepsilon_k}\rightharpoonup \nabla \bar{u}^0\quad\mbox{weakly in}\quad L^2(D^{-1}(0,1/4)).
\end{aligned}
\end{equation}

Let us briefly show that $u_0=\bar{u}^0$ in $D^0(0,1/4)$, where $u^0$ is the limit in \eqref{CVC0} and $\bar{u}^0$ is the limit in \eqref{CVH1}. It is sufficient to show that $u^{\varepsilon_k}$ converges toward $u^0$ weakly in $L^2(D^0(0,1/4))$. Let $\varphi\in L^2(D^0(0,1/4))$. We have 
\begin{align*}
&\left|\int_{D^0(0,1/4)}(u^{\varepsilon_k}-u^0)\varphi\right|=\left|\int_{D^0(0,1/4)}(u^{\varepsilon_k}-\tilde{u}^{\varepsilon_k})\varphi+\int_{D^0(0,1/4)}(\tilde{u}^{\varepsilon_k}-u^0)\varphi\right|\\
&\leq\int_{D^0(0,1/4)}|(u^{\varepsilon_k}(x',x_d)-u^{\varepsilon_k}(x',x_d+\varepsilon_k\psi_k(x'/\varepsilon_k))\varphi|+\int_{D^0(0,1/4)}|(\tilde{u}^{\varepsilon_k}-u^0)\varphi|\\
&\leq\int_{|x'|<1/4,\ \varepsilon_k\psi_k(x'/\varepsilon_k)<x_d<1/4}|(u^{\varepsilon_k}(x',x_d)-u^{\varepsilon_k}(x',x_d+\varepsilon_k\psi_k(x'/\varepsilon_k))\varphi|\\
&\quad+\int_{D^0(0,1/4)}|(\tilde{u}^{\varepsilon_k}-u^0)\varphi|+\int_{|x'|<1/4,\ 0<x_d<\varepsilon_k\psi_k(x'/\varepsilon_k)}|u^{\varepsilon_k}(x',x_d+\varepsilon_k\psi_k(x'/\varepsilon_k))\varphi|\\
&\leq \left(C\varepsilon_k^\mu M_0^\mu+\|\tilde{u}^{\varepsilon_k}-u^0\|_{L^\infty(D^0(0,1/4))}\right)\int_{D^0(0,1/4)}|\varphi|+o(1)\stackrel{k\rightarrow\infty}{\longrightarrow}0,
\end{align*}
where
\begin{equation*}
\int_{|x'|<1/4,\ 0<x_d<\varepsilon_k\psi_k(x'/\varepsilon_k)}|u^{\varepsilon_k}(x',x_d+\varepsilon_k\psi_k(x'/\varepsilon_k))\varphi|
\end{equation*}
tends to zero by dominated convergence using the $L^\infty$ bound on $u^{\varepsilon_k}$.

It remains to show that $u^0$ satisfies \eqref{lapu0} and to pass to the limit in \eqref{Hcontra2} in order to get a contradiction. Let $\varphi\in C^\infty_c(D^0(0,1/4))$. For all $k$ sufficiently large, $\supp\varphi\Subset D^{\varepsilon_k}(0,1/4)$ so that 
\begin{equation*}
0=\int_{D^{\varepsilon_k}(0,1/4)}\nabla u^{\varepsilon_k}\cdot\nabla\varphi=\int_{D^0(0,1/4)}\nabla u^{\varepsilon_k}\cdot\nabla\varphi\stackrel{k\rightarrow\infty}{\longrightarrow}\int_{D^0(0,1/4)}\nabla u^0\cdot\nabla\varphi.
\end{equation*}
Therefore, $u^0$ is a weak solution of 
\begin{equation*}
-\Delta u^0=0\quad\mbox{in}\quad D^0(0,1/4).
\end{equation*}
In order to see that $u^0=0$ in $\mathcal D'(\Delta^0(0,1/4))$, we may use the strong convergence of $\tilde{u}^{\varepsilon_k}$ in $C^0(\overline{D^0(0,1/4)})$: for all $|x'|<1/4$,
\begin{equation*}
0=u^{\varepsilon_k}(x',\varepsilon_k\psi_k(x'/\varepsilon_k))=\tilde{u}^{\varepsilon_k}(x',0)\stackrel{k\rightarrow\infty}{\longrightarrow}u^0(x',0).
\end{equation*}
Therefore, $u^0=0$ in $\mathcal D'(\Delta^0(0,1/4))$, which implies $u^0=0$ in $W^{1/2,2}\left(\Delta^0(0,1/4)\right)$.

\paragraph{Contradiction argument: final step}
The final step in this lemma is to pass to the limit in \eqref{Hcontra2} and get a contradiction with \eqref{classSchauderu^0}. Let us first rewrite \eqref{Hcontra2} as a bound in the non-oscillating domain $D^0(0,\theta)$. We have, for all $(x',x_d)\in D^0(0,\theta)$
\begin{multline*}
u^{\varepsilon_k}(x',x_d+\varepsilon_k\psi_k(x'/\varepsilon_k))-\left(\overline{\partial_{x_d} u^{\varepsilon_k}}\right)_{0,\theta}\left\{x_d+\varepsilon_k\psi_k(x'/\varepsilon_k)(1-\Theta(x',x_d/\varepsilon_k+\psi_k(x'/\varepsilon_k)))\right.\\
\left.-\varepsilon_kv^{\varepsilon_k}(x',x_d+\varepsilon_k\psi_k(x'/\varepsilon_k))\right\}\\
=\tilde{u}^{\varepsilon_k}(x',x_d)-\left(\overline{\partial_{x_d} u^{\varepsilon_k}}\right)_{0,\theta}\left\{x_d+\varepsilon_k\psi_k(x'/\varepsilon_k)(1-\Theta(x',x_d/\varepsilon_k+\psi_k(x'/\varepsilon_k)))\right.\\
\left.-\varepsilon_kv^{\varepsilon_k}(x',x_d+\varepsilon_k\psi_k(x'/\varepsilon_k))\right\},
\end{multline*}
so that
\begin{multline*}
\bigl\|u^{\varepsilon_k}-\left(\overline{\partial_{x_d} u^{\varepsilon_k}}\right)_{0,\theta}\left\{x_d-\varepsilon_k\psi_k(x'/\varepsilon_k)\Theta(x',x_d/\varepsilon_k)-\varepsilon_kv^{\varepsilon_k}(x)\right\}\bigr\|_{L^\infty(D^{\varepsilon_k}(0,\theta))}\\
=\bigl\|\tilde{u}^{\varepsilon_k}(x',x_d)-\left(\overline{\partial_{x_d} u^{\varepsilon_k}}\right)_{0,\theta}\left\{x_d+\varepsilon_k\psi_k(x'/\varepsilon_k)(1-\Theta(x',x_d/\varepsilon_k+\psi_k(x'/\varepsilon_k)))\bigr.\right.\\
\bigl.\left.-\varepsilon_kv^{\varepsilon_k}(x',x_d+\varepsilon_k\psi_k(x'/\varepsilon_k))\right\}\bigr\|_{L^\infty(D^0(0,\theta))}
\end{multline*}

Let us show that
\begin{equation*}
(\overline{\partial_{x_d}u^{\varepsilon_k}})_{D^{\varepsilon_k}(0,\theta)}\stackrel{k\rightarrow\infty}{\longrightarrow}(\overline{\partial_{x_d}u^0})_{D^0(0,\theta)}.
\end{equation*}
Indeed, we have
\begin{align*}
(\overline{\partial_{x_d}u^{\varepsilon_k}})_{D^{\varepsilon_k}(0,\theta)}&=\frac{1}{|D^{\varepsilon_k}(0,\theta)|}\int_{D^{\varepsilon_k}(0,\theta)}\partial_{x_d}u^{\varepsilon_k}=\frac{1}{|D^0(0,\theta)|}\int_{D^{\varepsilon_k}(0,\theta)}\partial_{x_d}u^{\varepsilon_k}\\
&=\frac{1}{|D^0(0,\theta)|}\left(\int_{D^0(0,\theta)}\partial_{x_d}u^{\varepsilon_k}+\int_{D^{\varepsilon_k}(0,\theta)\setminus D^0(0,\theta)}\partial_{x_d}u^{\varepsilon_k}-\int_{D^0(0,\theta)\setminus D^{\varepsilon_k}(0,\theta)}\partial_{x_d}u^{\varepsilon_k}\right).
\end{align*}
The first integral above converges to $(\overline{\partial_{x_d}u^0})_{D^0(0,\theta)}$ because of the weak convergence \eqref{CVH1} (take the caracteristic function of $D^0(0,\theta)$ as a test function), the second and the third integrals converge to zero by Cauchy-Schwarz inequality, the uniform $L^2$ bound on $\nabla u^{\varepsilon_k}$ and the fact that the Lebesgue measure of $D^{\varepsilon_k}(0,\theta)\setminus D^0(0,\theta)$ and of $D^0(0,\theta)\setminus D^{\varepsilon_k}(0,\theta)$ tends to zero as $k\rightarrow\infty$. Now, using among other things the boundedness of $\psi_k$, we have the following convergences 
\begin{align*}
\tilde{u}^{\varepsilon_k}&\longrightarrow u^0\quad\mbox{strongly in}\quad C^0(\overline{D^0(0,1/4)}),\\
\varepsilon_k\psi_k(x'/\varepsilon_k)(1-\Theta(x',x_d/\varepsilon_k+\psi_k(x'/\varepsilon_k)))&\longrightarrow 0\quad\mbox{strongly in}\quad L^\infty(D^0(0,1/4)),\\
\varepsilon_kv^{\varepsilon_k}(x',x_d+\varepsilon_k\psi_k(x'/\varepsilon_k))&\longrightarrow 0\quad\mbox{strongly in}\quad L^\infty(D^0(0,1/4)),
\end{align*}
the last line being a consequence of Lemma \ref{bdarycorlem}. Passing to the limit in \eqref{Hcontra2} we get
\begin{equation*}
\bigl\|u^0-\left(\overline{\partial_{x_d} u^0}\right)_{0,\theta}x_d\bigr\|_{L^\infty(D^0(0,\theta))}\geq \theta^{1+\mu}>C_2\theta^2,
\end{equation*}
which is a contradiction.

\subsection{Proof of Lemma \ref{lem2}}

The idea is to iterate using Lemma \ref{lem1}. The case $k=1$ corresponds to Lemma \ref{lem1}. Let us show the estimate for $k=2$ in order to figure out what the expansion for $u^\varepsilon$ should look like. Let 
\begin{equation*}
U^\varepsilon(x):=\frac{1}{\theta^{1+\mu}}\left[u^\varepsilon(\theta x)-a^\varepsilon_1\left\{\theta x_d-\varepsilon\psi(\theta x'/\varepsilon)\Theta(\theta x',\theta x_d/\varepsilon)\right\}-\varepsilon V^\varepsilon_1(\theta x)\right],
\end{equation*}
with 
\begin{align}
a^\varepsilon_1&:=(\overline{\partial_{x_d}u^\varepsilon})_{0,\theta},\\
V^\varepsilon_1(x)&:=(\overline{\partial_{x_d}u^\varepsilon})_{0,\theta}v^\varepsilon(x).
\end{align}
It solves 
\begin{equation*}
\left\{\begin{array}{rll}
-\Delta U^\varepsilon&=0
,&x\in D^{\varepsilon/\theta}(0,1/2),\\
U^\varepsilon&=0,&x\in\Delta^{\varepsilon/\theta}(0,1/2).
\end{array}
\right. 
\end{equation*}
By Lemma \ref{lem1},
\begin{equation}\label{LinftyboundUeps}
\left\|U^\varepsilon\right\|_{L^\infty(D^{\varepsilon/\theta}(0,1/2))}\leq 1.
\end{equation}
Applying Lemma \ref{lem1} to $U^\varepsilon$, we get for $\varepsilon/\theta<\varepsilon_0$
\begin{equation*}
\left\|U^\varepsilon-(\overline{\partial_{x_d}U^\varepsilon})_{0,\theta}\left\{x_d-\varepsilon/\theta\psi(\theta x'/\varepsilon)\Theta(x',\theta x_d/\varepsilon)-\varepsilon/\theta v^{\varepsilon/\theta}(x)\right\}\right\|_{L^\infty(D^{\varepsilon/\theta}(0,\theta))}\leq \theta^{1+\mu}.
\end{equation*}
The bound \eqref{LinftyboundUeps} yields again (see \eqref{majmoy})
\begin{equation*}
\left|(\overline{\partial_{x_d}U^\varepsilon})_{0,\theta}\right|\leq C_1/\theta.
\end{equation*}
We then get
\begin{multline*}
\bigl\|u^\varepsilon(\theta x)-a^\varepsilon_1\left\{\theta x_d-\varepsilon\psi(\theta x'/\varepsilon)\Theta(\theta x',\theta x_d/\varepsilon)\right\}-\varepsilon V^\varepsilon_1(\theta x)\bigr.\\
\bigl.-\theta^\mu(\overline{\partial_{x_d}U^\varepsilon})_{0,\theta}\left\{\theta x_d-\varepsilon\psi(\theta x'/\varepsilon)\Theta(x',\theta x_d/\varepsilon)-\varepsilon v^{\varepsilon/\theta}(x)\right\}\bigr\|_{L^\infty(D^{\varepsilon/\theta}(0,\theta))}\leq \theta^{2(1+\mu)}.
\end{multline*}
Notice that by \eqref{condTheta}, for all $|x'|<\theta<1/8\leq 1$, $\Theta(x',\theta x_d/\varepsilon)=\Theta(\theta x',\theta x_d/\varepsilon)$ so that the former estimate is equivalent to
\begin{equation*}
\left\|u^\varepsilon(x)-a^\varepsilon_2\left\{x_d-\varepsilon\psi(x'/\varepsilon)\Theta(x',x_d/\varepsilon)\right\}-\varepsilon V^\varepsilon_2(x)\right\|_{L^\infty(D^{\varepsilon}(0,\theta^2))}\leq \theta^{2(1+\mu)}
\end{equation*}
with
\begin{align*}
a^\varepsilon_2&:=a^\varepsilon_1+\theta^\mu(\overline{\partial_{x_d}U^\varepsilon})_{0,\theta},\\
V^\varepsilon_2&:=V^\varepsilon_1(x)+\theta^\mu(\overline{\partial_{x_d}U^\varepsilon})_{0,\theta}v^{\varepsilon/\theta}(x/\theta).
\end{align*}
We make now two important observations which make it possible to iterate. The first one is that 
\begin{equation*}
\left\{
\begin{array}{rll}
-\Delta V^\varepsilon_2&=a^\varepsilon_2\Delta(\psi(x'/\varepsilon)\Theta(x',x_d/\varepsilon)),&x\in D^\varepsilon(0,\theta^2),\\
V^\varepsilon_2&=0,&x\in\Delta^\varepsilon(0,\theta^2).
\end{array}
\right. 
\end{equation*}
Secondly, on the one hand it follows from \eqref{majmoy} that
\begin{equation*}
|a^\varepsilon_2|\leq |a^\varepsilon_1|+\theta^\mu|(\overline{\partial_{x_d}U^\varepsilon})_{0,\theta}|\leq (C_1/\theta)\left[1+\theta^\mu\right]
\end{equation*}
and on the other hand, it follows from Lemma \ref{bdarycorlem} that for all $x\in D^\varepsilon(0,\theta^2)$,
\begin{equation*}
|V^\varepsilon_2(x)|\leq |V^\varepsilon_1(x)|+\theta^\mu|(\overline{\partial_{x_d}U^\varepsilon})_{0,\theta}||v^{\varepsilon/\theta}(x/\theta)|\leq (C_0C_1/\theta)\left[1+\theta^{\mu}\right]\frac{\delta(x)^\tau}{\varepsilon^\tau}.
\end{equation*}

We now carry out the induction argument. Let $k\in\mathbb N$, $k\geq 1$. Assume that for all $\varepsilon<\theta^{k-1}\varepsilon_0$, for all $\psi\in \mathcal C_{M_0}^{1,\nu_0}$, for all $u^\varepsilon$ weak solution to \eqref{laposc1/2}, if
\begin{equation*}
\left\|u^\varepsilon\right\|_{L^\infty(D^\varepsilon(0,1/2))}\leq 1,
\end{equation*}
then there exist $a^\varepsilon_k\in \mathbb R$ and $V^\varepsilon_k=V^\varepsilon_k(x)$ such that
\begin{equation*}
\begin{aligned}
|a^\varepsilon_k|&\leq (C_0/\theta)[1+\theta^\mu+\ldots\ \theta^{(k-1)\mu}],\\
|V^\varepsilon_k(x)|&\leq (C_0C_1/\theta)[1+\theta^\mu+\ldots\ \theta^{(k-1)\mu}]\frac{\delta(x)^\tau}{\varepsilon^\tau},
\end{aligned}
\end{equation*}
and
\begin{equation*}
\left\{
\begin{array}{rll}
-\Delta V^\varepsilon_k&=a^\varepsilon_k\Delta(\psi(x'/\varepsilon)\Theta(x',x_d/\varepsilon)),&x\in D^\varepsilon(0,\theta^{k+1}),\\
V^\varepsilon_k&=0,&x\in\Delta^\varepsilon(0,\theta^{k+1}),
\end{array}
\right. 
\end{equation*}
and such that the following estimate holds
\begin{equation}\label{itlem2k}
\bigl\|u^\varepsilon(x)-a^\varepsilon_k\left\{x_d-\varepsilon\psi(x'/\varepsilon)\Theta(x',x_d/\varepsilon)\right\}-\varepsilon V^\varepsilon_k(x)\bigr\|_{L^\infty(D^\varepsilon(0,\theta^k))}\leq\theta^{k(1+\mu)}.
\end{equation}
Fix now $\varepsilon<\theta^{k}\varepsilon_0$, $\psi$ and $u^\varepsilon$ solving \eqref{laposc}. Let 
\begin{equation*}
U^\varepsilon(x):=\frac{1}{\theta^{k(1+\mu)}}\left[u^\varepsilon(\theta^k x)-a^\varepsilon_k\left\{\theta^k x_d-\varepsilon\psi(\theta^k x'/\varepsilon)\Theta(\theta^k x',\theta^k x_d/\varepsilon)\right\}-\varepsilon V^\varepsilon_k(\theta^k x)\right].
\end{equation*}
It solves 
\begin{equation*}
\left\{\begin{array}{rll}
-\Delta U^\varepsilon&=0
,&x\in D^{\varepsilon/\theta^k}(0,1/2),\\
U^\varepsilon&=0,&x\in\Delta^{\varepsilon/\theta^k}(0,1/2).
\end{array}
\right. 
\end{equation*}
By our recursive assumption \eqref{itlem2k},
\begin{equation}\label{LinftyboundUepsk}
\left\|U^\varepsilon\right\|_{L^\infty(D^{\varepsilon/\theta^k}(0,1/2))}\leq 1.
\end{equation}
Applying Lemma \ref{lem1} to $U^\varepsilon$, we get for $\varepsilon/\theta^k<\varepsilon_0$
\begin{equation*}
\left\|U^\varepsilon-(\overline{\partial_{x_d}U^\varepsilon})_{0,\theta}\left\{x_d-\varepsilon/\theta^k\psi(\theta^k x'/\varepsilon)\Theta(x',\theta^k x_d/\varepsilon)-\varepsilon/\theta^k v^{\varepsilon/\theta^k}(x)\right\}\right\|_{L^\infty(D^{\varepsilon/\theta^k}(0,\theta))}\leq \theta^{1+\mu}.
\end{equation*}
The bound \eqref{LinftyboundUepsk} yields again (see \eqref{majmoy})
\begin{equation*}
\left|(\overline{\partial_{x_d}U^\varepsilon})_{0,\theta}\right|\leq C_1/\theta.
\end{equation*}
We get
\begin{multline*}
\bigl\|u^\varepsilon(\theta^kx)-a^\varepsilon_k\left\{\theta^kx_d-\varepsilon\psi(\theta^kx'/\varepsilon)\Theta(\theta^kx',\theta x_d/\varepsilon)\right\}-\varepsilon V^\varepsilon_k(\theta^k x)\bigr.\\
\bigl.-\theta^{k\mu}(\overline{\partial_{x_d}U^\varepsilon})_{0,\theta}\left\{\theta^k x_d-\varepsilon\psi(\theta^k x'/\varepsilon)\Theta(x',\theta^k x_d/\varepsilon)-\varepsilon v^{\varepsilon/\theta^k}(x)\right\}\bigr\|_{L^\infty(D^{\varepsilon/\theta^k}(0,\theta))}\leq \theta^{(k+1)(1+\mu)}.
\end{multline*}
Notice that by \eqref{condTheta}, for all $|x'|<\theta^k\leq\theta<1/8\leq 1$, $\Theta(x',\theta^k x_d/\varepsilon)=\Theta(\theta^k x',\theta^k x_d/\varepsilon)$ so that the former estimate is equivalent to
\begin{equation*}
\left\|u^\varepsilon(x)-a^\varepsilon_{k+1}\left\{x_d-\varepsilon\psi(x'/\varepsilon)\Theta(x',x_d/\varepsilon)\right\}-\varepsilon V^\varepsilon_{k+1}(x)\right\|_{L^\infty(D^{\varepsilon}(0,\theta^{k+1}))}\leq \theta^{(k+1)(1+\mu)}
\end{equation*}
with
\begin{align*}
a^\varepsilon_{k+1}&:=a^\varepsilon_k+\theta^{k\mu}(\overline{\partial_{x_d}U^\varepsilon})_{0,\theta},\\
V^\varepsilon_{k+1}&:=V^\varepsilon_k(x)+\theta^{k\mu}(\overline{\partial_{x_d}U^\varepsilon})_{0,\theta}v^{\varepsilon/\theta^k}(x/\theta^k).
\end{align*}
Firstly, we have that 
\begin{equation*}
\left\{
\begin{array}{rll}
-\Delta V^\varepsilon_{k+1}&=a^\varepsilon_{k+1}\Delta(\psi(x'/\varepsilon)\Theta(x',x_d/\varepsilon)),&x\in D^\varepsilon(0,\theta^{k+1}),\\
V^\varepsilon_{k+1}&=0,&x\in\Delta^\varepsilon(0,\theta^{k+1}).
\end{array}
\right. 
\end{equation*}
Secondly, on the one hand it follows from \eqref{majmoy} that
\begin{equation*}
|a^\varepsilon_{k+1}|\leq |a^\varepsilon_k|+\theta^{k\mu}|(\overline{\partial_{x_d}U^\varepsilon})_{0,\theta^k}|\leq (C_0/\theta)\left[1+\theta^\mu+\ldots\ \theta^{k\mu}\right]
\end{equation*}
and on the other hand, it follows from Lemma \ref{bdarycorlem} and from our iterative assumption that for all $x\in D^\varepsilon(0,\theta^{k+1})$,
\begin{equation*}
|V^\varepsilon_{k+1}(x)|\leq |V^\varepsilon_k(x)|+\theta^{k\mu}|(\overline{\partial_{x_d}U^\varepsilon})_{0,\theta}||v^{\varepsilon/\theta^k}(x/\theta^k)|\leq (C_0C_1/\theta)\left[1+\theta^{\mu}+\ldots\ \theta^{k\mu}\right]\frac{\delta(x)^\tau}{\varepsilon^\tau}.
\end{equation*}
This concludes the iteration.

\subsection{Proof of Proposition \ref{proplap}}

If $\varepsilon\geq\varepsilon_0$, Proposition \ref{proplap} folows from the classical Lipschitz estimate. Let $0<\varepsilon<\varepsilon_0$ and $\psi\in\mathcal C_{M_0}^{1,\nu_0}$ be fixed for the whole proof. We start with $x_0:=(0,x_{0,d})\in D^\varepsilon(0,1/2)$. Recall that $\delta(x_0):=x_{0,d}-\varepsilon\psi(0)$. Let $k$ be the unique integer $k\geq 1$ such that $\theta^k\leq\varepsilon/\varepsilon_0<\theta^{k-1}$. As in the proof of Proposition \ref{propboundaryholder}, the idea is to consider two cases:
\begin{itemize}
\item ``far'' from the boundary: $\delta(x_0)>\theta^{k+1}/2$, in which case we rely on interior Lipschitz gradient estimates;
\item ``close'' to the boundary: $\delta(x_0)\leq \theta^{k+1}/2$, in which case we rely on a blow-up argument and the classical Lipschitz estimate of Theorem \ref{theoclasslip} near the boundary.
\end{itemize}
In either case, we use Lemma \ref{lem2} to bound $u^\varepsilon$ in $L^\infty$ and show that $u^\varepsilon$ is not too big close to the oscillating boundary. We assume that $\|u^\varepsilon\|_{L^\infty(D^\varepsilon(0,1/2))}\leq 1$.

\subsubsection*{First case: ``far'' from the boundary}

There exist $0\leq l\leq k$ such that 
\begin{equation*}
\theta^{l+1}/2<\delta(x_0)\leq\theta^l/2.
\end{equation*}
Applying the interior gradient estimate \eqref{intgradestrescaled}, we get
\begin{equation*}
\|\nabla u^\varepsilon\|_{L^\infty(B(x_0,\delta(x_0)/4))}\leq \frac{C}{\delta(x_0)}\|u^\varepsilon\|_{L^\infty(B(x_0,\delta(x_0)/2))}
\end{equation*}
where $C$ is uniform in $\varepsilon$. Of course,  the boundedness of $\psi$ implies that for $\varepsilon$ sufficiently small, $B(x_0,\delta(x_0)/4)$ does not intersect the oscillating boundary. We now use Lemma \ref{lem2} to bound $\|u^\varepsilon\|_{L^\infty(B(x_0,\delta(x_0)/2))}$. We get if $l=0$
\begin{equation*}
\|u^\varepsilon\|_{L^\infty(B(x_0,\delta(x_0)/2))}\leq \|u^\varepsilon\|_{L^\infty(D^\varepsilon(0,1))}\leq 1<2\delta(x_0)/\theta, 
\end{equation*}
and if $l\geq 1$ 
\begin{multline}\label{majuepsLinftycas1}
\|u^\varepsilon\|_{L^\infty(B(x_0,\delta(x_0)/2))}\leq \|u^\varepsilon\|_{L^\infty(D^\varepsilon(0,\theta^l))}\leq \theta^{l(1+\mu)}+|a^\varepsilon_l|\theta^l+\varepsilon \|V^\varepsilon_k(x)\|_{L^\infty(D^\varepsilon(0,\theta^l))}\\
\leq\theta^{l(1+\mu)}+\frac{C_0\max(1,C_1)}{\theta(1-\theta^\mu)}\left[\theta^l+\varepsilon^{1-\tau}\theta^{\tau l}\right]\leq \theta^{l(1+\mu)}+\frac{C_0\max(1,C_1)}{\theta(1-\theta^\mu)}\theta^l\left[1+\frac{\varepsilon_0^{1-\tau}}{\theta^{1-\tau}}\right]\\
\leq C_{\varepsilon_0,\theta}\theta^l\leq C_{\varepsilon_0,\theta}\delta(x_0),
\end{multline}
so that 
\begin{equation*}
\|\nabla u^\varepsilon\|_{L^\infty(B(x_0,\delta(x_0)/4))}\leq C.
\end{equation*}
Notice that we have used the inequality
\begin{equation*}
\|x_d-\varepsilon\psi(x'/\varepsilon)\Theta(x',x_d/\varepsilon)\|_{L^\infty(D^\varepsilon(0,\theta^l))}\leq \|x_d-\varepsilon\psi(x'/\varepsilon)\|_{L^\infty(D^\varepsilon(0,\theta^l))}+M_0\varepsilon.
\end{equation*}
\begin{center}
\includegraphics[width=9.78cm,height=6.60cm]{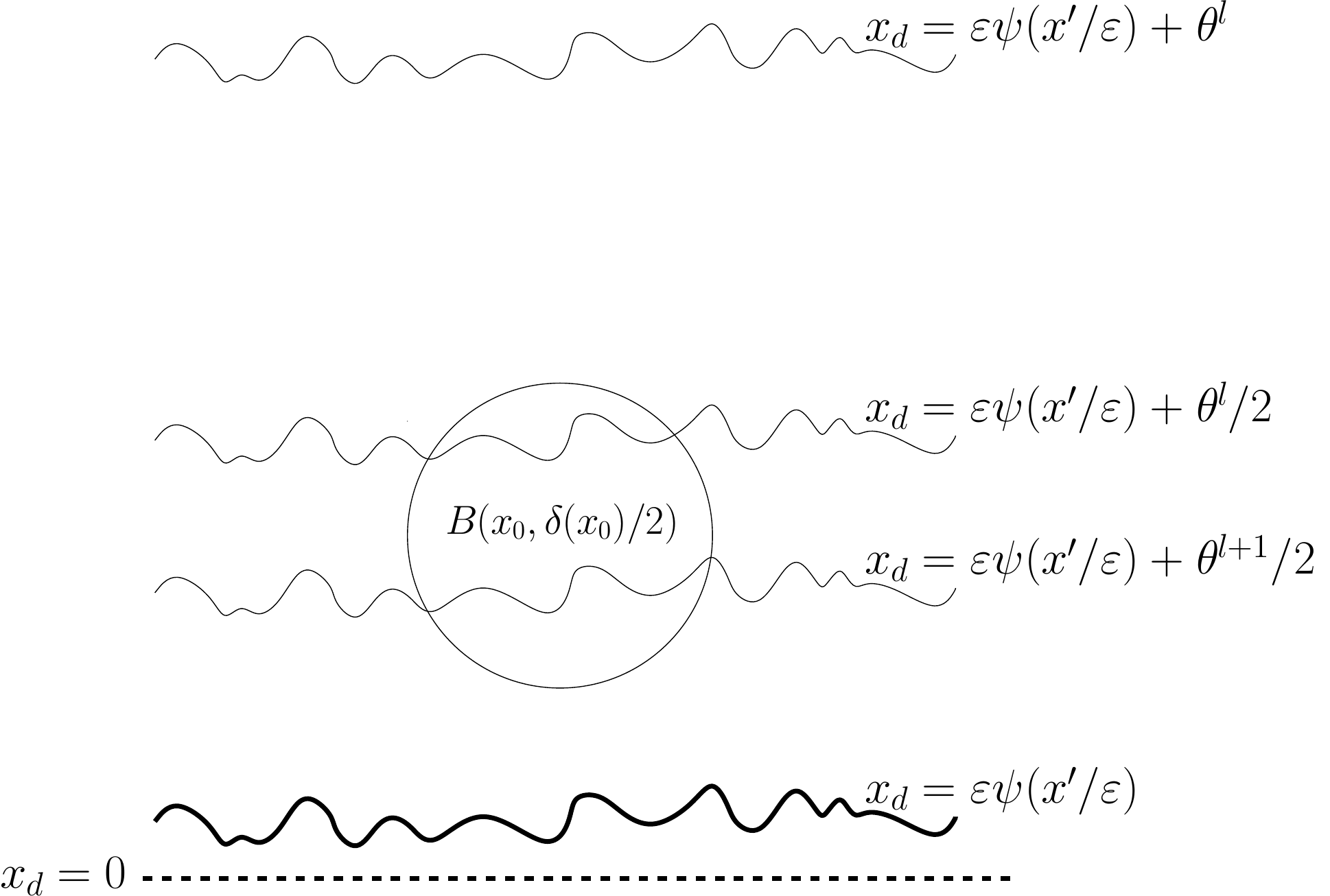}
\end{center}

\subsubsection*{Second case: ``close'' to the boundary}
We blow-up and apply classical estimates. This works here, since we are at the microscale. Let us consider 
\begin{equation*}
U^\varepsilon(y):=\frac{1}{\varepsilon}u^\varepsilon(\varepsilon y),
\end{equation*}
which solves
\begin{equation*}
\left\{\begin{array}{rll}
-\Delta U^\varepsilon&=0,&y\in D^1(0,1/\varepsilon),\\
U^\varepsilon&=0,&y\in\Delta^1(0,1/\varepsilon),
\end{array}
\right. 
\end{equation*}
where $D^1(0,1/\varepsilon)$ is the region bounded from below by the oscillating graph at scale $1$: $y_d=\psi(y')$. Applying the classical Lipschitz estimate \eqref{lipclassestu} near the boundary to $U^\varepsilon$, we get
\begin{equation*}
\|\nabla U^\varepsilon\|_{L^\infty(D^1(0,1/(2\varepsilon_0)))}\leq C\|U^\varepsilon\|_{L^\infty(D^1(0,1/\varepsilon_0))}.
\end{equation*}
On the one hand, 
\begin{equation*}
\|\nabla U^\varepsilon\|_{L^\infty(D^1(0,1/(2\varepsilon_0)))}=\|\nabla u^\varepsilon\|_{L^\infty(D^\varepsilon(0,\varepsilon/(2\varepsilon_0)))}
\end{equation*}
and on the other hand, rescaling and by Lemma \ref{lem2}
\begin{multline}\label{majuepsLinftycas2}
\|U^\varepsilon\|_{L^\infty(D^1(0,1/\varepsilon_0))}=\frac{1}{\varepsilon}\|U^\varepsilon\|_{L^\infty(D^\varepsilon(0,\varepsilon/\varepsilon_0))}\leq \frac{1}{\varepsilon}\|u^\varepsilon\|_{L^\infty(D^\varepsilon(0,\theta^{k-1}))}\\\leq \frac{\theta^k}{\varepsilon}\left(\theta^{k\mu}+\frac{C_0\max(1,C_1)}{\theta(1-\theta^\mu)}\left[1+\frac{\varepsilon^{1-\tau}}{\theta^{k(1-\tau)}}\right]\right)\leq C_{\varepsilon_0,\theta}\frac{\theta^k}{\varepsilon}\leq C.
\end{multline}

\subsubsection*{Arbitrary point $x_0$}
Take a point $x_0=(x_0',x_{0,d})\in D^\varepsilon(0,1/2)$. Remember that $\varepsilon$ is fixed. We consider $\tilde{\psi}$ defined for $y'\in\mathbb R^{d-1}$ by
\begin{equation*}
\tilde{\psi}(y'):=\psi(y'+x_0'/\varepsilon).
\end{equation*}
Notice that $\tilde{\psi}\in \mathcal C_{M_0}^{1,\nu_0}$. For all $|x'|<1/2$, $\varepsilon\tilde{\psi}(x'/\varepsilon)<x_d<\varepsilon\tilde{\psi}(x'/\varepsilon)+1/2$, let $\tilde{u}^\varepsilon$ be defined by
\begin{equation*}
\tilde{u}^\varepsilon(x',x_d):=u^\varepsilon(x'+x_0',x_d).
\end{equation*}
We have that 
\begin{equation*}
\|\tilde{u}^\varepsilon\|_{L^\infty(D^\varepsilon_{\tilde{\psi}}(0,1/2))}\leq 1
\end{equation*}
and $\tilde{u}^\varepsilon$ solves
\begin{equation*}
\left\{\begin{array}{rll}
\Delta \tilde{u}^\varepsilon&=0,&x\in D^\varepsilon_{\tilde{\psi}}(0,1/2),\\
\tilde{u}^\varepsilon&=0,&x\in \Delta^\varepsilon_{\tilde{\psi}}(0,1/2).
\end{array}
\right.
\end{equation*}
We can apply Lemma \ref{lem2} and argue exactly as above for $x_0=(0,x_{0,d})$. 

\section{Boundary Lipschitz estimate for systems with oscillating coefficients}
\label{secbdrliposc}

In this section, we address the Lipschitz estimates for weak solutions of elliptic systems such as \eqref{sysoscueps} with oscillating coefficients as well as oscillating boundary. Our main result is the following:

\begin{theo}\label{theoboundarylip}
Let $0<\mu<1$ and $\kappa>0$. There exist $C>0$, $\varepsilon_0>0$, such that for all $\psi\in\mathcal C_{M_0}^{1,\nu_0}$, for all $A\in\mathcal A^{0,\nu_0}$, for all $\varepsilon>0$, for all $f\in L^{d+\kappa}(D^\varepsilon(0,1))$, for all $F\in C^{0,\mu}(D^\varepsilon(0,1))$, for all $u^\varepsilon$ weak solution to \eqref{sysoscueps} the bounds
\begin{equation*}
\|u^\varepsilon\|_{L^\infty(D^\varepsilon(0,1))}\leq 1,\quad\|f\|_{L^{d+\kappa}(D^\varepsilon(0,1))}\leq \varepsilon_0,\quad\|F\|_{C^{0,\mu}(D^\varepsilon(0,1))}\leq \varepsilon_0
\end{equation*}
imply
\begin{equation*}
\|\nabla u^\varepsilon\|_{L^\infty(D^\varepsilon(0,1/2))}\leq C.
\end{equation*}
Notice that $C$ and $\varepsilon_0$ depend on $d$, $N$, $M_0$, $\lambda$, $\nu_0$, $\kappa$ and $\mu$.
\end{theo}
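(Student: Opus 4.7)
The proof follows the same three-step compactness scheme that was used for Poisson's equation in Section \ref{secliplap}. The plan is to establish (i) an improvement lemma that, via a contradiction argument invoking the homogenization weak convergence Theorem \ref{theoweakcvhomo}, shows that at a small scale $\theta$ the solution $u^\varepsilon$ is close (with gain $\theta^{1+\mu}$) to a distinguished ansatz; (ii) an iteration lemma that propagates this bound down scales $\theta^k$ by rescaling; and (iii) a concluding argument that, for an arbitrary point $x_0 \in D^\varepsilon(0,1/2)$, splits into a ``far from the boundary'' case (where the interior rescaled Lipschitz estimate \eqref{intgradestrescaledbis} combined with the $L^\infty$ bound coming from (ii) yields the result) and a ``close to the boundary'' case (where a blow-up $U^\varepsilon(y):=\varepsilon^{-1}u^\varepsilon(\varepsilon y)$ combined with the classical Schauder Lipschitz estimate of Theorem \ref{theoclasslip} applies, since at the microscale $\psi$ is $C^{1,\nu_0}$ with controlled seminorm).

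Compared to Section \ref{secliplap}, the new ingredient is the need to correct the oscillations of $A(\cdot/\varepsilon)$ in addition to those of the boundary. The correct ansatz in the improvement lemma should therefore combine the Dirichlet correctors $\chi^\beta$ defined in \eqref{eqdefchi} (to account for oscillations of $A$) with a boundary corrector $v^\varepsilon$ analogous to that of Lemma \ref{bdarycorlem} but built for the operator $-\nabla\cdot A(\cdot/\varepsilon)\nabla$. Explicitly, one seeks a constant vector $a^\varepsilon \in \mathbb{R}^N$ such that
\[
\bigl\|u^\varepsilon - a^\varepsilon \bigl\{x_d + \varepsilon \chi^d(x/\varepsilon) - \varepsilon \psi(x'/\varepsilon)\Theta(x',x_d/\varepsilon) - \varepsilon v^\varepsilon(x)\bigr\}\bigr\|_{L^\infty(D^\varepsilon(0,\theta))} \leq \theta^{1+\mu}.
\]
The contradiction argument parallels Lemma \ref{lem1}: assuming failure along a sequence $u^{\varepsilon_k}$, Cacciopoli's inequality together with the boundary H\"older estimate of Proposition \ref{propboundaryholder} gives weak $W^{1,2}$ and strong $C^{0,\alpha}$ compactness (after extending by zero below $\Delta^{\varepsilon_k}$) to a limit $u^0$ solving the homogenized problem $-\nabla\cdot\overline{A}\nabla u^0 = 0$ in $D^0(0,1/4)$ with $u^0 = 0$ on $\Delta^0(0,1/4)$. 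Classical Schauder theory for constant-coefficient systems in a flat half-space gives $u^0 \in C^2$ near the origin, and the Taylor expansion of $u^0$ (together with the fact that $\varepsilon_k\chi^d(\cdot/\varepsilon_k)$, $\varepsilon_k \psi_k(\cdot/\varepsilon_k)\Theta$ and $\varepsilon_k v^{\varepsilon_k}$ all tend to $0$ in $L^\infty$) produces the desired contradiction against a smallness bound $\theta^{1+\mu}>C\theta^2$.

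For the iteration lemma, rescaling $u^\varepsilon(\theta^k x) - a^\varepsilon_k\{\cdots\}$ by $\theta^{-k(1+\mu)}$ yields a solution of the same type of system with coefficients $A(\theta^kx/\varepsilon)$ at scale $\varepsilon/\theta^k$; this satisfies the hypotheses of the improvement lemma as long as $\varepsilon/\theta^k<\varepsilon_0$, since $\mu$ can be chosen slightly below $1-d/(d+\kappa)$ so that the rescaled source terms $f(\theta^k\cdot)$ and $F(\theta^k\cdot)$ stay small. One also needs the pointwise decay $|v^\varepsilon(x)| \leq C\delta(x)^\tau/\varepsilon^\tau$, which is exactly Lemma \ref{bdarycorlem} (proved via the Green's kernel estimates of Lemma \ref{lemestgreend=3}), to ensure that the corrector contributions $\varepsilon V^\varepsilon_k$ remain uniformly bounded when accumulated across scales, exactly as in Lemma \ref{lem2}.

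The hard part will be choosing the boundary corrector $v^\varepsilon$ so that the combined ansatz $x_d + \varepsilon \chi^d(x/\varepsilon) - \varepsilon\psi(x'/\varepsilon)\Theta(x',x_d/\varepsilon) - \varepsilon v^\varepsilon(x)$ is genuinely an approximate weak solution of $-\nabla\cdot A(\cdot/\varepsilon)\nabla = 0$ and simultaneously vanishes on $\Delta^\varepsilon$. This forces $v^\varepsilon$ to solve an inhomogeneous Dirichlet problem on $D^\varepsilon(0,2)$ whose right-hand side is the divergence of $A(\cdot/\varepsilon)$ applied to $\nabla[\chi^d(\cdot/\varepsilon)-\psi(\cdot/\varepsilon)\Theta]$, and whose boundary trace cancels that of $\varepsilon\chi^d(\cdot/\varepsilon)$ on $\Delta^\varepsilon$. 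The pointwise estimate from Lemma \ref{bdarycorlem}, applied on each piece, then provides the decay $\delta(x)^\tau/\varepsilon^\tau$ which is what drives the geometric summation in the iteration and closes the argument in exactly the same manner as the proof of Proposition \ref{proplap}.
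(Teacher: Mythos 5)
Your plan is structurally the same as the paper's: a three-step compactness argument (improvement lemma via a contradiction/homogenization argument using Theorem \ref{theoweakcvhomo}, iteration lemma via rescaling, and a final blow-up split into ``far from'' and ``close to'' the boundary cases), with correctors accounting for both the oscillating coefficients and the oscillating boundary. The one real difference is cosmetic: rather than a single corrector with inhomogeneous Dirichlet trace as you propose, the paper keeps two correctors, each with \emph{homogeneous} data. Concretely, its ansatz (up to a sign convention on $\chi^d$) is
\[
u^\varepsilon \approx a^\varepsilon\Bigl\{x_d - \varepsilon\bigl(1-\Theta\bigr)\chi^d(x/\varepsilon) - \varepsilon w^\varepsilon(x) - \varepsilon\psi(x'/\varepsilon)\Theta - \varepsilon v^\varepsilon(x)\Bigr\},
\]
where $v^\varepsilon$ solves \eqref{sysbdarycorr} as in the constant-coefficient case, and the new Dirichlet corrector $w^\varepsilon$ solves $-\nabla\cdot A(x/\varepsilon)\nabla w^\varepsilon = \nabla\cdot A(x/\varepsilon)\nabla(\chi^d(x/\varepsilon)\Theta)$ with $w^\varepsilon=0$ on $\partial D^\varepsilon(0,2)$; Lemma \ref{bulkcorlem} gives for $w^\varepsilon$ the same $\delta(x)^\tau/\varepsilon^\tau$ decay as Lemma \ref{bdarycorlem} gives for $v^\varepsilon$, by the identical Green's-kernel argument. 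Cutting $\chi^d$ by $(1-\Theta)$ is precisely what lets the whole ansatz vanish on $\Delta^\varepsilon(0,1)$ without any inhomogeneous boundary data; your single-corrector formulation unravels into $w^\varepsilon + \chi^d\Theta + v^\varepsilon$ and would work, but the decay estimate in the form of Lemma \ref{bdarycorlem} is stated for homogeneous data and would need a short re-derivation.

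One genuine slip: your source term for the unified corrector is wrong. Since $x_d + \varepsilon\chi^d(x/\varepsilon)$ is by construction an \emph{exact} solution of $-\nabla\cdot A(x/\varepsilon)\nabla[\cdot]=0$ (that is what \eqref{eqdefchi} encodes), the bulk residual of your ansatz comes only from $\varepsilon\psi(\cdot/\varepsilon)\Theta$. Your corrector must therefore solve $-\nabla\cdot A(x/\varepsilon)\nabla v^\varepsilon = \nabla\cdot A(x/\varepsilon)\nabla\bigl(\psi(x'/\varepsilon)\Theta\bigr)$ together with the inhomogeneous trace $v^\varepsilon=\chi^d(\cdot/\varepsilon)$ on $\Delta^\varepsilon$, \emph{not} the equation with source $\nabla\cdot A\nabla[\chi^d-\psi\Theta]$ that you wrote. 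With your source term the $O(\varepsilon^{-1})$ residual $\nabla\cdot A(x/\varepsilon)\nabla x_d$ gets cancelled twice, and the ansatz fails to be an approximate solution. This is repairable, but it is exactly the bookkeeping the paper's two-corrector decomposition is designed to make transparent: both the vanishing on $\Delta^\varepsilon$ and the exactness in the bulk are manifest by inspection.
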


\begin{rem}
Of course, this boils down to the estimate
\begin{equation}\label{bdarylipest}
\|\nabla u^\varepsilon\|_{L^\infty(D^\varepsilon(0,1/2))}\leq C\left\{\|u^\varepsilon\|_{L^2(D^\varepsilon(0,1))}+\|f\|_{L^{d+\kappa}(D^\varepsilon(0,1))}+\|F\|_{C^{0,\mu}(D^\varepsilon(0,1))}\right\}.
\end{equation} 
with $C>0$ uniform in $\varepsilon$.
\end{rem}

Once again, the proof relies on the three-steps compactness method. However, with the expansion for $u^\varepsilon$ provided in Lemma \ref{lem2}, the iteration argument does not go through. Indeed, assuming for simplicity that $f=F=0$, we may prove as for Poisson's equation (cf. Lemma \ref{lem1}) the existence of $\varepsilon_0,\ \theta>0$ such that for all $0<\varepsilon<\varepsilon_0$
\begin{equation}\label{estlem1model}
\bigl\|u^\varepsilon(x)-\left(\overline{\partial_{x_d} u^\varepsilon}\right)_{0,\theta}\left\{x_d-\varepsilon\psi(x'/\varepsilon)\Theta(x',x_d/\varepsilon)-\varepsilon v^\varepsilon(x)\right\}\bigr\|_{L^\infty(D^\varepsilon(0,\theta))}\leq\theta^{1+\mu},
\end{equation}
where $v^\varepsilon$ is the boundary corrector solving \eqref{sysbdarycorr}. However,
\begin{equation*}
U^\varepsilon(x):=\frac{1}{\theta^{1+\mu}}\left[u^\varepsilon(\theta x)-a^\varepsilon_1\left\{\theta x_d-\varepsilon\psi(\theta x'/\varepsilon)\Theta(\theta x',\theta x_d/\varepsilon)\right\}-\varepsilon V^\varepsilon_1(\theta x)\right]
\end{equation*}
solves 
\begin{equation*}
\left\{
\begin{array}{rll}
-\nabla\cdot A(\theta x/\varepsilon)\nabla U^\varepsilon&=a^\varepsilon_1/\theta^{\mu}\partial_\alpha\left(A^{\alpha d}(\theta x/\varepsilon)\right),&x\in D^\varepsilon(0,1/2),\\
U^\varepsilon&=0,&x\in \Delta^\varepsilon(0,1/2).
\end{array}
\right. 
\end{equation*}
Yet, neither is this right hand side zero, nor is $A^{\alpha d}(\theta \cdot/\varepsilon)$ uniformly bounded in $C^{0,\mu}(D^\varepsilon(0,1/2))$. Therefore, there is no way to get an estimate of the type of \eqref{estlem1model}, without introducing an additional corrector for the oscillations of the operator. The standard corrector $\chi^d$ solving \eqref{eqdefchi} needs to be adjusted since it introduces annoying oscillations on the boundary. We therefore introduce $w^\varepsilon=w^\varepsilon(x)\in W^{1,2}(D^\varepsilon(0,2))$ unique weak solution of
\begin{equation}\label{sysbulkcorr}
\left\{
\begin{array}{rll}
-\nabla\cdot A(x/\varepsilon)\nabla w^\varepsilon&=\nabla\cdot A(x/\varepsilon)\nabla(\chi^d(x/\varepsilon)\Theta(x',x_d/\varepsilon)),&x\in D^\varepsilon(0,2),\\
w^\varepsilon&=0,&x\in\partial D^\varepsilon(0,2),
\end{array}
\right. 
\end{equation}
for which we have the following lemma:

\begin{lem}[Dirichlet corrector]\label{bulkcorlem}
For all $1/2<\tau<1$, there exists $C_0'>0$ such that for all $\psi\in\mathcal C_{M_0}^{1,\omega}$, for all $A\in\mathcal A^{0,\nu_0}$, for all $0<\varepsilon<1$, the unique weak solution $w^\varepsilon\in W^{1,2}(D^\varepsilon(0,2))$ of \eqref{sysbulkcorr} satisfies the following estimate: for all $x\in D^\varepsilon(0,3/2)$,
\begin{equation}\label{estdircorlem}
|w^\varepsilon(x)|\leq \frac{C_0'\delta(x)^\tau}{\varepsilon^{\tau}},
\end{equation}
where $\delta(x):=x_d-\varepsilon\psi(x'/\varepsilon)$.
\end{lem}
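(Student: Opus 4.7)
The proof will closely parallel that of Lemma \ref{bdarycorlem}, with the role of $\psi(\tilde x'/\varepsilon)$ now played by $\chi^d(\tilde x/\varepsilon)$. The key preliminary observation is that the cell solution $\chi^d$ defined by \eqref{eqdefchi} belongs to $C^{1,\nu_0}(\mathbb T^d)$ by classical periodic Schauder regularity, since $A\in\mathcal A^{0,\nu_0}$; in particular $\|\chi^d\|_{L^\infty(\mathbb R^d)}$ and $\|\nabla\chi^d\|_{L^\infty(\mathbb R^d)}$ are bounded by a constant depending only on $d$, $N$, $\lambda$, $M_0$, $\nu_0$. This provides exactly the same analytic information as the $C^{1,\omega}_{M_0}$ bounds on $\psi$ used in Lemma \ref{bdarycorlem}.

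First I would represent $w^\varepsilon$ via the Green kernel $\widetilde G^\varepsilon$ associated to $-\nabla\cdot A(x/\varepsilon)\nabla$ in $D^\varepsilon(0,2)$. After an integration by parts (the boundary contribution vanishes because $\widetilde G^\varepsilon(x,\cdot)$ vanishes on $\partial D^\varepsilon(0,2)$ and $\Theta$ is compactly supported in the $\tilde x'$ variable), for $x\in D^\varepsilon(0,3/2)$,
\begin{equation*}
w^\varepsilon(x)=-\int_{D^\varepsilon(0,2)}\nabla_2\widetilde G^\varepsilon(x,\tilde x)\,A(\tilde x/\varepsilon)\nabla\bigl(\chi^d(\tilde x/\varepsilon)\Theta(\tilde x',\tilde x_d/\varepsilon)\bigr)\,d\tilde x.
\end{equation*}
Using the uniform $W^{1,\infty}$ bound on $\chi^d$ together with the structure of $\Theta$, the product $A(\tilde x/\varepsilon)\nabla\bigl(\chi^d(\tilde x/\varepsilon)\Theta(\tilde x',\tilde x_d/\varepsilon)\bigr)$ is pointwise $O(1/\varepsilon)$ and supported in the slab $[-3/2,3/2]^{d-1}\times[-3M_0\varepsilon/2,3M_0\varepsilon/2]$, exactly as in the proof of Lemma \ref{bdarycorlem}.

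Next I would split the integral into $\{|x-\tilde x|\leq\varepsilon\}$ and $\{|x-\tilde x|>\varepsilon\}$ and apply respectively the gradient bounds \eqref{est3nablagepsa} and \eqref{est3nablagepsb} of Lemma \ref{lemestgreend=3} (or the logarithmic analogues \eqref{est3nablagepsad=2}--\eqref{est3nablagepsbd=2} of Lemma \ref{lemestgreend=2} if $d=2$). The two resulting integrals are identical in form to the quantities labeled $I_1$ and $I_2$ in the proof of Lemma \ref{bdarycorlem}, and the same elementary calculations yield $|I_1|+|I_2|\leq C\delta(x)^\tau/\varepsilon^\tau$. The constraint $1/2<\tau<1$ enters in precisely the same way: the upper bound ensures integrability of $|\tilde y|^{-(d-1+\tau)}$ near $0$ for $I_1$, while the lower bound ensures integrability of $|\tilde y'|^{-(d-2+2\tau)}$ at infinity for $I_2$.

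I expect no new analytic obstacle: the whole argument of Lemma \ref{bdarycorlem} transfers essentially verbatim once the uniform $W^{1,\infty}$ estimate on the periodic corrector $\chi^d$ is in hand. The only conceptual point worth noting is that, although $\chi^d$ depends on all of $\tilde x$ (unlike $\psi$, which depends only on $\tilde x'$), this dependence is harmless because the bound on $\nabla_{\tilde x}(\chi^d(\tilde x/\varepsilon))$ has exactly the same size $O(1/\varepsilon)$ used throughout Lemma \ref{bdarycorlem}.
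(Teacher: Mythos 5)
Your proposal is correct and takes exactly the same route as the paper: the paper's proof of Lemma \ref{bulkcorlem} simply says it ``follows exactly the lines of the proof of Lemma \ref{bdarycorlem},'' and your write-up is that transfer carried out explicitly, including the one prerequisite the paper leaves tacit (the uniform $C^{1,\nu_0}(\mathbb T^d)$ bound on $\chi^d$ from periodic Schauder theory, which plays the role of the $\mathcal C^{1,\omega}_{M_0}$ bound on $\psi$) and the correct observation that the extra dependence of $\chi^d$ on $\tilde x_d$ is harmless since it still produces an $O(1/\varepsilon)$ gradient.
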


\begin{proof}
The proof follows exactly the lines of the proof of Lemma \ref{bdarycorlem} for the boundary corrector.
\end{proof}

Let $0<\mu<1$ and $\kappa>0$ be fixed in what follows. The two usual auxiliary lemmas (improvement and iteration) now read:
\begin{lem}[improvement lemma]\label{lem1lip}
There exist $0<\varepsilon_0< 1,\ 0<\theta<1/8$, such that for all $\psi\in\mathcal C_{M_0}^{1,\nu_0}$, for all $A\in\mathcal A^{0,\nu_0}$, for all $0<\varepsilon<\varepsilon_0$, for all $f\in L^{d+\kappa}(D^\varepsilon(0,1))$, for all $F\in C^{0,\mu}(D^\varepsilon(0,1))$, for all $u^\varepsilon$ weak solution to 
\begin{equation}\label{sysoscueps1/2}
\left\{
\begin{array}{rll}
-\nabla\cdot A(x/\varepsilon)\nabla u^\varepsilon&=f+\nabla\cdot F,&x\in D^\varepsilon(0,1/2),\\
u^\varepsilon&=0,&x\in \Delta^\varepsilon(0,1/2),
\end{array}
\right.
\end{equation}
if
\begin{equation}\label{boundsufF}
\left\|u^\varepsilon\right\|_{L^\infty(D^\varepsilon(0,1/2))}\leq 1,\quad\|f\|_{L^{d+\kappa}(D^\varepsilon(0,1/2))}\leq \varepsilon_0,\quad\|F\|_{C^{0,\mu}(D^\varepsilon(0,1/2))}\leq \varepsilon_0,
\end{equation}
then 
\begin{multline}\label{estlem1lip}
\bigl\|u^\varepsilon(x)-\left(\overline{\partial_{x_d} u^\varepsilon}\right)_{0,\theta}\left\{x_d-\varepsilon(1-\Theta(x',x_d/\varepsilon))\chi^d(x/\varepsilon)-\varepsilon w^\varepsilon(x)\right.\bigr.\\
\bigl.\left.-\varepsilon\psi(x'/\varepsilon)\Theta(x',x_d/\varepsilon)-\varepsilon v^\varepsilon(x)\right\}\bigr\|_{L^\infty(D^\varepsilon(0,\theta))}\leq\theta^{1+\mu},
\end{multline}
where $v^\varepsilon$ is the boundary corrector solving \eqref{sysbdarycorr}.
\end{lem}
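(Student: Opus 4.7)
The plan is to run the three-step compactness scheme that produced Lemma \ref{lem1} for Poisson's equation, the novelty being that the approximate linear profile $(\overline{\partial_{x_d}u^\varepsilon})_{0,\theta}\,x_d$ must now be dressed not only by the boundary corrector $v^\varepsilon$, but also by the Dirichlet corrector $w^\varepsilon$ and by the cut-off bulk corrector $\varepsilon(1-\Theta(x',x_d/\varepsilon))\chi^d(x/\varepsilon)$. The role of these additional terms is twofold: the cell corrector $\chi^d$ compensates, via the standard two-scale ansatz, the oscillations of $A(x/\varepsilon)$ that the plain profile $x_d$ does not see; the cut-off $(1-\Theta)$ together with $w^\varepsilon$ repairs the failure of $\varepsilon\chi^d(x/\varepsilon)$ to vanish on $\Delta^\varepsilon$.

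First, I would fix $\theta$ from the homogenized side. Let $A^0$ be any constant matrix satisfying \eqref{elliptA} and $u^0\in W^{1,2}(D^0(0,1/4))$ a weak solution of
\begin{equation*}
-\nabla\cdot A^0\nabla u^0=f^0+\nabla\cdot F^0\ \mbox{in}\ D^0(0,1/4),\qquad u^0=0\ \mbox{on}\ \Delta^0(0,1/4),
\end{equation*}
with $\|u^0\|_{L^\infty}\leq 1$, $\|f^0\|_{L^{d+\kappa}}\leq\varepsilon_0$ and $\|F^0\|_{C^{0,\mu}}\leq\varepsilon_0$. The classical boundary Schauder estimate of Theorem \ref{theoclasslip}, applied to this constant coefficient system in a flat half-ball, produces $u^0\in C^{1,\sigma}(\overline{D^0(0,1/8)})$ with $\sigma=\min(1-d/(d+\kappa),\mu)$ and $C^{1,\sigma}$-seminorm uniformly bounded. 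Writing $u^0(x)-u^0(x',0)=\int_0^1\partial_{x_d}u^0(x',tx_d)x_d\,dt$ and averaging over $D^0(0,\theta)$ exactly as in the proof of Lemma \ref{lem1}, one obtains
\begin{equation*}
\bigl\|u^0-(\overline{\partial_{x_d}u^0})_{0,\theta}x_d\bigr\|_{L^\infty(D^0(0,\theta))}\leq C_2\theta^{1+\sigma}.
\end{equation*}
One then fixes $\theta\in(0,1/8)$ so small that $C_2\theta^{1+\sigma}<\frac{1}{2}\theta^{1+\mu}$; as discussed at the end, this step is the heart of the argument.

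Next, I argue by contradiction: assume sequences $\varepsilon_k\to 0$, $\psi_k\in\mathcal C_{M_0}^{1,\nu_0}$, $A_k\in\mathcal A^{0,\nu_0}$, $f^{\varepsilon_k}$, $F^{\varepsilon_k}$ and $u^{\varepsilon_k}$ satisfying \eqref{boundsufF} but violating \eqref{estlem1lip}. Proposition \ref{propboundaryholder} and Cacciopoli's inequality give uniform Hölder control of $u^{\varepsilon_k}$ up to $\Delta^{\varepsilon_k}$ and a uniform $H^1$ bound on the zero-extension of $u^{\varepsilon_k}$ to $D^{-1}(0,1/4)$. Flattening $\tilde u^{\varepsilon_k}(x',x_d):=u^{\varepsilon_k}(x',x_d+\varepsilon_k\psi_k(x'/\varepsilon_k))$ and invoking Arzelà--Ascoli, Rellich and weak compactness, one extracts a common subsequence along which $\tilde u^{\varepsilon_k}\to u^0$ in $C^0(\overline{D^0(0,3/8)})$, $u^{\varepsilon_k}\rightharpoonup u^0$ and $\nabla u^{\varepsilon_k}\rightharpoonup\nabla u^0$ weakly in $L^2(D^{-1}(0,1/4))$, $\overline{A_k}\to A^0$, $f^{\varepsilon_k}\rightharpoonup f^0$ weakly in $L^{d+\kappa}$ and $F^{\varepsilon_k}\to F^0$ strongly in $C^0$, with $\|F^0\|_{C^{0,\mu}}\leq\varepsilon_0$. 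Theorem \ref{theoweakcvhomo} on each slab $(-1/4,1/4)^{d-1}\times(\iota,1/4)$, combined with the test-function argument on $\Delta^0(0,1/4)$ used in Lemma \ref{lem1}, identifies $u^0$ as a solution of the homogenized problem set up above, hence one to which the estimate of the previous paragraph applies.

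Finally, I pass to the limit in the negation of \eqref{estlem1lip}. The four correction terms of the ansatz vanish uniformly on $D^{\varepsilon_k}(0,\theta)$: Lemmas \ref{bdarycorlem} and \ref{bulkcorlem} give $\|\varepsilon_k v^{\varepsilon_k}\|_{L^\infty(D^{\varepsilon_k}(0,\theta))}+\|\varepsilon_k w^{\varepsilon_k}\|_{L^\infty(D^{\varepsilon_k}(0,\theta))}\leq C\varepsilon_k^{1-\tau}\theta^{\tau}$, while $\|\varepsilon_k(1-\Theta)\chi_k^d(\cdot/\varepsilon_k)\|_{L^\infty}\leq C\varepsilon_k$ and $\|\varepsilon_k\psi_k(\cdot/\varepsilon_k)\Theta\|_{L^\infty}\leq\varepsilon_k M_0$; meanwhile $(\overline{\partial_{x_d}u^{\varepsilon_k}})_{D^{\varepsilon_k}(0,\theta)}\to(\overline{\partial_{x_d}u^0})_{D^0(0,\theta)}$ because the symmetric difference $D^{\varepsilon_k}(0,\theta)\triangle D^0(0,\theta)$ has Lebesgue measure $O(\varepsilon_k)$ and $\nabla u^{\varepsilon_k}$ is bounded in $L^2$, exactly as in Lemma \ref{lem1}. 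Combining these convergences with the strong $C^0$ convergence of $\tilde u^{\varepsilon_k}$, the negation of \eqref{estlem1lip} passes in the limit to
\begin{equation*}
\bigl\|u^0-(\overline{\partial_{x_d}u^0})_{0,\theta}x_d\bigr\|_{L^\infty(D^0(0,\theta))}\geq\theta^{1+\mu}>C_2\theta^{1+\sigma},
\end{equation*}
contradicting the bound of the second paragraph. The main obstacle is precisely this Schauder step: to beat $\theta^{1+\mu}$ one needs $\sigma>\mu$, but $F^0\in C^{0,\mu}$ forces $\sigma\leq\mu$; the standard remedy is to prove the improvement for an auxiliary exponent $\mu'$ strictly less than $\min(\mu,1-d/(d+\kappa))$, for which $\sigma>\mu'$ is automatic, and let the iteration lemma recover the full statement. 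A secondary subtlety, handled just as in Lemma \ref{lem1}, is the passage to the limit in $(\overline{\partial_{x_d}u^{\varepsilon_k}})_{0,\theta}$ across the $\varepsilon$-dependent domain of averaging.
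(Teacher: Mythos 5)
Your overall scheme — choose $\theta$ from the flat-domain estimate, argue by contradiction, use H\"older plus Cacciopoli compactness, identify the limit via Theorem \ref{theoweakcvhomo} and the test-function argument, and show the correctors vanish — matches the strategy the paper uses, which it abbreviates by deferring to Lemmas \ref{lem1} and \ref{lem1holder}. But you have misread the quantifier structure of the improvement lemma, and this is a genuine gap. To negate ``there exists $\varepsilon_0$ such that for all $f,F$ with $\|f\|_{L^{d+\kappa}}\leq\varepsilon_0$ and $\|F\|_{C^{0,\mu}}\leq\varepsilon_0$ \dots'' one takes $\varepsilon_0=1/k$, so the contradiction sequence satisfies $\|f^{\varepsilon_k}\|_{L^{d+\kappa}}\leq 1/k\to 0$ and $\|F^{\varepsilon_k}\|_{C^{0,\mu}}\leq 1/k\to 0$; this is exactly what happens in the proof of Lemma \ref{lem1holder}, where the paper writes $\|f^{\varepsilon_k}\|\leq\varepsilon_k$. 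Consequently $f^0=F^0=0$ and the limit $u^0$ solves the homogeneous constant-coefficient problem $-\nabla\cdot A^0\nabla u^0=0$ in $D^0(0,1/4)$, $u^0=0$ on $\Delta^0(0,1/4)$, with no source terms at all. Classical regularity then gives $u^0\in C^2(\overline{D^0(0,1/8)})$ and, exactly as in the proof of Lemma \ref{lem1}, the Taylor bound
\begin{equation*}
\bigl\|u^0-(\overline{\partial_{x_d}u^0})_{0,\theta}x_d\bigr\|_{L^\infty(D^0(0,\theta))}\leq C_2\theta^2,
\end{equation*}
which beats $\theta^{1+\mu}$ for small $\theta$ and every $\mu<1$, with no regularity ceiling to work around.

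Your self-diagnosed ``main obstacle'' is therefore an artifact of this misreading: by keeping nonzero $f^0,F^0$ in the limit you manufacture a $C^{1,\sigma}$ ceiling with $\sigma\leq\mu$ that the correct argument never hits, and the proposed auxiliary-exponent workaround $\mu'<\min(\mu,1-d/(d+\kappa))$ proves a strictly weaker statement than Lemma \ref{lem1lip} as written (note that the iteration Lemma \ref{lem2lip} is stated for the same $\mu$, so it does not ``recover the full statement''; at best one would have to rerun the entire chain with $\mu'$). The remaining ingredients of your argument — the uniform bounds $\|\varepsilon_k v^{\varepsilon_k}\|+\|\varepsilon_k w^{\varepsilon_k}\|\leq C\varepsilon_k^{1-\tau}\theta^\tau$ from Lemmas \ref{bdarycorlem} and \ref{bulkcorlem}, the convergence of the averages $(\overline{\partial_{x_d}u^{\varepsilon_k}})_{0,\theta}$ across the $\varepsilon_k$-dependent domain, and the flattening $\tilde u^{\varepsilon_k}(x',x_d)=u^{\varepsilon_k}(x',x_d+\varepsilon_k\psi_k(x'/\varepsilon_k))$ — are correct and consistent with the paper's treatment.
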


\begin{lem}[iteration lemma]\label{lem2lip}
Let $0<\varepsilon_0<1$ and $\theta>0$ as given by Lemma \ref{lem1lip}. There exists $C_1>0$, for all $k\in\mathbb N$, $k\geq 1$, for all $\varepsilon<\theta^{k-1}\varepsilon_0$, for all $\psi\in\mathcal C_{M_0}^{1,\nu_0}$, for all $A\in\mathcal A^{0,\nu_0}$, for all $f\in L^{d+\kappa}(D^\varepsilon(0,1))$, for all $F\in C^{0,\mu}(D^\varepsilon(0,1))$, for all $u^\varepsilon$ weak solution to \eqref{sysoscueps1/2}, if
\begin{equation*}
\left\|u^\varepsilon\right\|_{L^\infty(D^\varepsilon(0,1/2))}\leq 1,\quad\|f\|_{L^{d+\kappa}(D^\varepsilon(0,1/2))}\leq \varepsilon_0,\quad\|F\|_{C^{0,\mu}(D^\varepsilon(0,1/2))}\leq \varepsilon_0,
\end{equation*}
then there exist $a^\varepsilon_k\in \mathbb R$, $V^\varepsilon_k=V^\varepsilon_k(x)$ and $W^\varepsilon_k=W^\varepsilon_k(x)$ such that
\begin{equation*}
\begin{aligned}
|a^\varepsilon_k|&\leq (C_0/\theta)[1+\theta^\mu+\ldots\ \theta^{(k-1)\mu}],\\
|V^\varepsilon_k(x)|&\leq (C_0C_1/\theta)[1+\theta^\mu+\ldots\ \theta^{(k-1)\mu}]\frac{\delta(x)^\tau}{\varepsilon^\tau},\\
|W^\varepsilon_k(x)|&\leq (C_0'C_1/\theta)[1+\theta^\mu+\ldots\ \theta^{(k-1)\mu}]\frac{\delta(x)^\tau}{\varepsilon^\tau},
\end{aligned}
\end{equation*}
where $C_0$ (resp. $C_0'$) is the constant appearing in \eqref{estbdarycorlem} (resp. \eqref{estdircorlem}) and
\begin{multline*}
\bigl\|u^\varepsilon(x)-a^\varepsilon_k\left\{x_d-\varepsilon(1-\Theta(x',x_d/\varepsilon))\chi^d(x/\varepsilon)-\varepsilon\psi(x'/\varepsilon)\Theta(x',x_d/\varepsilon)\right\}\bigr.\\
\bigl.-\varepsilon W^\varepsilon_k(x)-\varepsilon V^\varepsilon_k(x)\bigr\|_{L^\infty(D^\varepsilon(0,\theta^k))}\leq\theta^{k(1+\mu)}.
\end{multline*}
\end{lem}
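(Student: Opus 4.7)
We proceed by induction on $k$, following the architecture of Lemma~\ref{lem2} exactly, with two extra items of bookkeeping: the source terms $f, F$ and the Dirichlet corrector $W_k^\varepsilon$ running in parallel with the boundary corrector $V_k^\varepsilon$. The base case $k=1$ is precisely Lemma~\ref{lem1lip}, taking
\begin{equation*}
a_1^\varepsilon := (\overline{\partial_{x_d}u^\varepsilon})_{0,\theta}, \qquad V_1^\varepsilon := a_1^\varepsilon v^\varepsilon, \qquad W_1^\varepsilon := a_1^\varepsilon w^\varepsilon.
\end{equation*}
The control $|a_1^\varepsilon| \leq C_1/\theta$ is the divergence-theorem computation~\eqref{majmoy}, and the pointwise bounds on $V_1^\varepsilon, W_1^\varepsilon$ are provided by Lemmas~\ref{bdarycorlem} and~\ref{bulkcorlem} respectively.

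\textbf{Inductive step.} Assuming the statement at rank $k$, fix $\varepsilon < \theta^k \varepsilon_0$ and define on $D^{\varepsilon/\theta^k}(0,1/2)$
\begin{equation*}
U^\varepsilon(x) := \theta^{-k(1+\mu)}\Big[u^\varepsilon(\theta^k x) - a_k^\varepsilon\big\{\theta^k x_d - \varepsilon(1-\Theta_k)\chi^d(\theta^k x/\varepsilon) - \varepsilon\psi(\theta^k x'/\varepsilon)\Theta_k\big\} - \varepsilon W_k^\varepsilon(\theta^k x) - \varepsilon V_k^\varepsilon(\theta^k x)\Big]
\end{equation*}
with $\Theta_k := \Theta(\theta^k x', \theta^k x_d/\varepsilon)$. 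The algebraic heart of the proof is to check that, inside $D^\varepsilon(0,\theta^{k+1})$, the operator $-\nabla\cdot A(\cdot/\varepsilon)\nabla$ applied to the full expansion kills every term: the cell problem~\eqref{eqdefchi} annihilates the leading linear profile $(x_d - \varepsilon\chi^d(x/\varepsilon))$; the source generated by the $\varepsilon\Theta\chi^d$ piece cancels against $\varepsilon W_k^\varepsilon$ by~\eqref{sysbulkcorr}; and the source generated by the $\varepsilon\psi\Theta$ piece cancels against $\varepsilon V_k^\varepsilon$ by~\eqref{sysbdarycorr}. Since $V_k^\varepsilon, W_k^\varepsilon$ are accumulated sums of elementary correctors at scales $\varepsilon/\theta^j$ for $0 \leq j \leq k-1$, the cancellation relies on property~\eqref{condTheta} to identify $\Theta(x'/\theta^j, x_d/\varepsilon)$ with $\Theta(x', x_d/\varepsilon)$ throughout $D^\varepsilon(0,\theta^{k+1})$ (since $|x'|/\theta^j \leq 1$ for all relevant $j$), so that each elementary corrector solves the correct cancelling equation at the ambient scale $\varepsilon$ and the weights sum precisely to $a_k^\varepsilon$. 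Consequently $U^\varepsilon$ is a weak solution in $D^{\varepsilon/\theta^k}(0,1/2)$ of a system of the form~\eqref{sysoscueps1/2} with matrix $A(\cdot/(\varepsilon/\theta^k)) \in \mathcal A^{0,\nu_0}$, homogeneous Dirichlet data on $\Delta^{\varepsilon/\theta^k}(0,1/2)$, and rescaled source terms
\begin{equation*}
\tilde f(x) := \theta^{k(1-\mu)}f(\theta^k x), \qquad \tilde F(x) := \theta^{-k\mu}(F(\theta^k x) - F(0)),
\end{equation*}
the constant $F(0)$ being harmlessly subtracted to keep $\|\tilde F\|_{L^\infty}$ under control.

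\textbf{Closing the induction and main obstacle.} The inductive bound gives $\|U^\varepsilon\|_{L^\infty} \leq 1$, and the scaling identities $\|\tilde f\|_{L^{d+\kappa}} = \theta^{k(1-\mu-d/(d+\kappa))}\|f\|_{L^{d+\kappa}}$ and $\|\tilde F\|_{C^{0,\mu}} \leq C\|F\|_{C^{0,\mu}}$ deliver the smallness hypotheses of Lemma~\ref{lem1lip}, provided $\mu \leq 1 - d/(d+\kappa)$ (otherwise one reduces $\mu$ to the smaller of the two values, as already in Lemma~\ref{lem2holder}) and $\varepsilon_0$ is shrunk by a dimensional constant. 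Apply Lemma~\ref{lem1lip} to $U^\varepsilon$ (legitimate since $\varepsilon/\theta^k < \varepsilon_0$), undo the rescaling, and invoke~\eqref{condTheta} once more to rewrite the resulting $\Theta(x', \theta^k x_d/\varepsilon)$ as $\Theta(\theta^k x', \theta^k x_d/\varepsilon)$ inside $D^\varepsilon(0,\theta^{k+1})$; this produces the expansion at rank $k+1$ with
\begin{align*}
a_{k+1}^\varepsilon &:= a_k^\varepsilon + \theta^{k\mu}(\overline{\partial_{x_d}U^\varepsilon})_{0,\theta},\\
V_{k+1}^\varepsilon(x) &:= V_k^\varepsilon(x) + \theta^{k\mu}(\overline{\partial_{x_d}U^\varepsilon})_{0,\theta}\,v^{\varepsilon/\theta^k}(x/\theta^k),\\
W_{k+1}^\varepsilon(x) &:= W_k^\varepsilon(x) + \theta^{k\mu}(\overline{\partial_{x_d}U^\varepsilon})_{0,\theta}\,w^{\varepsilon/\theta^k}(x/\theta^k).
\end{align*}
The control $|(\overline{\partial_{x_d}U^\varepsilon})_{0,\theta}| \leq C_1/\theta$ from~\eqref{majmoy}, combined with Lemmas~\ref{bdarycorlem} and~\ref{bulkcorlem} applied at scale $\varepsilon/\theta^k$ and the scale-invariant identity $\delta(x/\theta^k)^\tau/(\varepsilon/\theta^k)^\tau = \delta(x)^\tau/\varepsilon^\tau$, closes the geometric-series bounds on $a_{k+1}^\varepsilon, V_{k+1}^\varepsilon, W_{k+1}^\varepsilon$. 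The principal technical hurdle is precisely the cancellation claim on which the inductive step rests: one must show that the cumulative expansion built during iterations $0,1,\ldots,k-1$ telescopes inside the small ball $D^\varepsilon(0,\theta^{k+1})$ so as to reproduce exactly the cancelling equations~\eqref{sysbdarycorr}, \eqref{sysbulkcorr} at the ambient scale $\varepsilon$; this collapse is what allows the structure to propagate cleanly through the induction and is the reason why the specific geometric property~\eqref{condTheta} of $\Theta$ was built into the construction in the first place.
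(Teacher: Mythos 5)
Your proof is correct and follows the paper's intended approach: iterate via the improvement Lemma \ref{lem1lip}, accumulate boundary and Dirichlet correctors scale by scale, invoke property \eqref{condTheta} to keep the cut-offs matched, and bound the accumulated correctors geometrically via Lemmas \ref{bdarycorlem} and \ref{bulkcorlem}. The paper itself only asserts that the argument is ``completely standard'' following Lemmas \ref{lem1} and \ref{lem2}, and your treatment of the rescaled source terms $\tilde{f}$, $\tilde{F}$ (constant subtraction for $F$, reduction of $\mu$ to $\min(\mu,1-d/(d+\kappa))$ as in Lemma \ref{lem2holder}) is exactly the bookkeeping that assertion leaves implicit.
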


The proofs of Lemma \ref{lem1lip} and Lemma \ref{lem2lip} are now completely standard. The core argument has been carried out in the proofs of Lemma \ref{lem1} and Lemma \ref{lem2}. The key argument for the improvement is the convergence of a subsequence to a system with constant coefficients in a flat space. This convergence is ensured 
\begin{itemize}
\item on the one hand by the bounds \eqref{boundsufF}, which give weak $H^1$ and strong $C^{0,\sigma}$ compactness, 
\item and on the other hand by the homogenization result of Theorem \ref{theoweakcvhomo}. 
\end{itemize}
As far as Lemma \ref{lem2lip} and the actual proof of Theorem \ref{theoboundarylip} are concerned, the Dirichlet corrector $w^\varepsilon$ leads to the exact same difficulties as the boundary corrector $v^\varepsilon$.

\section{Asymptotics of Green and Poisson kernels}
\label{secasygreen}

\subsection{Large scale pointwise estimates}

Let $G^\varepsilon=G^\varepsilon(x,\tilde{x})$ be the Green kernel associated to the operator $-\nabla\cdot A(x/\varepsilon)\nabla$ and the oscillating domain $D^\varepsilon_+$. Recall that for all $\tilde{x}\in D^\varepsilon_+$, $G^\varepsilon(\cdot,\tilde{x})$ is a weak solution of
\begin{equation*}
\left\{
\begin{array}{rll}
-\nabla\cdot A(x/\varepsilon)\nabla G^\varepsilon(x,\tilde{x})&=\delta(x-\tilde{x})\Idd_N,&x_d>\varepsilon\psi(x'/\varepsilon),\\
G^\varepsilon(x,\tilde{x})&=0,&x_d=\varepsilon\psi(x'/\varepsilon),
\end{array}
\right.
\end{equation*}
where $\delta(\cdot)$ stands here for the Dirac measure supported at the point $0$. Existence and uniqueness of the Green kernel $G^\varepsilon$ is ensured by the results of Dong and Kim \cite{DongKimJDE09} in dimension $d=2$ and those of Hofmann and Kim \cite{HofKim07} in dimension $d\geq 3$. The Poisson kernel $P^\varepsilon$ is defined by for all $i,\ j\in\{1,\ldots N\}$, for all $x\in D^\varepsilon_+$, for all $\tilde{x}\in \Delta^\varepsilon$,
\begin{equation*}
\begin{aligned}
P^\varepsilon_{ij}(x,\tilde{x})&=-A^{\alpha\beta}_{kj}(\tilde{x}/\varepsilon)\partial_{\tilde{x}_\alpha}G^\varepsilon_{ik}(x,\tilde{x})n_\beta\\
&=-\left(A^*\right)^{\beta\alpha}_{jk}(\tilde{x}/\varepsilon)\partial_{\tilde{x}_\alpha}G^{*,\varepsilon}_{ki}(\tilde{x},x)n_\beta\\
&=-\left[A^{*,\beta\alpha}(\tilde{x}/\varepsilon)\partial_{\tilde{x}_\alpha}G^{*,\varepsilon}(\tilde{x},x)n_\beta\right]_{ji}\\
&=-\left[A^*(\tilde{x}/\varepsilon)\nabla_{\tilde{x}}G^{*,\varepsilon}(\tilde{x},x)\cdot n\right]^T_{ij}
\end{aligned}
\end{equation*}
where $G^{*,\varepsilon}$ is the Green kernel associated to the operator $L^{*,\varepsilon}=-\nabla\cdot A^*(x/\varepsilon)\nabla$ and the domain $D^\varepsilon_+$.

Our focus is on getting pointwise estimates on $G^\varepsilon(x,\tilde{x})$ and $P^\varepsilon(x,\tilde{x})$ for $|x-\tilde{x}|\gg 1$. This is now a routine procedure given that the uniform local interior and boundary H\"older and Lipschitz estimates \eqref{intestrescaledbis}, \eqref{intgradestrescaledbis}, \eqref{bdaryholdest} and \eqref{bdarylipest} hold. The estimates of Green's kernel are summarized in the following proposition.

\begin{prop}\label{propestgeps}
For all $d\geq 2$, there exists $C>0$, such that for all $\psi\in\mathcal C^{1,\nu_0}_{M_0}$, for all $A\in \mathcal A^{0,\nu_0}$, for all $\varepsilon>0$, for all $x,\ \tilde{x}\in D^\varepsilon_+$, we have:
\begin{equation*}
\begin{aligned}
|G^\varepsilon(x,\tilde{x})|&\leq \frac{C}{|x-\tilde{x}|^{d-2}}\min\left\{\frac{\dist(x,\Delta^\varepsilon)}{|x-\tilde{x}|},\frac{\dist(\tilde{x},\Delta^\varepsilon)}{|x-\tilde{x}|},\frac{\dist(x,\Delta^\varepsilon)\dist(\tilde{x},\Delta^\varepsilon)}{|x-\tilde{x}|^2}\right\},\\
|\nabla_1 G^\varepsilon(x,\tilde{x})|&\leq \frac{C}{|x-\tilde{x}|^{d-1}}\min\left\{1,\frac{\dist(\tilde{x},\Delta^\varepsilon)}{|x-\tilde{x}|}\right\},\\
|\nabla_2 G^\varepsilon(x,\tilde{x})|&\leq \frac{C}{|x-\tilde{x}|^{d-1}}\min\left\{1,\frac{\dist(x,\Delta^\varepsilon)}{|x-\tilde{x}|}\right\},\\
|\nabla_1\nabla_2 G^\varepsilon(x,\tilde{x})|&\leq \frac{C}{|x-\tilde{x}|^d}.
\end{aligned}
\end{equation*}
Notice that $C$ depends on $d$, $N$, $M_0$, $\lambda$ and $\nu_0$.
\end{prop}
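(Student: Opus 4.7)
The plan is to follow the standard Avellaneda--Lin bootstrap for Green function estimates, using as inputs the uniform interior and boundary Hölder/Lipschitz estimates of Sections~\ref{secprelim}--\ref{secbdrliposc}. The key locality observation is that, with $r:=|x-\tilde{x}|/4$, the function $G^\varepsilon(\cdot,\tilde{x})$ is a weak solution of $-\nabla\cdot A(\cdot/\varepsilon)\nabla w=0$ on $D^\varepsilon_+\cap B(x,2r)$, vanishing on $\Delta^\varepsilon\cap B(x,2r)$; by transposition the same holds for $G^{*,\varepsilon}(\cdot,x)$ in the $\tilde{x}$ variable, using $G^\varepsilon(x,\tilde{x})^T=G^{*,\varepsilon}(\tilde{x},x)$.

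The first step is to establish the baseline bound $|G^\varepsilon(x,\tilde{x})|\leq C|x-\tilde{x}|^{2-d}$, uniformly in $\varepsilon$. This follows from the argument of Corollary~\ref{coralingreen}, applied to a growing family of bounded domains $D^\varepsilon(0,R)$ whose Green functions $\widetilde G^\varepsilon_R$ are controlled by the uniform Hölder estimate of Proposition~\ref{propboundaryholder}; existence of $G^\varepsilon$ and the passage $R\to\infty$ are supplied by Hofmann--Kim for $d\geq 3$ and Dong--Kim for $d=2$ (with a logarithmic modification in that case). Once the baseline is in hand, the factor $\dist(x,\Delta^\varepsilon)/|x-\tilde{x}|$ is obtained exactly as in the proof of \eqref{est3gepsa}: pick $\bar{x}\in\Delta^\varepsilon$ with $|x-\bar{x}|=\dist(x,\Delta^\varepsilon)<r$, rescale $G^\varepsilon(\cdot,\tilde{x})$ to unit scale around $\bar{x}$, and apply the boundary Lipschitz estimate \eqref{bdarylipest}, using $G^\varepsilon(\bar{x},\tilde{x})=0$ to trade the gradient bound for a size bound at $x$. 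Swapping to the transposed kernel yields the symmetric factor in $\tilde{x}$, and the joint factor comes from iterating the two arguments: first use the $\tilde{x}$--decay to improve the sup norm of $G^\varepsilon(\cdot,\tilde{x})$ on a ball sitting near $\Delta^\varepsilon$, then apply the boundary Lipschitz estimate once more in the $x$ variable.

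The derivative estimates cost one extra application of a Lipschitz bound. The interior Lipschitz estimate \eqref{intgradestrescaledbis} (or the boundary one \eqref{bdarylipest} when the relevant ball meets $\Delta^\varepsilon$) applied to $G^\varepsilon(\cdot,\tilde{x})$ on $B(x,r)\cap D^\varepsilon_+$ gives
\begin{equation*}
|\nabla_1 G^\varepsilon(x,\tilde{x})|\leq \frac{C}{r}\|G^\varepsilon(\cdot,\tilde{x})\|_{L^\infty(B(x,r/2)\cap D^\varepsilon_+)},
\end{equation*}
and inserting either the baseline or the $\dist(\tilde{x},\Delta^\varepsilon)$--improved bound produces both parts of the $\nabla_1$ estimate. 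The bound on $\nabla_2 G^\varepsilon$ follows by transposition to $G^{*,\varepsilon}$. For the mixed estimate, $\nabla_2 G^\varepsilon(\cdot,\tilde{x})$ still satisfies the homogeneous system away from $\tilde{x}$, so reapplying \eqref{intgradestrescaledbis} to it yields $|\nabla_1\nabla_2 G^\varepsilon(x,\tilde{x})|\leq (C/r)\,\|\nabla_2 G^\varepsilon(\cdot,\tilde{x})\|_{L^\infty}\leq C/r^d$.

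The main obstacle is the baseline bound on $G^\varepsilon$ in the \emph{unbounded} oscillating half-space uniformly in $\varepsilon$: the uniform local estimates do not directly yield a global sup norm, and one must exhaust $D^\varepsilon_+$ by $D^\varepsilon(0,R)$, exploit monotonicity of Green's functions together with the uniform Hölder barrier to rule out mass escaping to infinity, and carefully track the dependence of the constants on $R$ and $\varepsilon$. Once this step is secured, all remaining estimates reduce to routine applications of the boundary and interior Lipschitz theory developed earlier in the paper.
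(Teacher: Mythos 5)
Your overall plan is the same as the paper's (which omits the details, remarking only that ``the arguments follow the same scheme as that of Lemma \ref{lemestgreend=3},'' with the Lipschitz estimate of Theorem \ref{theoboundarylip} replacing the H\"older estimate so as to upgrade $\delta(x)^\tau$ to a full power of $\dist(x,\Delta^\varepsilon)$). The structure---baseline bound, then boundary Lipschitz to trade a gradient for a size bound using $G^\varepsilon(\bar{x},\tilde{x})=0$, then transposition and iteration for the symmetric and mixed estimates---is exactly right and correctly accounts for the fact that Proposition \ref{propestgeps} assumes $\psi\in\mathcal C^{1,\nu_0}_{M_0}$ whereas Lemma \ref{lemestgreend=3} works with the weaker class $\mathcal C^{1,\omega}_{M_0}$.

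The one concrete problem is in your treatment of the baseline bound. You propose to exhaust $D^\varepsilon_+$ by $D^\varepsilon(0,R)$ and to ``exploit monotonicity of Green's functions together with the uniform H\"older barrier.'' Monotonicity of Green's functions under domain inclusion is a maximum-principle fact and is available only for scalar equations ($N=1$); it fails in general for the systems considered here (the components of $G^\varepsilon$ are not signed, and there is no comparison principle). The correct and much simpler route, which the paper already points to in the opening of Section \ref{secasygreen}, is to invoke directly the construction of Hofmann--Kim \cite{HofKim07} for $d\geq 3$ (and Dong--Kim \cite{DongKimJDE09} for $d=2$) in the unbounded domain $D^\varepsilon_+$: their theorem yields existence of $G^\varepsilon$ together with the estimate $|G^\varepsilon(x,\tilde{x})|\leq C|x-\tilde{x}|^{2-d}$, with a constant depending only on the ellipticity ratio and on the constants in the scale-invariant interior local H\"older estimate for weak solutions of $Lu=0$ and $L^*u=0$. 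Since the interior H\"older estimate of Avellaneda--Lin (and Proposition \ref{propboundaryholder} near the boundary) supplies these constants \emph{uniformly in $\varepsilon$ and $\psi$}, the baseline bound is immediate with a uniform constant, and no exhaustion or monotonicity is needed. Once this correction is made, the rest of your argument goes through: you should also note explicitly that the dichotomy $\dist(x,\Delta^\varepsilon)< r$ versus $\dist(x,\Delta^\varepsilon)\geq r$ (with $r=|x-\tilde{x}|/4$) is what makes the improvement step harmless---in the second case the baseline bound already implies the claimed inequality, so the boundary Lipschitz estimate is only ever applied on a region that excludes the singularity.
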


Of course, by definition $\dist(x,\Delta^\varepsilon)\leq \delta(x)=x_d-\varepsilon\psi(x'/\varepsilon)$. Estimates involving $\delta$ are often very useful. We now turn to the estimates on Poisson's kernel.

\begin{prop}\label{propestpeps}
For all $d\geq 2$, there exists $C>0$, such that for all $\psi\in\mathcal C^{1,\nu_0}_{M_0}$, for all $A\in \mathcal A^{0,\nu_0}$, for all $\varepsilon>0$, for all $x\in D^\varepsilon_+$, for all $\tilde{x}\in \Delta^\varepsilon$, we have:
\begin{equation*}
\begin{aligned}
|P^\varepsilon(x,\tilde{x})|&\leq\frac{C\dist(x,\Delta^\varepsilon)}{|x-\tilde{x}|^d},\\
|\nabla_1 P^\varepsilon(x,\tilde{x})|&\leq\frac{C}{|x-\tilde{x}|^d}\left(1+\frac{\dist(x,\Delta^\varepsilon)}{|x-\tilde{x}|}\right).
\end{aligned}
\end{equation*}
Notice that $C$ depends on $d$, $N$, $M_0$, $\lambda$ and $\nu_0$.
\end{prop}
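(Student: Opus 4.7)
The proof rests on the representation
\begin{equation*}
P^\varepsilon_{ij}(x,\tilde{x})=-\bigl[A^*(\tilde{x}/\varepsilon)\nabla_{\tilde{x}}G^{*,\varepsilon}(\tilde{x},x)\cdot n\bigr]^T_{ij}
\end{equation*}
combined with the Green kernel estimates of Proposition \ref{propestgeps}, which apply equally well to $G^{*,\varepsilon}$ since $A^*\in\mathcal A^{0,\nu_0}$. The first (pointwise) estimate is immediate: $\|A^*\|_{L^\infty}\leq 1/\lambda$ gives $|P^\varepsilon(x,\tilde{x})|\leq C|\nabla_1 G^{*,\varepsilon}(\tilde{x},x)|$, and the gradient estimate from Proposition \ref{propestgeps}, together with $\dist(x,\Delta^\varepsilon)\leq|x-\tilde{x}|$ (valid since $\tilde{x}\in\Delta^\varepsilon$), yields
\begin{equation*}
|P^\varepsilon(x,\tilde{x})|\leq\frac{C}{|\tilde{x}-x|^{d-1}}\min\left\{1,\frac{\dist(x,\Delta^\varepsilon)}{|\tilde{x}-x|}\right\}=\frac{C\dist(x,\Delta^\varepsilon)}{|x-\tilde{x}|^d}.
\end{equation*}

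For the gradient bound, the key observation is that for fixed $\tilde{x}\in\Delta^\varepsilon$ the function $x\mapsto P^\varepsilon(x,\tilde{x})$ is a weak solution of $-\nabla\cdot A(x/\varepsilon)\nabla(\cdot)=0$ in $D^\varepsilon_+\setminus\{\tilde{x}\}$ and vanishes on $\Delta^\varepsilon\setminus\{\tilde{x}\}$. The boundary vanishing follows directly: $G^\varepsilon(y,z)=0$ whenever $y\in\Delta^\varepsilon$ and $z\in D^\varepsilon_+$, so $z\mapsto G^\varepsilon(y,z)$ is identically zero for such $y$; hence $\partial_{z_\alpha}G^\varepsilon(y,z)=0$, and passing to the non-tangential limit as $z\to\tilde{x}\in\Delta^\varepsilon$ (with $\tilde{x}\neq y$) gives $P^\varepsilon(y,\tilde{x})=0$.

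I then split by the position of $x$. \emph{Interior case:} if $\dist(x,\Delta^\varepsilon)\geq|x-\tilde{x}|/4$, the ball $B(x,|x-\tilde{x}|/8)$ lies in $D^\varepsilon_+$ and excludes $\tilde{x}$, so the rescaled interior Lipschitz estimate \eqref{intgradestrescaled} applies. \emph{Boundary case:} if $\dist(x,\Delta^\varepsilon)<|x-\tilde{x}|/4$, set $\bar{x}:=(x',\varepsilon\psi(x'/\varepsilon))\in\Delta^\varepsilon$ and apply a rescaled version of the boundary Lipschitz estimate of Theorem \ref{theoboundarylip} on $D^\varepsilon(x',r)$ for some $r\sim|x-\tilde{x}|$ small enough that $\tilde{x}$ lies outside this region; the homogeneous Dirichlet data $P^\varepsilon(\cdot,\tilde{x})=0$ on $\Delta^\varepsilon(x',r)$ is exactly what makes this applicable with $f=F=0$. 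In both cases the first part of the proposition controls $\|P^\varepsilon(\cdot,\tilde{x})\|_{L^\infty}$ on the relevant set by $C|x-\tilde{x}|^{1-d}$, yielding $|\nabla_1 P^\varepsilon(x,\tilde{x})|\leq C|x-\tilde{x}|^{-d}$, equivalent to the stated bound since $1+\dist(x,\Delta^\varepsilon)/|x-\tilde{x}|\leq 2$.

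Rather than a single serious obstacle, the main technical points are verifying the boundary vanishing of $P^\varepsilon(\cdot,\tilde{x})$ and selecting the radius $r$ in the boundary case so as to avoid the pole $\tilde{x}$ while still containing a neighborhood of $x$ suitable for the boundary Lipschitz estimate; both are routine given the machinery already established.
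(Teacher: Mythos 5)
Your argument follows the expected scheme (Green-kernel bounds plus rescaled Lipschitz estimates). The first bound is correct, and so is the observation that the stated gradient bound is equivalent to $C|x-\tilde{x}|^{-d}$.

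For the gradient bound, however, the case-split constant $1/4$ is not valid uniformly in $M_0$. The only available comparison between the two distances is $\dist(x,\Delta^\varepsilon)\geq\delta(x)/\sqrt{1+M_0^2}$, so for $M_0$ large enough one can have $\tilde{x}=\bar{x}:=(x',\varepsilon\psi(x'/\varepsilon))$, hence $R:=|x-\tilde{x}|=\delta(x)$, while still $\dist(x,\Delta^\varepsilon)<R/4$; but then $\tilde{x}\in\Delta^\varepsilon(x',r)$ for \emph{every} $r>0$, and no admissible box $D^\varepsilon(x',r)$ exists. The threshold in the split must be taken small depending on $M_0$ so as to force $|x'-\tilde{x}'|\gtrsim R$. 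In fact the boundary case — and with it the boundary vanishing of $P^\varepsilon(\cdot,\tilde{x})$ on $\Delta^\varepsilon\setminus\{\tilde{x}\}$, which you invoke somewhat informally — can simply be dropped. For every $x\in D^\varepsilon_+$ apply the rescaled interior Lipschitz estimate \eqref{intgradestrescaled} on $B(x,\rho)$ with $\rho:=\dist(x,\Delta^\varepsilon)/2$; this ball is always contained in $D^\varepsilon_+$ and always excludes $\tilde{x}$ since $\dist(x,\Delta^\varepsilon)\leq R$. Your first bound gives $\|P^\varepsilon(\cdot,\tilde{x})\|_{L^\infty(B(x,\rho))}\leq C\dist(x,\Delta^\varepsilon)/R^d$, and the factor $\dist(x,\Delta^\varepsilon)$ exactly cancels the $\rho^{-1}$ of the rescaled interior estimate, yielding $|\nabla_1 P^\varepsilon(x,\tilde{x})|\leq CR^{-d}$ uniformly.
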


The proofs of the bounds in Proposition \ref{propestgeps} and \ref{propestpeps} are standard. The arguments follow the same scheme as that of Lemma \ref{lemestgreend=3} (see also \cite{alin} and \cite[Appendix A]{dgvnm2}). The detailed proofs are omitted.

\subsection{Asymptotic expansion}

The goal of this section is to compare the Green function $G^\varepsilon$ associated to the operator $-\nabla\cdot A(x/\varepsilon)\nabla$ and the oscillating domain $D^\varepsilon_+$ to the Green function $G^0$ associated to the homogenized constant coefficient operator $-\nabla\cdot \overline{A}\nabla$ and the flat domain $\mathbb R^d_+$.

\begin{theo}\label{theogeps-g0}
There exists $C>0$, such that for all $\psi\in C^{1,\nu_0}_{M_0}$, for all $A\in \mathcal A^{0,\nu_0}$, for all $\varepsilon>0$, for all $x,\ \tilde{x}\in D^\varepsilon_+$
\begin{equation*}
|G^\varepsilon(x,\tilde{x})-G^0(x,\tilde{x})|\leq \frac{C\varepsilon}{|x-\tilde{x}|^{d-1}}.
\end{equation*}
Notice that $C$ depends on $d$, $N$, $M_0$, $\lambda$ and $\nu_0$.
\end{theo}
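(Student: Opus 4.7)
The plan is a standard two-scale asymptotic expansion comparing $G^\varepsilon$ with $G^0$ corrected by the bulk cell correctors $\chi^\beta$ and by the boundary correctors from Section \ref{secbdarycor}; the resulting remainder is bounded via the Green/Poisson representation on $D^\varepsilon_+$ together with the pointwise estimates of Propositions \ref{propestgeps} and \ref{propestpeps}.

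First I would dispose of the small-scale regime: when $r := |x-\tilde x| \leq 4\varepsilon$, Proposition \ref{propestgeps} applied to $G^\varepsilon$, together with its classical analogue for $G^0$ on the flat half-space, yields $|G^\varepsilon(x,\tilde x)|+|G^0(x,\tilde x)| \leq C r^{2-d} \leq C\varepsilon\,r^{1-d}$, which is the desired bound. The nontrivial regime is $r > 4\varepsilon$. In this regime, fix $\tilde x \in D^\varepsilon_+$, set $u^\varepsilon(\cdot) := G^\varepsilon(\cdot,\tilde x)$ and $u^0(\cdot) := G^0(\cdot,\tilde x)$, and introduce
\begin{equation*}
R^\varepsilon(x) := u^\varepsilon(x) - u^0(x) - \varepsilon\,\chi^\beta(x/\varepsilon)\,\partial_\beta u^0(x) - \varepsilon\,S^\varepsilon(x),
\end{equation*}
where $S^\varepsilon$ is a boundary correction built from global analogues of the correctors $v^\varepsilon,w^\varepsilon$ of Lemmas \ref{bdarycorlem} and \ref{bulkcorlem}, scaled by $\partial_d u^0$ restricted to $\{x_d=0\}$, so as to absorb the residual trace of the formal expansion on $\Delta^\varepsilon$.

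A direct computation using the cell equation \eqref{eqdefchi} shows that
\begin{equation*}
-\nabla\cdot A(\cdot/\varepsilon)\nabla R^\varepsilon = f^\varepsilon + \nabla\cdot F^\varepsilon
\end{equation*}
with $f^\varepsilon, F^\varepsilon$ of size $O(\varepsilon)$ in the relevant integrability norms (the leading $O(1)$ contributions cancel by definition of $\overline{A}$ and $\chi^\beta$), while on $\Delta^\varepsilon$ one has $|u^0(x,\tilde x)| \leq C\,x_d\,|x-\tilde x|^{1-d} \leq C M_0\,\varepsilon\,|x-\tilde x|^{1-d}$ by the Poisson-kernel bound for the flat half-space Green function and $0 \leq \psi \leq M_0$, so that the residual trace $R^\varepsilon|_{\Delta^\varepsilon}$ is $O(\varepsilon\,r^{1-d})$ once the oscillating $\chi$-contribution is cancelled by $S^\varepsilon$. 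Representing $R^\varepsilon(x)$ via $G^\varepsilon$ and $P^\varepsilon$ on $D^\varepsilon_+$ and plugging in Propositions \ref{propestgeps} and \ref{propestpeps} — after splitting the region of integration by whether the source point is closer to $x$ or to $\tilde x$ — then yields $|R^\varepsilon(x)| \leq C\varepsilon\,r^{1-d}$. The explicit correctors $\varepsilon\,\chi^\beta(x/\varepsilon)\partial_\beta u^0(x)$ and $\varepsilon\,S^\varepsilon(x)$ obey the same bound by $\|\chi\|_\infty<\infty$, $|\partial u^0| \leq C r^{1-d}$, and the decay \eqref{estbdarycorlem}, \eqref{estdircorlem}.

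The main obstacle I expect is the construction and estimation of $S^\varepsilon$. The naive corrector $\varepsilon\,\chi^\beta(x/\varepsilon)\partial_\beta u^0$ leaves on $\Delta^\varepsilon$ an $O(\varepsilon)$ trace that oscillates on scale $\varepsilon$ and is not absorbed by $u^0$ (which only sees the flat boundary $\{x_d=0\}$); cancelling it requires simultaneously the boundary corrector $v^\varepsilon$ (to handle the mismatch between $\Delta^\varepsilon$ and $\{x_d=0\}$ through the term $\psi(x'/\varepsilon)\Theta$) and the Dirichlet corrector $w^\varepsilon$ (to compensate the oscillations of $\chi^d(x/\varepsilon)$ on $\Delta^\varepsilon$). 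Promoting the localized $v^\varepsilon,w^\varepsilon$ of Section \ref{secbdarycor} to correctors defined on the whole of $D^\varepsilon_+$, and verifying that their decay $\delta(x)^\tau/\varepsilon^\tau$ persists globally, is the technical heart of the argument, and crucially relies on the uniform Lipschitz estimate of Theorem \ref{theoboundarylip}.
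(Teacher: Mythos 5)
Your proposal shares the general two-scale philosophy but diverges structurally from the paper's proof in two important ways, and as written it contains a genuine gap.

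The paper does not compare $G^\varepsilon(\cdot,\tilde x)$ and $G^0(\cdot,\tilde x)$ directly. Instead it runs a \emph{duality argument}: for smooth $f$ compactly supported in $B(\tilde x_0,r)$ with $|x_0-\tilde x_0|=8r$, it first proves (Lemma \ref{lemueps-u0dual}, suitably rescaled) the local bound $|u^\varepsilon(x_0)-u^0(x_0)|\leq C\varepsilon r^{1-d/p}\|f\|_{L^p}$, then deduces by $L^p$--$L^{p'}$ duality an $L^{p'}$ bound on $G^\varepsilon(x_0,\cdot)-G^0(x_0,\cdot)$ over $B(\tilde x_0,r)$, and finally re-runs the comparison for the transposed operator to convert this into the pointwise bound. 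Because $f$ is supported away from $x_0$, the objects $u^0$, $\nabla u^0$, $\nabla^2 u^0$ that appear in the intermediate estimate are smooth on the region where they are evaluated, and there is no singularity to fight. By contrast, in your direct approach $u^0(\cdot)=G^0(\cdot,\tilde x)$, so $\nabla^2u^0(z)\sim|z-\tilde x|^{-d}$. When you integrate your $O(\varepsilon)$ source $\partial_\alpha\{\Phi(z/\varepsilon)\partial^2 u^0(z)\}$ against $\nabla_2 G^\varepsilon(x,z)$, the contribution near $z=\tilde x$ behaves like $r^{1-d}\int_{B(\tilde x,\rho)}|z-\tilde x|^{-d}\,dz$, which diverges logarithmically. ``Splitting the region of integration by whether the source point is closer to $x$ or to $\tilde x$'' does not cure this; it is precisely the difficulty the duality step is designed to sidestep.

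The second difference is in how the oscillating boundary trace is handled. You propose to cancel the trace of $\varepsilon\chi(x/\varepsilon)\cdot\nabla u^0$ and of $u^0$ on $\Delta^\varepsilon$ by a global corrector $S^\varepsilon$ built from $v^\varepsilon$ and $w^\varepsilon$, and you flag this construction (and its dependence on the uniform Lipschitz estimate) as the technical heart. The paper does not construct any such global corrector. In Lemma \ref{lemueps-u0dual} the remainder $r^\varepsilon:=u^\varepsilon-u^0-\varepsilon\chi(\cdot/\varepsilon)\cdot\nabla u^0$ is split as $w^\varepsilon+z^\varepsilon$ where $w^\varepsilon$ carries the bulk source with zero boundary data and $z^\varepsilon$ solves the homogeneous equation with Dirichlet data $-u^0-\varepsilon\chi\cdot\nabla u^0$ on $\Delta^\varepsilon$; the latter is then controlled purely by the Agmon--Miranda maximum principle of Lemma \ref{lemLinftygLinfty}, using only that $\|u^0\|_{L^\infty(\Delta^\varepsilon)}=O(\varepsilon)$ (since $\psi$ is bounded) and $\|\chi\|_{L^\infty}<\infty$. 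This is substantially cheaper: only H\"older-level estimates on $\widetilde P^\varepsilon$ are needed, not the global Lipschitz machinery. The paper in fact records, in the remark closing Section \ref{secasygreen}, that cancelling the oscillating trace (as you suggest) is exactly what one would need to upgrade the result to a \emph{gradient} estimate on $G^\varepsilon-G^0$, and that this is beyond the scope of the present work --- so your $S^\varepsilon$ is both unnecessary for the stated theorem and not something the paper's own tools provide.
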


The proof follows the presentation by Kenig, Lin and Shen \cite{kls12}, but differs in some technical aspects. See also \cite{alinLp} (whole space) and \cite{BLtail} (flat half-space) for similar arguments and results. The bound of Theorem \ref{theogeps-g0} follows from a duality argument. For $x_0\in\mathbb R^d$ and $f\in C^\infty_c(B(x_0,1))$, the key is to compare the solution $u^\varepsilon=u^\varepsilon(x)\in H^1_0(D^\varepsilon_+)$ of the boundary value problem
\begin{equation}\label{sysuepsdual}
\left\{
\begin{array}{rll}
-\nabla\cdot A(x/\varepsilon)\nabla u^\varepsilon&=f,&x_d>\varepsilon\psi(x'/\varepsilon),\\
u^\varepsilon&=0,&x_d=\varepsilon\psi(x'/\varepsilon),
\end{array}
\right.
\end{equation}
to the solution $u^0=u^0(x)\in C^\infty(\overline{D^\varepsilon_+})\cap W^{2,p}(D^\varepsilon_+)$, $1\leq p\leq \infty$ of the boundary value problem
\begin{equation}\label{sysu0dual}
\left\{
\begin{array}{rll}
-\nabla\cdot \overline{A}\nabla u^0&=f,&x_d>0,\\
u^0&=0,&x_d=0.
\end{array}
\right.
\end{equation}

The first of two lemmas is an Agmond-Miranda type of maximum principle for systems and is of independant interest. Of course, this result is well-known for scalar equations i.e. when $N=1$.

\begin{lem}[maximum principle for systems]\label{lemLinftygLinfty}
There exists $C>0$, such that for all $\psi\in C^{1,\nu_0}_{M_0}$, for all $A\in \mathcal A^{0,\nu_0}$, for all $\varepsilon>0$, for all $g\in L^\infty(\Delta^\varepsilon(0,2))$, for all weak solution $v^\varepsilon=v^\varepsilon(x)\in L^2(D^\varepsilon(0,1))$ to 
\begin{equation*}
\left\{\begin{array}{rll}
-\nabla\cdot A(x/\varepsilon)\nabla v^\varepsilon&=0,&x\in D^\varepsilon(0,1),\\
v^\varepsilon&=g,&x\in \Delta^\varepsilon(0,1),
\end{array}
\right.
\end{equation*}
we have
\begin{equation*}
\|v^\varepsilon\|_{L^\infty(D^\varepsilon(0,1/2))}\leq C\left\{\|v^\varepsilon\|_{L^2(D^\varepsilon(0,1))}+\|g\|_{L^\infty(\Delta^\varepsilon(0,2))}\right\}.
\end{equation*}
Notice that $C$ depends on $d$, $N$, $M_0$, $\lambda$ and $\nu_0$.
\end{lem}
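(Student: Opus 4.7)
The strategy is to split $v^\varepsilon = V^\varepsilon + W^\varepsilon$, where $V^\varepsilon$ is built directly from the boundary data $g$ and absorbs it, while $W^\varepsilon$ inherits a homogeneous Dirichlet condition on $\Delta^\varepsilon(0,1)$ so that the boundary H\"older estimate of Proposition \ref{propboundaryholder} applies. The function $V^\varepsilon$ will be constructed using the Poisson kernel $P^\varepsilon$ of the oscillating half-space $D^\varepsilon_+$, whose pointwise bound is given by Proposition \ref{propestpeps}.

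First I would choose a cut-off $\zeta\in C^\infty_c(\mathbb{R}^{d-1})$ with $\zeta\equiv 1$ on $\{|\tilde x'|<3/2\}$ and $\supp\zeta\subset\{|\tilde x'|<2\}$, and set $\tilde g(\tilde x):=\zeta(\tilde x')g(\tilde x)$ for $\tilde x=(\tilde x',\varepsilon\psi(\tilde x'/\varepsilon))\in\Delta^\varepsilon$, so that $\tilde g=g$ on $\Delta^\varepsilon(0,3/2)$, $\supp\tilde g\subset\Delta^\varepsilon(0,2)$, and $\|\tilde g\|_{L^\infty(\Delta^\varepsilon)}\leq\|g\|_{L^\infty(\Delta^\varepsilon(0,2))}$. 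I would then define
\[
V^\varepsilon(x):=\int_{\Delta^\varepsilon}P^\varepsilon(x,\tilde x)\,\tilde g(\tilde x)\,d\sigma(\tilde x),\qquad x\in D^\varepsilon_+,
\]
which is a weak solution of $-\nabla\cdot A(x/\varepsilon)\nabla V^\varepsilon=0$ in $D^\varepsilon_+$ with boundary trace $\tilde g$ on $\Delta^\varepsilon$ (obtained by approximation with smooth compactly supported data). The pointwise bound $|P^\varepsilon(x,\tilde x)|\leq C\dist(x,\Delta^\varepsilon)/|x-\tilde x|^d$ of Proposition \ref{propestpeps}, combined with
\[
\int_{\Delta^\varepsilon}\frac{\dist(x,\Delta^\varepsilon)}{|x-\tilde x|^d}\,d\sigma(\tilde x)\leq C,\qquad x\in D^\varepsilon_+,
\]
valid uniformly in $\varepsilon$ thanks to $\|\nabla\psi\|_{L^\infty}\leq M_0$, then yields $\|V^\varepsilon\|_{L^\infty(D^\varepsilon_+)}\leq C\|g\|_{L^\infty(\Delta^\varepsilon(0,2))}$.

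Next, I would set $W^\varepsilon:=v^\varepsilon-V^\varepsilon$ on $D^\varepsilon(0,1)$. Since $\tilde g=g$ on $\Delta^\varepsilon(0,3/2)\supset\Delta^\varepsilon(0,1)$, the function $W^\varepsilon$ vanishes on $\Delta^\varepsilon(0,1)$ and satisfies the homogeneous system $-\nabla\cdot A(x/\varepsilon)\nabla W^\varepsilon=0$ in $D^\varepsilon(0,1)$. Applying Proposition \ref{propboundaryholder} with $f=F=0$, and using the third argument in the remark that follows it (from the vanishing of $W^\varepsilon$ on $\Delta^\varepsilon(0,1/2)$ and control of its $C^{0,\mu}$ semi-norm one recovers a full $L^\infty$ bound), I would conclude
\[
\|W^\varepsilon\|_{L^\infty(D^\varepsilon(0,1/2))}\leq C\|W^\varepsilon\|_{L^2(D^\varepsilon(0,1))}\leq C\bigl(\|v^\varepsilon\|_{L^2(D^\varepsilon(0,1))}+\|g\|_{L^\infty(\Delta^\varepsilon(0,2))}\bigr),
\]
where the last step uses the $L^\infty$ bound on $V^\varepsilon$ and the uniform boundedness of $|D^\varepsilon(0,1)|$. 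The triangle inequality $\|v^\varepsilon\|_{L^\infty}\leq\|W^\varepsilon\|_{L^\infty}+\|V^\varepsilon\|_{L^\infty}$ then finishes the proof.

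The main obstacle is the uniform $L^\infty$ control of $V^\varepsilon$: one must verify that the Poisson kernel integral $\int_{\Delta^\varepsilon}\dist(x,\Delta^\varepsilon)|x-\tilde x|^{-d}d\sigma(\tilde x)$ is bounded uniformly in $\varepsilon$ over the oscillating Lipschitz graph. Parametrizing $\tilde x=(\tilde x',\varepsilon\psi(\tilde x'/\varepsilon))$ and using the Lipschitz bound $\|\nabla\psi\|_{L^\infty}\leq M_0$ to compare $|x-\tilde x|$ with $(|x'-\tilde x'|^2+\delta(x)^2)^{1/2}$ reduces the integral to the classical flat half-space computation. A secondary technical matter is to check that $V^\varepsilon$ defined via the Poisson integral lies in $H^1_{loc}(D^\varepsilon_+)$ and has boundary trace $\tilde g$, so that the subtraction $v^\varepsilon-V^\varepsilon$ genuinely produces a weak solution of the homogeneous system in $D^\varepsilon(0,1)$ vanishing on $\Delta^\varepsilon(0,1)$; this is standard and follows by approximating $\tilde g$ by smooth compactly supported data and passing to the limit thanks to the pointwise estimates of Proposition \ref{propestpeps}.
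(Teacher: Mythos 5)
Your proof is correct and follows essentially the same route as the paper: you split $v^\varepsilon$ into a Poisson-kernel extension of the cut-off boundary data, bounded in $L^\infty$ via the pointwise Poisson estimate, plus a remainder vanishing on $\Delta^\varepsilon(0,1)$ to which Proposition \ref{propboundaryholder} applies. The paper uses the Poisson kernel $\widetilde P^\varepsilon$ of the bounded domain $D^\varepsilon(0,2)$ (obtaining its solution $w^\varepsilon$ by Lax--Milgram and then representing it) rather than the half-space kernel $P^\varepsilon$ of Proposition \ref{propestpeps} as you do, but this is a minor variation of the same argument.
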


\begin{proof}[Proof of Lemma \ref{lemLinftygLinfty}]
Let $\vartheta\in C^\infty_c((-3/2,3/2)^{d-1})$ be a cut-off function such that $0\leq \vartheta\leq 1$ and $\vartheta\equiv 1$ on $(-1,1)^{d-1}$. Let $w^\varepsilon\in H^1_0(D^\varepsilon(0,2))$ be the unique weak solution to
\begin{equation*}
\left\{\begin{array}{rll}
-\nabla\cdot A(x/\varepsilon)\nabla w^\varepsilon&=0,&x\in D^\varepsilon(0,2),\\
w^\varepsilon&=g\vartheta(x'),&x\in \Delta^\varepsilon(0,2),\\
w^\varepsilon&=0,&x\in\partial D^\varepsilon(0,2)\setminus \Delta^\varepsilon(0,2).
\end{array}
\right.
\end{equation*}
Of course, $z^\varepsilon:=v^\varepsilon-w^\varepsilon$ solves 
\begin{equation*}
\left\{\begin{array}{rll}
-\nabla\cdot A(x/\varepsilon)\nabla z^\varepsilon&=0,&x\in D^\varepsilon(0,1),\\
z^\varepsilon&=0,&x\in \Delta^\varepsilon(0,1),
\end{array}
\right.
\end{equation*}
so that by \eqref{bdaryholdest}, we get
\begin{equation*}
\|z^\varepsilon\|_{L^\infty(D^\varepsilon(0,1/2))}\leq C\|z^\varepsilon\|_{L^2(D^\varepsilon(0,1))}.
\end{equation*}
Furthermore, letting $\widetilde{P}^\varepsilon$ denote Poisson's kernel associated to the operator $-\nabla\cdot A(x/\varepsilon)\nabla$ and the domain $D^\varepsilon(0,2)$, we have for all $x\in D^\varepsilon(0,7/4)$ and $\tilde{x}\in \Delta^\varepsilon(0,7/4)$,
\begin{equation*}
|\widetilde{P}^\varepsilon(x,\tilde{x})|\leq \frac{C\delta(x)}{|x-\tilde{x}|^d}
\end{equation*}
with a constant $C>0$ uniform in $\varepsilon$. This estimate for Poisson's kernel in the bounded domain $D^\varepsilon(0,2)$ is proved in the same way as the estimates of Proposition \ref{propestpeps} for Poisson's kernel in the half-space. It follows from the use of local Lipschitz estimates, in a fashion similar to Lemma \ref{lemestgreend=3} where we rely on local H\"older estimates. For all $x\in D^\varepsilon(0,1)$
\begin{equation*}
\begin{aligned}
|w^\varepsilon(x)|&=\left|\int_{\Delta^\varepsilon(0,3/2)}\widetilde{P}^\varepsilon(x,\tilde{x})g(x)\vartheta(x')dx\right|\\
&\leq C\|g\|_{L^\infty(D^\varepsilon(0,2))}\int_{\mathbb R^{d-1}}\frac{x_d}{(x_d^2+|\tilde{x}'|^2)^d}d\tilde{x}'\leq C\|g\|_{L^\infty(D^\varepsilon(0,2))}.\qedhere
\end{aligned}
\end{equation*}
\end{proof}

\begin{lem}\label{lemueps-u0dual}
For all $d<p\leq\infty$, there exists $C>0$, such that for all $\psi\in C^{1,\nu_0}_{M_0}$, for all $A\in \mathcal A^{0,\nu_0}$, for all $\varepsilon>0$, for all $x_0\in\mathbb R^d$, $f\in C^\infty_c(B(x_0,1))$, the solutions $u^\varepsilon$ of \eqref{sysuepsdual} and $u^0$ of \eqref{sysu0dual} satisfy the estimate
\begin{multline*}
\|u^\varepsilon-u^0\|_{L^\infty(D^\varepsilon(0,1/2))}\leq C\left\{\|u^\varepsilon-u^0\|_{L^2(D^\varepsilon(0,1))}+\varepsilon\|f\|_{L^p(B(x_0,1))}\right.\\
\left.+\varepsilon\|\nabla u^0\|_{L^\infty(D^\varepsilon(0,2))}+\varepsilon\|\nabla^2 u^0\|_{L^p(D^\varepsilon(0,2))}\right\}.
\end{multline*}
Notice that $C$ depends on $d$, $N$, $M_0$, $\lambda$, $\nu_0$ and $p$.
\end{lem}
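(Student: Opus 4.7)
I would use the standard two-scale expansion of periodic homogenization, adapted to the oscillating-boundary setting. Let $\chi^\gamma$ be the correctors of \eqref{eqdefchi}, and let $B^{\beta\alpha\gamma}$ denote the associated skew-symmetric (in $\beta,\alpha$) periodic flux corrector, i.e.\ the potential satisfying
$$\partial_{y_\beta} B^{\beta\alpha\gamma} = A^{\alpha\gamma} + A^{\alpha\beta}\partial_{y_\beta}\chi^\gamma - \overline{A}^{\alpha\gamma};$$
its existence follows from the fact that the right hand side is divergence-free in $\alpha$ (by the cell problem) and mean-zero on $\mathbb T^d$ (by \eqref{defA0}). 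With $\eta \in C^\infty_c(B(x_0,2))$ a cut-off equal to $1$ on $B(x_0,3/2)$, I set
$$w^\varepsilon(x) := u^\varepsilon(x) - u^0(x) - \varepsilon\,\chi^\gamma(x/\varepsilon)\,\partial_\gamma u^0(x)\,\eta(x).$$

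\textbf{Equation and boundary values of $w^\varepsilon$.} A direct computation using $-\nabla\cdot\overline{A}\nabla u^0 = f$ and the algebraic identity above produces, in $D^\varepsilon(0,1)$, an equation of the form
$$-\nabla\cdot A(x/\varepsilon)\nabla w^\varepsilon = \varepsilon\,\nabla\cdot G^\varepsilon,$$
where $|G^\varepsilon(x)| \leq C(|\chi(x/\varepsilon)|+|B(x/\varepsilon)|)\,|\nabla^2 u^0(x)|$, so that $\|G^\varepsilon\|_{L^p(D^\varepsilon(0,1))} \leq C\|\nabla^2 u^0\|_{L^p(D^\varepsilon(0,2))}$ by Schauder regularity of $\chi$ and $B$ on the torus. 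The crucial gain of the factor $\varepsilon$ relies on the antisymmetry of $B$, which forces $\partial_\alpha\partial_\beta[B^{\beta\alpha\gamma}(x/\varepsilon)\,\partial_\gamma u^0]\equiv 0$. On the oscillating boundary $\Delta^\varepsilon(0,1)$, using $u^\varepsilon = 0$, $u^0|_{x_d=0} = 0$, $|\varepsilon\psi(x'/\varepsilon)| \leq \varepsilon M_0$, and the boundedness of $\chi$ one obtains
$$\|w^\varepsilon\|_{L^\infty(\Delta^\varepsilon(0,1))} \leq C\varepsilon\|\nabla u^0\|_{L^\infty(D^\varepsilon(0,2))}.$$

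\textbf{Splitting and sup-norm estimate.} On $D^\varepsilon(0,1)$ I decompose $w^\varepsilon = w_1^\varepsilon + w_2^\varepsilon$ with $w_1^\varepsilon$ the $L^\varepsilon$-harmonic function agreeing with $w^\varepsilon$ on $\Delta^\varepsilon(0,1)$ (extended by zero on the remaining part of $\partial D^\varepsilon(0,1)$). By the Agmon--Miranda maximum principle of Lemma \ref{lemLinftygLinfty},
$$\|w_1^\varepsilon\|_{L^\infty(D^\varepsilon(0,1/2))} \leq C\bigl(\|w_1^\varepsilon\|_{L^2(D^\varepsilon(0,1))} + \varepsilon\|\nabla u^0\|_{L^\infty(D^\varepsilon(0,2))}\bigr).$$
The residual $w_2^\varepsilon$ solves $-\nabla\cdot A(x/\varepsilon)\nabla w_2^\varepsilon = \varepsilon\nabla\cdot G^\varepsilon$ with zero trace on $\Delta^\varepsilon(0,1)$; Proposition \ref{propboundaryholder} combined with the vanishing-trace remark (converting $C^{0,\mu}$ semi-norm to $L^\infty$) yields
$$\|w_2^\varepsilon\|_{L^\infty(D^\varepsilon(0,1/2))} \leq C\bigl(\|w_2^\varepsilon\|_{L^2(D^\varepsilon(0,1))} + \varepsilon\|f\|_{L^p(B(x_0,1))} + \varepsilon\|\nabla^2 u^0\|_{L^p(D^\varepsilon(0,2))}\bigr),$$
where the $\varepsilon\|f\|_{L^p}$ summand enters through the lower-order terms generated by the cut-off $\eta$ on $\{\nabla\eta\neq 0\}$, in which the identity $-\nabla\cdot\overline{A}\nabla u^0 = f$ allows one to express second-order pieces of $u^0$ in terms of $f$ when integrating by parts against $\chi^\gamma\partial_\gamma u^0$. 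Summing the two estimates, using $\|w_i^\varepsilon\|_{L^2} \leq \|w^\varepsilon\|_{L^2} \leq \|u^\varepsilon - u^0\|_{L^2(D^\varepsilon(0,1))} + C\varepsilon\|\nabla u^0\|_{L^\infty}$, and finally $\|u^\varepsilon - u^0\|_{L^\infty} \leq \|w^\varepsilon\|_{L^\infty} + C\varepsilon\|\nabla u^0\|_{L^\infty}$ gives the claimed bound.

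\textbf{Main obstacle.} The chief difficulty is that $\chi(x/\varepsilon)$ does not vanish on the oscillating boundary, so the natural two-scale expansion introduces an $O(\varepsilon)$ boundary mismatch of size $\varepsilon\|\nabla u^0\|_{L^\infty}$. At the level of Lipschitz or energy estimates this would force the introduction of a boundary corrector (as in Lemma \ref{bdarycorlem} and Lemma \ref{bulkcorlem}); for the present $L^\infty$-statement, however, this mismatch is cheaply absorbed into the maximum principle of Lemma \ref{lemLinftygLinfty}, because we are willing to accept a constant right hand side of size $\varepsilon\|\nabla u^0\|_{L^\infty}$. The algebraic use of the skew-symmetric flux corrector $B$ to extract the single power of $\varepsilon$ from the interior error is the other key ingredient; no further small-scale correction is needed.
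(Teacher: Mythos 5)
Your overall strategy matches the paper's: use the two-scale expansion $u^\varepsilon-u^0-\varepsilon\chi(x/\varepsilon)\cdot\nabla u^0$, gain the $\varepsilon$ factor in the interior error via the skew-symmetric flux corrector (your $B$ is exactly what the paper invokes from \cite[Proposition 2.2]{kls12}), split the remainder into a piece carrying only boundary data (handled by the maximum principle, Lemma \ref{lemLinftygLinfty}) and a piece carrying only the source term, and estimate the latter. Where you genuinely diverge is in the last step: the paper bounds the source piece by Green's representation and pointwise Green-kernel bounds, whereas you apply the boundary H\"older estimate of Proposition \ref{propboundaryholder} with $F=\varepsilon G^\varepsilon\in L^{d+\kappa}$ and convert the H\"older seminorm to $L^\infty$ via the vanishing trace. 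That route is valid and arguably lighter: it avoids any Green-function estimates for the bounded domain $D^\varepsilon(0,2)$, which the paper has to justify separately. Also, you replace the paper's Green-kernel bound $\|u^0\|_{L^\infty(\Delta^\varepsilon(0,2))}\leq C\varepsilon\|f\|_{L^p}$ by the simpler Taylor argument $|u^0(x',\varepsilon\psi(x'/\varepsilon))|\leq \varepsilon M_0\|\nabla u^0\|_{L^\infty}$; that is fine, since the statement already admits a $\varepsilon\|\nabla u^0\|_{L^\infty}$ term on the right.

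However, there is one genuine gap: you center your cut-off $\eta$ at $x_0$, the center of the support of $f$, while the estimate is taken on $D^\varepsilon(0,1/2)$. The identity $-\nabla\cdot A(x/\varepsilon)\nabla w^\varepsilon=\varepsilon\nabla\cdot G^\varepsilon$ holds only where $\eta\equiv 1$. If $x_0$ is far from the origin -- which is precisely the regime used when this lemma is applied in the proof of Theorem \ref{theogeps-g0}, where $f$ is supported near $\tilde{x}_0$ at distance $8r$ from the point of evaluation -- then $\eta\equiv 0$ on $D^\varepsilon(0,1)$, so $w^\varepsilon=u^\varepsilon-u^0$ there and the source becomes $\nabla\cdot(A(x/\varepsilon)-\overline{A})\nabla u^0$, which carries no $\varepsilon$ factor. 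The cut-off must therefore be placed to cover the estimation domain $D^\varepsilon(0,2)$ (as the paper does with its $\vartheta$ supported in $(-3/2,3/2)^d$, used there to truncate the source rather than the corrector); with that modification the rest of your argument goes through. Relatedly, your explanation that the $\varepsilon\|f\|_{L^p}$ summand ``enters through lower-order terms generated by $\nabla\eta$'' is not correct: with a properly centered cut-off those residuals vanish identically on $D^\varepsilon(0,1)$, and the $\varepsilon\|f\|_{L^p}$ term is simply an extra (unused) allowance in the stated inequality, which the paper happens to produce via the Green-function bound on $u^0|_{\Delta^\varepsilon}$.
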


\begin{rem}[rescaled estimate]
For all $d<p\leq\infty$, there exists $C>0$, such that for all $\psi\in C^{1,\nu_0}_{M_0}$, for all $A\in \mathcal A^{0,\nu_0}$, for all $\varepsilon>0$, for all $r>0$ and $x_0\in\mathbb R^d$, $f\in C^\infty_c(B(x_0,r))$, the solutions $u^\varepsilon$ of \eqref{sysuepsdual} and $u^0$ of \eqref{sysu0dual} satisfy the estimate
\begin{multline}\label{rescestueps-u0dual}
\|u^\varepsilon-u^0\|_{L^\infty(D^\varepsilon(0,r/2))}\leq C\left\{r^{-d/2}\|u^\varepsilon-u^0\|_{L^2(D^\varepsilon(0,r))}+\varepsilon r^{1-d/p}\|f\|_{L^p(B(x_0,r))}\right.\\
\left.+\varepsilon\|\nabla u^0\|_{L^\infty(D^\varepsilon(0,2r))}+\varepsilon r^{1-d/p}\|\nabla^2 u^0\|_{L^p(D^\varepsilon(0,2r))}\right\}.
\end{multline}
Let us enter into some details of the proof of this crucial estimate. Let $r>0$ be fixed. We consider $\tilde{u}^\varepsilon=\tilde{u}^\varepsilon=u^\varepsilon(rz)$ solving
\begin{equation*}
\left\{
\begin{array}{rll}
-\nabla\cdot A(rz/\varepsilon)\nabla \tilde{u}^\varepsilon&=r^2f(rz)=:\tilde{f}(z),&z_d>(\varepsilon/r)\psi(rz'/\varepsilon),\\
\tilde{u}^\varepsilon&=0,&z_d=(\varepsilon/r)\psi(rz'/\varepsilon),
\end{array}
\right.
\end{equation*}
and $\tilde{u}^0=\tilde{u}^0(z)=u^0(rz)$ solving
\begin{equation*}
\left\{
\begin{array}{rll}
-\nabla\cdot \overline{A}\nabla\tilde{u}^0&=r^2f(rz)=:\tilde{f}(z),&z_d>0,\\
\tilde{u}^0&=0,&z_d=0.
\end{array}
\right.
\end{equation*}
The critical point is that the estimate of Lemma \ref{lemueps-u0dual} is uniform in $\varepsilon>0$, so that we can use it for $\tilde{\varepsilon}=\varepsilon/r$:
\begin{multline*}
\|\tilde{u}^\varepsilon-\tilde{u}^0\|_{L^\infty(D^{\varepsilon/r}(0,1/2))}\leq C\left\{\|\tilde{u}^\varepsilon-\tilde{u}^0\|_{L^2(D^{\varepsilon/r}(0,1))}+\varepsilon/r\|\tilde{f}\|_{L^p(B(x_0/r,1))}\right.\\
\left.+\varepsilon/r\|\nabla\tilde{u}^0\|_{L^\infty(D^{\varepsilon/r}(0,2))}+\varepsilon/r\|\nabla^2\tilde{u}^0\|_{L^p(D^{\varepsilon/r}(0,2))}\right\}.
\end{multline*}
Rescaling, we finally get \eqref{rescestueps-u0dual}.
\end{rem}

\begin{proof}[Proof of Lemma \ref{lemueps-u0dual}]
First of all, let us compute $\|u^0\|_{L^\infty(\Delta^\varepsilon)}$. For all $x'\in\mathbb R^{d-1}$, letting $x:=(x',\varepsilon\psi(x'/\varepsilon))$,
\begin{equation*}
u^0(x',\varepsilon\psi(x'/\varepsilon))=\int_{\mathbb R^d_+}G^0(x,\tilde{x})f(\tilde{x})d\tilde{x}
\end{equation*}
so that using the bound 
\begin{equation*}
|G^0(x,\tilde{x})|\leq \frac{Cx_d}{|x-\tilde{x}|^{d-1}}
\end{equation*}
valid for all $x,\ \tilde{x}\in\mathbb R^d_+$, we get for $1\leq p'<d$ and $d<p\leq\infty$
\begin{equation*}
\begin{aligned}
|u^0(x',\varepsilon\psi(x'/\varepsilon))|&\leq C\varepsilon\int_{\mathbb R^d}\frac{1}{|x-\tilde{x}|^{d-1}}|f(\tilde{x})|d\tilde{x}\\
&\leq C\varepsilon\left[\int_{B(0,1)}\frac{1}{|\tilde{x}|^{d-1}}|f(x-\tilde{x})|d\tilde{x}+\int_{B(0,1)^c}\frac{1}{|\tilde{x}|^{d-1}}|f(x-\tilde{x})|d\tilde{x}\right]\\
&\leq C\varepsilon\left[\|f\|_{L^{p'}(B(x_0,1))}+\|f\|_{L^{p}(B(x_0,1))}\right]\\
&\leq C\varepsilon\|f\|_{L^{p}(B(x_0,1))}.
\end{aligned}
\end{equation*}

Let $\vartheta\in C^\infty_c((-3/2,3/2)^d)$ be a cut-off such that $0\leq \vartheta\leq 1$ and $\vartheta\equiv 1$ on $(-5/4,5/4)^d$. Now, for $x\in D^\varepsilon_+$ consider 
\begin{equation*}
r^\varepsilon(x):=u^\varepsilon(x)-u^0(x)-\varepsilon\chi(x/\varepsilon)\cdot\nabla u^0=w^\varepsilon(x)+z^\varepsilon(x),
\end{equation*}
where $w^\varepsilon\in H^1_0(D^\varepsilon(0,2))$ is the unique weak solution to
\begin{equation*}
\left\{\begin{array}{rll}
-\nabla\cdot A(x/\varepsilon)\nabla w^\varepsilon&=-\vartheta(x)\nabla\cdot A(x/\varepsilon)\nabla r^\varepsilon,&x\in D^\varepsilon(0,2),\\
w^\varepsilon&=0,&x\in\partial D^\varepsilon(0,2).
\end{array}
\right.
\end{equation*}
Of course, $z^\varepsilon$ then satisfies
\begin{equation*}
\left\{\begin{array}{rll}
-\nabla\cdot A(x/\varepsilon)\nabla z^\varepsilon&=0,&x\in D^\varepsilon(0,1),\\
z^\varepsilon&=-u^0(x)-\varepsilon\chi(x/\varepsilon)\cdot\nabla u^0,&x\in\Delta^\varepsilon(0,1).
\end{array}
\right.
\end{equation*}
Lemma \ref{lemLinftygLinfty} now implies
\begin{equation*}
\begin{aligned}
\|z^\varepsilon\|_{L^\infty(D^\varepsilon(0,1/2))}&\leq C\left\{\|z^\varepsilon\|_{L^2(D^\varepsilon(0,1))}+\|u^0(x)+\varepsilon\chi(x/\varepsilon)\cdot\nabla u^0\|_{L^\infty(\Delta^\varepsilon(0,2))}\right\}\\
&\leq C\left\{\|u^\varepsilon-u^0\|_{L^2(D^\varepsilon(0,1))}+\|w^\varepsilon\|_{L^2(D^\varepsilon(0,1))}+\varepsilon\|f\|_{L^{p}(B(x_0,1))}+\varepsilon\|\nabla u^0\|_{L^\infty(D^\varepsilon(0,2))}\right\}\\
&\leq C\left\{\|u^\varepsilon-u^0\|_{L^2(D^\varepsilon(0,1))}+\|w^\varepsilon\|_{L^\infty(D^\varepsilon(0,1))}+\varepsilon\|f\|_{L^{p}(B(x_0,1))}+\varepsilon\|\nabla u^0\|_{L^\infty(D^\varepsilon(0,2))}\right\}.
\end{aligned}
\end{equation*}
It remains to bound $\|w^\varepsilon\|_{L^\infty(D^\varepsilon(0,1))}$. For this, we rely on Green's representation formula, using Green's kernel $\widetilde{G}^\varepsilon$ associated to the operator $-\nabla\cdot A(x/\varepsilon)\nabla$ and to the domain $D^\varepsilon(0,2)$. This gives, for all $x\in D^\varepsilon(0,1)$,
\begin{equation*}
\begin{aligned}
w^\varepsilon(x)&=-\int_{D^\varepsilon(0,2)}\widetilde{G}^\varepsilon(x,\tilde{x})\vartheta(\tilde{x})\nabla\cdot A(\tilde{x}/\varepsilon)\nabla r^\varepsilon(\tilde{x})d\tilde{x}\\
&=-\int_{D^\varepsilon(0,2)\cap(-3/2,3/2)^d}\widetilde{G}^\varepsilon(x,\tilde{x})\vartheta(\tilde{x})\nabla\cdot A(\tilde{x}/\varepsilon)\nabla r^\varepsilon(\tilde{x})d\tilde{x}.
\end{aligned}
\end{equation*}
Following \cite[Proposition 2.2]{kls12}, there exists $\Phi=\Phi(y)\in L^\infty(\mathbb T^d;\mathbb R^{3dN})$ such that
\begin{equation*}
-\nabla\cdot A(\tilde{x}/\varepsilon)\nabla r^\varepsilon(\tilde{x})=\varepsilon\partial_\alpha\left\{\left[\Phi^{\alpha\beta\gamma}(\tilde{x}/\varepsilon)+A^{\alpha\beta}(\tilde{x}/\varepsilon)\chi^\gamma(\tilde{x}/\varepsilon)\right]\partial_\beta\partial\gamma u^0(\tilde{x})\right\}.
\end{equation*}
Therefore, integrating by parts we get
\begin{equation*}
\begin{aligned}
&w^\varepsilon(x)=-\varepsilon\int_{D^\varepsilon(0,2)\cap(-3/2,3/2)^d}\partial_{\tilde{x}_\alpha}\left(\widetilde{G}^\varepsilon(x,\tilde{x})\vartheta(\tilde{x})\right)\left[\Phi^{\alpha\beta\gamma}(\tilde{x}/\varepsilon)+A^{\alpha\beta}(\tilde{x}/\varepsilon)\chi^\gamma(\tilde{x}/\varepsilon)\right]\partial_\beta\partial\gamma u^0(\tilde{x})d\tilde{x}\\
&=-\varepsilon\int_{D^\varepsilon(0,2)\cap(-3/2,3/2)^d}\left(\partial_{\tilde{x}_\alpha}\widetilde{G}^\varepsilon(x,\tilde{x})\right)\vartheta(\tilde{x})\left[\Phi^{\alpha\beta\gamma}(\tilde{x}/\varepsilon)+A^{\alpha\beta}(\tilde{x}/\varepsilon)\chi^\gamma(\tilde{x}/\varepsilon)\right]\partial_\beta\partial\gamma u^0(\tilde{x})d\tilde{x}\\
&\qquad-\varepsilon\int_{D^\varepsilon(0,2)\cap\left((-3/2,3/2)^d\setminus(-5/4,5/4)^d\right)}\widetilde{G}^\varepsilon(x,\tilde{x})\partial_{\tilde{x}_\alpha}\vartheta(\tilde{x})\left[\Phi^{\alpha\beta\gamma}(\tilde{x}/\varepsilon)+A^{\alpha\beta}(\tilde{x}/\varepsilon)\chi^\gamma(\tilde{x}/\varepsilon)\right]\partial_\beta\partial\gamma u^0(\tilde{x})d\tilde{x}
\end{aligned}
\end{equation*}
so that
\begin{equation*}
\begin{aligned}
|w^\varepsilon(x)|&\leq C\varepsilon\int_{D^\varepsilon(0,2)\cap(-3/2,3/2)^d}\frac{1}{|x-\tilde{x}|^{d-1}}|\partial_\beta\partial\gamma u^0(\tilde{x})|d\tilde{x}\\
&\qquad+C\varepsilon\int_{D^\varepsilon(0,2)\cap\left((-3/2,3/2)^d\setminus(-5/4,5/4)^d\right)}|\partial_\beta\partial\gamma u^0(\tilde{x})|d\tilde{x}\\
&\leq C\varepsilon\|\nabla^2 u^0\|_{L^p(D^\varepsilon(0,2))}.
\end{aligned}
\end{equation*}
Finally,
\begin{multline*}
\|u^\varepsilon-u^0\|_{L^\infty(D^\varepsilon(0,1/2))}\leq C\left\{\|u^\varepsilon-u^0\|_{L^2(D^\varepsilon(0,1))}+\varepsilon\|f\|_{L^p(B(x_0,1))}\right.\\
\left.+\varepsilon\|\nabla u^0\|_{L^\infty(D^\varepsilon(0,2))}+\varepsilon\|\nabla^2 u^0\|_{L^p(D^\varepsilon(0,2))}\right\}.\qedhere
\end{multline*}
\end{proof}

\begin{rem}[interior estimate]
Following the same scheme, one can prove an interior estimate. For all $d<p\leq\infty$, there exists $C>0$, such that for all $A\in \mathcal A^{0,\nu_0}$, for all $\varepsilon>0$, for all $r>0$ and $x_0\in\mathbb R^d$, $f\in C^\infty_c(B(x_0,r))$, the solutions $u^\varepsilon$ of 
\begin{equation*}
-\nabla\cdot A(x/\varepsilon)\nabla u^\varepsilon=f,\qquad x\in\mathbb R^d 
\end{equation*}
and $u^0$ of 
\begin{equation*}
-\nabla\cdot \overline{A}\nabla u^0=f,\qquad x\in\mathbb R^d
\end{equation*}
satisfy the estimate
\begin{multline}\label{rescestueps-u0dualint}
\|u^\varepsilon-u^0\|_{L^\infty(B(0,r/2))}\leq C\left\{r^{-d/2}\|u^\varepsilon-u^0\|_{L^2(B(0,r))}+\varepsilon r^{1-d/p}\|\nabla^2 u^0\|_{L^p(B(0,2r))}\right\}.
\end{multline}
\end{rem}

\begin{proof}[Proof of Theorem \ref{theogeps-g0}]
Let $\varepsilon>0$ be fixed. Let $x_0,\ \tilde{x}_0\in D^\varepsilon_+$, $r:=|x_0-\tilde{x}_0|/8$ and $f\in C^\infty_c(B(\tilde{x}_0,r))$. Either $\dist(x_0,\Delta^\varepsilon)<r/2$, in which case we rely on the boundary estimate \eqref{rescestueps-u0dual}, or $\dist(x_0,\Delta^\varepsilon)\geq r/2$, in which case we resort to the interior estimate \eqref{rescestueps-u0dualint}. Let $\bar{x}_0\in \Delta^\varepsilon$ such that $\dist(x_0,\Delta^\varepsilon)=|x_0-\bar{x}_0|$.

Assume that $\dist(x_0,\Delta^\varepsilon)<r/2$. Thanks to estimate \eqref{rescestueps-u0dual}, we have
\begin{equation*}
\begin{aligned}
|u^\varepsilon(x_0)-u^0(x_0)|&\leq \|u^\varepsilon-u^0\|_{L^\infty(D^\varepsilon(\bar{x}_0',r/2))}\\
&\leq C\left\{r^{-d/2}\|u^\varepsilon-u^0\|_{L^2(D^\varepsilon(\bar{x}_0',r))}+\varepsilon r^{1-d/p}\|f\|_{L^p(B(\tilde{x}_0,r))}\right.\\
&\qquad\left.+\varepsilon\|\nabla u^0\|_{L^\infty(D^\varepsilon(\bar{x}_0',2r))}+\varepsilon r^{1-d/p}\|\nabla^2 u^0\|_{L^p(D^\varepsilon(\bar{x}_0',2r))}\right\}\\
&\leq C\left\{r^{-d/2}\|u^\varepsilon-u^0\|_{L^2(D^\varepsilon(\bar{x}_0',r))}+\varepsilon r^{1-d/p}\|f\|_{L^p(B(\tilde{x}_0,r))}\right.\\
&\qquad\left.+\varepsilon\|\nabla u^0\|_{L^\infty(\mathbb R^d_+)}+\varepsilon r^{1-d/p}\|\nabla^2 u^0\|_{L^p(\mathbb R^d_+)}\right\}.
\end{aligned}
\end{equation*}
Now, the classical estimates of \cite{adn1,adn2} imply
\begin{equation*}
\begin{aligned}
\|\nabla u^0\|_{L^\infty(\mathbb R^d_+)}&\leq Cr^{1-d/p}\|f\|_{L^p(B(\tilde{x}_0,r))},\qquad\mbox{for any}\ p>d,\\
\|\nabla^2 u^0\|_{L^p(\mathbb R^d_+)}&\leq C\|f\|_{L^p(B(\tilde{x}_0,r))},\qquad\mbox{for any}\ 1<p<\infty.
\end{aligned}
\end{equation*}
It remains to handle $\|u^\varepsilon-u^0\|_{L^2(D^\varepsilon(\bar{x}_0',r))}$. We have
\begin{equation*}
\|u^\varepsilon-u^0\|_{L^2(D^\varepsilon(\bar{x}_0',r))}\leq \|u^\varepsilon-u^0-\varepsilon\chi(x/\varepsilon)\cdot\nabla u^0\|_{L^2(D^\varepsilon(\bar{x}_0',r))}+\varepsilon\|\chi(x/\varepsilon)\cdot\nabla u^0\|_{L^2(D^\varepsilon(\bar{x}_0',r))}.
\end{equation*}
On the one hand, 
\begin{multline*}
\|\chi(x/\varepsilon)\cdot\nabla u^0\|_{L^2(D^\varepsilon(\bar{x}_0',r))}\leq C\|\nabla u^0\|_{L^2(D^\varepsilon(\bar{x}_0',r))}\leq Cr^{d/2}\|\nabla u^0\|_{L^\infty(D^\varepsilon(\bar{x}_0',r))}\\
\leq Cr^{1-d/p+d/2}\|f\|_{L^p(B(\tilde{x}_0,r))}.
\end{multline*}
On the other hand, let us decompose 
\begin{equation*}
r^\varepsilon(x):=u^\varepsilon(x)-u^0(x)-\varepsilon\chi(x/\varepsilon)\cdot\nabla u^0(x)=w^\varepsilon+z^\varepsilon,
\end{equation*}
where $w^\varepsilon\in H^1_0(D^\varepsilon_+)$ is the unique weak solution to
\begin{equation*}
\left\{\begin{array}{rll}
-\nabla\cdot A(x/\varepsilon)\nabla w^\varepsilon&=-\nabla\cdot A(x/\varepsilon)\nabla r^\varepsilon,&x\in D^\varepsilon_+,\\
w^\varepsilon&=0,&x\in\Delta^\varepsilon.
\end{array}
\right.
\end{equation*}
Since by \cite[Proposition 2.2]{kls12}
\begin{equation*}
\nabla\cdot A(x/\varepsilon)\nabla r^\varepsilon=\varepsilon\nabla\cdot\left\{\left(\Phi(x/\varepsilon)+A(x/\varepsilon)\chi(x/\varepsilon)\right)\cdot\nabla^2u^0\right\},
\end{equation*}
a standard energy estimate yields
\begin{equation*}
\|\nabla w^\varepsilon\|_{L^2(D^\varepsilon_+)}\leq \varepsilon\|\left(\Phi(x/\varepsilon)+A(x/\varepsilon)\chi(x/\varepsilon)\right)\cdot\nabla^2u^0\|_{L^2(D^\varepsilon_+)}\leq C\varepsilon\|\nabla^2u^0\|_{L^2(D^\varepsilon_+)}.
\end{equation*}
Now,
\begin{equation*}
\|\nabla^2u^0\|_{L^2(D^\varepsilon_+)}\leq C\|f\|_{L^2(B(\tilde{x}_0,r))}\leq Cr^{d/2-d/p}\|f\|_{L^p(B(\tilde{x}_0,r))}
\end{equation*}
for $p>d\geq 2$,
and by the Poincar\'e inequality 
\begin{equation*}
\|w^\varepsilon\|_{L^2(D^\varepsilon(\bar{x}_0',r))}\leq Cr\|\nabla w^\varepsilon\|_{L^2(D^\varepsilon(\bar{x}_0',r))}\leq Cr\|\nabla w^\varepsilon\|_{L^2(D^\varepsilon_+)}.
\end{equation*}
Combining these inequalities leads to
\begin{equation*}
\|w^\varepsilon\|_{L^2(D^\varepsilon(\bar{x}_0',r))}\leq C\varepsilon r^{1-d/p+d/2}\|f\|_{L^p(B(\tilde{x}_0,r))}.
\end{equation*}
Estimating $z^\varepsilon$ is done through the representation via Poisson's kernel $P^\varepsilon$, since $z^\varepsilon$ is the unique weak solution to 
\begin{equation*}
\left\{\begin{array}{rll}
-\nabla\cdot A(x/\varepsilon)\nabla z^\varepsilon&=0,&x\in D^\varepsilon_+,\\
z^\varepsilon&=-u^0(x)-\varepsilon\chi(x/\varepsilon)\cdot\nabla u^0(x),&x\in\Delta^\varepsilon.
\end{array}
\right.
\end{equation*}
We have for all $x\in D^\varepsilon_+$
\begin{equation*}
|z^\varepsilon(x)|\leq \int_{\Delta^\varepsilon}|P^\varepsilon(x,\tilde{x})||z^\varepsilon(\tilde{x})|d\tilde{x}\leq C\|z^\varepsilon\|_{L^\infty(\Delta^\varepsilon)}.
\end{equation*}
Thus,
\begin{multline*}
\|z^\varepsilon\|_{L^2(D^\varepsilon(\bar{x}_0,r))}\leq Cr^{d/2}\|z^\varepsilon\|_{L^\infty(D^\varepsilon)}\leq Cr^{d/2}\|z^\varepsilon\|_{L^\infty(\Delta^\varepsilon_+)}\\
\leq C\varepsilon r^{d/2}\left\{r^{1-d/p}\|f\|_{L^{p}(B(\tilde{x}_0,r))}+\|\nabla u^0\|_{L^\infty(\mathbb R^d_+))}\right\}\leq C\varepsilon r^{1-d/p+d/2}\|f\|_{L^{p}(B(\tilde{x}_0,r))}.
\end{multline*}
Gathering all the bounds we end up with
\begin{equation*}
|u^\varepsilon(x_0)-u^0(x_0)|\leq C\varepsilon r^{1-d/p}\|f\|_{L^{p}(B(\tilde{x}_0,r))}.
\end{equation*}

Assuming that $\dist(x_0,\Delta^\varepsilon)\geq r/2$ we get using the interior estimate \eqref{rescestueps-u0dual}
\begin{equation*}
|u^\varepsilon(x_0)-u^0(x_0)|\leq C\|u^\varepsilon-u^0\|_{L^\infty(B(x_0,r/4))}\leq C\varepsilon r^{1-d/p}\|f\|_{L^{p}(B(\tilde{x}_0,r))}.
\end{equation*}
Notice that this estimate is far easier to establish, because we do not have to deal with boundary terms.

By the representation formula in terms of Green kernels, we get
\begin{equation*}
u^\varepsilon(x_0)-u^0(x_0)=\int_{D^\varepsilon_+}\left[G^\varepsilon(x_0,\tilde{x})-G^0(x_0,\tilde{x})\right]f(\tilde{x})d\tilde{x}+\int_{0\leq \tilde{x}_d\leq\varepsilon\psi(\tilde{x}'/\varepsilon)}G^0(x_0,\tilde{x})f(\tilde{x})d\tilde{x}.
\end{equation*}
We deal with the remainder term, in a fashion similar to what we have done on $u^0(x',\varepsilon\psi(x'/\varepsilon))$ above (cf. proof of Lemma \ref{lemueps-u0dual}). Using the bound 
\begin{equation*}
|G^0(x_0,\tilde{x})|\leq \frac{C\tilde{x}_d}{|x_0-\tilde{x}|^{d-1}},
\end{equation*}
we obtain
\begin{equation*}
\left|\int_{0\leq \tilde{x}_d\leq\varepsilon\psi(\tilde{x}'/\varepsilon)}G^0(x_0,\tilde{x})f(\tilde{x})d\tilde{x}\right|\leq C\varepsilon r^{1-d/p}\|f\|_{L^{p}(B(\tilde{x}_0,r))}.
\end{equation*}
Therefore, for all $f\in C^\infty_c(B(\tilde{x}_0,r))$,
\begin{equation*}
\left|\int_{D^\varepsilon_+}\left[G^\varepsilon(x_0,\tilde{x})-G^0(x_0,\tilde{x})\right]f(\tilde{x})d\tilde{x}\right|\leq C\varepsilon r^{1-d/p}\|f\|_{L^{p}(B(\tilde{x}_0,r))},
\end{equation*}
which implies by duality that
\begin{equation}\label{dualestgeps-g0p'}
\left(\int_{B(\tilde{x}_0,r)\cap D^\varepsilon_+}\left|G^\varepsilon(x_0,\tilde{x})-G^0(x_0,\tilde{x})\right|^{p'}d\tilde{x}\right)^{1/p'}\leq C\varepsilon r^{1-d/p}
\end{equation}
for $1/p'+1/p=1$.

Finally, relying on the fact that 
\begin{equation*}
-\nabla\cdot A^*(\tilde{x}/\varepsilon)\nabla G^\varepsilon(x_0,\tilde{x})=-\nabla\cdot \overline{A}^*\nabla G^0(x_0,\tilde{x}),
\end{equation*}
we can apply the same type of arguments to get our target bound on $|G^\varepsilon(x_0,\tilde{x}_0)-G^0(x_0,\tilde{x}_0)|$. The rescaled boundary estimate \eqref{rescestueps-u0dual}, slightly modified to take into account that $G^0(x_0,\tilde{x})$ does not vanish on the oscillating boundary $\Delta^\varepsilon$, reads
\begin{multline*}
|G^\varepsilon(x_0,\tilde{x}_0)-G^0(x_0,\tilde{x}_0)|\leq C\Biggl\{r^{-d/2}\left(\int_{B(\tilde{x}_0,r)\cap D^\varepsilon_+}\left|G^\varepsilon(x_0,\tilde{x})-G^0(x_0,\tilde{x})\right|^2\right)^{1/2}\Biggr.\\
+C\|G^0(x_0,\tilde{x})\|_{L^\infty(\overline{B(\tilde{x}_0,2r)}\cap\Delta^\varepsilon)}+C\varepsilon\|\nabla_2G^0(x_0,\cdot)\|_{L^\infty(B(\tilde{x}_0,2r)\cap D^\varepsilon_+)}\\
\Biggl.+C\varepsilon r^{1-d/p}\|\nabla_2^2G^0(x_0,\cdot)\|_{L^p(B(\tilde{x}_0,2r)\cap D^\varepsilon_+)}\Biggr\}.
\end{multline*}
We have,
\begin{equation*}
\|G^0(x_0,\tilde{x})\|_{L^\infty(\overline{B(\tilde{x}_0,2r)}\cap\Delta^\varepsilon)}\leq C\varepsilon\left\|\frac{1}{|x_0-\tilde{x}|^{d-1}}\right\|_{L^\infty(\overline{B(\tilde{x}_0,2r)}\cap\Delta^\varepsilon)}\leq C\varepsilon r^{1-d},
\end{equation*}
similarly
\begin{equation*}
\|\nabla_2G^0(x_0,\cdot)\|_{L^\infty(B(\tilde{x}_0,2r)\cap D^\varepsilon_+)}\leq Cr^{1-d},
\end{equation*}
and 
\begin{equation*}
r^{-d/p}\|\nabla_2^2G^0(x_0,\cdot)\|_{L^p(B(\tilde{x}_0,2r)\cap D^\varepsilon_+)}\leq Cr^{-2}\|G^0(x_0,\cdot)\|_{L^\infty(B(\tilde{x}_0,4r)}\leq Cr^{-d}.
\end{equation*}
Consequently, the latter combined with the dual estimate \eqref{dualestgeps-g0p'} gives
\begin{equation*}
|G^\varepsilon(x_0,\tilde{x}_0)-G^0(x_0,\tilde{x}_0)|\leq C\varepsilon r^{1-d}\leq\frac{C\varepsilon}{|x_0-\tilde{x}|^{d-1}},
\end{equation*}
which concludes the proof.
\end{proof}

\begin{cor}\label{cortheocompgreen}
For all $1<p<d$ and $1/q=1/p-1/d$, or $p>d$ and $q=\infty$, there exists $C>0$, such that for all $\psi\in C^{1,\nu_0}_{M_0}$, for all $A\in \mathcal A^{0,\nu_0}$, for all $\varepsilon>0$, for all $f\in L^p(\mathbb R^d)$, for all $u^\varepsilon=u^\varepsilon(x)$ weak solution of \eqref{sysuepsdual} and $u^0=u^0(x)$ weak solution of \eqref{sysu0dual},
\begin{equation*}
\|u^\varepsilon-u^0\|_{L^q(D^\varepsilon_+)}\leq C\varepsilon\|f\|_{L^p(D^\varepsilon_+)}.
\end{equation*}
\end{cor}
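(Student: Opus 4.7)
The plan is to represent both solutions by their Green's functions and then reduce the estimate to Theorem \ref{theogeps-g0} combined with the Hardy--Littlewood--Sobolev inequality. For $x \in D^\varepsilon_+$, I would write
$$u^\varepsilon(x) - u^0(x) = \int_{D^\varepsilon_+} \bigl[G^\varepsilon(x,\tilde x) - G^0(x,\tilde x)\bigr]\,f(\tilde x)\,d\tilde x \;-\; \int_{\mathbb R^d_+ \setminus D^\varepsilon_+} G^0(x,\tilde x)\,f(\tilde x)\,d\tilde x,$$
using $u^\varepsilon(x) = \int_{D^\varepsilon_+} G^\varepsilon(x,\tilde x) f(\tilde x)\,d\tilde x$ and $u^0(x) = \int_{\mathbb R^d_+} G^0(x,\tilde x) f(\tilde x)\,d\tilde x$.

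Next, I would bound each integrand by a kernel of order $\varepsilon |x-\tilde x|^{-(d-1)}$. For the first integral this is exactly the content of Theorem \ref{theogeps-g0}. The second integration takes place on the oscillating boundary strip $\mathbb R^d_+ \setminus D^\varepsilon_+ = \{0 \leq \tilde x_d \leq \varepsilon\psi(\tilde x'/\varepsilon)\}$, on which $\tilde x_d \leq \varepsilon M_0$; combined with the half-space estimate $|G^0(x,\tilde x)| \leq C\tilde x_d/|x-\tilde x|^{d-1}$ (already used in the proof of Theorem \ref{theogeps-g0}), this gives a bound of the same form $|G^0(x,\tilde x)| \leq C\varepsilon / |x-\tilde x|^{d-1}$ on the strip. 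Combining both contributions yields the pointwise inequality
$$|u^\varepsilon(x) - u^0(x)| \;\leq\; C\varepsilon \int_{\mathbb R^d} \frac{|f(\tilde x)|}{|x-\tilde x|^{d-1}}\,d\tilde x \;=\; C\varepsilon\, I_1|f|(x),$$
where $I_1$ denotes the Riesz potential of order $1$.

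To conclude, for $1 < p < d$ and $1/q = 1/p - 1/d$ I would invoke the Hardy--Littlewood--Sobolev inequality $\|I_1 g\|_{L^q(\mathbb R^d)} \leq C \|g\|_{L^p(\mathbb R^d)}$ applied to $g = |f|$, and then restrict to $D^\varepsilon_+$. The remaining case $p > d$, $q = \infty$ follows from a direct H\"older bound on $I_1|f|(x)$, using that $|\cdot|^{-(d-1)}$ is locally in $L^{p'}$ when $p > d$ (with the usual decay/integrability assumption on $f$ ensuring that $u^0$ is well defined).

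The main obstacle is really only the boundary strip term: without the extra factor $\tilde x_d$ in the half-space bound on $G^0$, which reflects the Dirichlet vanishing at $x_d = 0$, the strip would contribute a term of order $\varepsilon^{1/q'}$ rather than $\varepsilon$. It is this vanishing, paired with the thickness $\leq \varepsilon M_0$ of the strip, that preserves the $O(\varepsilon)$ rate throughout and matches the sharp pointwise expansion of Theorem \ref{theogeps-g0}.
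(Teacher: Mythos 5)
Your proof is correct and follows essentially the same route as the paper: the same decomposition of $u^\varepsilon-u^0$ into the bulk term $\int_{D^\varepsilon_+}[G^\varepsilon-G^0]f$ and the boundary-strip term $\int_{\{0<\tilde x_d<\varepsilon\psi(\tilde x'/\varepsilon)\}}G^0 f$, the same pointwise bounds ($C\varepsilon|x-\tilde x|^{-(d-1)}$ from Theorem~\ref{theogeps-g0} for the first, and $|G^0(x,\tilde x)|\le C\tilde x_d/|x-\tilde x|^{d-1}\le C\varepsilon/|x-\tilde x|^{d-1}$ on the strip for the second), and the same conclusion via fractional-integral (Hardy--Littlewood--Sobolev) estimates for $1<p<d$ and H\"older for $p>d$. (The paper's display~\eqref{decompueps-u0} carries a $+$ where your $-$ is the correct sign, but this is immaterial since only absolute values are used.)
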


\begin{proof}[Proof of Corollary \ref{cortheocompgreen}]
For all $x\in D^\varepsilon_+$,
\begin{equation}\label{decompueps-u0}
u^\varepsilon(x)-u^0(x)=\int_{D^\varepsilon_+}\left[G^\varepsilon(x,\tilde{x})-G^0(x,\tilde{x})\right]f(\tilde{x})d\tilde{x}+\int_{0<\tilde{x}_d<\varepsilon\psi(\tilde{x}'/\varepsilon)}G^0(x,\tilde{x})f(\tilde{x})d\tilde{x}.
\end{equation}
On the one hand, by Theorem \ref{theogeps-g0} and well-known estimates for fractional integrals, we get for $1<p<d$ and $1/q=1/p-1/d$,
\begin{equation*}
\begin{aligned}
\left\|\int_{D^\varepsilon_+}\left[G^\varepsilon(x,\tilde{x})-G^0(x,\tilde{x})\right]f(\tilde{x})d\tilde{x}\right\|_{L^q(D^\varepsilon_+)}&\leq C\varepsilon \left\|\int_{D^\varepsilon_+}\frac{|f(\tilde{x})|}{|x-\tilde{x}|^{d-1}}d\tilde{x}\right\|_{L^q(D^\varepsilon_+)}\\
&\leq C\varepsilon\|f\|_{L^p(D^\varepsilon_+)}.
\end{aligned}
\end{equation*}
In the case of $p>d$ and $q=\infty$, the latter simply results from H\"older's inequality. On the other hand, the second term in the right hand side of \eqref{decompueps-u0} is small because we integrate on a small neighborhood of the boundary. Using the bound
\begin{equation*}
|G^0(x,\tilde{x})|\leq \frac{C\tilde{x}_d}{|x-\tilde{x}|^{d-1}}\leq \frac{C\varepsilon}{|x-\tilde{x}|^{d-1}}
\end{equation*}
for all $x,\ \tilde{x}\in\mathbb R^d_+$ such that $0\leq\tilde{x}_d\leq \varepsilon\psi(\tilde{x}'/\varepsilon)$, this term is bounded by $C\varepsilon\|f\|_{L^p(D^\varepsilon_+)}$ using either estimates on weakly singular integrals, or H\"older's inequality according to $p$ and $q$.
\end{proof}

\begin{rem}[Lipschitz estimate and estimate on Poisson's kernel]
One of the issues here is that $u^0$ is oscillating too much on the boundary $\Delta^\varepsilon$. These oscillations do not obstruct the proof of the $L^\infty$ estimate in Lemma \ref{lemueps-u0dual}. However, they prevent us from proving a Lipschitz bound as in \cite[Lemma 3.5]{kls12}. It seems that something more is necessary to refine the expansion of $u^\varepsilon$ close to the boundary, in other words to improve the way our expansion approximates the oscillations of $u^\varepsilon$. The thing we need is probably more structure on the boundary, allowing to carry out a boundary layer analysis as in \cite{David09} for instance. This would result in replacing the Dirichlet boundary condition for $u^0$ by another boundary condition. Such a study is beyond the scope of this paper, since our main concern is to work with boundaries without any structure.
\end{rem}

\section{Two different scales}
\label{sec2scales}

In this section we address the generalization of uniform boundary estimates to the situation where the coefficients and the boundary graph oscillate at two different scales
\begin{equation}\label{systwoscalealphabeta}
\left\{
\begin{array}{rll}
-\nabla\cdot A(x/\alpha)\nabla u^{\alpha,\beta}&=f+\nabla\cdot F,&x\in D^\beta(0,1),\\
u^{\alpha,\beta}&=0,&x\in\Delta^\beta(0,1),
\end{array}
\right.
\end{equation}
where $\alpha,\ \beta>0$. 

Using the three-step compactness method, one can prove that a boundary H\"older estimate uniform in $\alpha$ and $\beta$ holds.

\begin{prop}\label{propboundaryholderab}
Let $\kappa,\ \kappa'>0$. There exist $C>0$, $\varepsilon_0>0$ such that for all 
\begin{equation*}
0<\mu<\min\left(1-d/(d+\kappa),2-d/(d/2+\kappa')\right), 
\end{equation*}
for all $\psi\in\mathcal C_{M_0}^{1,\omega}$, for all $A\in\mathcal A^{0,\nu_0}$, for all $\alpha,\ \beta>0$, for all $f\in L^{d/2+\kappa'}(D^\beta(0,1))$, for all $F\in L^{d+\kappa}(D^\beta(0,1))$, for all $u^{\alpha,\beta}$ weak solution to \eqref{systwoscalealphabeta} the bounds
\begin{equation*}
\|u^{\alpha,\beta}\|_{L^2(D^\beta(0,1))}\leq 1,\quad\|f\|_{L^{d/2+\kappa'}(D^\beta(0,1))}\leq \varepsilon_0,\quad\|F\|_{L^{d+\kappa}(D^\beta(0,1))}\leq \varepsilon_0
\end{equation*}
imply
\begin{equation*}
[u^{\alpha,\beta}]_{C^{0,\mu}(\overline{D^\beta(0,1/2)})}\leq C.
\end{equation*}
Notice that $C$ and $\varepsilon_0$ depend on $d$, $N$, $M_0$, on the modulus of continuity $\omega$ of $\nabla\psi$, $\lambda$, $\kappa$ and $\kappa'$.
\end{prop}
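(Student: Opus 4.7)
The plan is to transcribe \emph{verbatim} the three-step compactness scheme used for Proposition \ref{propboundaryholder}, with the only novelty being that the compactness argument and the iteration must track two independent scales $\alpha$ and $\beta$. The key observation is that under the rescaling $x\mapsto \theta x$ the system \eqref{systwoscalealphabeta} on $D^\beta(0,1)$ becomes a system of the same form on $D^{\beta/\theta}(0,1)$ with coefficient scale $\alpha/\theta$ and boundary scale $\beta/\theta$, so both scales shrink at the same rate under iteration. Thus the iteration step will go through as soon as the improvement step does.

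For the improvement lemma, fix $0<\mu<\min(1-d/(d+\kappa),2-d/(d/2+\kappa'))$. The plan is to prove by contradiction that there exist $\varepsilon_0>0$, $0<\theta<1/8$ such that, whenever $\alpha,\beta<\varepsilon_0$ and the source bounds hold, $\intbar_{D^\beta(0,\theta)}|u^{\alpha,\beta}|^2\leq\theta^{2\mu}$. Given sequences $(\alpha_k,\beta_k)\to(0,0)$, $\psi_k\in\mathcal C_{M_0}^{1,\omega}$, $A_k\in\mathcal A^{0,\nu_0}$ violating the bound, extend $u^{\alpha_k,\beta_k}$ by zero to $D^{-1}(0,1/4)$ and use Cacciopoli together with the homogeneous Dirichlet condition on $\Delta^{\beta_k}$ to obtain uniform $L^2$ and $H^1$ controls, exactly as in Lemma \ref{lem1holder}. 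Passing to a weak limit $u^0$, and observing that $\|\beta_k\psi_k(\cdot/\beta_k)\|_{L^\infty}\leq M_0\beta_k\to0$ so that $\Delta^{\beta_k}$ collapses to $\Delta^0$, Theorem \ref{theoweakcvhomo} on compact subsets of $D^0(0,1/4)$ together with a trace argument identifies $u^0$ as a weak solution of $-\nabla\cdot A^0\nabla u^0=0$ on $D^0(0,1/4)$ with $u^0=0$ on $\Delta^0(0,1/4)$. Classical Schauder at the flat boundary then delivers the required Hölder decay and contradicts the hypothesis, in perfect parallel with Lemma \ref{lem1holder}.

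The iteration is the mechanical analogue of Lemma \ref{lem2holder}: rescaling $U(x):=u^{\alpha,\beta}(\theta^k x)/\theta^{k\mu}$ produces a solution on $D^{\beta/\theta^k}(0,1)$ with coefficient scale $\alpha/\theta^k$, and the inequality $\mu<\min(1-d/(d+\kappa),2-d/(d/2+\kappa'))$ ensures that the rescaled source norms remain bounded by $\varepsilon_0$. Hence as long as \emph{both} $\alpha/\theta^{k-1}<\varepsilon_0$ and $\beta/\theta^{k-1}<\varepsilon_0$, one inherits by induction $\intbar_{D^\beta(0,\theta^k)}|u^{\alpha,\beta}|^2\leq\theta^{2k\mu}$. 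For the concluding third step, let $k_\star\geq 1$ be the largest integer with $\max(\alpha,\beta)<\theta^{k_\star-1}\varepsilon_0$ and reason exactly as in the proof of Proposition \ref{propboundaryholder}: for $x_0\in D^\beta(0,1/2)$ with $\delta(x_0)>\theta^{k_\star+1}/2$, apply the rescaled interior Hölder estimate \eqref{intestrescaledbis} on a ball of radius $\delta(x_0)/2$ and absorb the $L^2$ mass via the iterative bound; for $\delta(x_0)\leq\theta^{k_\star+1}/2$, blow up at the microscale $\max(\alpha,\beta)\sim\theta^{k_\star}\varepsilon_0$ and apply the classical boundary Hölder estimate of Theorem \ref{theoclassholder} to the rescaled problem, whose coefficients and boundary graph now oscillate at scales comparable to $1$ and whose relevant norms are controlled by $M_0/\varepsilon_0^{\nu_0}$ and the modulus $\omega$. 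Arbitrary points $x_0\in D^\beta(0,1/2)$ are handled by the translation device $\psi\mapsto\psi(\cdot+x_0'/\beta)$, $A\mapsto A(\cdot+x_0/\alpha)$ from the end of the proof of Proposition \ref{propboundaryholder}; both classes $\mathcal C_{M_0}^{1,\omega}$ and $\mathcal A^{0,\nu_0}$ are stable under such shifts.

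The main obstacle is the improvement lemma: one must pass to the limit as $(\alpha_k,\beta_k)\to(0,0)$ at \emph{genuinely independent} rates. Homogenization of the coefficients (governed by $\alpha_k$) and flattening of the boundary (governed by $\beta_k$) interact nontrivially in the thin transition layer $\{0<x_d<\beta_k\psi_k(x'/\beta_k)\}$: one needs uniform $L^2$ bounds on the extended $u^{\alpha_k,\beta_k}$ and on $A_k(\cdot/\alpha_k)\nabla u^{\alpha_k,\beta_k}$ in order to identify simultaneously the trace of $u^0$ on $\Delta^0$ and the homogenized equation up to $\Delta^0$. The remedy is to test against functions compactly supported in $D^0(0,1/4)$ (which for $k$ large are supported in $D^{\beta_k}$), apply Theorem \ref{theoweakcvhomo} away from the boundary, and handle the shrinking transition layers by dominated convergence, exactly mirroring the proof of Lemma \ref{lem1holder}. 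Once this decoupling is in place, the iteration and the final sub-case analysis become direct adaptations of the single-scale case.
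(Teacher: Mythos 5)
The improvement and iteration steps are in order and match the paper's (sketched) approach: the compactness argument passes to a weak limit $u^0$ solving the homogenized system in the flat half-space, using Theorem~\ref{theoweakcvhomo} in the interior, the shrinking of the transition layer $\{0<x_d<\beta_k\psi_k(x'/\beta_k)\}$ and a trace argument for the boundary condition; the rescaling $x\mapsto\theta x$ sends $(\alpha,\beta)$ to $(\alpha/\theta,\beta/\theta)$ so the iteration is mechanical. These are the right ideas.

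The gap is in the third step, in the sub-case $\delta(x_0)\leq\theta^{k_\star+1}/2$. You blow up at the \emph{larger} scale $\max(\alpha,\beta)$ and then assert that the rescaled coefficients and boundary graph ``oscillate at scales comparable to $1$'' with $C^{0,\nu_0}$ / modulus-of-continuity norms ``controlled by $M_0/\varepsilon_0^{\nu_0}$ and $\omega$''. This is not true: after rescaling by $\max(\alpha,\beta)$ the remaining quantity oscillates at the scale $\min(\alpha,\beta)/\max(\alpha,\beta)$, which can be arbitrarily small since $\alpha$ and $\beta$ are independent. Concretely, if $\alpha<\beta$ and one blows up at $\beta$, the new coefficient matrix is $A(\beta y/\alpha)$ with $[A(\beta\,\cdot/\alpha)]_{C^{0,\nu_0}}=(\beta/\alpha)^{\nu_0}[A]_{C^{0,\nu_0}}$, which blows up as $\alpha/\beta\to 0$; if $\beta<\alpha$ and one blows up at $\alpha$, the new graph $(\beta/\alpha)\psi(\alpha y'/\beta)$ has modulus of continuity $\omega(\alpha t/\beta)\geq\omega(t)$, again uncontrolled. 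In either case the constant in the classical Theorem~\ref{theoclassholder} degenerates, so the classical Schauder estimate does not furnish a bound uniform in $\alpha,\beta$ here.

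What is needed instead (and what the paper itself flags in section \ref{subsec2cases}, where it introduces the one-scale estimates ``we will rely on when blowing-up in $\alpha$ or $\beta$'') are the H\"older analogues of Propositions~\ref{theoboundarylip1eps} and~\ref{theoboundarylipeps1}: a boundary H\"older estimate with oscillating boundary, \emph{non}-oscillating coefficients, uniform in the boundary scale (the $\alpha=1$, $\beta=\varepsilon$ case), and a boundary H\"older estimate with fixed $\mathcal C_{M_0}^{1,\omega}$ boundary, oscillating coefficients, uniform in the coefficient scale (the $\alpha=\varepsilon$, $\beta=1$ case, i.e.\ the boundary version of Lemma~12 of \cite{alin}). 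After blowing up at $\max(\alpha,\beta)$ exactly one of the two scales becomes $1$ and the other lies in $(0,1]$; one then applies whichever of the two one-scale uniform H\"older estimates handles the remaining small scale. These same two one-scale estimates are also what you should invoke for the preliminary cases in which exactly one of $\alpha,\beta$ exceeds $\varepsilon_0$, in parallel with how the proof of Theorem~\ref{theoboundarylipalphabeta} treats case~(ii). Without these auxiliary uniform estimates, the blow-up step as you wrote it does not close.
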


The proof of this estimate is much simpler than the proof of a Lipschitz estimate uniform in $\alpha$ and $\beta$. The latter is our main focus. 

\begin{theo}\label{theoboundarylipalphabeta}
Let $0<\mu<1$ and $\kappa>0$. There exist $C>0$, $\varepsilon_0>0$, such that for all $\psi\in\mathcal C_{M_0}^{1,\nu_0}$, for all $A\in\mathcal A^{0,\nu_0}$, for all $\alpha,\ \beta>0$, for all $f\in L^{d+\kappa}(D^\beta(0,1))$, for all $F\in C^{0,\mu}(D^\beta(0,1))$, for all $u^{\alpha,\beta}$ weak solution to \eqref{systwoscalealphabeta} the bounds
\begin{equation*}
\|u^{\alpha,\beta}\|_{L^\infty(D^\beta(0,1))}\leq 1,\quad\|f\|_{L^{d+\kappa}(D^\beta(0,1))}\leq \varepsilon_0,\quad\|F\|_{C^{0,\mu}(D^\beta(0,1))}\leq \varepsilon_0
\end{equation*}
imply
\begin{equation*}
\|\nabla u^{\alpha,\beta}\|_{L^\infty(D^\beta(0,1/2))}\leq C.
\end{equation*}
Notice that $C$ and $\varepsilon_0$ depend on $d$, $N$, $M_0$, $\lambda$, $\nu_0$, $\kappa$ and $\mu$. Again, the salient point is the uniformity in $\alpha$ and $\beta$ of the constant $C$.
\end{theo}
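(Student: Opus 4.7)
The proof will follow the three-step compactness scheme (improvement, iteration, conclusion) used in Theorem \ref{theoboundarylip}, now tracking the two parameters $\alpha$ and $\beta$ jointly. When either $\alpha$ or $\beta$ exceeds a fixed $\varepsilon_0$, the estimate reduces either to the classical Schauder theory or to Theorem \ref{theoboundarylip} after a rescaling, so I focus on the regime $\alpha, \beta \leq \varepsilon_0$.

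The first ingredient is a two-scale version of the correctors of Lemmas \ref{bdarycorlem} and \ref{bulkcorlem}. The boundary corrector $v^{\alpha,\beta} \in W^{1,2}_0(D^\beta(0,2))$ solves
\[
-\nabla \cdot A(x/\alpha) \nabla v^{\alpha,\beta} = \nabla \cdot A(x/\alpha) \nabla\bigl(\psi(x'/\beta) \Theta(x', x_d/\beta)\bigr),
\]
and the Dirichlet corrector $w^{\alpha,\beta} \in W^{1,2}_0(D^\beta(0,2))$ solves
\[
-\nabla \cdot A(x/\alpha) \nabla w^{\alpha,\beta} = \nabla \cdot A(x/\alpha) \nabla\bigl(\chi^d(x/\alpha) \Theta(x', x_d/\beta)\bigr).
\]
For $1/2 < \tau < 1$ I expect pointwise estimates of the form $|v^{\alpha,\beta}(x)|, |w^{\alpha,\beta}(x)| \leq C \delta(x)^\tau/\max(\alpha,\beta)^\tau$, obtained via two-scale Green's function bounds along the lines of Section \ref{secbdarycor}. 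The Hölder control of Green's function is supplied uniformly in $\alpha, \beta$ by Proposition \ref{propboundaryholderab}; the gradient bound is then produced by applying classical Lipschitz estimates at the small scale $\min(\alpha,\beta)$, at which both the coefficients and the oscillating boundary look locally smooth.

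The improvement lemma then states that there exist $\varepsilon_0, \theta > 0$ such that for $\alpha, \beta < \varepsilon_0$, with $a_1 := (\overline{\partial_{x_d} u^{\alpha,\beta}})_{0,\theta}$,
\[
\bigl\|u^{\alpha,\beta} - a_1 \bigl\{x_d - \beta\psi(x'/\beta)\Theta - \alpha(1-\Theta)\chi^d(x/\alpha) - \alpha w^{\alpha,\beta} - \beta v^{\alpha,\beta}\bigr\}\bigr\|_{L^\infty(D^\beta(0,\theta))} \leq \theta^{1+\mu}.
\]
This is shown by contradiction as in Lemma \ref{lem1lip}: along a negating sequence, extract $\alpha_k, \beta_k \to 0$ and, after a further extraction, $\alpha_k/\beta_k \to r \in [0,\infty]$. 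In each of the three regimes, Rellich compactness combined with Theorem \ref{theoweakcvhomo} (applied with respect to the scale $\max(\alpha_k,\beta_k)$) yields a limit $u^0$ solving $-\nabla \cdot A^0 \nabla u^0 = 0$ on $D^0(0,1/4)$ with $u^0 = 0$ on $\Delta^0(0,1/4)$, where $A^0$ is either the homogenized matrix or a constant-coefficient subsequential limit. Classical $C^2$ regularity for $u^0$ then produces the contradiction. The correction terms $\alpha_k w^{\alpha_k,\beta_k}$ and $\beta_k v^{\alpha_k,\beta_k}$ vanish in $L^\infty_{\mathrm{loc}}$ in the limit because $\max(\alpha_k,\beta_k)^{1-\tau} \to 0$. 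The iteration lemma follows by rescaling $U(x) := \theta^{-(1+\mu)}[u^{\alpha,\beta}(\theta x) - a_1\{\cdots\}]$, which satisfies a system of the same form with scales $\alpha/\theta,\beta/\theta$; iterating gives geometric control of the coefficients $a_k$ and a bound of order $\theta^{k(1+\mu)}$ on a boundary neighborhood of size $\theta^k$. The conclusion of Theorem \ref{theoboundarylipalphabeta} is then obtained exactly as in Theorem \ref{theoboundarylip}: for $x_0$ with $\delta(x_0) > \theta^{k+1}/2$, where $\theta^k \leq \max(\alpha,\beta)/\varepsilon_0 < \theta^{k-1}$, one uses the interior rescaled Lipschitz estimate \eqref{intgradestrescaledbis}; otherwise one blows up at the scale $\max(\alpha,\beta)$ and invokes the classical Lipschitz estimate of Theorem \ref{theoclasslip}.

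The main obstacle I anticipate is the uniform construction and control of the correctors $v^{\alpha,\beta}$, $w^{\alpha,\beta}$ when the ratio $\alpha/\beta$ is very far from $1$. Green's function bounds must be established uniformly in both parameters, and the small-scale Lipschitz estimate that upgrades the Hölder bound on Green's function must be carefully tracked in terms of $\min(\alpha,\beta)$ rather than of a single $\varepsilon$. A secondary technical difficulty is verifying, in the improvement lemma, that the cut-off factors $\Theta(x',x_d/\beta)$ behave well under rescaling by $\theta$, exactly as in \eqref{condTheta}, so that the iteration fits together at each step regardless of the ratio $\alpha_k/\beta_k$. Once these uniform estimates are in hand, the rest of the argument mirrors the single-scale case.
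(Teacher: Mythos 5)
Your overall scaffolding (case analysis, two-scale correctors, three-step compactness) is the right one, but there is a concrete gap in how you plan to estimate the gradient of Green's function, and it propagates to the corrector bounds.

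You propose to obtain the gradient bound on $\widetilde{G}^{\alpha,\beta}$ by ``applying classical Lipschitz estimates at the small scale $\min(\alpha,\beta)$.'' At that scale both the coefficients and the boundary do indeed have period $\geq 1$, so classical Schauder applies, but the resulting gradient bound then carries a prefactor $1/\min(\alpha,\beta)$. Suppose $\alpha>\beta$. The source for the Dirichlet corrector $w^{\alpha,\beta}$ is of size $O(1/\alpha)$ in a layer of thickness $O(\gamma)=O(\alpha)$ (with $\gamma=\max(\alpha,\beta)$; note the paper cuts off at scale $\gamma$, not $\beta$ as you write, precisely so that the layer dominates both oscillation scales). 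Convolving the $1/\min=1/\beta$ gradient bound against this source gives
$|w^{\alpha,\beta}(x)|\lesssim\alpha^{1-\tau}\delta(x)^\tau/\beta$,
not the $\delta(x)^\tau/\max(\alpha,\beta)^\tau=\delta(x)^\tau/\alpha^\tau$ you anticipate. These two bounds disagree by a factor $\alpha/\beta$, and when one multiplies by $\alpha$ and uses $\delta(x)\lesssim\theta^k\sim\alpha$ at the conclusion, the first gives $\alpha^2/\beta$, which does not close against the target $\theta^k\sim\alpha$ once $\alpha\gg\beta$. What the paper actually does (Lemma \ref{lemestgreend=3ab}) is establish two separate gradient bounds, one with prefactor $1/\alpha$ valid for $|x-\tilde{x}|>\alpha$ and one with prefactor $1/\beta$ valid for $|x-\tilde{x}|>\beta$, and uses whichever matches the scale of the relevant corrector. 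The bound with prefactor $1/\alpha$ is obtained by rescaling to scale $\alpha$, at which the \emph{boundary} is still fast ($\beta/\alpha<1$); one therefore cannot invoke classical Schauder there, and instead needs the uniform Lipschitz estimate of Proposition \ref{theoboundarylip1eps} (fixed coefficients, arbitrarily oscillating boundary), which is uniform in the boundary scale. Symmetrically, the $1/\beta$ bound at scale $\beta$ (where the \emph{coefficients} are still fast) needs the Avellaneda--Lin estimate of Proposition \ref{theoboundarylipeps1}. Neither of these is classical nor obtainable from Theorem \ref{theoboundarylip} by rescaling, since rescaling cannot equalize two distinct scales.

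This last point also affects your case analysis at the very start. When $\alpha\geq\varepsilon_0$ and $\beta<\varepsilon_0$ (or vice versa), the problem does \emph{not} reduce to classical Schauder or to Theorem \ref{theoboundarylip} after rescaling: the coefficients (or boundary) are tame, but the other oscillation is genuinely fast and must be handled uniformly. These are exactly the two auxiliary Propositions \ref{theoboundarylip1eps} and \ref{theoboundarylipeps1}, proved by their own compactness arguments (and a citation to \cite{alin}), and they then re-enter as ingredients for the Green's function gradient estimates in the main case $\alpha,\beta<\varepsilon_0$. Your improvement/iteration scheme, the extraction along $\alpha_k/\beta_k\to r\in[0,\infty]$, and the final blow-up at scale $\max(\alpha,\beta)$ are all in line with the paper; the missing ingredient is the pair of one-scale uniform Lipschitz estimates and the resulting two-scale Green's function gradient bounds, together with the case-dependent corrector estimates of Lemmas \ref{bdarycorlemab}--\ref{bulkcorlemab} (which are genuinely weaker than the clean $\delta(x)^\tau/\max(\alpha,\beta)^\tau$ you write, but still sufficient once multiplied by $\alpha$ or $\beta$).
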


The main idea is to compare $\alpha$ to $\beta$. Let $\varepsilon_0>0$ be a fixed threshold (to be given later on by a proof by contradiction). There are three main cases: (i) when both $\alpha$ and $\beta$ are bigger than $\varepsilon_0$, the Lipschitz estimate follows from classical estimates; (ii) when only one of the two parameters is bigger than $\varepsilon_0$, the Lipschitz estimate follows from the combination of classical estimates in the slowly oscillating variable and the homogenization (or flattening) properties in the rapidly oscillating variable (see section \ref{subsec2cases} below); (iii) when both parameters are small with respect to $\varepsilon_0$, the uniform estimate is a consequence of a three-step compactness scheme, similar to the case $\alpha=\varepsilon=1$. The proofs of improvement and iteration lemmas go through without obstruction provided that one has appropriate controls of boundary and Dirichlet correctors. Refined estimates on the correctors taking into account the two scales $\alpha$ and $\beta$ are proved in section \ref{secbdarydiralphabetacor}.

We concentrate on the case $d\geq 3$ and $f=F=0$. As in the case when $\alpha=\beta=\varepsilon$, we need boundary and Dirichlet correctors. Let $\gamma:=\max(\alpha,\beta)$. Let $v^{\alpha,\beta}=v^{\alpha,\beta}(x)\in W^{1,2}(D^\beta(0,2))$, the boundary corrector, be the unique weak solution of
\begin{equation}\label{sysbdarycorrab}
\left\{
\begin{array}{rll}
-\nabla\cdot A(x/\alpha)\nabla v^{\alpha,\beta}&=\nabla\cdot A(x/\alpha)\nabla(\psi(x'/\beta)\Theta(x',x_d/\gamma)),&x\in D^\beta(0,2),\\
v^{\alpha,\beta}&=0,&x\in\partial D^\beta(0,2).
\end{array}
\right. 
\end{equation}
Let also $w^{\alpha,\beta}=w^{\alpha,\beta}(x)\in W^{1,2}(D^\beta(0,2))$, the Dirichlet corrector, be the unique weak solution of
\begin{equation}\label{sysbulkcorrab}
\left\{
\begin{array}{rll}
-\nabla\cdot A(x/\alpha)\nabla w^{\alpha,\beta}&=\nabla\cdot A(x/\alpha)\nabla(\chi^d(x/\alpha)\Theta(x',x_d/\gamma)),&x\in D^\beta(0,2),\\
w^{\alpha,\beta}&=0,&x\in\partial D^\beta(0,2).
\end{array}
\right. 
\end{equation}

\subsection{Two particular cases}
\label{subsec2cases}

The boundary Lipschitz estimates in the particular case $\alpha=1$, $\beta=\varepsilon$ on the one hand, and $\alpha=\varepsilon$, $\beta=1$ on the other hand are particularly important. Indeed, these estimates are the ones we will rely on when blowing-up in $\alpha$ or $\beta$.

Let us first consider the case $\alpha=1$, $\beta=\varepsilon$:
\begin{equation}\label{systwoscale1eps}
\left\{
\begin{array}{rll}
-\nabla\cdot A(x)\nabla u^{\varepsilon}&=0,&x\in D^\varepsilon(0,1),\\
u^{\varepsilon}&=0,&x\in\Delta^\varepsilon(0,1),
\end{array}
\right.
\end{equation}

\begin{prop}\label{theoboundarylip1eps}
Let $0<\mu<1$ and $\kappa>0$. There exist $C>0$, such that for all $\psi\in\mathcal C_{M_0}^{1,\nu_0}$, for all $A$ satisfying \eqref{smoothA2} and \eqref{elliptA}, for all $\varepsilon>0$, for all $u^{\varepsilon}$ weak solution to \eqref{systwoscale1eps} the bound
\begin{equation*}
\|u^{\varepsilon}\|_{L^\infty(D^\varepsilon(0,1))}\leq 1
\end{equation*}
implies
\begin{equation*}
\|\nabla u^{\varepsilon}\|_{L^\infty(D^\varepsilon(0,1/2))}\leq C.
\end{equation*}
Notice that $C$ depends on $d$, $N$, $M_0$, $\lambda$, $\nu_0$, $\kappa$ and $\mu$. 
\end{prop}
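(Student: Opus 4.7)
The plan is to adapt the three-step compactness scheme developed for Poisson's equation in Section \ref{secliplap}, with the crucial simplification that since the coefficients $A(x)$ do not oscillate, no Dirichlet corrector is needed. Only a boundary corrector $v^\varepsilon\in W^{1,2}(D^\varepsilon(0,2))$ is required, solving
\begin{equation*}
-\nabla\cdot A(x)\nabla v^\varepsilon=\nabla\cdot A(x)\nabla(\psi(x'/\varepsilon)\Theta(x',x_d)),\qquad v^\varepsilon|_{\partial D^\varepsilon(0,2)}=0.
\end{equation*}
By the same Green's function argument as in the proof of Lemma \ref{bdarycorlem}, now applied with the Green kernel of the fixed (non-oscillating) operator $-\nabla\cdot A(x)\nabla$, one obtains $|v^\varepsilon(x)|\le C\delta(x)^\tau/\varepsilon^\tau$ for every $1/2<\tau<1$. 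The Green kernel bounds of Lemma \ref{lemestgreend=3} transfer verbatim: their proofs only used the boundary H\"older estimate uniform in the scale, which in the present context is just the classical Schauder estimate of Theorem \ref{theoclassholder} applied to the fixed operator.

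The improvement step is the heart of the argument. Fix $0<\mu<\nu_0$; one shows the existence of $\varepsilon_0>0$ and $0<\theta<1/8$ such that for $\varepsilon<\varepsilon_0$ and $\|u^\varepsilon\|_{L^\infty(D^\varepsilon(0,1/2))}\le 1$,
\begin{equation*}
\bigl\|u^\varepsilon-(\overline{\partial_{x_d}u^\varepsilon})_{0,\theta}\{x_d-\varepsilon\psi(x'/\varepsilon)\Theta(x',x_d)-\varepsilon v^\varepsilon\}\bigr\|_{L^\infty(D^\varepsilon(0,\theta))}\le\theta^{1+\mu}.
\end{equation*}
The proof follows Lemma \ref{lem1} by contradiction. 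Extract $\varepsilon_k\to 0$, $\psi_k\in\mathcal C_{M_0}^{1,\nu_0}$, $u^{\varepsilon_k}$ violating the bound. Proposition \ref{propboundaryholder} gives uniform $C^{0,\mu'}$ control for any $\mu'<1$, and Ascoli applied to the flattened functions $\tilde u^{\varepsilon_k}(x',x_d):=u^{\varepsilon_k}(x',x_d+\varepsilon_k\psi_k(x'/\varepsilon_k))$ yields a $C^0$-limit $u^0$ on $\overline{D^0(0,3/8)}$. Since the coefficients do not oscillate, no homogenization is required: one checks directly (using weak $H^1$ compactness) that $u^0$ solves $-\nabla\cdot A(x)\nabla u^0=0$ in $D^0(0,1/4)$ with $u^0=0$ on $\Delta^0(0,1/4)$. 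Classical Schauder theory gives $u^0\in C^{1,\nu_0}$ up to the flat boundary, hence
\begin{equation*}
\|u^0-(\overline{\partial_{x_d}u^0})_{0,\theta}x_d\|_{L^\infty(D^0(0,\theta))}\le C\theta^{1+\nu_0}.
\end{equation*}
Choosing $\theta$ so that $C\theta^{1+\nu_0}<\theta^{1+\mu}$, and passing to the limit in the failing inequality (noting $\varepsilon_k v^{\varepsilon_k}\to 0$ uniformly on $D^{\varepsilon_k}(0,\theta)$ since $|\varepsilon_k v^{\varepsilon_k}(x)|\le C\varepsilon_k^{1-\tau}\theta^\tau$), one reaches a contradiction with the classical Schauder bound on $u^0$.

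The iteration lemma, which is essentially identical to Lemma \ref{lem2}, then yields $a^\varepsilon_k$ with $|a^\varepsilon_k|\le C$ and $V^\varepsilon_k$ with $|V^\varepsilon_k(x)|\le C\delta(x)^\tau/\varepsilon^\tau$ such that
\begin{equation*}
\|u^\varepsilon-a^\varepsilon_k\{x_d-\varepsilon\psi(x'/\varepsilon)\Theta\}-\varepsilon V^\varepsilon_k\|_{L^\infty(D^\varepsilon(0,\theta^k))}\le \theta^{k(1+\mu)}.
\end{equation*}
To conclude, given $x_0\in D^\varepsilon(0,1/2)$ and $k$ with $\theta^k\le\varepsilon/\varepsilon_0<\theta^{k-1}$, we split into the two cases of Section \ref{secliplap}: if $\delta(x_0)>\theta^{k+1}/2$, the classical interior Lipschitz estimate for $-\nabla\cdot A\nabla$ on $B(x_0,\delta(x_0)/4)$, combined with the iteration bound $\|u^\varepsilon\|_{L^\infty(D^\varepsilon(0,\theta^l))}\lesssim\theta^l$, gives $|\nabla u^\varepsilon(x_0)|\le C$; if $\delta(x_0)\le\theta^{k+1}/2$, blow up to $U^\varepsilon(y):=u^\varepsilon(\varepsilon y)/\varepsilon$, which satisfies $-\nabla\cdot A(\varepsilon y)\nabla U^\varepsilon=0$ on $D^1(0,1/\varepsilon)$, and apply the classical boundary Lipschitz estimate of Theorem \ref{theoclasslip}. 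The crucial point, and indeed the only subtlety here, is that the constants in this last step do not degenerate: $[A(\varepsilon\cdot)]_{C^{0,\nu_0}}=\varepsilon^{\nu_0}[A]_{C^{0,\nu_0}}\le M_0$ and $\psi\in\mathcal C^{1,\nu_0}_{M_0}$, so the Schauder constants depend only on $d$, $N$, $M_0$, $\lambda$ and $\nu_0$, yielding the required uniformity in $\varepsilon$.
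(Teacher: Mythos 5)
Your overall scheme is the right one and matches the paper's intent: a three-step compactness argument parallel to Section \ref{secliplap}, with only the boundary corrector $v^\varepsilon$, the $C^{1,\nu_0}$ classical Schauder estimate producing the bound $C\theta^{1+\nu_0}$ which beats $\theta^{1+\mu}$ for $\mu<\nu_0$, and the observation that $\|A(\varepsilon\cdot)\|_{C^{0,\nu_0}}\le M_0$ so the blow-up near the boundary stays in a fixed class.

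There is, however, a genuine gap in the claim that ``the iteration lemma is essentially identical to Lemma \ref{lem2}'' and that no Dirichlet-type correction is needed. The Laplacian iteration of Lemma \ref{lem2} hinges on $\Delta x_d=0$: the expansion term $a_k^\varepsilon\{x_d-\varepsilon\psi\Theta-\varepsilon V_k^\varepsilon\}$ is harmonic, so the rescaled remainder $U^\varepsilon$ again solves a homogeneous equation. For the fixed operator the linear profile fails to be $A$-harmonic,
\begin{equation*}
-\nabla\cdot A(x)\nabla\,x_d = -\partial_\alpha A^{\alpha d}(x)\neq 0,
\end{equation*}
so the expansion is not annihilated by the operator. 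After subtracting and rescaling, $U^\varepsilon$ satisfies
\begin{equation*}
-\nabla\cdot A(\theta^k x)\nabla U^\varepsilon = \nabla\cdot F^\varepsilon,\qquad F^\varepsilon(x)=a_k^\varepsilon\,\theta^{-k\mu}\left[A^{\cdot d}(\theta^k x)-A^{\cdot d}(0)\right],
\end{equation*}
and one checks $\|F^\varepsilon\|_{C^{0,\mu}}\le C\theta^{k(\nu_0-\mu)}$, which is small precisely because $\mu<\nu_0$ and $A\in C^{0,\nu_0}$. But your improvement lemma is stated without any source term, so it cannot be applied to $U^\varepsilon$. To close the iteration you need the improvement lemma in the form of Lemma \ref{lem1lip}, allowing a right-hand side $\nabla\cdot F$ with $\|F\|_{C^{0,\mu}}\le\varepsilon_0$, together with the compactness of $C^{0,\mu}$ into $C^0$ used in the paper's contradiction argument. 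The requirement $\mu<\nu_0$ is therefore not merely to get the improvement factor $\theta^{1+\nu_0}<\theta^{1+\mu}$, but also to ensure the residual source decays under rescaling; glossing this over is the same pitfall the paper flags in the remark ``Necessity of the boundary corrector'' after Lemma \ref{lem2}.

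A smaller point: the cut-off must be $\Theta(x',x_d/\varepsilon)$, not $\Theta(x',x_d)$. Localizing the corrector source to a layer of thickness $O(\varepsilon)$ is what makes the Green-kernel computation of Lemma \ref{bdarycorlem} produce $|v^\varepsilon(x)|\le C\delta(x)^\tau/\varepsilon^\tau$; with $\Theta(x',x_d)$ the source of size $O(1/\varepsilon)$ sits in a region of height $O(1)$, and only a bound of order $\delta(x)^\tau/\varepsilon$ (or worse) comes out, which is insufficient in the final blow-up step where one needs $\varepsilon\|V_k^\varepsilon\|_{L^\infty(D^\varepsilon(0,\theta^l))}\lesssim\theta^l$.
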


The proof of Proposition \ref{theoboundarylip1eps} is very similar to the one of Proposition \ref{proplap}. The coefficients are not highly oscillating, and thus no structure (periodicity) is needed for $A$. The classical regularity for the operator $-\nabla\cdot A(x)\nabla$ is used intensively. Notice that when one blows-up at the microscale, the matrix $A(\varepsilon\cdot)$ is very slowly oscillating and 
\begin{equation*}
\|A(\varepsilon\cdot)\|_{L^\infty(\mathbb R^d)}+[A(\varepsilon\cdot)]_{C^{0,\nu_0}(\mathbb R^d)}\leq M_0.
\end{equation*} 

Let us now consider the case $\alpha=\varepsilon$, $\beta=1$:
\begin{equation}\label{systwoscaleeps1}
\left\{
\begin{array}{rll}
-\nabla\cdot A(x/\varepsilon)\nabla u^{\varepsilon}&=0,&x\in D^1(0,1),\\
u^{\varepsilon}&=0,&x\in\Delta^1(0,1),
\end{array}
\right.
\end{equation}

\begin{prop}\label{theoboundarylipeps1}
Let $0<\mu<1$ and $\kappa>0$. There exist $C>0$, such that for all $\psi\in C^{1,\nu_0}(\mathbb R^{d-1})$,
\begin{equation*}
\|\nabla\psi\|_{L^\infty(\mathbb R^{d-1})}+[\nabla\psi]_{C^{0,\nu_0}(\mathbb R^{d-1})}\leq M_0,
\end{equation*}
for all $A\in\mathcal A^{0,\nu_0}$, for all $\varepsilon>0$, for all $u^{\varepsilon}$ weak solution to \eqref{systwoscaleeps1} the bound
\begin{equation*}
\|u^{\varepsilon}\|_{L^\infty(D^1(0,1))}\leq 1
\end{equation*}
implies
\begin{equation}\label{unifeps1}
\|\nabla u^{\varepsilon}\|_{L^\infty(D^1(0,1/2))}\leq C.
\end{equation}
Notice that $C$ depends on $d$, $N$, $M_0$, $\lambda$, $\nu_0$, $\kappa$ and $\mu$. 
\end{prop}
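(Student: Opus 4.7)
The plan is to adapt the three-step compactness scheme of Avellaneda--Lin (as already carried out in the proof of Theorem \ref{theoboundarylip} with both correctors) to the present setting. The key simplification compared to Theorem \ref{theoboundarylip} is that the boundary is fixed at scale $1$, so no boundary corrector is needed: the only source of oscillations is $A(x/\varepsilon)$, and it suffices to handle it with a Dirichlet-type corrector $w^\varepsilon\in W^{1,2}(D^1(0,2))$ solving
\begin{equation*}
\left\{
\begin{array}{rll}
-\nabla\cdot A(x/\varepsilon)\nabla w^\varepsilon&=\nabla\cdot A(x/\varepsilon)\nabla\bigl(\chi^d(x/\varepsilon)\Theta(x',x_d)\bigr),&x\in D^1(0,2),\\
w^\varepsilon&=0,&x\in\partial D^1(0,2).
\end{array}
\right.
\end{equation*}
The bound $|w^\varepsilon(x)|\leq C\delta(x)^\tau/\varepsilon^\tau$ for $1/2<\tau<1$ is obtained exactly as in Lemma \ref{bulkcorlem} (representation by Green's kernel of $D^1(0,2)$ together with the gradient estimates of Lemma \ref{lemestgreend=3}, whose proof depends only on the uniform boundary H\"older regularity of the oscillating-coefficient operator and not on the oscillating character of $\psi$).

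Next I would prove an improvement lemma: for $0<\mu<1$ there exist $\varepsilon_0>0$ and $\theta\in(0,1/8)$, uniform in $\psi$ and $A$ in their respective classes, such that if $u^\varepsilon$ solves the system in $D^1(0,1/2)$ with $\|u^\varepsilon\|_{L^\infty(D^1(0,1/2))}\leq 1$, then
\begin{equation*}
\bigl\|u^\varepsilon-a^\varepsilon\bigl\{x_d-\psi(x')-\varepsilon(1-\Theta)\chi^d(x/\varepsilon)-\varepsilon w^\varepsilon\bigr\}\bigr\|_{L^\infty(D^1(0,\theta))}\leq \theta^{1+\mu},
\end{equation*}
with $a^\varepsilon=(\overline{\partial_{x_d}u^\varepsilon})_{0,\theta}$ bounded by $C/\theta$. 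This is proved by contradiction: extracting a subsequence $\varepsilon_k\to 0$, $\psi_k\to\psi_\infty$ strongly in $C^{1,\mu'}_{\mathrm{loc}}$ for any $\mu'<\nu_0$ (by Ascoli applied to the $C^{1,\nu_0}$ bound), and $A_k$ with homogenized matrix $\overline{A_k}\to A^0$, one uses the uniform interior and boundary H\"older regularity (Proposition \ref{propboundaryholder}, whose hypotheses on $\psi$ are satisfied uniformly because $\|\nabla\psi_k\|_{L^\infty}+[\nabla\psi_k]_{C^{0,\nu_0}}\leq M_0$ forces a common modulus of continuity $\omega$), together with Caccioppoli's inequality, to extract $u^{\varepsilon_k}\rightharpoonup u^0$ weakly in $W^{1,2}$ and strongly in $C^0$ on compact subsets of the limiting domain. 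By Theorem \ref{theoweakcvhomo}, $u^0$ weakly solves $-\nabla\cdot A^0\nabla u^0=0$ in $\{x_d>\psi_\infty(x')\}\cap B$ with $u^0=0$ on $\{x_d=\psi_\infty(x')\}\cap B$. Since $\psi_\infty\in C^{1,\nu_0}$ and $A^0$ is constant, classical Schauder regularity (Theorem \ref{theoclasslip}) gives $\nabla u^0\in C^{0,\mu'}$ up to the boundary, and a first-order Taylor expansion along the direction normal to $\{x_d=\psi_\infty(x')\}$ yields the approximation by $a^0\{x_d-\psi_\infty(x')\}$ with error $O(\theta^{1+\mu'})$. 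Choosing $\mu<\mu'$ and $\theta$ small provides the contradiction.

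The iteration lemma (analogous to Lemma \ref{lem2lip}) then follows by rescaling at each dyadic step $\theta^k$: set $U^\varepsilon(x):=\theta^{-k(1+\mu)}[u^\varepsilon(\theta^k x)-a^\varepsilon_k\{\theta^kx_d-\psi(\theta^kx')\}-\varepsilon W^\varepsilon_k(\theta^k x)]$, verify it solves an analogous system in $D^{1/\theta^k}_{\psi(\theta^k\cdot)/\theta^k}(0,1/2)$ with $\|U^\varepsilon\|_{L^\infty}\leq 1$, and apply the improvement lemma provided $\varepsilon<\theta^k\varepsilon_0$. Notice that the rescaled boundary $y\mapsto\psi(\theta^k y)/\theta^k$ still has $\|\nabla\cdot\|_{L^\infty}+[\nabla\cdot]_{C^{0,\nu_0}}\leq M_0\theta^{k\nu_0}\leq M_0$, so it stays in the same class uniformly. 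This is crucial and is where the $C^{1,\nu_0}$ hypothesis on $\psi$ is used.

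Finally, to conclude the uniform estimate \eqref{unifeps1} at an arbitrary point $x_0\in D^1(0,1/2)$, I would split into the two usual cases according to whether $\delta(x_0)>\theta^{k+1}/2$ or not, where $\theta^k\leq\varepsilon/\varepsilon_0<\theta^{k-1}$. In the ``far'' case, the interior Lipschitz estimate \eqref{intgradestrescaled} is combined with the iteration lemma exactly as in \eqref{majuepsLinftycas1}. In the ``close'' case, a blow-up $U^\varepsilon(y):=\varepsilon^{-1}u^\varepsilon(\varepsilon y)$ satisfies a system with slowly-varying coefficients $A(y)$ and boundary $y_d=\psi(\varepsilon y')/\varepsilon$ whose derivatives are controlled (we use the argument from Proposition \ref{theoboundarylip1eps}: the rescaled boundary has $C^{1,\nu_0}$ norm $O(\varepsilon^{\nu_0})\leq O(1)$ and $A$ is now non-oscillating), allowing the application of the classical Lipschitz estimate of Theorem \ref{theoclasslip}, combined with the $L^\infty$ bound from the iteration lemma as in \eqref{majuepsLinftycas2}. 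The main obstacle is to ensure uniformity at every step of the compactness argument of the improvement lemma: one must verify that the class of boundary graphs is preserved under both the Ascoli extraction (ensuring the limit is still $C^{1,\nu_0}$ and admits classical Schauder estimates with a uniform constant) and the dyadic rescaling used in the iteration, which works precisely because the $C^{1,\nu_0}$ class is scale invariant in the direction of decreasing scales.
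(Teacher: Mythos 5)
The paper itself does not prove this proposition: it is the Lipschitz estimate of \cite[Lemma~20]{alin}, and the text simply refers to that result. Your proposal instead tries to reconstruct a proof by the three-step compactness scheme of Section~\ref{secbdrliposc}, with the Dirichlet corrector $w^\varepsilon$ but without the boundary corrector $v^\varepsilon$. The premise that ``no boundary corrector is needed'' because the boundary does not oscillate is where the argument breaks. Your ansatz $a^\varepsilon\{x_d-\psi(x')+\varepsilon(1-\Theta)\chi^d(x/\varepsilon)-\varepsilon w^\varepsilon\}$ vanishes on $\Delta^1_\psi$, but it does \emph{not} lie in the kernel of $-\nabla\cdot A(x/\varepsilon)\nabla$: while $x_d+\varepsilon(1-\Theta)\chi^d-\varepsilon w^\varepsilon$ does (by the cell problem \eqref{eqdefchi} and the definition \eqref{sysbulkcorr} of $w^\varepsilon$), the extra term $-\psi(x')$ contributes the source $\nabla\cdot\bigl(A(x/\varepsilon)\nabla\psi(x')\bigr)$. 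Consequently, the residual $R^\varepsilon:=u^\varepsilon-a^\varepsilon\{\dots\}$ solves $-\nabla\cdot A(x/\varepsilon)\nabla R^\varepsilon=-a^\varepsilon\nabla\cdot\bigl(A(x/\varepsilon)\nabla\psi(x')\bigr)$, and the step of the iteration lemma where you ``verify it solves an analogous system'' fails: after rescaling, $U^\varepsilon$ solves an equation with a divergence-form source whose $L^\infty$ norm is multiplied by $\theta^{-\mu}$ at each step, and whose flux $A(\theta^k y/\varepsilon)\nabla\psi(\theta^k y)$ is not uniformly bounded in $C^{0,\mu}$ because of the oscillating factor $A(\cdot/\varepsilon)$. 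This is precisely the obstruction described in the remark preceding Lemma~\ref{lem1lip}; in the oscillating-boundary case the corrector $\varepsilon v^\varepsilon$ cancels the analogous source (and its estimate in Lemma~\ref{bdarycorlem} crucially exploits that $\psi(x'/\varepsilon)\Theta(x',x_d/\varepsilon)$ is supported in an $O(\varepsilon)$-thin layer, a feature you lose at boundary scale~$1$).

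To close the gap one must not use $x_d-\psi(x')$ as the first-order profile. The correct object is the affine function $a^\varepsilon\cdot\bigl(x-(0,\psi(0))\bigr)$ with a \emph{vector} coefficient $a^\varepsilon$ in the normal direction, corrected by the full family $\chi^\gamma$, $\gamma=1,\dots,d$, weighted accordingly, together with a Dirichlet corrector for the boundary data. This ansatz lies in the kernel of $-\nabla\cdot A(x/\varepsilon)\nabla$, so the residual solves the homogeneous system, but it no longer vanishes identically on $\Delta^1_\psi$: its boundary trace is $a^\varepsilon\cdot(0,\psi(x')-\psi(0)-\nabla\psi(0)\cdot x')=O(|x'|^{1+\nu_0})$. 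The improvement and iteration lemmas then have to be formulated for homogeneous systems with inhomogeneous boundary data of order $|x|^{1+\mu}$, which is exactly the structure of Avellaneda--Lin's argument in \cite{alin}. A secondary point: you invoke Proposition~\ref{propboundaryholder} for the uniform H\"older bound, but that proposition is stated for $\psi\in\mathcal C^{1,\omega}_{M_0}$, hence $0\leq\psi\leq M_0$, an assumption absent from Proposition~\ref{theoboundarylipeps1}; the appropriate reference here is \cite[Lemma~12]{alin}, whose constant is independent of $\|\psi\|_{L^\infty}$.
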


This theorem is the Lipschitz estimate of \cite[Lemma 20]{alin}. A salient point is the fact that $C$ in \eqref{unifeps1} does not depend on $\|\psi\|_{L^\infty}$ as emphasized in \cite{alin}.

\subsection{Boundary and Dirichlet correctors}
\label{secbdarydiralphabetacor}

The key in order to estimate the boundary and Dirichlet correctors are the following estimates of Green's function $G^{\alpha,\beta}=G^{\alpha,\beta}(x,\tilde{x})$ associated to the operator $-\nabla\cdot A(x/\alpha)\nabla$ and to the domain $D^\beta(0,2)$.

\begin{lem}[estimate of Green's kernel, $d\geq 3$]\label{lemestgreend=3ab}
For all $0<\tau<1$, there exists $C>0$ such that for all $\psi\in\mathcal C_{M_0}^{1,\omega}$, for all $A\in\mathcal A^{0,\nu_0}$, for all $0<\alpha<1$, $0<\beta<1$:
\begin{enumerate}[label=(\arabic*)]
\item for all $x,\ \tilde{x}\in D^\beta(0,7/4)$,
\begin{align}
|\widetilde{G}^{\alpha,\beta}(x,\tilde{x})|\leq \frac{C\delta(x)^\tau}{|x-\tilde{x}|^{d-2+\tau}},\label{est3gepsaab}\\
|\widetilde{G}^{\alpha,\beta}(x,\tilde{x})|\leq \frac{C\delta(x)^\tau\delta(\tilde{x})^\tau}{|x-\tilde{x}|^{d-2+2\tau}},\label{est3gepsbab}
\end{align}
\item for all $x,\ \tilde{x}\in D^\beta(0,3/2)$,
\begin{align}
|\nabla_2\widetilde{G}^{\alpha,\beta}(x,\tilde{x})|\leq \frac{C\delta(x)^\tau}{|x-\tilde{x}|^{d-1+\tau}},\quad\mbox{for}\quad |x-\tilde{x}|\leq\alpha,\label{est3nablagepsaabalpha}\\
|\nabla_2\widetilde{G}^{\alpha,\beta}(x,\tilde{x})|\leq \frac{C\delta(x)^\tau\delta(\tilde{x})^\tau}{\alpha|x-\tilde{x}|^{d-2+2\tau}},\quad\mbox{for}\quad |x-\tilde{x}|>\alpha,\label{est3nablagepsbabalpha}
\end{align}
\item for all $x,\ \tilde{x}\in D^\beta(0,3/2)$,
\begin{align}
|\nabla_2\widetilde{G}^{\alpha,\beta}(x,\tilde{x})|\leq \frac{C\delta(x)^\tau}{|x-\tilde{x}|^{d-1+\tau}},\quad\mbox{for}\quad |x-\tilde{x}|\leq\beta,\label{est3nablagepsaabbeta}\\
|\nabla_2\widetilde{G}^{\alpha,\beta}(x,\tilde{x})|\leq \frac{C\delta(x)^\tau\delta(\tilde{x})^\tau}{\beta|x-\tilde{x}|^{d-2+2\tau}},\quad\mbox{for}\quad |x-\tilde{x}|>\beta,\label{est3nablagepsbabbeta}
\end{align}
\end{enumerate}
\end{lem}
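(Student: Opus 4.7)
The proof follows the scheme of the single-scale Lemma \ref{lemestgreend=3}, replacing the one-parameter uniform estimates at each step by their two-parameter analogues. First I would establish a two-parameter version of Corollary \ref{coralingreen}, namely an $L^\infty$ bound $|\widetilde{G}^{\alpha,\beta}(x,\tilde{x})|\leq C/|x-\tilde{x}|^{d-2}$ uniform in $\alpha,\beta$. This follows from the uniform boundary H\"older estimate of Proposition \ref{propboundaryholderab}, by exactly the argument of \cite[Theorem 13]{alin}.

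For estimates \eqref{est3gepsaab}--\eqref{est3gepsbab}, the blow-up argument of the single-scale proof transposes without change. Given $x,\tilde{x}\in D^\beta(0,7/4)$ and $r=|x-\tilde{x}|$, I would consider the rescaled function $u^{\alpha',\beta'}(z):=\widetilde{G}^{\alpha,\beta}((r/(8\eta+1))z+\bar{x},\tilde{x})$, which solves a boundary value problem with parameters $\alpha':=(8\eta+1)\alpha/r$, $\beta':=(8\eta+1)\beta/r$, shifted boundary graph $\psi'$, and shifted coefficient matrix, all in their respective classes. Proposition \ref{propboundaryholderab}, which is uniform in $\alpha'$ and $\beta'$, produces the $\delta(x)^\tau$ factor in \eqref{est3gepsaab}. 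Estimate \eqref{est3gepsbab} is then obtained by exchanging the roles of $x$ and $\tilde{x}$ through the Green kernel $G^{*,\alpha,\beta}$ of the transposed operator.

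For the gradient bounds \eqref{est3nablagepsaabalpha}--\eqref{est3nablagepsbabalpha} at scale $\alpha$, I would follow exactly the gradient argument of the single-scale proof, but using Proposition \ref{theoboundarylipeps1} as the small-scale Lipschitz input (this is the relevant input, since after rescaling to remove the $\alpha$-oscillations the coefficients are left \emph{periodic} at scale $1$). When $r\leq\alpha$, rescaling by $r/2$ puts us in the regime $\alpha'=2\alpha/r\geq 2$ in which the rescaled coefficients $A'(\cdot/\alpha')$ are uniformly $C^{0,\nu_0}$, while the rescaled boundary may oscillate at an arbitrary scale $\beta'=2\beta/r$; Proposition \ref{theoboundarylipeps1} (applied to the non-oscillating representative of these smooth coefficients) combined with \eqref{est3gepsaab} yields \eqref{est3nablagepsaabalpha}. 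When $r>\alpha$, the ball $D^\beta(\tilde{x},\alpha/2)$ is contained in $D^\beta(\tilde{x},r/2)$; rescaling by $\alpha/2$ leaves the coefficients $A(z)$ periodic at scale $1$ in $\mathcal{A}^{0,\nu_0}$ and the boundary at scale $2\beta/\alpha$, so Proposition \ref{theoboundarylipeps1} again applies and, plugged together with \eqref{est3gepsbab}, produces \eqref{est3nablagepsbabalpha}.

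Estimates \eqref{est3nablagepsaabbeta}--\eqref{est3nablagepsbabbeta} at scale $\beta$ are obtained by the symmetric argument, using Proposition \ref{theoboundarylip1eps} in place of Proposition \ref{theoboundarylipeps1}: blowing up by $\beta/2$ (or $r/2$ when $r\leq\beta$) makes the rescaled boundary smooth and uniformly $C^{1,\nu_0}$, while the coefficients remain periodic and oscillate at some scale $\alpha''$, which is precisely the setting of Proposition \ref{theoboundarylip1eps}, whose constant is uniform in the coefficient oscillation scale. The main obstacle, and the one that distinguishes this proof from the single-scale case, is the bookkeeping: at every rescaling one must check that the rescaled coefficient matrix and boundary graph remain in the correct class with bounds independent of $\alpha,\beta$, $r$, and of the complementary scale that ends up being the ``fast'' one in the chosen blow-up. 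For coefficients this is the scaling identity $[A'(\cdot/s)]_{C^{0,\nu_0}}=s^{-\nu_0}[A']_{C^{0,\nu_0}}$ together with $s\geq 1$ in the slow regime; for the boundary, the key point is that both uniform Lipschitz estimates invoked here are independent of $\|\psi\|_{L^\infty}$, so the translations appearing in the rescalings do not produce any spurious growth.
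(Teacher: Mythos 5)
Your treatment of the H\"older part is correct: estimates \eqref{est3gepsaab}--\eqref{est3gepsbab} follow from the two-parameter uniform boundary H\"older estimate of Proposition \ref{propboundaryholderab}, exactly as in Lemma \ref{lemestgreend=3}, together with a two-parameter analogue of Corollary \ref{coralingreen}. Your blow-up scheme for the gradient bounds is also conceptually the paper's: in each case rescale so that one of the two microscales becomes slow, and invoke a Lipschitz estimate uniform in the \emph{remaining} oscillation scale.

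However, you have \textbf{swapped the two auxiliary Lipschitz propositions}, and this is not a harmless relabeling: in each of your two regimes the cited proposition is the one that does not apply. Concretely, for \eqref{est3nablagepsaabalpha}--\eqref{est3nablagepsbabalpha} (scale $O(\alpha)$), after rescaling by $r/2$ with $r\leq\alpha$ you correctly observe that the coefficients $A(\cdot/\alpha')$ with $\alpha'=2\alpha/r\geq 2$ are slowly oscillating and uniformly $C^{0,\nu_0}$, while the rescaled boundary oscillates at an arbitrary scale $\beta'=2\beta/r$ which can be small. What is needed here is a Lipschitz estimate uniform in the \emph{boundary} oscillation scale $\beta'$ for a fixed non-oscillating coefficient matrix — that is Proposition \ref{theoboundarylip1eps}, not Proposition \ref{theoboundarylipeps1}. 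Proposition \ref{theoboundarylipeps1} requires the boundary to be at the macroscopic scale ($\beta=1$, i.e.\ $[\nabla(\varepsilon\psi(\cdot/\varepsilon))]_{C^{0,\nu_0}}=O(1)$); it would here produce a constant blowing up like $\beta'^{-\nu_0}$, destroying the estimate. Symmetrically, for \eqref{est3nablagepsaabbeta}--\eqref{est3nablagepsbabbeta} (scale $O(\beta)$), rescaling makes the boundary slowly oscillating and uniformly $C^{1,\nu_0}$ while the coefficients remain periodic at an arbitrary scale $\alpha'$ which can be small; the correct input is the Avellaneda--Lin-type estimate Proposition \ref{theoboundarylipeps1}, uniform in the \emph{coefficient} oscillation scale, not Proposition \ref{theoboundarylip1eps}, which has no structure assumption on $A$ and whose constant does depend on $[A]_{C^{0,\nu_0}}$. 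Your parenthetical characterization of Proposition \ref{theoboundarylip1eps} as being ``uniform in the coefficient oscillation scale'' is therefore incorrect — in that proposition the coefficients do not oscillate at all, and the uniformity is in the boundary scale. The reasoning describing the post-rescaling regimes in each case is right; it is the pairing of regime to cited proposition that must be reversed, matching the paper's statement that $(\alpha)$-bounds use Proposition \ref{theoboundarylip1eps} (uniform in $\beta$) and $(\beta)$-bounds use Proposition \ref{theoboundarylipeps1} (uniform in $\alpha$).
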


\begin{proof}[Proof of Lemma \ref{lemestgreend=3ab}]
The proof of these estimates follows the proof of Lemma \ref{lemestgreend=3}. Remember that estimates \eqref{est3gepsaab} and \eqref{est3gepsbab} are a consequence of the uniform boundary H\"older estimates. There are some subtleties in the gradient estimates, which we now underline. Instead of relying on classical estimates as in the proof of Lemma \ref{lemestgreend=3}, we need to resort to estimates uniform in $\alpha$ or $\beta$. The estimates \eqref{est3nablagepsaabalpha} and \eqref{est3nablagepsbabalpha} rely on the boundary Lipschitz estimate of Proposition \ref{theoboundarylip1eps} uniform in $\beta$ applied at small scale $O(\alpha)$. The estimates \eqref{est3nablagepsaabbeta} and \eqref{est3nablagepsbabbeta} rely on the Lipschitz estimates of \cite{alin} (see Proposition \ref{theoboundarylipeps1}) uniform in $\alpha$ applied at small scale $O(\beta)$. The uniformity of all the constants $C$ in $\alpha$ and $\beta$ is crucial. Let us show \eqref{est3nablagepsaabbeta} and \eqref{est3nablagepsbabbeta} in full details.

Let $x,\ \tilde{x}\in D^\beta(0,3/2)$ and $r:=|x-\tilde{x}|$. Notice that 
\begin{equation*}
\widetilde{G}^{\alpha,\beta}(x,\cdot)^T=\widetilde{G}^{*,\alpha,\beta}(\cdot,x)\quad\mbox{so that}\quad\nabla_2\widetilde{G}^{\alpha,\beta}(x,\cdot)^T=\nabla_1 \widetilde{G}^{*,\alpha,\beta}(\cdot,x),
\end{equation*}
and
\begin{equation*}
\left\{
\begin{array}{rll}
-\nabla\cdot A^*(\hat{x}/\alpha)\nabla \widetilde{G}^{*,\alpha,\beta}(\hat{x},x)&=0,&\hat{x}\in D^\beta(\tilde{x},r/2),\\
\widetilde{G}^{*,\alpha,\beta}(\hat{x},x)&=0,&\hat{x}\in \Delta^\beta(\tilde{x},r/2).
\end{array}
\right.
\end{equation*}
If $r\leq\beta$, consider for $z\in D^{\beta'}_{\psi'}(\tilde{x},1)$
\begin{equation*}
u^{\alpha',\beta'}(z)=\widetilde{G}^{*,\alpha,\beta}(r/2(z-\tilde{x})+\tilde{x},x),\ \alpha':=2\alpha/r,\ \beta':=2\beta/r,\ \psi':=\psi(\cdot+\tilde{x}'(1/\beta-1/\beta'))
\end{equation*}
which solves
\begin{equation*}
\left\{
\begin{array}{rll}
-\nabla\cdot A^*(z/\alpha'+\tilde{x}(1/\alpha-1/\alpha'))\nabla u^{\alpha',\beta'}&=0,&z\in D^{\beta'}_{\psi'}(\tilde{x}',1),\\
u^{\alpha',\beta'}&=0,&z\in \Delta^{\beta'}_{\psi'}(\tilde{x}',1).
\end{array}
\right.
\end{equation*}
Now, $A^*(\cdot+\tilde{x}(1/\alpha-1/\alpha'))\in\mathcal A^{0,\nu_0}$, and the estimate of Proposition \ref{theoboundarylipeps1} uniform in $\alpha'$ yields
\begin{equation}\label{classlipestueps'}
\|\nabla u^{\alpha',\beta'}\|_{L^\infty(D^{\beta'}_{\psi'}(\tilde{x}',1/2))}\leq C'\|u^{\alpha',\beta'}\|_{L^\infty(D^{\beta'}_{\psi'}(\tilde{x}',1))},
\end{equation}
where $C'$ in the previous inequality depends a priori on $\beta'$ but only through $\|\nabla(\beta'\psi'(\cdot/\beta'))\|_{C^{0,\nu_0}}$: since $\beta'\geq 2$,
\begin{equation*}
\|\nabla(\beta'\psi'(\cdot/\beta'))\|_{C^{0,\mu}}=O(1).
\end{equation*}
An important point is that $C'$ does not depend on the $L^\infty$ norm of the boundary graph. Thus $C'$ can be taken uniform in $\alpha'$ and $\beta'$. Rescaling and applying \eqref{est3gepsaab} finally gives
\begin{equation*}
\|\nabla_2\widetilde{G}^{\alpha,\beta}(x,\cdot)\|_{L^\infty(D^\beta(\tilde{x},r/4))}\leq \frac{C}{r}\|\widetilde{G}^{\alpha,\beta}(x,\cdot)\|_{L^\infty(D^\beta(\tilde{x},r/2))}\leq \frac{C}{r}\sup_{\hat{x}\in D^\beta(\tilde{x},r/2)}\frac{\delta(x)^\tau}{|x-\hat{x}|^{d-2+\tau}}\leq \frac{C\delta(x)^\tau}{r^{d-1+\tau}},
\end{equation*}
since $|x-\hat{x}|\geq |x-\tilde{x}|-|\tilde{x}-\hat{x}|\geq r/2$ for all $\hat{x}\in D^\beta(\tilde{x},r/2)$. Thus
\begin{equation*}
|\nabla_2\widetilde{G}^{\alpha,\beta}(x,\tilde{x})|\leq \frac{C\delta(x)^\tau}{|x-\tilde{x}|^{d-1+\tau}}.
\end{equation*}
If $r>\beta$, $D^\beta(\tilde{x},\beta/2)\subset D^\beta(\tilde{x},r/2)$, so that we may directly apply the Lipschitz estimate of Proposition \ref{theoboundarylipeps1} at small scale $O(\beta)$ in combination with \eqref{est3gepsbab}
\begin{multline*}
\|\nabla_2\widetilde{G}^{\alpha,\beta}(x,\cdot)\|_{L^\infty(D^\beta(\tilde{x},\beta/4))}\leq \frac{C}{\beta}\|\widetilde{G}^{\alpha,\beta}(x,\cdot)\|_{L^\infty(D^\beta(\tilde{x},\beta/2))}\\
\leq \frac{C}{\beta}\sup_{\hat{x}\in D^\beta(\tilde{x},\beta/2)}\frac{\delta(x)^\tau\delta(\hat{x})^\tau}{|x-\hat{x}|^{d-2+2\tau}}\leq \frac{C\delta(x)^\tau\delta(\tilde{x})^\tau}{\beta|x-\tilde{x}|^{d-2+2\tau}},
\end{multline*}
which implies \eqref{est3nablagepsbabbeta}.
\end{proof}

\begin{lem}[boundary corrector]\label{bdarycorlemab}
For all $1/2<\tau<1$, there exists $C>0$ such that for all $\psi\in\mathcal C_{M_0}^{1,\omega}$, for all $A\in\mathcal A^{0,\nu_0}$, for all $0<\alpha<1$, $0<\beta<1$, the unique weak solution $v^{\alpha,\beta}\in W^{1,2}(D^\beta(0,2))$ of \eqref{sysbdarycorrab} satisfies the following estimate: for all $x\in D^\beta(0,3/2)$,
\begin{equation}\label{estbdarycorlemab}
|v^{\alpha,\beta}(x)|\leq C_0\left\{\begin{array}{ll}
\delta(x)^\tau/\beta^\tau,&\mbox{if}\ \beta>\alpha,\\
\left(\frac{1}{\alpha}+\frac{1}{\beta}\right)\alpha^{1-\tau}\delta(x)^\tau,&\mbox{if}\ \beta\leq\alpha,
\end{array}\right.
\end{equation}
where $\delta(x):=x_d-\beta\psi(x'/\beta)$.
\end{lem}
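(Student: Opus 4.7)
The plan is to follow the proof of Lemma \ref{bdarycorlem} but leveraging Lemma \ref{lemestgreend=3ab} and adapting to the new vertical thickness $O(\gamma)$ of the cutoff. Using Green's kernel $\widetilde G^{\alpha,\beta}$ associated to $-\nabla\cdot A(x/\alpha)\nabla$ on $D^\beta(0,2)$, I would represent, for $x\in D^\beta(0,3/2)$,
\[
v^{\alpha,\beta}(x) = -\int_{D^\beta(0,2)}\nabla_2\widetilde G^{\alpha,\beta}(x,\tilde x)\, A(\tilde x/\alpha)\,\nabla\bigl(\psi(\tilde x'/\beta)\Theta(\tilde x',\tilde x_d/\gamma)\bigr)\,d\tilde x.
\]
Two structural observations drive the estimate. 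First, since $\Theta$ has $x_d$-argument $x_d/\gamma$, the integrand is supported in a horizontal layer of vertical thickness $O(\gamma)$, on which $\delta(\tilde x)\leq C\gamma$. Second, using $\psi\in\mathcal C_{M_0}^{1,\omega}$ together with $\beta\leq\gamma$,
\[
\bigl|\nabla\bigl(\psi(\tilde x'/\beta)\Theta(\tilde x',\tilde x_d/\gamma)\bigr)\bigr|\leq C\bigl(\tfrac{1}{\beta}+\tfrac{1}{\gamma}\bigr)\leq \tfrac{C}{\beta}.
\]

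The key new ingredient relative to Lemma \ref{bdarycorlem} is the choice of splitting scale. In Lemma \ref{lemestgreend=3ab} bounds on $\nabla_2\widetilde G^{\alpha,\beta}$ are available with split at $\alpha$ (estimates \eqref{est3nablagepsaabalpha}--\eqref{est3nablagepsbabalpha}) \emph{and} at $\beta$ (estimates \eqref{est3nablagepsaabbeta}--\eqref{est3nablagepsbabbeta}); the plan is to use the split at the larger of the two, $\gamma=\max(\alpha,\beta)$. Indeed, for $|x-\tilde x|\leq\gamma$ the sharper small-scale bound $|\nabla_2\widetilde G^{\alpha,\beta}(x,\tilde x)|\leq C\delta(x)^\tau/|x-\tilde x|^{d-1+\tau}$ remains valid, while for $|x-\tilde x|>\gamma$ the large-scale bound
\[
|\nabla_2\widetilde G^{\alpha,\beta}(x,\tilde x)|\leq \frac{C\delta(x)^\tau\delta(\tilde x)^\tau}{\gamma\,|x-\tilde x|^{d-2+2\tau}}
\]
is available, coming from \eqref{est3nablagepsbabbeta} when $\gamma=\beta$ and from \eqref{est3nablagepsbabalpha} when $\gamma=\alpha$. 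Writing $v^{\alpha,\beta}=I_1+I_2$ according to this dichotomy and running verbatim the computations of Lemma \ref{bdarycorlem} then gives
\[
|I_1|\leq \frac{C\delta(x)^\tau}{\beta}\int_{|y|\leq\gamma}\frac{dy}{|y|^{d-1+\tau}}\leq \frac{C\gamma^{1-\tau}\delta(x)^\tau}{\beta},
\]
and, after the two-region split of the layer integral at $|x'-\tilde x'|\sim\gamma$, in which the restriction $1/2<\tau<1$ is used exactly as in the single-scale case to guarantee that both the outer horizontal tail and the inner box contribute $O(\gamma^{2-2\tau})$,
\[
|I_2|\leq \frac{C\delta(x)^\tau\gamma^\tau}{\gamma\beta}\cdot C\gamma^{2-2\tau}=\frac{C\gamma^{1-\tau}\delta(x)^\tau}{\beta}.
\]

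Specializing $\gamma$ then recovers the two cases of the statement: when $\beta>\alpha$ one has $\gamma=\beta$, so $\gamma^{1-\tau}/\beta=\beta^{-\tau}$, which is the first bound; when $\beta\leq\alpha$ one has $\gamma=\alpha$, so $\gamma^{1-\tau}/\beta=\alpha^{1-\tau}/\beta\leq(\tfrac{1}{\alpha}+\tfrac{1}{\beta})\alpha^{1-\tau}$, which is the second bound. The main delicate point, already present at the single-scale level, is the careful bookkeeping of which gradient estimate from Lemma \ref{lemestgreend=3ab} is invoked in each regime of $|x-\tilde x|$ and the verification that choosing the split at $\gamma$ (rather than at $\min(\alpha,\beta)$) avoids any artificial dependence on $\min(\alpha,\beta)^{-1}$ in $I_2$; once that choice is correctly made, the remaining integral computations are cosmetic variants of those in the proof of Lemma \ref{bdarycorlem}.
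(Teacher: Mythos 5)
Your proposal is correct and matches the paper's intended strategy: the paper likewise uses the Green-kernel representation together with the gradient estimates of Lemma \ref{lemestgreend=3ab}, choosing the small/large-scale split at $\alpha$ when $\beta\leq\alpha$ and at $\beta$ when $\beta>\alpha$ (i.e.\ at $\gamma=\max(\alpha,\beta)$), and then running the same layer integral computation as in Lemma \ref{bdarycorlem}. Your presentation through the unified parameter $\gamma$ and the observation that the data gradient is $O(1/\beta+1/\gamma)\leq C/\beta$ is a clean way to organize the two cases, and your resulting bound $C\gamma^{1-\tau}\delta(x)^\tau/\beta$ specializes correctly to both branches of \eqref{estbdarycorlemab}.
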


\begin{lem}[Dirichlet corrector]\label{bulkcorlemab}
For all $1/2<\tau<1$, there exists $C_0'>0$ such that for all $\psi\in\mathcal C_{M_0}^{1,\omega}$, for all $A\in\mathcal A^{0,\nu_0}$, for all $0<\alpha<1$, $0<\beta<1$, the unique weak solution $w^{\alpha,\beta}\in W^{1,2}(D^\beta(0,2))$ of \eqref{sysbulkcorrab} satisfies the following estimate: for all $x\in D^\beta(0,3/2)$,
\begin{equation}\label{estdircorlemab}
|w^{\alpha,\beta}(x)|\leq C_0'\left\{\begin{array}{ll}
\left(\frac{1}{\alpha}+\frac{1}{\beta}\right)\beta^{1-\tau}\delta(x)^\tau,&\mbox{if}\ \beta>\alpha,\\
\delta(x)^\tau/\alpha^\tau,&\mbox{if}\ \beta\leq\alpha,
\end{array}\right.
\end{equation}
where $\delta(x):=x_d-\beta\psi(x'/\beta)$.
\end{lem}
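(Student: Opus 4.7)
The plan is to follow the same scheme as in Lemma \ref{bdarycorlem}, namely represent $w^{\alpha,\beta}$ via the Green kernel $\widetilde{G}^{\alpha,\beta}$ associated to $-\nabla\cdot A(\cdot/\alpha)\nabla$ on $D^\beta(0,2)$ and integrate by parts; the novelty will lie in choosing the correct scale ($\alpha$ or $\beta$) at which to split the integral, according to which of the two is dominant. Specifically, for $x\in D^\beta(0,3/2)$ one writes
\begin{equation*}
w^{\alpha,\beta}(x)=-\int_{D^\beta(0,2)}\nabla_{\tilde x}\widetilde{G}^{\alpha,\beta}(x,\tilde x)\,A(\tilde x/\alpha)\nabla\bigl(\chi^d(\tilde x/\alpha)\,\Theta(\tilde x',\tilde x_d/\gamma)\bigr)d\tilde x,
\end{equation*}
and the key point is that, by the product rule, the factor $A(\tilde x/\alpha)\nabla(\chi^d(\tilde x/\alpha)\Theta(\tilde x',\tilde x_d/\gamma))$ is bounded pointwise by $C(1/\alpha+1/\gamma)$ on the compact support of $\Theta$, which sits in the slab $\{|\tilde x_d|\leq 3M_0\gamma/2\}$.

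When $\beta\leq\alpha$, so that $\gamma=\alpha$, I would use the gradient estimates \eqref{est3nablagepsaabalpha}--\eqref{est3nablagepsbabalpha} of Lemma \ref{lemestgreend=3ab} and split the integral at the scale $|x-\tilde x|=\alpha$. The inner ball integral is bounded using $|\nabla_2\widetilde G^{\alpha,\beta}|\lesssim \delta(x)^\tau/|x-\tilde x|^{d-1+\tau}$ together with $1/\alpha+1/\gamma\lesssim 1/\alpha$, yielding $(1/\alpha)\delta(x)^\tau\cdot\alpha^{1-\tau}=\delta(x)^\tau/\alpha^\tau$. The outer integral uses $|\nabla_2\widetilde G^{\alpha,\beta}|\lesssim \delta(x)^\tau\delta(\tilde x)^\tau/(\alpha|x-\tilde x|^{d-2+2\tau})$, the fact that $\delta(\tilde x)\lesssim\alpha$ on the slab, and the very same slab computation as in the proof of Lemma \ref{bdarycorlem} (valid precisely because $\tau>1/2$), to produce another copy of $\delta(x)^\tau/\alpha^\tau$. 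Summing the two contributions gives the claimed bound in this regime.

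When $\beta>\alpha$, so that $\gamma=\beta$, the natural scale is now $\beta$ and I would instead rely on the gradient estimates \eqref{est3nablagepsaabbeta}--\eqref{est3nablagepsbabbeta}, splitting the integral at $|x-\tilde x|=\beta$. The prefactor $1/\alpha+1/\gamma=1/\alpha+1/\beta$ is now kept explicitly. The inner contribution contributes $(1/\alpha+1/\beta)\delta(x)^\tau\cdot\beta^{1-\tau}$, exactly of the desired form. For the outer contribution, using $\delta(\tilde x)\lesssim\beta$ on the slab and $\tau>1/2$, the slab-type integral $\int_{|x-\tilde x|>\beta,\,|\tilde x_d|\lesssim\beta}|x-\tilde x|^{-(d-2+2\tau)}d\tilde x\lesssim \beta^{2-2\tau}$ produces a second copy of $(1/\alpha+1/\beta)\beta^{1-\tau}\delta(x)^\tau$, so the total bound is as stated.

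The only real obstacle is bookkeeping: one must verify that the gradient estimates of Lemma \ref{lemestgreend=3ab} are used at the scale which matches both the support of $\Theta$ and the integrability threshold $\tau>1/2$ needed to control the tangential integral over a slab of the correct height. Once this matching is done the computation is a line-by-line transcription of the argument in Section~4.1, with $\alpha$ or $\beta$ replacing the single scale $\varepsilon$ and with the extra prefactor $1/\alpha+1/\gamma$ tracked through the proof. The two-dimensional case is treated identically using the logarithmic variants \eqref{est3nablagepsad=2}--\eqref{est3nablagepsbd=2} adapted to two scales.
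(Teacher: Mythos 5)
Your proposal is correct and follows essentially the same route as the paper: represent $w^{\alpha,\beta}$ via the Green kernel $\widetilde{G}^{\alpha,\beta}$, integrate by parts, bound the source term by $C(1/\alpha+1/\gamma)$ on the slab $\{|\tilde x_d|\lesssim\gamma\}$, and split the integral at the scale $\gamma=\max(\alpha,\beta)$, invoking the $\alpha$-scale gradient estimates \eqref{est3nablagepsaabalpha}--\eqref{est3nablagepsbabalpha} when $\beta\leq\alpha$ and the $\beta$-scale estimates \eqref{est3nablagepsaabbeta}--\eqref{est3nablagepsbabbeta} when $\beta>\alpha$. The paper spells out only the case $\beta>\alpha$ and states that the rest follows the proof of Lemma \ref{bdarycorlem}; your bookkeeping of the two regimes, the choice of splitting scale, and the use of $\tau>1/2$ to control the slab integral all agree with the paper's computation.
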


\begin{proof}
The proof of Lemma \ref{bulkcorlemab} follows the lines of the proof of Lemma \ref{bdarycorlem} for the boundary corrector, using the new estimates \eqref{est3nablagepsaabalpha} and \eqref{est3nablagepsbabalpha} when $\beta\leq\alpha$, and the estimates \eqref{est3nablagepsaabbeta} and \eqref{est3nablagepsbabbeta} when $\beta>\alpha$. 

Let us write down the calculations when $\beta>\alpha$. We rely on estimate \eqref{est3nablagepsbabbeta}. For all $x\in D^\beta(0,3/2)$,
\begin{align*}
w^{\alpha,\beta}(x)&=-\int_{D^\beta(0,2)}\nabla_2 \widetilde{G}^{\alpha,\beta}(x,\tilde{x})A(\tilde{x}/\alpha)\nabla(\chi^d(\tilde{x}'/\alpha)\Theta(\tilde{x}',\tilde{x}_d/\beta))d\tilde{x}\\
&=-\int_{D^\beta(0,2)\cap\{|x-\tilde{x}|\leq\beta\}}\nabla_2 \widetilde{G}^{\alpha,\beta}(x,\tilde{x})A(\tilde{x}/\alpha)\nabla(\chi^d(\tilde{x}'/\alpha)\Theta(\tilde{x}',\tilde{x}_d/\beta))d\tilde{x}\\
&\qquad-\int_{D^\beta(0,2)\{|x-\tilde{x}|>\beta\}}\nabla_2 \widetilde{G}^{\alpha,\beta}(x,\tilde{x})A(\tilde{x}/\alpha)\nabla(\chi^d(\tilde{x}'/\alpha)\Theta(\tilde{x}',\tilde{x}_d/\beta))d\tilde{x}\\
&=I_1+I_2.
\end{align*}
We get on the one hand by \eqref{est3nablagepsaabbeta}
\begin{align*}
|I_1|&\leq C\left(\frac{1}{\alpha}+\frac{1}{\beta}\right)\int_{\{|x-\tilde{x}|\leq\beta\}}\frac{\delta(x)^\tau}{|x-\tilde{x}|^{d-1+\tau}}d\tilde{x}\\
&\leq C\delta(x)^\tau\left(\frac{1}{\alpha}+\frac{1}{\beta}\right)\int_{|\tilde{y}|\leq 1}\frac{1}{|\tilde{y}|^{d-1+\tau}}d\tilde{y}\leq C\left(\frac{1}{\alpha}+\frac{1}{\beta}\right)\beta^{1-\tau}\delta(x)^\tau,
\end{align*}
and on the other hand by \eqref{est3nablagepsbabbeta}
\begin{align*}
|I_2|&\leq C\beta^{-1}\left(\frac{1}{\alpha}+\frac{1}{\beta}\right)\int_{D^\beta(0,2)\cap[-3/2,3/2]^{d-1}\times[-3M_0\beta/2,3M_0\beta/2]\cap\{|x-\tilde{x}|>\beta\}}\frac{\delta(x)^\tau\delta(\tilde{x})^\tau}{|x-\tilde{x}|^{d-2+2\tau}}d\tilde{x}\\
&\leq C\delta(x)^\tau\beta^{\tau-1}\left(\frac{1}{\alpha}+\frac{1}{\beta}\right)\int_{D^\beta(0,2)\cap[-3/2,3/2]^{d-1}\times[-3M_0\beta/2,3M_0\beta/2]\cap\{|x-\tilde{x}|>\beta\}}\frac{1}{|x-\tilde{x}|^{d-2+2\tau}}d\tilde{x}\\
&\leq C\delta(x)^\tau\beta^{\tau-1}\left(\frac{1}{\alpha}+\frac{1}{\beta}\right)\Biggl\{\int_{|x'-\tilde{x}'|>\beta}\frac{1}{|x'-\tilde{x}'|^{d-2+2\tau}}\int_{-3M_0\beta/2}^{3M_0\beta/2}\frac{1}{\left\{1+\frac{(x_d-\tilde{x}_d)^2}{|x'-\tilde{x}'|^{2}}\right\}^{(d-2+2\tau)/2}}d\tilde{x}_dd\tilde{x}'\Biggr.\\
&\qquad\Biggl.+\int_{\{|x-\tilde{x}|>\beta\}\cap\{|x'-\tilde{x}'|<\beta\}\times[-3M_0\beta/2,3M_0\beta/2]}\frac{1}{|x-\tilde{x}|^{d-2+2\tau}}d\tilde{x}\Biggr\}\\
&\leq C\delta(x)^\tau\beta^{\tau-1}\left(\frac{1}{\alpha}+\frac{1}{\beta}\right)\Biggl\{\beta^{2-2\tau}\int_{|\tilde{y}'|>1}\frac{1}{|\tilde{y}'|^{d-2+2\tau}}d\tilde{y}'+\beta^{2-2\tau}\Biggr\}\\
&\leq C\left(\frac{1}{\alpha}+\frac{1}{\beta}\right)\beta^{1-\tau}\delta(x)^\tau,
\end{align*}
for $1/2<\tau<1$.
\end{proof}

\subsection{Proof of the Lipschitz estimate}

We now tackle the proof of Theorem \ref{theoboundarylipalphabeta} relying on the three-step compactness method. The two usual auxiliary lemmas (improvement and iteration) read as follows.
\begin{lem}[improvement lemma]\label{lem1lipab}
There exist $0<\varepsilon_0<1,\ 0<\theta<1/8$, such that for all $\psi\in\mathcal C_{M_0}^{1,\nu_0}$, for all $A\in\mathcal A^{0,\nu_0}$, for all $0<\alpha,\beta<\varepsilon_0$, for all $u^\varepsilon$ weak solution to \eqref{systwoscalealphabeta}, 
if
\begin{equation*}
\|u^{\alpha,\beta}\|_{L^\infty(D^\beta(0,1/2))}\leq 1,
\end{equation*}
then 
\begin{multline}\label{estlem1ablip}
\bigl\|u^{\alpha,\beta}(x)-\left(\overline{\partial_{x_d} u^{\alpha,\beta}}\right)_{0,\theta}\left\{x_d-\alpha(1-\Theta(x',x_d/\gamma))\chi^d(x/\alpha)-\alpha w^{\alpha,\beta}(x)\right.\bigr.\\
\bigl.\left.-\beta\psi(x'/\beta)\Theta(x',x_d/\gamma)-\beta v^{\alpha,\beta}(x)\right\}\bigr\|_{L^\infty(D^\beta(0,\theta))}\leq\theta^{1+\mu},
\end{multline}
where $\gamma:=\max(\alpha,\beta)$.
\end{lem}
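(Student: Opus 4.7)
The plan is to mimic the three-step compactness scheme already carried out in the proof of Lemma \ref{lem1} (Laplace) and Lemma \ref{lem1lip} (one-scale), the only genuine novelty being the need to handle the two independent scales $\alpha$ and $\beta$ simultaneously. First I fix $0<\theta<1/8$ and invoke the classical $C^2$ boundary regularity of \cite{adn1,adn2} applied to an arbitrary weak solution $u^0\in W^{1,2}(D^0(0,1/4))$ of a constant coefficient elliptic system
\begin{equation*}
-\nabla\cdot A^0\nabla u^0=0\ \text{in}\ D^0(0,1/4),\qquad u^0=0\ \text{on}\ \Delta^0(0,1/4),
\end{equation*}
with $A^0$ satisfying \eqref{elliptA} and $\|u^0\|_{L^2(D^0(0,1/4))}\leq 1$; exactly as in \eqref{classSchauderu^0} this gives
\begin{equation*}
\bigl\|u^0-\left(\overline{\partial_{x_d}u^0}\right)_{0,\theta}x_d\bigr\|_{L^\infty(D^0(0,\theta))}\leq C_2\theta^2
\end{equation*}
with $C_2$ independent of $\theta$. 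I then fix $\theta$ so small that $\theta^{1+\mu}>C_2\theta^2$, which is possible since $\mu<1$.

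The main step is the contradiction argument. Suppose the conclusion of the lemma fails for arbitrarily small $\varepsilon_0$: there exist sequences $\alpha_k,\beta_k\to 0$, $\psi_k\in\mathcal C_{M_0}^{1,\nu_0}$, $A_k\in\mathcal A^{0,\nu_0}$ and $u^{\alpha_k,\beta_k}$ solving \eqref{systwoscalealphabeta} with $\|u^{\alpha_k,\beta_k}\|_{L^\infty(D^{\beta_k}(0,1/2))}\leq 1$ and violating \eqref{estlem1ablip}. By the uniform $L^\infty$ bound, the uniform boundary H\"older estimate (Proposition \ref{propboundaryholderab}) and Cacciopoli's inequality, after flattening via $\tilde u^{\alpha_k,\beta_k}(x',x_d):=u^{\alpha_k,\beta_k}(x',x_d+\beta_k\psi_k(x'/\beta_k))$ as in the proof of Lemma \ref{lem1}, and upon extracting a subsequence, we obtain $\tilde u^{\alpha_k,\beta_k}\to u^0$ strongly in $C^0(\overline{D^0(0,3/8)})$ and weakly in $W^{1,2}$ after zero extension on $D^{-1}(0,1/4)$. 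Extracting a further subsequence so that $\overline{A_k}\to A^0$ (the limit is still uniformly elliptic), Theorem \ref{theoweakcvhomo} applied on subdomains $\Omega_\iota=(-1/4,1/4)^{d-1}\times(\iota,1/4)$ together with the strong $C^0$ convergence of $\tilde u^{\alpha_k,\beta_k}$ up to the flattened boundary (which yields the Dirichlet condition $u^0=0$ on $\Delta^0(0,1/4)$) shows that $u^0$ solves the constant coefficient Dirichlet problem above. Hence the classical Schauder estimate applies to $u^0$.

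It remains to pass to the limit in the negation of \eqref{estlem1ablip}. The bound on the mean $\bigl|\bigl(\overline{\partial_{x_d}u^{\alpha_k,\beta_k}}\bigr)_{0,\theta}\bigr|\leq C_1/\theta$ is obtained from the divergence theorem exactly as in \eqref{majmoy}, and this mean converges to $\bigl(\overline{\partial_{x_d}u^0}\bigr)_{0,\theta}$ by the same argument as in the Laplace case (weak convergence of $\nabla\tilde u^{\alpha_k,\beta_k}$ plus the fact that the symmetric differences $D^{\beta_k}(0,\theta)\triangle D^0(0,\theta)$ have vanishing Lebesgue measure). The crucial new input is that each of the four oscillating correction terms in \eqref{estlem1ablip} tends to zero uniformly on compact subsets of $D^{\beta_k}(0,\theta)$: the terms $\beta_k\psi_k(x'/\beta_k)\Theta(x',x_d/\gamma_k)$ and $\alpha_k(1-\Theta(x',x_d/\gamma_k))\chi^d(x/\alpha_k)$ are trivially $O(\gamma_k)$ in $L^\infty$ using the bounds on $\psi_k$ and $\chi^d$; and the estimates \eqref{estbdarycorlemab}--\eqref{estdircorlemab} from Lemmas \ref{bdarycorlemab} and \ref{bulkcorlemab} give, in both regimes $\beta_k\gtrless\alpha_k$,
\begin{equation*}
\beta_k|v^{\alpha_k,\beta_k}(x)|+\alpha_k|w^{\alpha_k,\beta_k}(x)|\leq C\,\gamma_k^{1-\tau}\delta(x)^\tau\xrightarrow{k\to\infty}0,
\end{equation*}
with the key point that $(1+\alpha_k/\beta_k)$ and $(1+\beta_k/\alpha_k)$ are controlled by the dichotomy built into \eqref{estbdarycorlemab}--\eqref{estdircorlemab}. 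Passing to the limit in the negation of \eqref{estlem1ablip} therefore yields
\begin{equation*}
\bigl\|u^0-\bigl(\overline{\partial_{x_d}u^0}\bigr)_{0,\theta}x_d\bigr\|_{L^\infty(D^0(0,\theta))}\geq \theta^{1+\mu}>C_2\theta^2,
\end{equation*}
contradicting the Schauder estimate on $u^0$.

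The main obstacle, and the reason the two-scale case is not strictly identical to the one-scale Lemma \ref{lem1lip}, is the subsequence selection that guarantees an identifiable limit operator: one must extract both $\overline{A_k}\to A^0$ and control the relative size of $\alpha_k$ and $\beta_k$ in order to use the sharp corrector bounds of Lemmas \ref{bdarycorlemab}--\ref{bulkcorlemab}. Once the dichotomy $\beta>\alpha$ versus $\beta\leq\alpha$ is threaded through the correctors (so that neither of the ``cross'' factors $\alpha/\beta$ or $\beta/\alpha$ blows up), the rest of the argument is mechanical and identical to the single-scale case.
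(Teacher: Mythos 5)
Your proposal is correct and follows essentially the same compactness/contradiction scheme the paper uses (and explicitly says to use, referring back to Lemma~\ref{lem1}): fix $\theta$ against the classical Schauder estimate for the flat, constant-coefficient limit, flatten via $\tilde u^{\alpha_k,\beta_k}(x',x_d)=u^{\alpha_k,\beta_k}(x',x_d+\beta_k\psi_k(x'/\beta_k))$, extract $C^0$-strong and $W^{1,2}$-weak subsequences, identify the limit via Theorem~\ref{theoweakcvhomo}, and pass to the limit using the corrector bounds of Lemmas~\ref{bdarycorlemab}--\ref{bulkcorlemab} to kill all four oscillating terms. One small correction to your closing remark: no subsequence extraction is needed to decide the sign of $\alpha_k-\beta_k$ — as your own computation already shows, the products $\beta_k|v^{\alpha_k,\beta_k}|$ and $\alpha_k|w^{\alpha_k,\beta_k}|$ are bounded by $C\gamma_k^{1-\tau}\delta(x)^\tau$ with $\gamma_k=\max(\alpha_k,\beta_k)$ in both regimes of the dichotomy, so they vanish as $k\to\infty$ without any control on the ratio $\alpha_k/\beta_k$.
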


\begin{lem}[iteration lemma]\label{lem2lipab}
Let $0<\varepsilon_0<1$ and $\theta>0$ as given by Lemma \ref{lem1lipab}. There exists $C_1>0$, for all $k\in\mathbb N$, $k\geq 1$, for all $0<\alpha,\beta<\theta^{k-1}\varepsilon_0$, for all $\psi\in\mathcal C_{M_0}^{1,\nu_0}$, for all $A\in\mathcal A^{0,\nu_0}$, for all $u^{\alpha,\beta}$ weak solution to \eqref{systwoscalealphabeta}, if
\begin{equation*}
\|u^{\alpha,\beta}\|_{L^\infty(D^\beta(0,1/2))}\leq 1,
\end{equation*}
then there exist $a^{\alpha,\beta}_k\in \mathbb R$, $V^{\alpha,\beta}_k=V^{\alpha,\beta}_k(x)$ and $W^{\alpha,\beta}_k=W^{\alpha,\beta}_k(x)$ such that
\begin{equation*}
\begin{aligned}
|a^{\alpha,\beta}_k|&\leq (C_0/\theta)[1+\theta^\mu+\ldots\ \theta^{(k-1)\mu}],\\
|V^{\alpha,\beta}_k(x)|&\leq (C_0C_1/\theta)[1+\theta^\mu+\ldots\ \theta^{(k-1)\mu}]\left\{\begin{array}{ll}
\delta(x)^\tau/\beta^\tau,&\mbox{if}\ \beta>\alpha,\\
\left(\frac{1}{\alpha}+\frac{1}{\beta}\right)\alpha^{1-\tau}\delta(x)^\tau,&\mbox{if}\ \beta\leq\alpha,
\end{array}\right.\\
|W^{\alpha,\beta}_k(x)|&\leq (C_0'C_1/\theta)[1+\theta^\mu+\ldots\ \theta^{(k-1)\mu}]\delta(x)^\tau\left\{\begin{array}{ll}
\left(\frac{1}{\alpha}+\frac{1}{\beta}\right)\beta^{1-\tau}\delta(x)^\tau,&\mbox{if}\ \beta>\alpha,\\
\delta(x)^\tau/\alpha^\tau,&\mbox{if}\ \beta\leq\alpha,
\end{array}\right.
\end{aligned}
\end{equation*}
where $C_0$ (resp. $C_0'$) is the constant appearing in \eqref{estbdarycorlemab} (resp. \eqref{estdircorlemab}) and
\begin{multline*}
\bigl\|u^{\alpha,\beta}(x)-a^{\alpha,\beta}_k\left\{x_d-\alpha(1-\Theta(x',x_d/\gamma))\chi^d(x/\alpha)-\beta\psi(x'/\beta)\Theta(x',x_d/\gamma)\right\}\bigr.\\
\bigl.-\alpha W^{\alpha,\beta}_k(x)-\beta V^{\alpha,\beta}_k(x)\bigr\|_{L^\infty(D^\beta(0,\theta^k))}\leq\theta^{k(1+\mu)},
\end{multline*}
with $\gamma:=\max(\alpha,\beta)$.
\end{lem}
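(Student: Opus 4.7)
The plan is to argue by induction on $k$, mirroring the proof of Lemma \ref{lem2lip} but keeping careful track of how the two scales $\alpha$ and $\beta$ transform under rescaling. The base case $k=1$ is exactly the improvement Lemma \ref{lem1lipab}, with the choice
\begin{equation*}
a^{\alpha,\beta}_1 := (\overline{\partial_{x_d} u^{\alpha,\beta}})_{0,\theta},\quad V^{\alpha,\beta}_1 := a^{\alpha,\beta}_1 v^{\alpha,\beta}, \quad W^{\alpha,\beta}_1 := a^{\alpha,\beta}_1 w^{\alpha,\beta},
\end{equation*}
the bound $|a^{\alpha,\beta}_1|\leq C_1/\theta$ being the analogue of \eqref{majmoy}, and the bounds on $V^{\alpha,\beta}_1$ and $W^{\alpha,\beta}_1$ following respectively from Lemmas \ref{bdarycorlemab} and \ref{bulkcorlemab}.

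Suppose the conclusion holds up to step $k$, and fix $0<\alpha,\beta<\theta^k\varepsilon_0$. Setting $\gamma=\max(\alpha,\beta)$, I would introduce the rescaled function
\begin{equation*}
U^{\alpha,\beta}(x):=\frac{1}{\theta^{k(1+\mu)}}\left[u^{\alpha,\beta}(\theta^kx)-a^{\alpha,\beta}_k\bigl\{\theta^k x_d-\alpha(1-\Theta(\theta^k x',\theta^k x_d/\gamma))\chi^d(\theta^k x/\alpha)-\beta\psi(\theta^k x'/\beta)\Theta(\theta^k x',\theta^k x_d/\gamma)\bigr\}-\alpha W^{\alpha,\beta}_k(\theta^kx)-\beta V^{\alpha,\beta}_k(\theta^k x)\right].
\end{equation*}
A direct computation shows that $U^{\alpha,\beta}$ solves the two-scale problem \eqref{systwoscalealphabeta} in $D^{\beta/\theta^k}(0,1)$ with coefficients $A(\cdot/(\alpha/\theta^k))$ and homogeneous Dirichlet data on $\Delta^{\beta/\theta^k}(0,1)$, since the terms added to $u^{\alpha,\beta}(\theta^k\cdot)$ precisely correct the inhomogeneity coming from the oscillation of $A$ and the boundary. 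The inductive bound then gives $\|U^{\alpha,\beta}\|_{L^\infty(D^{\beta/\theta^k}(0,1/2))}\leq 1$, and since $\alpha/\theta^k,\beta/\theta^k<\varepsilon_0$, Lemma \ref{lem1lipab} applies to $U^{\alpha,\beta}$ with the new scales.

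The estimate \eqref{estlem1ablip} for $U^{\alpha,\beta}$ then produces, after unrescaling and after using the compatibility property \eqref{condTheta} (which ensures that $\Theta(x',\theta^k x_d/\gamma)=\Theta(\theta^k x',\theta^k x_d/\gamma)$ on the region of interest so that the two expansions line up), the desired bound at step $k+1$, with
\begin{equation*}
a^{\alpha,\beta}_{k+1}:=a^{\alpha,\beta}_k+\theta^{k\mu}(\overline{\partial_{x_d} U^{\alpha,\beta}})_{0,\theta},\quad V^{\alpha,\beta}_{k+1}(x):=V^{\alpha,\beta}_k(x)+\theta^{k\mu}(\overline{\partial_{x_d} U^{\alpha,\beta}})_{0,\theta}\,v^{\alpha/\theta^k,\beta/\theta^k}(x/\theta^k),
\end{equation*}
and similarly $W^{\alpha,\beta}_{k+1}$ with $w^{\alpha/\theta^k,\beta/\theta^k}$. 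The telescoping bound on $a^{\alpha,\beta}_{k+1}$ follows from \eqref{majmoy} applied to $U^{\alpha,\beta}$.

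The only nonroutine step is verifying the quantitative bounds on $V^{\alpha,\beta}_{k+1}$ and $W^{\alpha,\beta}_{k+1}$, which is where the asymmetric case split in Lemmas \ref{bdarycorlemab}--\ref{bulkcorlemab} must be reconciled with the rescaling $\alpha\mapsto\alpha/\theta^k$, $\beta\mapsto\beta/\theta^k$. The key observation is that the ratio $\beta/\alpha$ is invariant under this rescaling, so the case distinction ($\beta>\alpha$ versus $\beta\leq\alpha$) is preserved, and the functional form of the bounds in \eqref{estbdarycorlemab} and \eqref{estdircorlemab} is stable under simultaneous scaling of $\alpha$, $\beta$ and $\delta(x)$ by $\theta^{-k}$. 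Assembling the telescoping sum then produces exactly the asserted estimates for $V^{\alpha,\beta}_{k+1}$ and $W^{\alpha,\beta}_{k+1}$, completing the induction. I expect this bookkeeping, and verifying that the corrector bounds are correctly scale-covariant, to be the main (though still largely mechanical) obstacle.
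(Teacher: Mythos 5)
Your proof is correct and matches the paper's approach, which gives only a two-sentence sketch here: the iteration rests on the uniformity of the improvement Lemma \ref{lem1lipab} in $\alpha$ and $\beta$, together with the observation that the case distinction $\beta>\alpha$ versus $\beta\leq\alpha$ is preserved under the simultaneous rescaling $\alpha\mapsto\alpha/\theta^k$, $\beta\mapsto\beta/\theta^k$. You correctly identify and exploit both points, in particular the scale-invariance of the corrector bounds \eqref{estbdarycorlemab} and \eqref{estdircorlemab} under $(\alpha,\beta,\delta)\mapsto(\alpha/\theta^k,\beta/\theta^k,\delta/\theta^k)$, which is exactly what makes the telescoping sums for $V^{\alpha,\beta}_k$ and $W^{\alpha,\beta}_k$ close.
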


We emphasize a few points of the proof of Lemma \ref{lem1lipab}. The general scheme developed in the proof of Lemma \ref{lem1} works all the same here. As usual we assume that there exist sequences $\alpha_k,\ \beta_k\rightarrow 0$, $\psi_k\in \mathcal C_{M_0}^{1,\nu_0}$, $A_k\in\mathcal A^{0,\nu_0}$ such that 
\begin{equation*}
\|u^{\alpha_k,\beta_k}\|_{L^\infty(D^{\beta_k}(0,1/2))}\leq 1,
\end{equation*}
and 
\begin{multline}\label{contraestlem1ablip}
\bigl\|u^{\alpha_k,\beta_k}(x)-\left(\overline{\partial_{x_d} u^{\alpha_k,\beta_k}}\right)_{0,\theta}\left\{x_d-\alpha_k(1-\Theta(x',x_d/\gamma_k))\chi^d_k(x/\alpha_k)-\alpha_k w^{\alpha_k,\beta_k}(x)\right.\bigr.\\
\bigl.\left.-\beta_k\psi_k(x'/\beta_k)\Theta(x',x_d/\gamma_k)-\beta_k v^{\alpha_k,\beta_k}(x)\right\}\bigr\|_{L^\infty(D^{\beta_k}(0,\theta))}>\theta^{1+\mu},
\end{multline}
with $\gamma_k:=\max(\alpha_k,\beta_k)$. The control of $\|u^{\alpha_k,\beta_k}\|_{L^\infty(D^{\beta_k}(0,1/2))}$ and the H\"older estimate imply that $u^{\alpha_k,\beta_k}$ is bounded in $C^{0,\mu}$ uniformly in $\alpha_k$ and $\beta_k$. We can therefore extract subsequences converging strongly in $C^0(\overline{D^{\beta_k}(0,1/2)})$. Furthermore, Cacciopoli's inequality yields weak compactness in $H^1$. These convergences make it possible to pass to the limit in \eqref{contraestlem1ablip}, which contradicts the estimate for constant homogenized coefficients in the flat domain.

The key for the proof of the iteration Lemma \ref{lem2lipab} is the uniformity of \eqref{estlem1ablip} in $\alpha$ and $\beta$. Notice also that $\beta/\theta^k>\alpha/\theta^k$ for all $k\geq 1$ whenever $\beta>\alpha$.

\begin{proof}[Proof of Theorem \ref{theoboundarylipalphabeta}]
If both $\alpha$ and $\beta>\varepsilon_0$, then the theorem follows from classical estimates. Assume now that (at least) one of the two parameters is less than $\varepsilon_0$. If $\beta\geq\varepsilon_0$ (resp. $\alpha\geq\varepsilon_0$), then Theorem \ref{theoboundarylipalphabeta} follows from the Proposition \ref{theoboundarylipeps1} (resp. Proposition \ref{theoboundarylip1eps}). Assume now that $0<\alpha,\ \beta<\varepsilon_0$ and let $\psi\in\mathcal C^{1,\nu_0}_{M_0}$ and $A\in\mathcal A^{0,\nu_0}$ be fixed for the rest of the proof. Let also $x_0:=(0,x_{0,d})\in D^\beta(0,1/2)$ be fixed.

Assume that $\alpha<\beta$. In that case, we will blow-up at the biggest of the two scales, i.e. $\beta$. There exist a $k\geq 1$ such that $\theta^k\leq\beta/\varepsilon_0<\theta^{k-1}$. We have
\begin{multline*}
\|u^{\alpha,\beta}\|_{L^\infty(D^\beta(0,\beta/\varepsilon_0))}\leq \theta^{k(1+\mu)}+|a^{\alpha,\beta}_k|\left\{\|x_d-\beta\psi(x'/\beta)\Theta(x',x_d/\beta)\|_{L^\infty(D^\beta(0,\theta^k))}\right.\\
\left.+\alpha\|(1-\Theta(x',x_d/\beta))\chi^d(x/\alpha)\|_{L^\infty(D^\beta(0,\theta^k))}+\alpha\|W^{\alpha,\beta}_k(x)\|_{L^\infty(D^\beta(0,\theta^k))}+\beta\|V^{\alpha,\beta}_k(x)\|_{L^\infty(D^\beta(0,\theta^k))}\right\}\\
\leq C\theta^k\leq C\beta.
\end{multline*}
Assume that $\alpha\geq\beta$. In that case, we will blow-up at the biggest of the two scales, i.e. $\alpha$. There exist a $k\geq 1$ such that $\theta^k\leq\alpha/\varepsilon_0<\theta^{k-1}$. We have
\begin{multline*}
\|u^{\alpha,\beta}\|_{L^\infty(D^\beta(0,\alpha/\varepsilon_0))}\leq \theta^{k(1+\mu)}+|a^{\alpha,\beta}_k|\left\{\|x_d-\beta\psi(x'/\beta)\Theta(x',x_d/\alpha)\|_{L^\infty(D^\beta(0,\theta^k))}\right.\\
\left.+\alpha\|(1-\Theta(x',x_d/\alpha))\chi^d(x/\alpha)\|_{L^\infty(D^\beta(0,\theta^k))}+\alpha\|W^{\alpha,\beta}_k(x)\|_{L^\infty(D^\beta(0,\theta^k))}+\beta\|V^{\alpha,\beta}_k(x)\|_{L^\infty(D^\beta(0,\theta^k))}\right\}\\
\leq C\theta^k\leq C\alpha.
\end{multline*}
The case of arbitrary points $x_0\in D^\beta(0,1/2)$ is treated in a standard way.
\end{proof}

\section{Further generalizations}
\label{secgen}

\subsection{A generalization to boundaries with macroscopic behavior}

For this part only, we address a generalization of the Lipschitz estimate to domains with oscillating boundaries with a macroscopic dependence
\begin{equation*}
\psi=\varepsilon\psi(x',x'/\varepsilon).
\end{equation*}
The Lipschitz estimate should hold provided that we have enough regularity ensuring classical estimates. There are only small modifications to be done in the iteration procedure (Lemma \ref{lem2}). Let $\psi=\psi(x',y')\in C^{1,\nu_0}(\mathbb R^{d-1}\times\mathbb R^{d-1})$ be a function such that $0\leq\psi\leq M_0$ and
\begin{multline}\label{boundspsigenmacro}
\sup_{y'\in\mathbb R^{d-1}}\|\nabla_1\psi(\cdot,y')\|_{L^\infty(\mathbb R^{d-1})}+\sup_{x'\in\mathbb R^{d-1}}\|\nabla_2\psi(x',\cdot)\|_{L^\infty(\mathbb R^{d-1})}\\
+\sup_{y'\in\mathbb R^{d-1}}[\nabla_1\psi(\cdot,y')]_{C^{0,\nu_0}(\mathbb R^{d-1})}+\sup_{x'\in\mathbb R^{d-1}}\|\nabla_2\psi(x',\cdot)\|_{C^{0,\nu_0}(\mathbb R^{d-1})}\leq M_0.
\end{multline}
The improvement lemma and the blow-up analysis go through in the exact same way as for $\psi=\psi(y')\in\mathcal C^{1,\nu_0}_{M_0}$. The estimate corresponding to the one of Lemma \ref{lem1} is
\begin{equation}\label{estlem1macro}
\bigl\|u^\varepsilon(x)-\left(\overline{\partial_{x_d} u^\varepsilon}\right)_{0,\theta}\left\{x_d-\varepsilon\psi(x',x'/\varepsilon)\Theta(x',x_d/\varepsilon)-\varepsilon v^\varepsilon(x)\right\}\bigr\|_{L^\infty(D^\varepsilon(0,\theta))}\leq\theta^{1+\mu}.
\end{equation}
For the sake of completeness, let us do the first step of the iteration. We consider
\begin{equation*}
U^\varepsilon(x):=\frac{1}{\theta^{1+\mu}}\left[u^\varepsilon(\theta x)-\left(\overline{\partial_{x_d} u^\varepsilon}\right)_{0,\theta}\left\{\theta x_d-\varepsilon\psi(\theta x',\theta x'/\varepsilon)\Theta(\theta x',\theta x_d/\varepsilon)-\varepsilon v^\varepsilon(\theta x)\right\}\right],
\end{equation*}
which solves
\begin{equation*}
\left\{\begin{array}{rll}
-\Delta U^\varepsilon&=0
,&x\in D^{\varepsilon/\theta}_{\psi^1}(0,1/2),\\
U^\varepsilon&=0,&x\in\Delta^{\varepsilon/\theta}_{\psi^1}(0,1/2),
\end{array}
\right. 
\end{equation*}
where $\psi^1:=\psi(\theta\cdot,\cdot)$. Notice that $\psi^1$ satisfies the bounds \eqref{boundspsigenmacro} and that by estimate \eqref{estlem1macro} $\|U^\varepsilon\|_{L^\infty(D^\varepsilon_{\psi^1}(0,1/2))}\leq 1$. Applying now the ad hoc improvement lemma, we get for $\varepsilon/\theta<\varepsilon_0$
\begin{equation*}
\left\|U^\varepsilon-(\overline{\partial_{x_d}U^\varepsilon})_{0,\theta}\left\{x_d-\varepsilon/\theta\psi^1(x',\theta x'/\varepsilon)\Theta(x',\theta x_d/\varepsilon)-\varepsilon/\theta v^{\varepsilon/\theta}_{\psi^1}(x)\right\}\right\|_{L^\infty(D^{\varepsilon/\theta}_{\psi^1}(0,\theta))}\leq \theta^{1+\mu},
\end{equation*}
where $v^{\varepsilon/\theta}_{\psi^1}$ is a solution of \eqref{sysbdarycorr} with $\psi^1$ in place of $\psi$, $N=1$ and $A=\Idd_d$. Therefore
\begin{multline*}
\bigl\|u^\varepsilon(\theta x)-\left(\overline{\partial_{x_d} u^\varepsilon}\right)_{0,\theta}\left\{\theta x_d-\varepsilon\psi(\theta x',\theta x'/\varepsilon)\Theta(\theta x',\theta x_d/\varepsilon)-\varepsilon v^\varepsilon(\theta x)\right\}\bigr.\\
\bigl.-\theta^\mu(\overline{\partial_{x_d}U^\varepsilon})_{0,\theta}\left\{\theta x_d-\varepsilon\psi^1(x',\theta x'/\varepsilon)\Theta(x',\theta x_d/\varepsilon)-\varepsilon v^{\varepsilon/\theta}_{\psi^1}(x)\right\}\bigr\|_{L^\infty(D^{\varepsilon/\theta}_{\psi^1}(0,\theta))}\leq \theta^{2(1+\mu)},
\end{multline*}
which using $\psi^1(x',\theta x'/\varepsilon)=\psi(\theta x',\theta x'/\varepsilon)$ and  $\Theta(x',\theta x_d/\varepsilon)=\Theta(\theta x',\theta x_d/\varepsilon)$ for $|x'|<1$ boils down to
\begin{multline*}
\bigl\|u^\varepsilon(x)-\left[\left(\overline{\partial_{x_d} u^\varepsilon}\right)_{0,\theta}+\theta^\mu(\overline{\partial_{x_d}U^\varepsilon})_{0,\theta}\right]\left\{x_d-\varepsilon\psi(x',x'/\varepsilon)\Theta(x',x_d/\varepsilon)\right\}\bigr.\\
\bigl.-\varepsilon \left(\overline{\partial_{x_d} u^\varepsilon}\right)_{0,\theta}v^\varepsilon(x)-\varepsilon\theta^\mu(\overline{\partial_{x_d}U^\varepsilon})_{0,\theta} v^{\varepsilon/\theta}_{\psi^1}(x)\bigr\|_{L^\infty(D^{\varepsilon}_{\psi}(0,\theta^2))}\leq \theta^{2(1+\mu)}.
\end{multline*}

\subsection{Inclined bumpy half-spaces}

Consider now an inclined half-space domain $\Omega^\varepsilon\subset\mathbb R^d$, such that there is a $\psi_1\in\mathcal C^{1,\nu_0}_{M_0}$ and an orthogonal matrix $O\in M_d(\mathbb R)$, such that
\begin{equation*}
\begin{aligned}
\Omega^\varepsilon_1&:=O\Omega^\varepsilon=\{x\in\mathbb R^d:\ \exists z\in\Omega^\varepsilon,\ x=Oz\}\\
&=\{(x',x_d)\in\mathbb R^d:\ x_d>\varepsilon\psi_1(x'/\varepsilon)\}.
\end{aligned}
\end{equation*}
This means that $\Omega^\varepsilon$ is just the rotation of an oscillating half-space domain $\Omega^\varepsilon_1$ of the type studied all along this paper.

We intend to emphasize a recent result of Schmutz \cite{Schmutz08}, which has been communicated to us by Zhongwei Shen \cite{Shenperiodichomodraft}. This result may be of interest to the PDE community.

\begin{theo}[\cite{Schmutz08}, Theorem 3.1]\label{rstSchmutz}
Let $O\in M_d(\mathbb R)$ an orthogonal matrix. For any $\delta>0$, there exists an orthogonal matrix $T\in M_d(\mathbb Q)$, i.e. with rational entries, such that:
\begin{enumerate}
\item $|O-T|=\sup_{1\leq\alpha,\beta\leq d}|O^{\alpha\beta}-T^{\alpha\beta}|<\delta$;
\item each entry of $T$ has a denominator bounded from below by a constant depending only on $d$ and $\delta$.
\end{enumerate}
\end{theo}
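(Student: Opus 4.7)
The plan is to use the Cayley parametrization of the orthogonal group, which is a diffeomorphism from the vector space of skew-symmetric matrices onto the open subset $\{O\in O(d): \det(I+O)\neq 0\}$; this reduces the non-linear approximation problem in $O(d)$ to the trivial linear problem of approximating a skew-symmetric matrix by a rational skew-symmetric one. The technical content is then: (i) handling the defect of surjectivity of the Cayley map, and (ii) controlling the denominators of the rational orthogonal matrix produced by inverting an integer matrix.

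First I would reduce to the Cayley domain. The key claim is that for every $O\in O(d)$ there exists a signed permutation $P\in O(d,\mathbb Z)$ (there are $2^d d!$ such matrices, all with entries in $\{-1,0,1\}$, hence denominator $1$) such that $\|(I+PO)^{-1}\|\leq c_d^{-1}$, where $c_d>0$ depends only on $d$. Pointwise, the existence of \emph{some} $P$ with $\det(I+PO)\neq 0$ follows because, for each fixed $P$, the set $\{O\in O(d): PO \text{ has } -1 \text{ as eigenvalue}\}$ is a proper closed algebraic subvariety of $O(d)$, and the finite signed permutation group cannot leave any proper subvariety of an irreducible component invariant. Upgrading this to a uniform quantitative bound uses the compactness of $O(d)$ together with the continuity of $O\mapsto \max_P \sigma_{\min}(I+PO)$. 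Replacing $O$ by $PO$ and absorbing $P^{-1}$ at the end, I may assume $\|(I+O)^{-1}\|\leq c_d^{-1}$, and in particular the Cayley preimage $A:=(I-O)(I+O)^{-1}$ is skew-symmetric with $\|A\|_\infty \leq C_1(d)$.

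Now, for a large positive integer $N$ (to be chosen), I round each entry of $A$ above the diagonal to the nearest integer multiple of $1/N$ and extend by antisymmetry, producing a skew-symmetric $A_N=M/N$ with $M\in\mathbb Z^{d\times d}$ satisfying $|A-A_N|_\infty\leq 1/(2N)$ and $\|M\|_\infty\leq N C_1(d)+1$. The Cayley image
\begin{equation*}
T:=(I-A_N)(I+A_N)^{-1} = (NI-M)(NI+M)^{-1}
\end{equation*}
is automatically orthogonal, and since $M$ is skew-symmetric its eigenvalues are purely imaginary, so $\det(NI+M)$ is a nonzero integer. Cramer's rule then writes the entries of $T$ as ratios in $\frac{1}{\det(NI+M)}\mathbb Z$, and Hadamard's inequality bounds $|\det(NI+M)| \leq (N^2+\|M\|_\infty^2 d)^{d/2} \leq C_2(d,N)$. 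A resolvent identity $O-T = 2(I+A)^{-1}(A_N-A)(I+A_N)^{-1}$ then gives $|O-T|_\infty \leq C_3(d)/N$. Choosing $N=N(d,\delta):=\lceil C_3(d)/\delta\rceil$ yields $|O-T|_\infty<\delta$, with the common denominator of $T$ bounded by $C_2(d,N(d,\delta))=:C(d,\delta)$, as required.

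The main obstacle I anticipate is the \emph{uniform} lower bound $c_d$ in the first step: while the existence of one good signed permutation $P$ at each individual $O$ is algebraically elementary, establishing $\sigma_{\min}(I+PO)\geq c_d$ uniformly as $O$ varies over all of $O(d)$ requires a genuine compactness argument showing that the translates $\{P\cdot\mathcal U_{\operatorname{Cayley}}\}_P$ cover $O(d)$ with uniformly controlled Cayley data. This rests on the algebro-geometric fact that no proper subvariety of an irreducible component of $O(d)$ can be invariant under the full signed permutation group. Once this input is secured, the rest of the argument — Cayley rounding, integer-determinant denominator control, and the resolvent-identity error estimate — is routine, and the preliminary reduction preserves rationality and the denominator bound because signed permutations have denominator $1$.
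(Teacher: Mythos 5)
The paper does not prove this statement: it is quoted verbatim from \cite{Schmutz08} (Theorem 3.1), so there is no in-text argument to compare yours against. Evaluating your proposal on its own, the Cayley-transform strategy is sound, and steps (ii)--(v) are correct. In particular, rounding the skew-symmetric $A$ to $A_N=M/N$ preserves skew-symmetry; $T=(NI-M)(NI+M)^{-1}$ is automatically orthogonal with rational entries over the common denominator $\det(NI+M)$, which is a nonzero integer and is bounded by Hadamard; and the resolvent identity $O-T=2(I+A)^{-1}(A_N-A)(I+A_N)^{-1}$, together with the observations that $(I+A)^{-1}=\tfrac12(I+O)$ and that $I+A_N$ has all singular values $\geq 1$ (its eigenvalues are $1+i\mu$, $\mu\in\mathbb R$), gives $|O-T|_\infty\leq C(d)/N$ as you claim.

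The gap is in step (i), and specifically in the sentence ``the finite signed permutation group cannot leave any proper subvariety of an irreducible component invariant.'' That general principle is false: any finite union of orbits of a finite group $G\leq O(d)$ is a proper $G$-invariant subvariety (e.g.\ $G$ itself, viewed as a finite subset of $O(d)$), so the invariance of $\bigcap_{P}\{O:\det(I+PO)=0\}$ under signed permutations does not by itself force it to be empty. The needed fact is nonetheless true, and can be proved directly, even using only diagonal sign matrices $D_\epsilon=\operatorname{diag}(\epsilon_1,\dots,\epsilon_d)$ with $\epsilon\in\{\pm1\}^d$. Since $I+D_\epsilon O=D_\epsilon\left(\operatorname{diag}(\epsilon)+O\right)$, it suffices to find $\epsilon$ with $\det(\operatorname{diag}(\epsilon)+O)\neq 0$. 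The function $p(x_1,\dots,x_d):=\det(\operatorname{diag}(x_1,\dots,x_d)+O)$ is multilinear (degree at most one in each $x_i$, as $x_i$ occurs only in the $(i,i)$ entry) with the coefficient of $x_1\cdots x_d$ equal to $1$, hence $p\not\equiv 0$; a nonzero multilinear polynomial cannot vanish identically on $\{\pm1\}^d$, so some $p(\epsilon)\neq 0$. Then $f(O):=\max_\epsilon|\det(I+D_\epsilon O)|$ is continuous and strictly positive on the compact set $O(d)$, so it has a positive lower bound $m_d$, and since $\|I+D_\epsilon O\|\leq 2$ one obtains the uniform $\sigma_{\min}(I+D_\epsilon O)\geq m_d/2^{d-1}=:c_d$ you need. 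With this replacement the rest of your argument goes through; note also that it yields denominators bounded from \emph{above} by a constant $C(d,\delta)$, which is what the paper actually uses when it invokes the existence of an integer $N=N(d,\delta)$ clearing all denominators of $T^{-1}$.
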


We would like to apply this theorem to get estimates on the system
\begin{equation}\label{sysueps(z)}
\left\{\begin{array}{rll}
-\nabla\cdot A(z/\varepsilon)\nabla u^\varepsilon&=0,&z\in\Omega^\varepsilon\cap B(0,1),\\
u^\varepsilon&=0,&z\in\partial\Omega^\varepsilon\cap B(0,1),
\end{array}
\right.
\end{equation}
using the theory we have developed in this paper.

Let $\delta>0$ be small enough, such that for any orthogonal matrix $T$ satisfying $|O-T|<\delta$, we have the existence of $\psi_2\in\mathcal C^{1,\nu_0}_{M_0}$ and
\begin{equation*}
\begin{aligned}
\Omega^\varepsilon_2&:=T\Omega^\varepsilon=\{\hat{x}\in\mathbb R^d:\ \exists z\in\Omega^\varepsilon,\ \hat{x}=Tz\}\\
&=\{(\hat{x}',\hat{x}_d)\in\mathbb R^d:\ \hat{x}_d>\varepsilon\psi_2(\hat{x}'/\varepsilon)\}.
\end{aligned}
\end{equation*}
In other words, since we control $\|\nabla\psi\|_{L^\infty(\mathbb R^{d-1})}$ uniformly in the class $\mathcal C^{1,\nu_0}_{M_0}$, a slight rotation of $\Omega^\varepsilon_1$ will still be a half-space above a graph. We fix $\delta$.

Take now a matrix $T$ approximating $O$ such as given by Theorem \ref{rstSchmutz}. In particular, both $T$ and $T^{-1}$ are orthogonal with rational entries, $|O-T|<\delta$, and there exists a large integer $N$, depending only on $d$ and $\delta$ such that $NT^{-1}$ has integer entries. Now, let 
\begin{equation*}
\Omega^\varepsilon_3:=T\Omega^\varepsilon=\{\hat{x}\in\mathbb R^d:\ \exists z\in\Omega^\varepsilon,\ \hat{x}=Tz\}.
\end{equation*}
There exists $\psi_3\in\mathcal C^{1,\nu_0}_{M_0}$ such that
\begin{equation*}
\Omega^\varepsilon_3=\{(\hat{x}',\hat{x}_d)\in\mathbb R^d:\ \hat{x}_d>\varepsilon\psi_3(\hat{x}'/\varepsilon)\}.
\end{equation*}
We proceed to the change of variables $\hat{x}:=N^{-1}Tz$ and let $w^\varepsilon(\hat{x}):=u^\varepsilon(z)$, where $u^\varepsilon$ solves \eqref{sysueps(z)}. After this change of variables, $w^\varepsilon=w^\varepsilon(\hat{x})$ solves
\begin{equation}\label{sysweps}
\left\{\begin{array}{rll}
-\nabla\cdot B(\hat{x}/\varepsilon)\nabla w^\varepsilon&=0,&\hat{x}\in\Omega^\varepsilon_3\cap B(0,1),\\
w^\varepsilon&=0,&\hat{x}\in\partial\Omega^\varepsilon_3\cap B(0,1),
\end{array}
\right.
\end{equation}
where for all $1\leq\alpha,\ \beta\leq d$ and $1\leq i,\ j\leq N$, for all $\hat{x}\in\Omega^\varepsilon_3$,
\begin{equation*}
B^{\alpha\beta}_{ij}(\hat{x}/\varepsilon)=N^{-2}T_{\alpha\gamma}T_{\beta\delta}A^{\gamma\delta}_{ij}(NT^{-1}\hat{x}).
\end{equation*}
Since $NT^{-1}$ has integer entries, $A(NT^{-1}\cdot)$ is periodic (with a very large period), so $B$ is also periodic. Therefore, we can apply all the estimates we have proved for \eqref{sysoscueps} to the system \eqref{sysweps}.

\bibliographystyle{plain}
\bibliography{BLtailosc.bib} 

\end{document}